\documentclass[10pt]{amsart}
\usepackage{amssymb,amsmath,amsfonts,amsthm,graphics,mathrsfs,amscd,hyperref}
\usepackage[hmargin=1in,vmargin=1in]{geometry}
\usepackage[all,cmtip]{xy}

\theoremstyle{plain}

\newtheorem{definition}[equation]{Definition}

\newtheorem{lemma}[equation]{Lemma}

\newtheorem{theorem}[equation]{Theorem}

\newtheorem{example}{Example}

\newtheorem{remark}[equation]{Remark}

\numberwithin{equation}{subsection}

\setcounter{tocdepth}{1}

\title{Obstructed and unobstructed Poisson deformations}
\author{Chunghoon Kim}

\email{ckim042@gmail.com}            

\begin{document}

\maketitle
\begin{abstract}
In this paper, we study obstructed and unobstructed (holomorphic) Poisson deformations with classical examples in deformation theory. 
\end{abstract}

\tableofcontents

\section{Introduction}

In this paper, we study obstructed and unobstructed (holomorphic) Poisson deformations with classical examples in deformation theory. A holomorphic Poisson manifold $X$ is a complex manifold such that its structure sheaf is a sheaf of Poisson algebras.\footnote{For general information on Poisson geometry, we refer to \cite{Lau13} .}  A holomorphic Poisson structure is encoded in a holomorphic section (a holomorphic bivector field) $\Lambda_0\in H^0(X,\wedge^2 \Theta_X)$ with $[\Lambda_0,\Lambda_0]=0$, where $\Theta_X$ is the sheaf of germs of holomorphic vector fields on $X$, and the bracket $[-,-]$ is the Schouten bracket on $X$. In the sequel a holomorphic Poisson manifold will be denoted by $(X,\Lambda_0)$. In \cite{Kim15}, we studied deformations of holomorphic Poisson structures on the basis of Kodaira-Spencer's deformation theory of complex structures. We defined a concept of a family of compact holomorphic Poisson manifolds, called a Poisson analytic family, which is based on a complex analytic family in the sense of Kodaira-Spencer's deformations theory (\cite{Kod05}), and proved theorem of existence and completeness for holomorphic Poisson structures as analogues of theorem of existence and completeness for deformations of complex structures (\cite{Kod05}). Throughout this paper, we will call deformations of complex structures `complex deformations', and deformations of holomorphic Poisson structures `Poisson deformations' for short. In this paper, we study Poisson deformations with classical examples in deformation theory.  We focus on obstructedness or unobstructedness of compact holomorphic Poisson manifolds in Poisson deformations. In particular, we provide examples which are
\begin{enumerate}
\item unobstructed in complex deformations, and unobstructed in Poisson deformations.
\item unobstructed in complex deformations, and obstructed in Poisson deformations.
\item obstructed in complex deformations, and unobstructed in Poisson deformations.
\item obstructed in complex deformations, and obstructed in Poisson deformations.
\end{enumerate}

In section \ref{section2}, we review deformations of holomorphic Poisson structures presented in \cite{Kim15}. We recall the definitions of a Poisson analytic family (see Definition \ref{o1}) and the associated Poisson Kodaira-Spencer map (see Definition \ref{o8}), and integrability condition (see Remark \ref{o9}). Given a compact holomorphic Poisson manifold $(X,\Lambda_0)$, infinitesimal (Poisson) deformations of $(X,\Lambda_0)$ are encoded in the first cohomology group $\mathbb{H}^1(X, \Theta_X^\bullet)$ of the complex of sheaves $\Theta_X^\bullet: \Theta_X \to \wedge^2 \Theta_X\to\wedge^3 \Theta_X \to \cdots $ induced by $[\Lambda_0,-]$. We say that $a\in \mathbb{H}^1(X, \Theta_X^\bullet)$ is called an obstructed element in Poisson deformations if there is no Poisson analytic family of deformations of $(X,\Lambda_0)$ such that $a$ is in the image of the Poisson Kodaira-Spencer map at the distinguished point (see Definition \ref{b11}). As $\theta\in H^1 (X, \Theta_X)$ is an obstructed element in complex deformations if $[\theta, \theta] \ne 0 $ in $H^2(X, \Theta_X)$, we similarly describe a condition when $a\in \mathbb{H}^1(X, \Theta_X^\bullet)$  is an obstructed element in Poisson deformations by using \v{C}ech reolution and Dolbeault resolution of $\Theta_X^\bullet$ (see Theorem \ref{o50} and Theorem \ref{o51}).

In section \ref{section3}, we study Poisson deformations of rational ruled surfaces $F_m=\mathbb{P}(\mathcal{O}_{\mathbb{P}_\mathbb{C}^1}(m)\oplus \mathcal{O}_{\mathbb{P}_\mathbb{C}^1}), m\geq 0$. It is known that since $H^2(F_m,\Theta_{F_m})=0$, $F_m$ are unobstructed in complex deformations. We determine obstructedness or unobstructedness in Poisson deformations for any holomorphic Poisson structure on $F_m$. We show that $F_0,F_1, F_2, F_3$ are unobstructed in Poisson deformations for any holomorphic Poisson structure. For $m\geq 4$, $(F_m, \Lambda_0)$ have both obstructed and unobstructed Poisson deformations depending on Poisson structure $\Lambda_0$. For unobstructed Poisson deformations, we will explicitly give examples of Poisson analytic families of deformations of $(F_m,\Lambda_0)$ such that the associated Poisson Kodaira-Spencer map is an isomorphism at the distinguished point.

In section \ref{section4}, we study Poisson deformations of (primary) Hopf surfaces. We determine obstructedness or unobstructedness of Poisson deformations except for two classes of Poisson Hopf surfaces (see Table $\ref{h51}$ and Remark $\ref{h95}$). By extending the method in \cite{Weh81} in the context of holomorphic Poisson deformations, in the case of unobstructed Poisson Hopf surfaces, we prove the unobstructedness by explicitly constructing Poisson analytic families such that the associated Poisson Kodaira-Spencer map is an isomorphism at the distinguished point.

In section \ref{section5}, we study Poisson deformations of $X=C_1\times C_2$ where $C_1$ and $C_2$ are nonsingular projective curves with genera $g(C_1)=g_1$ and $g(C_2)=g_2$ respectively. Since $X$ has only trivial Poisson structure for $g_1\geq 2$ or $g_2\geq 2$, we only consider $g_1\leq 1$ and $g_2\leq 1$. In this case, we show that $(X=C_1\times C_2,\Lambda_0)$ are unobstructed in Poisson deformations except for $(E\times \mathbb{P}_\mathbb{C}^1,\Lambda_0=0)$ where $E$ is an elliptic curve.

In section \ref{section6}, we study Poisson deformations of $T\times \mathbb{P}_\mathbb{C}^1$ where $T$ is a complex torus with dimension $2$. It is known that $X=T\times \mathbb{P}_\mathbb{C}^1$ where $T$ is a complex torus with dimension $2$, is obstructed in complex deformations (see \cite{Kod58} p.436). We determine obstructedness or unobstructedness in Poisson deformations for any holomorphic Poisson structure on $X$.  In particular, we show that there exist holomorphic Poisson structures $\Lambda_0$ on $X$ such that $(X,\Lambda_0)$ are unobstructed in Poisson deformations. $T\times \mathbb{P}_\mathbb{C}^1$ provides examples which are
\begin{enumerate}
\item obstructed in complex deformations, and unobstructed in Poisson deformations.
\item obstructed in complex deformations, and obstructed in Poisson deformations.
\end{enumerate}

\section{Obstructions}\label{section2}

\subsection{Review of deformations of holomorphic Poisson structures}\

We recall the definitions of a Poisson analytic family of deformations of compact holomorphic Poisson manifolds and the associated Poisson Kodaira-Spencer map (for the detail, see \cite{Kim15}).

\begin{definition}$($compare \cite{Kod05} p.59$)$ \label{o1}
Suppose that given a domain $B\subset \mathbb{C}^m$, there is a set $\{(M_t,\Lambda_t)|t \in B\}$ of $n$-dimensional compact holomorphic Poisson manifolds $(M_t,\Lambda_t)$, depending on $t=(t_1,...,t_m)\in B$. We say that $\{(M_t,\Lambda_t)|t\in B\}$ is a family of compact holomorphic Poisson manifolds or a Poisson analytic family of compact holomorphic Poisson manifolds if there exists a holomorphic Poisson manifold $(\mathcal{M},\Lambda)$ and a holomorphic map $\omega:\mathcal{M}\to B$ satisfing the following properties
\begin{enumerate}
\item $\omega^{-1}(t)$ is a compact holomorphic Poisson submanifold of $(\mathcal{M},\Lambda)$ for each $t\in B$.
\item $(M_t,\Lambda_t)=\omega^{-1}(t)(M_t$ has the induced Poisson holomorphic structure $\Lambda_t$ from $\Lambda)$.
\item The rank of Jacobian of $\omega$ is equal to $m$ at every point of $\mathcal{M}$.
\end{enumerate}
We will denote a Poisson analytic family by $(\mathcal{M},\Lambda,B,\omega)$. We also call $(\mathcal{M},\Lambda,B,\omega)$ a Poisson analytic family of deformations of a compact holomorphic Poisson manifold $(M_{t_0},\Lambda_{t_0})$ for each fixed point $t_0\in B$.
\end{definition}

\begin{remark}\label{o2}
Given a Poisson analytic family $(\mathcal{M},\Lambda,B,\omega)$ as in Definition $\ref{o1}$,  we can choose a locally finite open covering $\mathcal{U}=\{\mathcal{U}_j\}$ of $\mathcal{M}$ such that $\mathcal{U}_j$ are coordinate polydisks with a system of local complex coordinates $\{z_1,...,z_j,...\}$, where a local coordinate function $z_j:p\to z_j(p)$ on $\mathcal{U}_j$ satisfies $z_j(p)=(z_j^1(p),...,z_j^n(p),t_1,...,t_m)$, and $t=(t_1,...,t_m)=\omega(p)$. Then for a fixed $t_0\in B$, $\{p\mapsto (z_j^1(p),...,z_j^n(p))| \mathcal{U}_j \cap M_{t_0}\ne \emptyset\}$ gives a system of local complex coordinates on $M_{t_0}$. In terms of these coordinates, $\omega$ is the projection given by $(z_j,t)=(z_j^1,...,z_j^n,t_1,...,t_m)\to (t_1,...,t_m)$. For $j,k$ with $\mathcal{U}_j\cap \mathcal{U}_k\ne \emptyset$, we denote the coordinate transformations from $z_k$ to $z_j$ by $f_{jk}:(z_k^1,...,z_k^n,t)\to (z_j^1,...,z_j^n,t)=f_{jk}(z_k^1,...,z_k^n,t)$$($for the detail, see \cite{Kod05} p.60$)$. 
 
On the other hand, since $(M,\Lambda) \hookrightarrow (\mathcal{M},\Lambda)$ is a holomorphic Poisson submanifold for each $t\in B$ and $\mathcal{M}=\bigcup_t M_t$, the holomorphic Poisson structure $\Lambda$ on $\mathcal{M}$ can be expressed in terms of local coordinates as $\Lambda=\sum_{\alpha,\beta=1}^n g_{\alpha \beta}^j(z_j^1,...,z_j^n,t)\frac{\partial}{\partial{z_j^{\alpha}}}\wedge \frac{\partial}{\partial{z_j^{\beta}}}$ on $\mathcal{U}_j$, where $g_{\alpha\beta}^j(z_j,t)=g_{\alpha\beta}^j(z_j^1,...,z_n^n,t)$ is holomorphic with respect to $(z_j,t)$ with $g_{\alpha\beta}^j(z_j,t)=-g_{\beta\alpha}^j(z_j,t)$. For a fixed $t^0$, the holomorphic Poisson structure $\Lambda_{t^0}$ on $M_{t_0}$ is given by $\sum_{\alpha,\beta=1}^n g_{\alpha \beta}^j(z_j^1,...,z_j^n,t^0)\frac{\partial}{\partial{z_j^{\alpha}}}\wedge \frac{\partial}{\partial{z_j^{\beta}}}$ on $\mathcal{U}_j\cap M_{t_0}$.
\end{remark}

\begin{remark}
Let $(\mathcal{M},\Lambda,B,\omega)$ be a Poisson analytic family. Let $\Delta$ be an open set of $B$. Then the restriction $(\mathcal{M}_{\Delta}=\omega^{-1}(\Delta),\Lambda|_{M_{\Delta}},\Delta,\omega|_{\mathcal{M}_{\Delta}})$ is also a Poisson analytic family. We will denote the family by $(\mathcal{M}_{\Delta},\Lambda_{\Delta},\Delta,\omega)$.
\end{remark}

Given a Poisson analytic family $(\mathcal{M},\Lambda, B, \omega)$ of deformations of $(M, \Lambda_0)=\omega^{-1}(0), 0\in B$, the infinitesimal deformations of $(M, \Lambda_0)$ in the Poisson analytic family are encoded in $\mathbb{H}^1(M, \Theta_M^\bullet)$, which is the first cohomology group of the following complex of sheaves
\begin{align}\label{o5}
\Theta_{M}^\bullet : \Theta_{M}\xrightarrow{ [\Lambda_0,-]} \wedge^2 \Theta_{M}\xrightarrow{[\Lambda_0,-]}\cdots \xrightarrow{[\Lambda_0,-]} \wedge^n \Theta_{M}\to 0
\end{align}
where $\Theta_M$ is the sheaf of germs of holomorphic vector fields on $M$. We will denote the $i$-th cohomology group of the complex of sheave $(\ref{o5})$ by $\mathbb{H}^i(M, \Theta_{M}^\bullet)$. We can compute $\mathbb{H}^i(M,\Theta_M^\bullet)$ by using the double complex from \v{C}ech resolution or Dolbeault resolution of $\Theta_M^\bullet$ in the following: here $\mathcal{U}^0:=\mathcal{U}\cap M$ is an open covering of $M$, $\delta$ is the \v{C}ech map, and $A^{0,p}(M, \wedge^q \Theta_M)$ is the global section of sheaf of $C^\infty$ $p$-forms with coefficients in $\wedge^q \Theta_M$.
\begin{equation}\label{o6}
\begin{CD}
@A[\Lambda_0,-]AA\\
C^0(\mathcal{U}^0,\wedge^3 \Theta_M)@>-\delta>>\cdots\\
@A[\Lambda_0,-]AA @A[\Lambda_0,-]AA\\
C^0(\mathcal{U}^0,\wedge^2 \Theta_M)@>\delta>> C^1(\mathcal{U}^0,\wedge^2 \Theta_M)@>-\delta>>\cdots\\
@A[\Lambda_0,-]AA @A[\Lambda_0,-]AA @A[\Lambda_0,-]AA\\
C^0(\mathcal{U}^0,\Theta_M)@>-\delta>>C^1(\mathcal{U}^0,\Theta_M)@>\delta>>C^2(\mathcal{U}^0,\Theta_M)@>-\delta>>\cdots\\
\end{CD}
\end{equation}

\begin{equation}\label{o7}
\begin{CD}
@A[\Lambda_0,-]AA\\
A^{0,0}(M,\wedge^3 \Theta_M)@>\bar{\partial}>>\cdots\\
@A[\Lambda_0,-]AA @A[\Lambda_0,-]AA\\
A^{0,0}(M,\wedge^2 \Theta_M)@>\bar{\partial}>> A^{0,1}(M,\wedge^2 \Theta_M)@>\bar{\partial}>>\cdots\\
@A[\Lambda_0,-]AA @A[\Lambda_0,-]AA @A[\Lambda_0,-]AA\\
A^{0,0}(M,\Theta_M)@>\bar{\partial}>>A^{0,1}(M,\Theta_M)@>\bar{\partial}>>A^{0,2}(M, \Theta_M)@>\bar{\partial}>>\cdots\\
\end{CD}
\end{equation}

 Then we can define the Poisson Kodaira-Spencer map at $0\in B$ which encodes the information of infinitesimal Poisson deformations of $\omega^{-1}(0)=(M,\Lambda_0)$ in the Poisson analytic family $(X,\Lambda, B, \omega)$ (for the detail, see \cite{Kim15}).

\begin{definition}[Poisson Kodaira-Spencer map]\label{o8}
Let $(\mathcal{M},\Lambda,B,\omega)$ be a Poisson analytic family of deformations of $(M,\Lambda_0)=\omega^{-1}(0),0\in B$, where $B$ is a domain of $\mathbb{C}^m$. As in Remark $\ref{o2}$, let $\mathcal{U}=\{\mathcal{U}_j\}$ be an open covering of $\mathcal{M}$, and $(z_j,t)$ a local complex coordinate system on $\mathcal{U}_j$. The Poisson structure $\Lambda$ is expressed as $\sum_{\alpha,\beta=1}^n g_{\alpha \beta}^{j}(z_j,t)\frac{\partial}{\partial z_{j}^{\alpha}}\wedge \frac{\partial}{\partial z_{j}^{\beta}}$ on $\mathcal{U}_j$ where $g^{j}_{\alpha \beta}(z_j,t)$ is a holomorphic function with $g_{\alpha\beta}^j(z_j,t)=-g_{\beta\alpha}^j(z_j,t)$. For a tangent vector $\frac{\partial}{\partial t}=\sum_{\lambda=1}^{m} c_{\lambda}\frac{\partial}{\partial t_{\lambda}},c_{\lambda} \in \mathbb{C}$, of $B$, we put  
\begin{align*}
\frac{\partial (M_t, \Lambda_t)}{\partial t}|_{t=0}:=\left(  \{ \sum_{\alpha=1}^n \frac{\partial f_{jk}^\alpha(z_k,t)}{\partial t}|_{t=0}\frac{\partial}{\partial z_j^\alpha}     \} ,\{{\sum_{\alpha,\beta=1}^n\frac{\partial g_{\alpha \beta}^{j}(z_j,t)}{\partial t}|_{t=0} \frac{\partial}{\partial z_{j}^{\alpha}}\wedge \frac{\partial}{\partial z_{j}^{\beta}}}\}    \right) \in  C^1(\mathcal{U}^0, \Theta_{M}) \oplus C^0(\mathcal{U}^0, \wedge^2 \Theta_M)
\end{align*}
The Poisson Kodaira-Spencer map at $0\in B$ in the Poisson analytic family $(\mathcal{M},\Lambda, B, \omega)$ is defined to be a $\mathbb{C}$-linear map by using \v{C}ech resolution $(\ref{o6})$ of $\Theta_M^\bullet$.  
\begin{align*}
\varphi_0:T_0(B) &\to \mathbb{H}^1(M,\Theta_{M}^\bullet)\\
\frac{\partial}{\partial t} &\mapsto \frac{\partial (M_t,\Lambda_t)}{\partial t}|_{t=0}
\end{align*}
\end{definition}

\begin{remark}[see \cite{Kim15}]\label{o9}
Given a Poisson analytic family $(\mathcal{M},\Lambda , B, \omega)$ of deformations of $\omega^{-1}(0)=(M,\Lambda_0), 0\in B$, for a sufficiently small neighborhood $\Delta$ of $0\in B$, the restriction $(\mathcal{M}_\Delta, \Lambda_\Delta, \Delta, \omega)$ is represented by the convergent power series $\varphi(t)$ in $t\in \Delta$ with coefficients in $A^{0,1}(M,\Theta_M)$ and the convergent power series $\Lambda(t)$ in $t\in \Delta$ with coefficients in $A^{0,0}(M,\wedge^2 \Theta_M)$ such that $\varphi(0)=0, \Lambda(0)=0$, and 
\begin{align}\label{o4}
L (\varphi(t)+\Lambda(t))+\frac{1}{2}[\varphi(t)+\Lambda(t),\varphi(t)+\Lambda(t)]=0,\,\,\,\,\,\,\,\text{where}\,\,\,\,\,L=\bar{\partial}-+[\Lambda_0,-].
\end{align}
Conversely, let $(M,\Lambda_0)$ be a compact holomorphic Poisson manifold, and $\Delta$ be a neighborhood of $0\in \mathbb{C}^m$. Given a convergent power series $\varphi(t)$ in $t\in \Delta$ with coefficients in $A^{0,1}(M,\Theta_M)$ and a convergent power series $\Lambda(t)$ in $t\in \Delta$ with coefficients in $A^{0,0}(M,\wedge^2 \Theta_M)$ such that $\varphi(0)=0, \Lambda(0)=0$ and satisfy $(\ref{o4})$. $(\varphi(t),\Lambda(t))$ defines a Poisson analytic family $(\mathcal{M},\Lambda, \Delta, \omega)$ of deformations of $(M,\Lambda_0)=\omega^{-1}(0)$. In this case, the associated Poisson Kodaira-Spencer map at $t=0$ is described in the following way by using Daulbeault resolution $(\ref{o7})$ of $\Theta_M^\bullet$. 
\begin{align}
\varphi_0: T_0 \Delta &\to \mathbb{H}^1(M,\Theta_M^\bullet)\\
 \frac{\partial}{\partial t}&\mapsto \left(  \frac{\partial \varphi(t)}{\partial t}|_{t=0}, \frac{\partial \Lambda(t)}{\partial t}|_{t=0}   \right) \notag
\end{align}
\end{remark}

\subsection{Description of obstructed elements in \v{C}ech Resolution of $\Theta_M^\bullet$}\

Based on the previous section, now we consider our main topic in this paper: obstructedness or unobstructedness of compact holomorphic Poisson manifolds in Poisson deformations.

\begin{definition}
We say that a compact holomorphic Poisson manifold $(M,\Lambda_0)$ is unobstructed if there is a Poisson analytic family $(\mathcal{M},\Lambda, B, \omega)$ of deformations of $\omega^{-1}(0)=(M,\Lambda_0), 0\in B$ such that the Poisson Kodaira-Spencer map $\varphi_0: T_0 B \to \mathbb{H}^1(M,\Theta_M^\bullet)$ is an isomorphism at $0\in B$. Otherwise, we say that $(M,\Lambda_0)$ is obstructed in Poisson deformations.

\end{definition}

\begin{remark}
We note that by theorem of existence for deformations of holomorphic Poisson structures $($see \cite{Kim15}$)$, if a compact holomorphic Poisson manifold $(M,\Lambda_0)$ satisfies $\mathbb{H}^2(M,\Theta_M^\bullet)=0$, there is a Poisson analytic family $(\mathcal{M},\Lambda, B, \omega)$ such that $\omega^{-1}(0)=(M,\Lambda_0) ,0\in B$ and the Poisson Kodaira-Spence map is an isomorphism at $0\in B$ so that $(M,\Lambda_0)$ is unobstructed in Poisson deformaitons.
\end{remark}

\begin{example}\label{o19}
Let $(M,\Lambda_0)$ be any Poisson Del Pezzo surface. Then $\mathbb{H}^2(M,\Theta_M^\bullet)=0$ $($see \cite{Pin11}$)$ so that  $(M,\Lambda_0)$ is unobstructed in Poisson deformations.
\end{example}

\begin{example}
Let $(M,\Lambda_0)$ be any Poisson $K3$ surface. Since $H^1(M,\wedge^2 \Theta_X)=H^2(M,\Theta_M)=0$, we have $\mathbb{H}^2(M,\Theta_M^\bullet)=0$ so that $(M,\Lambda_0)$ is unobstructed in Poisson deformations.
\end{example}

\begin{example}[compare \cite{Kod05} p.230-232]\label{torus}
Let $(z^1,...,z^n)$ be a complex coordinate of $\mathbb{C}^n$.  Let $(M,\Lambda_0)$ be a Poisson complex torus of dimension $n$ defined by the period matrix of the form
\begin{equation*}
\left(\begin{matrix}
a_1^1 &\cdots & a_n^1\\
\cdots & \cdots &\cdots \\
a_n^1 &\cdots & a_n^n\\
1 &\cdots  &0\\
\cdots & \cdots &\cdots\\
0 &\cdots &1
\end{matrix}\right)
\end{equation*}
and Poisson structure $\Lambda_0=\sum_{1\leq \alpha<\beta \leq n} b_{\alpha\beta}\frac{\partial}{\partial z^\alpha}\wedge \frac{\partial}{\partial z^\beta},b_{\alpha\beta}\in \mathbb{C}$ on $M$. Let us consider the spectral sequence associated with the double complex $(\ref{o7})$. At $E_1$, we have
\begin{equation}
\begin{CD}
H^0(M,\wedge^3 \Theta_M) @. H^1(M,\wedge^3\Theta_M)@. H^2(M,\wedge^3 \Theta_M)\\
@A[\Lambda_0,-]AA @A[\Lambda_0,-]AA @A[\Lambda_0,-]AA\\
H^0(M, \wedge^2 \Theta_M) @. H^1(M, \wedge^2 \Theta_M) @. H^2(M,\wedge^2 \Theta_M)\\
@A[\Lambda_0,-]AA @A[\Lambda_0,-]AA @A[\Lambda_0,-]AA\\
H^0(M,\Theta_M) @.H^1(M,\Theta_M) @. H^2(M,\Theta_M)
\end{CD}
\end{equation}
Since $[\Lambda_0,-]$ is the zero map, we have
\begin{align*}
\mathbb{H}^1(M,\Theta_M^\bullet)\cong H^0(M,\wedge^2 \Theta_M)\oplus H^1(M,\Theta_M)
\end{align*}
so that $\dim_\mathbb{C} \mathbb{H}^1(M,\Theta_M^\bullet)=n^2+\binom{n}{2}$. We show that $(M,\Lambda_0)$ is unobstructed by constructing a complete Poisson analytic family of deformations of $(M, \Lambda_0)$ such that the Poisson Kodaira-Spencer map is an isomorphism.

Let $(M,\Lambda)$ be a Poisson complex torus of dimension $n$ with the period matrix of the from
\begin{equation*}
\left(\begin{matrix}
t_1^1 &\cdots & t_n^1\\
\cdots & \cdots &\cdots \\
t_n^1 &\cdots & t_n^n\\
1 &\cdots  &0\\
\cdots & \cdots &\cdots\\
0 &\cdots &1
\end{matrix}\right)
\end{equation*}
with Poisson structure $\Lambda=\sum_{1\leq \alpha < \beta <n} v_{\alpha\beta} \frac{\partial}{\partial z^\alpha}\wedge \frac{\partial}{\partial z^\beta}$.

Put $t=(t_\alpha^\beta)_{\alpha,\beta=1,...,n}$, $v=(v_{\alpha\beta})_{1\leq \alpha<\beta \leq n}$, and write $(M_t,\Lambda_v)$ for $(M,\Lambda)$. Let
\begin{align*}
\omega_j(t)=(\omega_j^1(t),\cdots, \omega_j^n(t))
\end{align*}
be the $k$-th row of the period matrix. Namely
\begin{equation}
\omega_j^\beta(t)=\begin{cases}
t_j^\beta, & j=1,...,n\\
\delta_{j-n}^\beta & j=n+1,...,2n
\end{cases}
\end{equation}

Then $M_t=\mathbb{C}^n/G_t$ where $G_t=\{\sum_{j=1}^{2n} m_j \omega_j(t)|m_j\in \mathbb{Z}\}$. Let $B=\{(t,v)\in \mathbb{C}^{n^2}\times \mathbb{C}^{\binom{n}{2}}|\det\, \textnormal{Im} \, t>0\}$ where $\textnormal{Im}\, t=(\textnormal{Im}\, t_\alpha^\beta)_{\alpha,\beta=1,...,n}$. Then $\{M_t|t\in B\}$ forms a complex analytic family $(\mathcal{M}, B ,\pi)$, where $\mathcal{M}=\mathbb{C}^n\times B/\mathcal{G}$ is the quotient space by $\mathcal{G}$ and the projection $\pi:(z,t,v)\mapsto (t,v)$. Here $\mathcal{G}$ is the group of automorphism of $\mathbb{C}^n\times B$ consisting of all automorphisms defined by
\begin{align*}
(z,t)\to (z+\sum_{j=1}^{2n} m_j\omega_j(t),t,v),\,\,\,\,\,m_j\in \mathbb{Z},\,\,\,\,\,j=1,...,2n.
\end{align*}
Moreover $\Lambda= \sum_{1\leq \alpha<\beta\leq n}^n v_{\alpha\beta}\frac{\partial}{\partial z^\alpha}\wedge \frac{\partial}{\partial z^\beta}$ is invariant under the action $\mathcal{G}$ so that $(\mathcal{M},\Lambda,B, \pi)$ is a Poisson analytic family and $\pi^{-1}(t,v)=(\mathbb{C}^n\times t/G_t, \Lambda_{v})=(M_t,\Lambda_v)$. Then the Poisson Kodaira-Spencer map at $(t,v)$ is an isomorphism
\begin{align*}
\varphi_{(t,v)}:T_{(t,v)} B&\to \mathbb{H}^1(M_t, \Theta_{M_t}^\bullet)\cong H^1(M_t, \Theta_{M_t})\oplus H^0(M_t, \wedge^2 \Theta_{M_t})\\
        \left(\frac{\partial}{\partial t_{\alpha}^\beta}, \frac{\partial}{\partial v_{\alpha\beta} }\right) &\mapsto  \left( \frac{\partial M_t}{\partial t_{\alpha}^\beta},\frac{\partial \Lambda_v}{\partial v_{\alpha\beta} } \right)=\left( \sum_{\gamma=1}^n u_\gamma^\alpha d\bar{z}^\gamma\frac{\partial}{\partial z^\beta},  \frac{\partial}{\partial z^\alpha}\wedge \frac{\partial}{\partial z^\beta}  \right)
\end{align*}
$($for the definition of $u_\gamma^\alpha$, and the description of $\frac{\partial M_t}{\partial t_\alpha^\beta}$, see \cite{Kod05} $p.231-233$.$)$
\end{example}

As we find an obstructed element $\theta\in H^1(M, \Theta_M)$ (see \cite{Kod05} p.209-214) in complex deformations in order to show that a given compact complex manifold $M$ is obstructed, analogously an one way to determine obstructedeness of a compact holomorphic Poisson manifold in Poisson deformations is to find an obstructed element in Poisson deformations as we define in the following way.

\begin{definition}\label{b11}
Let $(M,\Lambda_0)$ be a compact holomorphic Poisson manifold. We say that $a\in \mathbb{H}^1(M,\Theta_M^\bullet)$ is an obstructed element in Poisson deformations of $(M,\Lambda_0)$ if there is no Poisson analytic family $(\mathcal{M},\Lambda,B, \omega)$ of deformations of $(M,\Lambda_0)=\omega^{-1}(0),0\in B$ such that $a\in \varphi_0(T_0(M))$ where $\varphi_0:T_0 M\to \mathbb{H}^1(M,\Theta_M^\bullet)$ is the Poisson Kodaira-Spencer map at $0\in B$ of $(\mathcal{M},\Lambda, B, \pi)$.
\end{definition}

Hence if there is an obstructed element $a\in \mathbb{H}^1(M,\Theta_M^\bullet)$ in Poisson deformations, $(M,\Lambda_0)$ is obstructed. As $\theta\in H^1(M,\Theta_M)$ is an obstructed element in complex deformation if $[\theta,\theta]\ne 0 \in H^2(X,\Theta_X)$, we now describe a condition when $a\in \mathbb{H}^1(M, \Theta_M)$ is an obstructed element in Poisson deformations. We extend the arguments in \cite{Kod05} p.210-214 in the context of a Poisson analytic family. 

Let $(\mathcal{M},\Lambda, B, \omega)$ with $0\in B\subset \mathbb{C}$ be a Poisson analytic family such that $\omega^{-1}(0)=(M,\Lambda_0)$. Take a small disk $\Delta$ with centre $0$ such that $0\in \Delta \subset B$, and represent $(\mathcal{M}_\Delta,\Lambda_\Delta)=\omega^{-1}(\Delta)$ in the form
\begin{align*}
(\mathcal{M}_\Delta, \Lambda_\Delta)=\bigcup_{j=1}^l (U_j\times \Delta, \Lambda_j(t))
\end{align*}
where each $U_j$ is a Poisson polydisk equipped with the Poisson structure $\Lambda_j(t)=\sum_{\alpha,\beta=1}^n g_{\alpha\beta}^j (z_j,t)\frac{\partial}{\partial z_j^\alpha}\wedge \frac{\partial}{\partial z_j^\beta}$ with $g_{\beta\alpha}^j(z_j,t)=-g_{\alpha\beta}^j(z_j,t)$ and $[\Lambda_j(t), \Lambda_j(t)]=0$, and $(z_j,t)\in U_j\times \Delta$ and $(z_k,t)\in U_k\times \Delta$ are the same point on $\mathcal{M}_\Delta$ if $z_j^\alpha=f_{jk}^\alpha(z_k,t)$ for $\alpha=1,...,n$. Here each $f_{jk}^\alpha(z_k,t)=f_{jk}^\alpha(z_k^1,...,z_k^n,t)$ is a holomorphic Poisson map of $z_k^1,...,z_k^n,t$ defined on $(U_k\times \Delta,\Lambda_k(t)) \cap  ( U_j\times \Delta ,\Lambda_j (t) ) \ne \emptyset$.

As in Definition \ref{o8}, the infinitesimal deformation $(\theta(t),\lambda(t))\in \frac{\partial (M_t,\Lambda_t)}{\partial t}\in \mathbb{H}^1(M_t, \Theta_t^\bullet)$ is the cohomology class of the $1$-cocycle $(\{\lambda_j(t)\},\{\theta_{jk}(t)\} )\in C^0(\mathcal{U}_t, \wedge^2 \Theta_t)\oplus C^1(\mathcal{U}_t,\Theta_t)$ where $\mathcal{U}_t=\{U_j\times t\}$, the vector field 
\begin{align*}
\theta_{jk}(t)=\sum_{\alpha=1}^n \theta_{jk}^\alpha(z_k,t)\frac{\partial}{\partial z_j^\alpha},\,\,\,\,\,\,\,\,\,\text{where}\,\,\,\,\,\theta_{jk}^\alpha(z_k,t)=\frac{\partial f_{jk}^\alpha(z_k,t)}{\partial t},\,\,\,\,\,z_k=f_{kj}(z_j,t),
\end{align*}
and the bivector field
\begin{align*}
\lambda_j(t)=\sum_{\alpha,\beta=1}^n \lambda_{\alpha\beta}^j(z_j,t)\frac{\partial}{\partial z_j^\alpha}\wedge \frac{\partial}{\partial z_j^\beta}, \,\,\,\,\,\,\,\,\,\text{where}\,\,\,\,\,\lambda_{\alpha\beta}^j(z_j,t)=\frac{\partial g_{\alpha\beta}^j(z_j,t)}{\partial t}
\end{align*}

On $(U_k\times \Delta) \cap (U_j\times \Delta)\ne \emptyset$, we have the equalities
\begin{align} 
f_{ik}^\alpha(z_k,t)&=f_{ij}^\alpha(f_{jk}(z_k,t),t),\,\,\,\,\,\alpha=1,...,n. \label{o15}\\
g_{\alpha\beta}^j(f_{jk}(z_k,t),t)&=\sum_{p,q=1}^n g_{pq}^k(z_k,t)\frac{\partial f_{jk}^\alpha(z_k,t)}{\partial z_k^p}\frac{f_{jk}^\beta(z_k,t)}{\partial z_k^q},\,\,\,\,\,\,\alpha,\beta=1,...,n \label{o16}
\end{align}
and on $U_j\times \Delta$, we have the equalities
\begin{align}
[\sum_{\alpha,\beta=1}^n g_{\alpha\beta}^j(z_j,t)\frac{\partial}{\partial z_j^\alpha}&\wedge \frac{\partial}{\partial z_j^\beta},  \sum_{\alpha,\beta=1}^n g_{\alpha\beta}^j(z_j,t)\frac{\partial}{\partial z_j^\alpha}\wedge \frac{\partial}{\partial z_j^\beta}]=0, \,\,\,\,\,\alpha,\beta=1,...,n. \label{o17}
\end{align}

By differentiating both sides of $(\ref{o15}),(\ref{o16})$ and $(\ref{o17})$  in $t$, we have
\begin{align}
&\theta_{ik}^\alpha(z_i,t)=\theta_{ij}^\alpha(z_i,t)+\sum_{\beta=1}^n \frac{\partial z_i^\alpha}{\partial z_j^\beta}\theta_{jk}^\beta(z_j,t),\,\,\,\,\alpha=1,...,n. \label{b1}\\
&\lambda_{\alpha\beta}^j(z_j,t)+\sum_{r=1}^n \frac{\partial g_{\alpha\beta}^j}{\partial z_j^r}\theta_{jk}^r(z_j,t),\,\,\,\,\,\alpha,\beta=1,...,n. \label{b2}\\
&=\sum_{p,q=1}^n\lambda_{pq}^k(z_k,t)\frac{\partial f_{jk}^\alpha}{\partial z_k^p}\frac{\partial f_{jk}^\beta}{\partial z_k^q}+\sum_{p,q=1}^n g_{pq}^k(z_k,t)\frac{\partial \theta_{jk}^\alpha(z_j,t)}{\partial z_k^p}\frac{\partial f_{jk}^\beta}{\partial z_k^q}+\sum_{p,q=1}^n g_{pq}^k(z_k,t)\frac{\partial f_{jk}^\alpha}{\partial z_k^p} \frac{\partial \theta_{jk}^\beta(z_j,t)}{\partial z_k^q} \notag \\
&=\sum_{p,q=1}^n\lambda_{pq}^k(z_k,t)\frac{\partial f_{jk}^\alpha}{\partial z_k^p}\frac{\partial f_{jk}^\beta}{\partial z_k^q}+\sum_{p,q=1}^n g_{pq}^j(z_j,t)\frac{\partial \theta_{jk}^\alpha(z_j,t)}{\partial z_j^p}\frac{\partial f_{jk}^\beta}{\partial z_j^q}+\sum_{p,q=1}^n g_{pq}^j(z_j,t)\frac{\partial f_{jk}^\alpha}{\partial z_j^p} \frac{\partial \theta_{jk}^\beta(z_j,t)}{\partial z_j^q} \notag \\
& [\sum_{\alpha,\beta=1}^n g_{\alpha\beta}^j(z_j,t)\frac{\partial}{\partial z_j^\alpha}\wedge \frac{\partial}{\partial z_j^\beta},\sum_{\alpha,\beta=1}^n \lambda_{\alpha\beta}^j(z_j,t)\frac{\partial}{\partial z_j^\alpha}\wedge \frac{\partial}{\partial z_j^\beta}]=0. \label{b3}
\end{align}

Next we will take the derivative of $(\ref{b1}),(\ref{b2}), (\ref{b3})$ in $t$ as functions of $z_j^1,...,z_j^n,t$ again. As in \cite{Kod05} p.211 we sometimes write $\left(\frac{\partial}{\partial t}\right)_j$ instead of $\left(\frac{\partial}{\partial t}\right)$ in order to make explicit that $\frac{\partial}{\partial t}$ denotes the differentiation of a function of $z_j^1,...,z_j^n,t$ with respect to $t$. On $(U_j\times \Delta ) \cap (  U_k\times \Delta ) \ne  \emptyset$, we have the following equalities: $\frac{\partial}{\partial z_k^\alpha}=\sum_{\beta=1}^n \frac{\partial z_j^\beta}{\partial z_k^\alpha}\frac{\partial}{\partial z_j^\beta}$, where $\frac{\partial z_j^\beta}{\partial z_k^\alpha}=\frac{\partial f_{jk}^\beta(z_k,t)}{\partial z_k^\alpha}$, and
\begin{align}
\left(\frac{\partial}{\partial t}\right)_k&=\sum_{s=1}^n \frac{\partial f_{jk}^s(z_k,t)}{\partial t}\frac{\partial}{\partial z_j^s}+\left( \frac{\partial}{\partial t} \right)_j=\sum_{s=1}^n \theta_{jk}^s(z_j,t)\frac{\partial}{\partial z_j^s} +\left( \frac{\partial}{\partial t} \right)_j \notag \\
&\iff \left(\frac{\partial}{\partial t}\right)_j=-\sum_{s=1}^n \theta_{jk}^s(z_j,t)\frac{\partial}{\partial z_j^s}+\left(\frac{\partial}{\partial t}\right)_k  \label{o20}
\end{align}

By taking the derivative $(\ref{b1})$ with respect to $t$, and putting
\begin{align*}
\dot{\theta}_{ij}(t)=\sum_{\alpha=1}^n \dot{\theta}_{ij}^\alpha(z_i,t) \frac{\partial}{\partial z_i^\alpha},\,\,\,\,\,\text{where}\,\,\,\,\,\dot{\theta}_{ij}^\alpha(z_i,t)=\frac{\partial \theta_{ij}^\alpha(z_i,t)}{\partial t}
\end{align*}
 we obtain (for the detail, see \cite{Kod05} p.211-213)
 \begin{align}\label{o35}
 \dot{\theta}_{ij}(t)-\dot{\theta}_{ik}(t)+\dot{\theta}_{jk}(t)=[\theta_{ij}(t),\theta_{jk}(t)]
 \end{align}
 
 By taking the derivative $(\ref{b2})$ with respect to $t$, and putting
 \begin{align*}
 \dot{\lambda}_j(t)=\sum_{\alpha,\beta=1}^n \dot{\lambda}_{\alpha\beta}^j(z_j,t)\frac{\partial}{\partial z_j^\alpha}\wedge \frac{\partial}{\partial z_j^\beta},\,\,\,\,\,\,\text{where}\,\,\,\,\, \dot{\lambda}_{\alpha\beta}^j(z_j,t)=\frac{\partial {\lambda}_{\alpha\beta}^j (z_j,t)  }{\partial t} ,
 \end{align*}
we obtain
\begin{align*}
&[\sum_{\alpha,\beta=1}^n g_{\alpha\beta}^j(z_j,t)\frac{\partial}{\partial z_j^\alpha}\wedge \frac{\partial}{\partial z_j^\beta},\sum_{\alpha,\beta=1}^n \dot{\lambda}_{\alpha\beta}^j(z_j,t)\frac{\partial}{\partial z_j^\alpha}\wedge \frac{\partial}{\partial z_j^\beta}]\\
&+ [\sum_{\alpha,\beta=1}^n \lambda_{\alpha\beta}^j(z_j,t)\frac{\partial}{\partial z_j^\alpha}\wedge \frac{\partial}{\partial z_j^\beta},\sum_{\alpha,\beta=1}^n \lambda_{\alpha\beta}^j(z_j,t)\frac{\partial}{\partial z_j^\alpha}\wedge \frac{\partial}{\partial z_j^\beta}]=0
\end{align*}
which is equivalent to
\begin{align}\label{o36}
[\Lambda(t),\dot{\lambda}_j(t)]=-[\lambda_j(t),\lambda_j(t)]
\end{align}

Lastly we take the derivative of $(\ref{b2})$ with respect to $t$ (i.e.$\left(\frac{\partial}{\partial t}\right)_j$). Let us consider the left hand side of $(\ref{b2})$.
\begin{align}\label{o21}
\left(\frac{\partial}{\partial t}\right)_j \left( \lambda_{\alpha\beta}^j(z_j,t)     +\sum_{r=1}^n \frac{\partial g_{\alpha\beta}^j}{\partial z_j^r}\theta_{jk}^r(z_j,t)   \right)=\dot{\lambda}_{\alpha\beta}^j(z_j,t)+\sum_{r=1}^n \frac{\partial \lambda_{\alpha\beta}^j}{\partial z_j^r}\theta_{jk}^r(z_j,t)+\sum_{r=1}^n\frac{\partial g_{\alpha\beta}^j}{\partial z_j^r}\dot{\theta}_{jk}^r(z_j,t)
\end{align} 

Let us consider the right hand side of $(\ref{b2})$. We take the derivative of each term with respect to $t$ in the following. We take the derivative of the first term of the right hand side of $(\ref{b2})$ with respect to $t$. From $(\ref{o20})$, we have

\begin{align}\label{o22}
&\left(\frac{\partial}{\partial t}\right)_j \sum_{p,q=1}^n \lambda_{pq}^k(z_k,t)\frac{\partial f_{jk}^\alpha}{\partial z_k^p}\frac{\partial f_{jk}^\beta}{\partial z_k^q} \\
&=-\sum_{p,q=1}^n\sum_{s=1}^n \theta_{jk}^s(z_j,t)\frac{\partial }{\partial z_j^s}\left(\lambda_{pq}^k(z_k,t)\frac{\partial f_{jk}^\alpha}{\partial z_k^p}\frac{\partial f_{jk}^\beta}{\partial z_k^q}\right) \notag\\
&+\sum_{p,q=1}^n\dot{ \lambda}_{pq}^k(z_k,t)\frac{\partial f_{jk}^\alpha}{\partial z_k^p}\frac{\partial f_{jk}^\beta}{\partial z_k^q}
+\sum_{p,q=1}^n\lambda_{pq}^k(z_k,t)\frac{\partial \theta_{jk}^\alpha}{\partial z_k^p}\frac{\partial f_{jk}^\beta}{\partial z_k^q} 
+\sum_{p,q=1}^n\lambda_{pq}^k(z_k,t)\frac{\partial f_{jk}^\alpha}{\partial z_k^p}\frac{\partial \theta_{jk}^\beta}{\partial z_k^q} \notag
\end{align}

We take the derivative of the second term of the right hand side of $(\ref{b2})$ with respect to $t$.

\begin{align}
\left(\frac{\partial}{\partial t}\right)_j \sum_{p,q=1}^n g_{pq}^j(z_j,t)\frac{\partial \theta_{jk}^\alpha(z_j,t)}{\partial z_j^p}\frac{\partial f_{jk}^\beta}{\partial z_j^q}=\left(\frac{\partial}{\partial t}\right)_j \sum_{p=1}^n g_{p\beta}^j(z_j,t)\frac{\partial \theta_{jk}^\alpha(z_j,t)}{\partial z_j^p}=\sum_{p=1}^n \lambda_{p\beta}^j(z_j,t)\frac{\partial \theta_{jk}^\alpha}{\partial z_j^p}+\sum_{p=1}^n g_{p\beta}^j\frac{\partial \dot{\theta}_{jk}^\beta}{\partial z_j^p} \label{o23}
\end{align}

We take the derivative of the third term of the right hand side of $(\ref{b2})$ with respect to $t$.

\begin{align}\label{o24}
&\left(\frac{\partial}{\partial t}\right)_j \sum_{p,q=1}^n g_{pq}^j(z_j,t)\frac{\partial f_{jk}^\alpha}{\partial z_j^p}\frac{\partial \theta_{jk}^\beta(z_j,t)}{\partial z_j^q}=\left(\frac{\partial}{\partial t}\right)_j \sum_{q=1}^n g_{\alpha q}^j(z_j,t)\frac{\partial \theta_{jk}^\beta}{\partial z_j^q}= \sum_{q=1}^n \lambda_{\alpha q}^j(z_j,t)\frac{\partial \theta_{jk}^\beta  }{\partial z_j^q}+\sum_{q=1}^n g_{\alpha q}^j\frac{\partial \dot{\theta}_{jk}^\beta}{\partial z_j^q}
\end{align}

Then from $(\ref{b2}),(\ref{o21}),(\ref{o22}),(\ref{o23})$ and $(\ref{o24})$, we have
\begin{align}\label{o251}
&\sum_{\alpha,\beta=1}^n \dot{\lambda}^j_{\alpha\beta}\frac{\partial}{\partial z_j^\alpha}\wedge \frac{\partial}{\partial z_j^\beta}+\sum_{\alpha,\beta,r =1}^n  \frac{\partial \lambda_{\alpha\beta}^j}{\partial z_j^r}\theta_{jk}^r \frac{\partial}{\partial z_j^\alpha}\wedge \frac{\partial}{\partial z_j^\beta}+\sum_{\alpha,\beta, r=1}^n\frac{\partial g_{\alpha\beta}^j}{\partial z_j^r}\dot{\theta}_{jk}^r\frac{\partial}{\partial z_j^\alpha}\wedge \frac{\partial}{\partial z_j^\beta}\\
=&-\sum_{\alpha,\beta, p,q,s=1}^n\frac{\partial}{\partial z_j^s}\left(  \lambda_{pq}^k(z_k,t)\frac{\partial f_{jk}^\alpha}{\partial z_k^p}\frac{\partial f_{jk}^\beta}{\partial z_k^q}   \right)\theta_{jk}^s\frac{\partial }{\partial z_j^\alpha}\wedge \frac{\partial}{\partial z_j^\beta}+\sum_{p,q=1}^n \dot{\lambda}_{pq}^k \frac{\partial}{\partial z_k^p}\wedge \frac{\partial}{\partial z_k^q}
+2\sum_{p,q=1}^n \lambda_{pq}^k(z_k,t)\frac{\partial \theta_{jk}^\alpha}{\partial z_k^p}\frac{\partial f_{jk}^\beta}{\partial z_k^q}\frac{\partial}{\partial z_j^\alpha}\wedge \frac{\partial}{\partial z_j^\beta} \notag \\
&+2\sum_{\alpha,\beta,p=1}^n \lambda_{p\beta}^j\frac{\partial \theta_{jk}^\alpha}{\partial z_j^p}\frac{\partial}{\partial z_j^\alpha}\wedge \frac{\partial}{\partial z_j^\beta}+2\sum_{\alpha,\beta,p=1}^n g_{p\beta}^j\frac{\partial \dot{\theta}_{jk}^\alpha}{\partial z_j^p} \frac{\partial}{\partial z_j^\alpha}\wedge \frac{\partial}{\partial z_j^\beta}\notag
\end{align}
\begin{lemma}
$(\ref{o25})$ is equivalent to
\begin{align}
\dot{\lambda}_j(t)-\dot{\lambda}_k(t)= [\Lambda(t), \dot{\theta}_{jk}(t)] +[\lambda_j(t),\theta_{jk}(t)]+[\lambda_k(t),\theta_{jk}(t)] \label{o261}\\
\iff \dot{\lambda}_k(t)-\dot{\lambda}_j(t)+[\Lambda(t),\dot{\theta}_{jk}(t)]=-[\lambda_j(t)+\lambda_k(t),\theta_{jk}(t)] \label{o31}
\end{align}
\end{lemma}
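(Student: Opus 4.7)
The plan is to recognize $(\ref{o251})$ as the component-wise expansion of the coordinate-free identity $(\ref{o261})$, using the explicit formula for the Schouten bracket of a bivector with a vector field. Precisely, for an antisymmetric bivector $P = \sum_{\alpha,\beta} P^{\alpha\beta}\frac{\partial}{\partial z^\alpha}\wedge \frac{\partial}{\partial z^\beta}$ and a vector field $\theta = \sum_r \theta^r \frac{\partial}{\partial z^r}$, one has
\begin{align*}
[P,\theta] = \sum_{\alpha,\beta,r}\left( 2 P^{r\beta} \frac{\partial \theta^\alpha}{\partial z^r} - \theta^r \frac{\partial P^{\alpha\beta}}{\partial z^r} \right)\frac{\partial}{\partial z^\alpha}\wedge \frac{\partial}{\partial z^\beta}.
\end{align*}
I would apply this with three pairs computed in the $z_j$-chart on $U_j\cap U_k$: namely $(P,\theta) = (\Lambda(t), \dot{\theta}_{jk}(t))$, $(\lambda_j(t),\theta_{jk}(t))$, and $(\lambda_k(t),\theta_{jk}(t))$, and match the resulting six groups of terms against $(\ref{o251})$.

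The first two pairings are straightforward. The bracket $[\Lambda(t), \dot{\theta}_{jk}(t)]$ produces $\sum 2 g^j_{r\beta}\frac{\partial \dot{\theta}^\alpha_{jk}}{\partial z_j^r}\frac{\partial}{\partial z_j^\alpha}\wedge \frac{\partial}{\partial z_j^\beta} - \sum \dot{\theta}^r_{jk}\frac{\partial g^j_{\alpha\beta}}{\partial z_j^r}\frac{\partial}{\partial z_j^\alpha}\wedge \frac{\partial}{\partial z_j^\beta}$, matching the last term on the right of $(\ref{o251})$ and (with opposite sign) the third term on the left. The bracket $[\lambda_j(t), \theta_{jk}(t)]$ similarly yields $2\sum \lambda^j_{r\beta}\frac{\partial \theta^\alpha_{jk}}{\partial z_j^r} \frac{\partial}{\partial z_j^\alpha}\wedge \frac{\partial}{\partial z_j^\beta} - \sum \theta^r_{jk} \frac{\partial \lambda^j_{\alpha\beta}}{\partial z_j^r}\frac{\partial}{\partial z_j^\alpha}\wedge \frac{\partial}{\partial z_j^\beta}$, accounting for the fourth term on the right and (with opposite sign) the second term on the left of $(\ref{o251})$.

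The hard part is $[\lambda_k(t), \theta_{jk}(t)]$, because $\lambda_k$ is naturally written in the $z_k$-basis while the remaining identity lives in the $z_j$-basis. I would exploit the coordinate-invariance of the Schouten bracket together with the change of basis $\frac{\partial}{\partial z_k^p} = \sum_\gamma \frac{\partial f^\gamma_{jk}}{\partial z_k^p}\frac{\partial}{\partial z_j^\gamma}$ to rewrite
\begin{align*}
\lambda_k(t) = \sum_{\alpha,\beta}\Bigl(\sum_{p,q}\lambda^k_{pq}\frac{\partial f^\alpha_{jk}}{\partial z_k^p}\frac{\partial f^\beta_{jk}}{\partial z_k^q}\Bigr)\frac{\partial}{\partial z_j^\alpha}\wedge \frac{\partial}{\partial z_j^\beta}.
\end{align*}
Applying the Schouten bracket formula in the $z_j$-chart, the Lie-derivative part becomes $2\sum \lambda^k_{pq}\frac{\partial \theta^\alpha_{jk}}{\partial z_k^p}\frac{\partial f^\beta_{jk}}{\partial z_k^q}\frac{\partial}{\partial z_j^\alpha}\wedge\frac{\partial}{\partial z_j^\beta}$ after collapsing $\sum_r \frac{\partial f^r_{jk}}{\partial z_k^p}\frac{\partial \theta^\alpha_{jk}}{\partial z_j^r}$ to $\frac{\partial \theta^\alpha_{jk}}{\partial z_k^p}$ by the chain rule; this is the third term on the right of $(\ref{o251})$. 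The remaining term $-\sum \theta^r_{jk}\frac{\partial}{\partial z_j^r}\bigl(\sum_{p,q}\lambda^k_{pq}\frac{\partial f^\alpha_{jk}}{\partial z_k^p}\frac{\partial f^\beta_{jk}}{\partial z_k^q}\bigr)\frac{\partial}{\partial z_j^\alpha}\wedge \frac{\partial}{\partial z_j^\beta}$ matches the first term on the right of $(\ref{o251})$ verbatim.

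Collecting the three contributions and transposing the unwanted summands of $(\ref{o251})$ across the equality gives $(\ref{o261})$; the equivalence with $(\ref{o31})$ is then purely algebraic, obtained by moving $[\Lambda(t), \dot{\theta}_{jk}(t)]$ to the left, flipping overall sign, and combining $[\lambda_j(t),\theta_{jk}(t)] + [\lambda_k(t),\theta_{jk}(t)] = [\lambda_j(t)+\lambda_k(t),\theta_{jk}(t)]$ by $\mathbb{C}$-bilinearity of the Schouten bracket. The principal bookkeeping hurdles are the factor of $2$ produced by antisymmetrization of bivectors, and making sure in the $[\lambda_k,\theta_{jk}]$ computation that the derivatives $\partial/\partial z_k^p$ and $\partial/\partial z_j^r$ are correctly linked through the Jacobian $\partial f^r_{jk}/\partial z_k^p$ rather than confused with one another.
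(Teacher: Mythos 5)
Your proposal is correct and follows essentially the same route as the paper: compute $[\Lambda(t),\dot{\theta}_{jk}(t)]$, $[\lambda_j(t),\theta_{jk}(t)]$ and $[\lambda_k(t),\theta_{jk}(t)]$ via the coordinate formula for the Schouten bracket of a bivector with a vector field, handling $[\lambda_k(t),\theta_{jk}(t)]$ by first rewriting $\lambda_k$ in the $z_j$-chart through the Jacobian $\partial f_{jk}^\alpha/\partial z_k^p$, and then match the six groups of terms against $(\ref{o251})$. Your term-by-term identification (including the factor of $2$ from antisymmetrization and the chain-rule collapse in the $\lambda_k$ bracket) agrees with the paper's computations $(\ref{o27})$--$(\ref{o30})$.
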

\begin{proof}
We compute each term of $(\ref{o261})$.
Let us compute
\begin{align} \label{o27}
\dot{\lambda}_j(t)-\dot{\lambda}_k(t)=\sum_{\alpha,\beta=1}^n \dot{\lambda}_{\alpha\beta}^j\frac{\partial}{\partial z_j^\alpha}\wedge \frac{\partial}{\partial z_j^\beta}-\sum_{p,q=1}^n\dot{\lambda}_{pq}^k\frac{\partial}{\partial z_k^p}\wedge \frac{\partial}{\partial z_k^q}
\end{align}

Let us compute
\begin{align} \label{o28}
[\Lambda(t),\dot{\theta}_{jk}(t)]&=[\sum_{\alpha,\beta=1}^n g_{\alpha\beta}^j(z_j,t)\frac{\partial}{\partial z_j^\alpha}\wedge \frac{\partial}{\partial z_j^\beta},\sum_{r=1}^n \dot{\theta}_{jk}^r(z_j,t)\frac{\partial}{\partial z_j}]\\
&=2\sum_{\alpha,\beta,p=1}^ng_{p\beta}^j\frac{\partial \dot{\theta}_{jk}^\alpha}{\partial z_j^r}\frac{\partial}{\partial z_j^\alpha}\wedge \frac{\partial}{\partial z_j^\beta}-\sum_{\alpha,\beta,r=1}^n\dot{\theta}_{jk}^r\frac{\partial g_{\alpha\beta}^j}{\partial z_j^r}\frac{\partial}{\partial z_j^\alpha}\wedge \frac{\partial}{\partial z_j^\beta} \notag
\end{align}

Let us compute
\begin{align} \label{o29}
[\lambda_j(t),\theta_{jk}(t)]&=[\sum_{\alpha,\beta=1}^n \lambda_{\alpha\beta}^j(z_j,t)\frac{\partial}{\partial z_j^\alpha}\wedge \frac{\partial}{\partial z_j^\beta},\sum_{r=1}^n \theta_{jk}^r(z_j,t)\frac{\partial}{\partial z_j^r}]\\
&=2\sum_{\alpha,\beta,p=1}^n\lambda_{p\beta}^j\frac{\partial \theta_{jk}^\alpha}{\partial z_j^r}\frac{\partial}{\partial z_j^\alpha}\wedge \frac{\partial}{\partial z_j^\beta}-\sum_{\alpha,\beta,r=1}^n\theta_{jk}^r\frac{\partial \lambda_{\alpha\beta}^j}{\partial z_j^r}\frac{\partial}{\partial z_j^\alpha}\wedge \frac{\partial}{\partial z_j^\beta}\notag\end{align}

Let us compute
\begin{align} \label{o30}
[\lambda_k(t),\theta_{jk}(t)]&=[\sum_{p,q=1}^n\lambda_{pq}^k(z_k,t)\frac{\partial}{\partial z_k^p}\wedge \frac{\partial}{\partial z_k^q} , \sum_{r=1}^n \theta_{jk}^r(z_j,t)\frac{\partial}{\partial z_j^r}]=[\sum_{\alpha,\beta,p,q=1}^n\lambda_{pq}^k(z_k,t)\frac{\partial f_{jk}^\alpha}{\partial z_k^p}\frac{\partial f_{jk}^\beta}{\partial z_k^q}\frac{\partial}{\partial z_j^\alpha}\wedge \frac{\partial}{\partial z_j^\beta} , \sum_{r=1}^n \theta_{jk}^r(z_j,t)\frac{\partial}{\partial z_j^r}]  \\
&=2\sum_{\alpha,\beta,p,q=1}^n \lambda_{pq}^k(z_k,t)\frac{\partial \theta_{jk}^\alpha}{\partial z_k^p}\frac{\partial f_{jk}^\beta}{\partial z_k^q} \frac{\partial}{\partial z_j^\alpha}\wedge \frac{\partial}{\partial z_j^\beta}-\sum_{\alpha,\beta,p,q,r =1}^n \theta_{jk}^r\frac{\partial}{\partial z_j^r}\left( \lambda_{pq}^k(z_k,t)\frac{\partial f_{jk}^\alpha}{\partial z_k^p}\frac{\partial f_{jk}^\beta}{\partial z_k^q} \right)\frac{\partial}{\partial z_j^\alpha}\wedge \frac{\partial}{\partial z_j^\beta} \notag
\end{align}

By considering $(\ref{o251}),(\ref{o27}),(\ref{o28}),(\ref{o29}),(\ref{o30})$, we get $(\ref{o261})$.

\end{proof}

From $(\ref{o35}),(\ref{o36})$ and $(\ref{o31})$,  and substituting $t=0$, we obtain
 \begin{align}\label{o37}
 \dot{\theta}_{ij}(0)-\dot{\theta}_{ik}(0)+\dot{\theta}_{jk}(0)&=[\theta_{ij}(0),\theta_{jk}(0)]   \notag  \\
\dot{\lambda}_k(0)-\dot{\lambda}_j(0)+[\Lambda_0,\dot{\theta}_{jk}(0)]&=-[\lambda_j(0)+\lambda_k(0),\theta_{jk}(0)]  \\
[\Lambda_0,\dot{\lambda}_j(0)]&=-[\lambda_j(0),\lambda_j(0)] \notag  
 \end{align}

Since $(M,\Lambda_0)=(M_0,\Lambda_0)$ by assumption, identifying $U_j$ with $U_j\times 0$, we may consider $\mathcal{U}=\{U_j\}$ as a finite open covering of $M$. For a given $(\lambda,\theta)\in \mathbb{H}^1(M,\Theta_M^\bullet)$, if $(\theta,\lambda)=\left( \frac{\partial (M_t,\Lambda_t)}{\partial t}\right)_{t=0}$, where $(\lambda, \theta)$ is the cohomology class of the $1$-cocycle $(\{\lambda_j(0)\},\{\theta_{jk}(0)\})\in C^0(\mathcal{U}, \wedge^2 \Theta_M)\oplus C^1(\mathcal{U},\Theta_M)$ as above, then $(\ref{o37})$ imposes a certain restriction on such $(\lambda,\theta)$. 

\begin{lemma}
For any $1$-cocycle $(\{\lambda_j\},\{\theta_{jk}\})\in C^0(\mathcal{U},\wedge^2 \Theta_M)\oplus C^1(\mathcal{U},\Theta_M)$ of $\Theta_M^\bullet$ so that $[\Lambda_0, \lambda_j]=0,\lambda_k-\lambda_j+[\Lambda_0, \theta_{jk}]=0 $ and $\theta_{jk}-\theta_{ik}+\theta_{ij}=0$,
\begin{align*}
(\{\gamma_j:=-[\lambda_j,\lambda_j ] \},\{\eta_{jk}:=-[\lambda_j+\lambda_k,\theta_{jk}]\}, \{\xi_{ijk}:=[\theta_{ij},\theta_{jk}]\})\in C^0(\mathcal{U},\wedge^3 \Theta_M)\oplus C^1(\mathcal{U},\wedge^2 \Theta_M)\oplus C^2(\mathcal{U},\Theta_M)
\end{align*}
defines a $2$-cocycle in \v{C}ech resolution $(\ref{o6})$ of $\Theta_M^\bullet$.
\end{lemma}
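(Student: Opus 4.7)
The plan is to verify the four identities expressing the vanishing of the total differential of the double complex $(\ref{o6})$ applied to the triple $(\{\gamma_j\},\{\eta_{jk}\},\{\xi_{ijk}\})$ at total degree $2$: namely $[\Lambda_0,\gamma_j]=0$ in $C^0(\mathcal{U},\wedge^4\Theta_M)$, $(\delta\gamma)_{jk}\pm[\Lambda_0,\eta_{jk}]=0$ in $C^1(\mathcal{U},\wedge^3\Theta_M)$, $(\delta\eta)_{ijk}\pm[\Lambda_0,\xi_{ijk}]=0$ in $C^2(\mathcal{U},\wedge^2\Theta_M)$, and $(\delta\xi)_{ijkl}=0$ in $C^3(\mathcal{U},\Theta_M)$. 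Each reduces to a direct computation from the graded Jacobi identity $[A,[B,C]]=[[A,B],C]+(-1)^{(|A|-1)(|B|-1)}[B,[A,C]]$ and graded symmetry $[A,B]=-(-1)^{(|A|-1)(|B|-1)}[B,A]$ of the Schouten bracket, together with the three given cocycle relations $[\Lambda_0,\lambda_j]=0$, $[\Lambda_0,\theta_{jk}]=\lambda_j-\lambda_k$, and $\theta_{ik}=\theta_{ij}+\theta_{jk}$.

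The two extreme identities are essentially formal. Graded Jacobi expands $[\Lambda_0,[\lambda_j,\lambda_j]]$ as two copies of $[[\Lambda_0,\lambda_j],\lambda_j]$, both of which vanish by $[\Lambda_0,\lambda_j]=0$, yielding the top identity. For the $\delta\xi$ identity, substituting $\theta_{ik}=\theta_{ij}+\theta_{jk}$ and $\theta_{jl}=\theta_{jk}+\theta_{kl}$ into the \v{C}ech coboundary $[\theta_{jk},\theta_{kl}]-[\theta_{ik},\theta_{kl}]+[\theta_{ij},\theta_{jl}]-[\theta_{ij},\theta_{jk}]$ produces a term-by-term cancellation.

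For the identity involving $\gamma$ and $\eta$, graded Jacobi applied to $[\Lambda_0,[\lambda_j+\lambda_k,\theta_{jk}]]$ (with $\Lambda_0$ and $\lambda_j+\lambda_k$ both bivectors, so the Jacobi sign is $-1$), combined with $[\Lambda_0,\lambda_j+\lambda_k]=0$ and $[\Lambda_0,\theta_{jk}]=\lambda_j-\lambda_k$, rewrites $[\Lambda_0,\eta_{jk}]$ as $[\lambda_j+\lambda_k,\lambda_j-\lambda_k]$. Because the Schouten bracket of two bivectors is symmetric, the cross terms $-[\lambda_j,\lambda_k]$ and $[\lambda_k,\lambda_j]$ cancel, leaving $[\lambda_j,\lambda_j]-[\lambda_k,\lambda_k]=-\gamma_j+\gamma_k=(\delta\gamma)_{jk}$, which matches up to the prescribed sign of the total differential.

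The heart of the proof is the identity involving $\eta$ and $\xi$. Here graded Jacobi---now with $\theta_{ij}$ a vector field, so the sign in the formula is $+1$---combined with $[\Lambda_0,\theta_{ij}]=\lambda_i-\lambda_j$ and $[\Lambda_0,\theta_{jk}]=\lambda_j-\lambda_k$ gives $[\Lambda_0,\xi_{ijk}]=[\lambda_i-\lambda_j,\theta_{jk}]+[\theta_{ij},\lambda_j-\lambda_k]$, which after applying $[\theta,\lambda]=-[\lambda,\theta]$ becomes $[\lambda_i,\theta_{jk}]-[\lambda_j,\theta_{jk}]-[\lambda_j,\theta_{ij}]+[\lambda_k,\theta_{ij}]$. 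On the other side, expanding $(\delta\eta)_{ijk}=\eta_{jk}-\eta_{ik}+\eta_{ij}$ together with $\theta_{ik}=\theta_{ij}+\theta_{jk}$ produces exactly the same combination of brackets. The main obstacle throughout is sign bookkeeping, keeping the graded Jacobi and symmetry signs consistent with the alternating $\pm\delta$ convention of $(\ref{o6})$; once this is pinned down, each of the four identities reduces to mechanical verification.
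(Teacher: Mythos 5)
Your plan is correct and follows essentially the same route as the paper: the paper likewise verifies that the total differential of $(\ref{o6})$ kills the triple component by component, using the graded Jacobi identity together with $[\Lambda_0,\lambda_j]=0$, $[\Lambda_0,\theta_{jk}]=\lambda_j-\lambda_k$, and $\theta_{ik}=\theta_{ij}+\theta_{jk}$, with exactly the reductions you describe ($[\Lambda_0,\eta_{jk}]=[\lambda_j+\lambda_k,\lambda_j-\lambda_k]$ for the $\gamma$--$\eta$ identity, and the matching expansion of $\delta\eta$ and $[\Lambda_0,\xi]$ for the $\eta$--$\xi$ identity). The only cosmetic difference is that the paper also records the skew-symmetry of $\eta_{jk}$ and the alternating/cocycle properties of $\xi_{ijk}$ (citing Kodaira), which you should include when writing the argument out in full.
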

\begin{proof}
First we note that on $U_i\cap U_j\cap U_k\ne \emptyset$, $\xi_{ikj}=-\xi_{ijk}=\xi_{jik}$ and on $U_h\cap U_j\cap U_j\cap U_k\ne \emptyset$, $\xi_{ijk}-\xi_{hjk}+\xi_{hik}-\xi_{hij}=0$ (for the detail, see \cite{Kod05} p.213). Next we note that $[\Lambda_0, \{\gamma_j\}]=0$, $\eta_{kj}=-[\lambda_k+\lambda_j,\theta_{kj}]=[\lambda_j+\lambda_k,\theta_{jk}]=-\eta_{jk}$, and
\begin{align*}
-\delta(\{\gamma_{j}\})+[\Lambda_0,\{\eta_{jk}\}]=&-\delta(\{-[\lambda_j,\lambda_j]\})+[\Lambda_0, \{-[\lambda_j+\lambda_k,\theta_{jk}]\}]\\
&=\{ [\lambda_k,\lambda_k]-[\lambda_j,\lambda_j]-[\Lambda_0,[\lambda_j,\theta_{jk}]]-[\Lambda_0,[\lambda_k,\theta_{jk}]] \}\\
&=\{[\lambda_k,\lambda_k]-[\lambda_j,\lambda_j]-(-1)^9[\lambda_j, [\Lambda_0,\theta_{jk}]]-(-1)^9[\lambda_k,[\Lambda_0,\theta_{jk}]]\}\\
&=\{[\lambda_k,\lambda_k]-[\lambda_j,\lambda_j]+[\lambda_j,\lambda_j-\lambda_k]+[\lambda_k,\lambda_j-\lambda_k]\}=0
\end{align*}
Lastly we note that
\begin{align*}
-\delta(\{\eta_{jk}\})+[\Lambda_0,\{\xi_{ijk}\}]=&-\delta(\{-[\lambda_j+\lambda_k,\theta_{jk}]\})+[\Lambda_0,\{[\theta_{ij},\theta_{jk}]\} ]\\
&=\{[\lambda_j+\lambda_k,\theta_{jk}]-[\lambda_i+\lambda_k,\theta_{ik}]+[\lambda_{i}+\lambda_j,\theta_{ij}]+[[\Lambda_0,\theta_{ij}],\theta_{jk}]+[\theta_{ij},[\Lambda_0,\theta_{jk}]]\}\\
&=\{[\lambda_j+\lambda_k,\theta_{jk}]-[\lambda_i+\lambda_k,\theta_{ik}]+[\lambda_{i}+\lambda_j,\theta_{ij}]+[\lambda_i-\lambda_j,\theta_{jk}]+[\theta_{ij},\lambda_j-\lambda_k]\}\\
&=\{[\lambda_k,\theta_{jk}]-[\lambda_i,\theta_{ik}]-[\lambda_k,\theta_{ik}]+[\lambda_i,\theta_{ij}]+[\lambda_i,\theta_{jk}]+[\lambda_k,\theta_{ij}]\}=0
\end{align*}
\end{proof}

Then from $(\ref{o37})$ which is equivalent to
\begin{align*}
[\Lambda_0,\{\dot{\lambda}_j(0)\}]=\{-[\lambda_j(0),\lambda_j(0)] \},\,\,\,\delta(\dot{\lambda}_j(0))+[\Lambda_0,\{\dot{\theta}_{jk}\}]=\{-[\lambda_j(0)+\lambda_k(0),\theta_{jk}(0)]\},\,\,\, \delta(\{\dot{\theta}_{jk}\})=\{[\theta_{ij}(0),\theta_{jk}(0)]\},
\end{align*}
 we obtain the following theorem.
\begin{theorem}\label{o50}
Suppose given a compact holomorphic Poisson manifold $(M,\Lambda_0)$, and $(\lambda,\theta)=(\{\lambda_i\},\{\theta_{jk}\})\in \mathbb{H}^1(M,\Theta_M^\bullet)$. In order that there may exist a Poisson analytic family $(\mathcal{M},\Lambda, B,\omega)$ such that $\omega^{-1}(0)=(M,\Lambda_0)$, and that $\left( \frac{\partial (M_t,\Lambda_t)}{\partial t}\right)_{t=0}=(\theta, \lambda)$, it is necessary that 
\begin{align*}
(\{-[\lambda_j,\lambda_j]\},\{-[\lambda_j+\lambda_k,\theta_{jk}]\}, \{[\theta_{ij},\theta_{jk}]\})\in C^0(\mathcal{U},\wedge^3 \Theta_M)\oplus C^1(\mathcal{U},\wedge^2 \Theta_M)\oplus C^2(\mathcal{U},\Theta_M)
\end{align*}
 defines the $0$ class in $\mathbb{H}^2(M,\Theta_M^\bullet)$.
\end{theorem}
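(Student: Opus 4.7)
The plan is to produce an explicit $1$-cochain in the total complex of the double complex $(\ref{o6})$ whose coboundary equals the given $2$-cocycle; the witness cochain will be read off directly from the hypothesized Poisson analytic family by taking second-order derivatives of the transition data at $t=0$. Suppose such a family $(\mathcal{M},\Lambda,B,\omega)$ exists with $\omega^{-1}(0)=(M,\Lambda_0)$ and Poisson Kodaira--Spencer image $(\theta,\lambda)$. Shrinking $B$ if needed, I would invoke the coordinate setup preceding the theorem to produce, for each $t$ in a small disk, a representing $1$-cocycle $(\{\lambda_j(t)\},\{\theta_{jk}(t)\})$ with $\lambda_j(0)=\lambda_j$ and $\theta_{jk}(0)=\theta_{jk}$.

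Next, I would point out that the bulk of the differential calculus is already done: differentiating the compatibility relations $(\ref{o15}),(\ref{o16}),(\ref{o17})$ once in $t$ produced $(\ref{b1}),(\ref{b2}),(\ref{b3})$, and differentiating again in $t$ gave exactly the three identities assembled in $(\ref{o37})$. The crucial observation is that these three identities can be re-read as the three component equations expressing that the total differential of
\begin{align*}
(\{\dot{\lambda}_j(0)\},\{\dot{\theta}_{jk}(0)\}) \in C^0(\mathcal{U},\wedge^2\Theta_M)\oplus C^1(\mathcal{U},\Theta_M)
\end{align*}
in the double complex $(\ref{o6})$ is precisely the target triple. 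Concretely, $[\Lambda_0,\dot{\lambda}_j(0)]=-[\lambda_j,\lambda_j]$ produces the $C^0(\mathcal{U},\wedge^3\Theta_M)$-entry; the combination $\delta\{\dot{\lambda}_j(0)\}+[\Lambda_0,\{\dot{\theta}_{jk}(0)\}]$ produces the $C^1(\mathcal{U},\wedge^2\Theta_M)$-entry $\{-[\lambda_j+\lambda_k,\theta_{jk}]\}$; and the \v{C}ech differential $\delta\{\dot{\theta}_{jk}(0)\}$ produces the $C^2(\mathcal{U},\Theta_M)$-entry $\{[\theta_{ij},\theta_{jk}]\}$. Combined with the preceding lemma, which already certifies that the triple is a genuine $2$-cocycle, this exhibits it as a total coboundary and hence as $0\in \mathbb{H}^2(M,\Theta_M^\bullet)$.

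The main obstacle I anticipate is careful sign bookkeeping in $(\ref{o6})$: the horizontal \v{C}ech differentials alternate between $\delta$ and $-\delta$, so one must verify that, with the particular convention chosen there, the total-complex coboundary of $(\{\dot{\lambda}_j(0)\},\{\dot{\theta}_{jk}(0)\})$ actually agrees entry-by-entry (including signs) with the triple $(\{-[\lambda_j,\lambda_j]\},\{-[\lambda_j+\lambda_k,\theta_{jk}]\},\{[\theta_{ij},\theta_{jk}]\})$. Once this bookkeeping is dispatched, no further calculation is needed: $(\ref{o37})$ directly supplies the three coboundary relations and the theorem follows.
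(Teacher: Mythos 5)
Your proposal is correct and follows essentially the same route as the paper: differentiate the gluing and Poisson-compatibility relations twice in $t$, obtain the three identities $(\ref{o37})$, and reinterpret them as saying that the triple $(\{-[\lambda_j,\lambda_j]\},\{-[\lambda_j+\lambda_k,\theta_{jk}]\},\{[\theta_{ij},\theta_{jk}]\})$ is the total coboundary of the $1$-cochain $(\{\dot{\lambda}_j(0)\},\{\dot{\theta}_{jk}(0)\})$ in the double complex $(\ref{o6})$, with the preceding lemma guaranteeing it is a cocycle. The sign bookkeeping you flag is exactly the content of the displayed equivalence just before the theorem in the text, so nothing further is needed.
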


Given $(\lambda,\theta)=(\{\lambda_i\},\{\theta_{jk}\})\in \mathbb{H}^1(M,\Theta_M^\bullet)$, if the $2$-cocycle $(\{-[\lambda_j,\lambda_j]\},\{-[\lambda_j+\lambda_k,\theta_{jk}]\},\{[\theta_{ij},\theta_{jk}]\})$ is not $0$ in $\mathbb{H}^2(M,\Theta_M^\bullet)$, there is no Poisson deformation $(M_t,\Lambda_t)$ with $(M_0,\Lambda_0)=(M,\Lambda_0)$ and $\left(\frac{\partial (M_t,\Lambda_t)}{\partial t} \right)=(\theta,\lambda)$.

\begin{example}
While $(\mathbb{P}_\mathbb{C}^2,\Lambda_0=0)$ is unobstructed in Poisson deformations as in Example $\ref{o19}$, $(\mathbb{P}_\mathbb{C}^3,\Lambda_0=0)$ is obstructed in Poisson deformations. We note that $\mathbb{H}^1(\mathbb{P}_\mathbb{C}^3,\Theta_{\mathbb{P}_\mathbb{C}^3}^\bullet)=H^0(\mathbb{P}_\mathbb{C}^3,\wedge^2 \Theta_{\mathbb{P}_\mathbb{C}^3})$. $(\mathbb{P}_\mathbb{C}^3,\Lambda_0=0)$ is obstructed in Poisson deformations since there is an element $\Pi\in H^0(\mathbb{P}_\mathbb{C}^3,\wedge^2 \Theta_{\mathbb{P}_\mathbb{C}^3})$ such that $[\Pi,\Pi]\ne 0$.

\end{example}

\subsection{Description of obstructed elements in Dolbeault resolution of $\Theta_M^\bullet$}\

We have simpler description of obstructed elements in Poisson deformations when we use Dolbeault resolution $(\ref{o7})$ of $\Theta_M^\bullet$. Let  $(M,\Lambda_0)$ be a compact holomorphic Poison manifold, and $\Delta\subset \mathbb{C}$ be a neighborhood of $0$. As in Remark \ref{o9}, let $(\Lambda(t),\varphi(t))$ be convergent power series in $t\in \Delta$ with coefficients in $A^{0,0}(M, \wedge^2 \Theta_M)\oplus A^{0,1}(M, \Theta_M)$ such that $\varphi(0),\Lambda(0)=0$ and 
\begin{align}\label{o40}
L \alpha(t)+\frac{1}{2}[\alpha(t),\alpha(t)]=0,\,\,\,\,\,\,\,\,\,\,L=\bar{\partial}-+[\Lambda_0,-]
\end{align}
where $\alpha(t):=\varphi(t)+\Lambda(t)$ so that $\alpha(t)$ defines a Poisson analytic family $(\mathcal{M},\Lambda, \Delta, \omega)$ of deformations of $\omega^{-1}(0)=(M, \Lambda_0)$. We will denote $\frac{\partial^n \alpha(t)}{\partial t^n}$ by $\alpha^{(n)}(t)$. Then by taking the derivative of $(\ref{o40})$ with respect to $t$, we get
\begin{align}\label{o41}
L\alpha'(t)=-[\alpha(t),\alpha'(t)]
\end{align}
By taking the derivative of $(\ref{o41})$ with respect to $t$, and setting $t=0$, we get
\begin{align*}
L\alpha''(0)=-[\alpha(0),\alpha''(0)]-[\alpha'(0), \alpha'(0)]=-[\alpha'(0),\alpha'(0)]
\end{align*}

Hence
\begin{align*}
[\alpha'(0),\alpha'(0)]=[\Lambda(0),\Lambda(0)]+2[\Lambda(0),\varphi(0)]+[\varphi(0),\varphi(0)]\in A^{0,0}(M,\wedge^2 \Theta_M)\oplus A^{0,1}(M,\wedge^2 \Theta_M)\oplus A^{0,2}(M,\Theta_M)
\end{align*}
defines $0$ class in $\mathbb{H}^2(M,\Theta_M^\bullet)$. We can check that if $(\lambda, \theta)\in A^{0,0}(M, \wedge^2 \Theta_M)\oplus A^{0,1}(M, \Theta_M)$ defines an element in $\mathbb{H}^1(M, \Theta_M^\bullet)$ so that $[\Lambda_0, \lambda]=0, \bar{\partial} \lambda+[\Lambda_0, \theta]=0$ and $\bar{\partial}\theta=0$, then $[\theta+\lambda, \theta+\lambda]$ defines an element in $\mathbb{H}^2(M, \Theta_M^\bullet)$. Then we obtain the following theorem.

\begin{theorem}\label{o51}
Suppose given a compact holomorphic Poisson manifold $(M, \Lambda_0)$ and $(\lambda, \theta)\in A^{0,0}(M, \wedge^2 \Theta_M)\oplus A^{0,1}(M, \Theta_M)$ defines an element in $\mathbb{H}^1(M, \Theta_M^\bullet)$. In order that there exist a Poisson analytic family $(\mathcal{M},\Lambda, B, \omega)$ such that $\omega^{-1}(0)=(M, \Lambda_0)$, and $\left( \frac{\partial (M_t, \Lambda_t)}{\partial t}   \right)_{t=0}=(\theta,\lambda)$, it is necessary that 
\begin{align*}
[\theta+\lambda, \theta+\lambda]=[\lambda,\lambda]+2[\lambda, \theta]+[\theta,\theta]\in A^{0,0}(M, \wedge^2 \Theta_M)\oplus A^{0,1}(M, \wedge^2 \Theta_M)\oplus A^{0,2}(M, \Theta_M)
\end{align*}
define the $0$ class in $\mathbb{H}^2(M, \Theta_M^\bullet)$.
\end{theorem}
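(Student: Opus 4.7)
The plan is to invoke the Maurer--Cartan reformulation of a Poisson analytic family provided by Remark \ref{o9} and then differentiate twice, exactly mirroring the classical Kodaira--Spencer derivation of the $[\theta,\theta]$ obstruction in complex deformations. Assume such a family exists; by Remark \ref{o9}, after restricting to a small disk $\Delta\subset B$, it is represented by convergent power series $\varphi(t)\in A^{0,1}(M,\Theta_M)$ and $\Lambda(t)\in A^{0,0}(M,\wedge^2\Theta_M)$ with $\varphi(0)=0$, $\Lambda(0)=0$, satisfying the total Maurer--Cartan equation
\begin{equation*}
L\alpha(t)+\tfrac12[\alpha(t),\alpha(t)]=0,\qquad \alpha(t):=\varphi(t)+\Lambda(t),\qquad L=\bar\partial+[\Lambda_0,-],
\end{equation*}
and such that the Poisson Kodaira--Spencer map at $0$ sends $\partial/\partial t$ to the class of $\alpha'(0)=(\varphi'(0),\Lambda'(0))=(\theta,\lambda)$.

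Next I would differentiate the Maurer--Cartan equation in $t$. The first derivative gives $L\alpha'(t)=-[\alpha(t),\alpha'(t)]$. Differentiating once more and evaluating at $t=0$, where $\alpha(0)=0$, yields
\begin{equation*}
L\alpha''(0)=-[\alpha'(0),\alpha'(0)]=-[\theta+\lambda,\theta+\lambda].
\end{equation*}
Thus $[\theta+\lambda,\theta+\lambda]$ lies in the image of $L$ on an element of the total complex $A^{0,0}(M,\wedge^2\Theta_M)\oplus A^{0,1}(M,\Theta_M)$ built from Dolbeault resolutions of $\Theta_M^\bullet$, namely $\alpha''(0)=(\Lambda''(0),\varphi''(0))$.

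To conclude, it remains to verify the statement's parenthetical remark, that $[\theta+\lambda,\theta+\lambda]$ genuinely represents a class in $\mathbb{H}^2(M,\Theta_M^\bullet)$ computed from the total Dolbeault double complex \eqref{o7}, so that exhibiting it as $L$-exact is the same as saying it is zero in $\mathbb{H}^2(M,\Theta_M^\bullet)$. I would expand the Schouten bracket of $\theta+\lambda$ by bidegree as in the statement, getting the three pieces $[\lambda,\lambda]\in A^{0,0}(M,\wedge^3\Theta_M)$, $2[\lambda,\theta]\in A^{0,1}(M,\wedge^2\Theta_M)$ and $[\theta,\theta]\in A^{0,2}(M,\Theta_M)$, and then check that the cocycle conditions $[\Lambda_0,\lambda]=0$, $\bar\partial\lambda+[\Lambda_0,\theta]=0$, $\bar\partial\theta=0$ for $(\lambda,\theta)$, combined with the graded Jacobi identity for the Schouten bracket, yield $L([\theta+\lambda,\theta+\lambda])=0$ in the total complex. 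This reduces to the identities $[\Lambda_0,[\lambda,\lambda]]=0$, $\bar\partial[\lambda,\lambda]+2[\Lambda_0,[\lambda,\theta]]=0$, $\bar\partial[\lambda,\theta]+[\Lambda_0,[\theta,\theta]]+[\theta,[\Lambda_0,\theta]]=0$ (and $\bar\partial[\theta,\theta]=0$), each of which follows directly from graded Jacobi once one differentiates the cocycle relations.

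The only real bookkeeping concern is keeping the bidegrees of $[\cdot,\cdot]$ straight in the total complex and confirming the signs in $L=\bar\partial+[\Lambda_0,-]$ align with the sign conventions used in \eqref{o7}; once that is fixed, $L\alpha''(0)=-[\theta+\lambda,\theta+\lambda]$ immediately gives that $[\theta+\lambda,\theta+\lambda]$ is the zero class in $\mathbb{H}^2(M,\Theta_M^\bullet)$, proving Theorem \ref{o51}.
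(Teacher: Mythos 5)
Your proposal is correct and follows essentially the same route as the paper: restrict to a disk, use the Maurer--Cartan representation $L\alpha(t)+\tfrac12[\alpha(t),\alpha(t)]=0$ from Remark \ref{o9}, differentiate twice and evaluate at $t=0$ to get $L\alpha''(0)=-[\alpha'(0),\alpha'(0)]=-[\theta+\lambda,\theta+\lambda]$, and separately verify via the cocycle relations and graded Jacobi that $[\theta+\lambda,\theta+\lambda]$ is an $L$-closed element of the total Dolbeault complex, hence represents the zero class in $\mathbb{H}^2(M,\Theta_M^\bullet)$. The only difference is that you spell out the bidegree-by-bidegree cocycle check that the paper merely asserts, which is a welcome addition.
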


\section{Rational ruled surfaces}\label{section3}

In this section, we study Poisson deformations of rational ruled surfaces $F_m=\mathbb{P}(\mathcal{O}_{\mathbb{P}^1_\mathbb{C}}(m)\oplus \mathcal{O}_{\mathbb{P}^1_\mathbb{C}}),m\geq 0$. It is known that since $H^2(F_m,\Theta_{F_m})=0$, $F_m$ are unobstructed in complex deformations. We determine obstructedness or unobstructedness in Poisson deformations for any holomorphic Poisson structure on $F_m$. We show that $F_0,F_1, F_2, F_3$ are unobstructed in Poisson deformations for any holomorphic Poisson structure. For $m\geq 4$, $(F_m, \Lambda_0)$ have both obstructed and unobstructed Poisson deformations depending on Poisson structure $\Lambda_0$. For unobstructed Poisson deformations, we will explicitly give examples of Poisson analytic families of deformations of $(F_m,\Lambda_0)$ such that the associated Poisson Kodaira-Spencer map is an isomorphism at the distinguished point.

\begin{remark}\label{r1}
We recall that $F_m$ is constructed in the following way: take two copies of $U_i\times \mathbb{P}_\mathbb{C}^1$ where $U_i=\mathbb{C},i=1,2$, and write the coordinates as $(z,\xi)\in U_1\times \mathbb{P}_\mathbb{C}^1$ and $(z',\xi')\in U_2\times \mathbb{P}_\mathbb{C}^1$, where $z,z'$ are the coordinates of $\mathbb{C}$, respectively and $\xi,\xi'$ are the inhomogenous coordinates of $\mathbb{P}_\mathbb{C}^1$, respectively. Then $F_m$ is constructed by patching $U_i\times \mathbb{P}_\mathbb{C}^1, i=1,2$ by the relation $z'=\frac{1}{z}$ and $\xi'=z^m \xi$.
\end{remark}

\subsection{Cohomology groups $H^i(F_m, \wedge^j \Theta_{F_m})$}\

We explicitly describe cohomology groups $H^i(F_m, \wedge^j \Theta_{F_m}), i=0,1,2,j=1,2$. In the sequel we keep the notations in Remark \ref{r1}.

\subsubsection{Descriptions of $H^0(F_m, \Theta_{F_m})$}\ 

$H^0(F_m, \Theta_{F_m})$ are described on $U_1\times \mathbb{P}_\mathbb{C}^1$ (see \cite{Kod05} p.73-75) in the following way.
\begin{align}\label{r12}
g(z)\frac{\partial}{\partial z}+(a(z)+b(z)\xi+c(z)\xi^2)\frac{\partial}{\partial \xi}
\end{align}
\begin{enumerate}
\item In the case of $m=0$, we have $g(z)=g_0+g_1z+g_2z^2$, $a(z)=a$, $b(z)=d$, and $c(z)=c_0$ so that $H^0(F_0, \Theta_{F_0})\cong \mathbb{C}^6$.
\item In the case of $m\geq 1$, we have $g(z)=g_0+g_1z + g_2 z^2$, $a(z)=0$, $b(z)=-mg_2z+d$, and $c(z)=c_0+c_1z+\cdots +c_m z^m$ so that $H^0(F_m, \Theta_{F_m})\cong \mathbb{C}^{m+5}$.
\end{enumerate}

\subsubsection{Descriptions of $H^1(F_m, \Theta_{F_m})$}\

Let $\mathcal{U}=\{U_1\times \mathbb{P}_\mathbb{C}^1, U_2\times \mathbb{P}_\mathbb{C}^1\}$ be the open covering of $F_m$ as in Remark \ref{r1}. We can compute $H^i(F_m, \Theta_{F_m})$ by \v{C}ech resolution of $\Theta_{F_m}$ by using the open covering $\mathcal{U}$ of $F_m$. Then we have (see \cite{Kod05} p.312)
\begin{align*}
\dim_\mathbb{C} H^1(F_m, \Theta_{F_m})=
\begin{cases}
0,  & m=0,1\\
m-1, & m\geq 2.
\end{cases}
\end{align*}
and for $m\geq 2$, the $1$-cocycles
\begin{align}\label{r10}
\frac{1}{z^k}\frac{\partial}{\partial \xi}=z'^k\frac{\partial}{\partial \xi} \in C^1(\mathcal{U},\Theta_{F_m}),\,\,\,\,\,k=1,...,m-1
\end{align}
forms a  basis of $H^1(F_m,\wedge^2 \Theta_{F_m})$.

\subsubsection{Holomorphic Poisson structures $H^0(F_m, \wedge^2 \Theta_{F_m})$ on ${F}_m, m\geq 0$}\

We describe holomorphic Poisson structures on rational ruled surfaces $F_m$ explicitly. We note that $\frac{\partial}{\partial z'}=-z^2\frac{\partial}{\partial z}+mz\xi\frac{\partial}{\partial \xi}$ and $\frac{\partial}{\partial \xi'}=z^{-m}\frac{\partial}{\partial \xi}$ so that $\frac{\partial}{\partial z'}\wedge \frac{\partial}{\partial \xi'}=-z^{-m+2}\frac{\partial}{\partial z}\wedge \frac{\partial}{\partial \xi}$. We also note that  a holomorphic bivector field on $U_1\times \mathbb{P}_\mathbb{C}^1$ is of the form 
\begin{align}\label{r2}
(d(z)+e(z)\xi+f(z)\xi^2)\frac{\partial}{\partial z}\wedge \frac{\partial}{\partial \xi}, 
\end{align}
and a holomorphic bivector field on $U_2\times \mathbb{P}_\mathbb{C}^1$ is of the form 
\begin{align}\label{r3}
(p(z')+q(z')\xi'+r(z')\xi'^2)\frac{\partial}{\partial z'}\wedge \frac{\partial}{\partial \xi'}, 
\end{align}
where $d(z),e(z),f(z)$ are entire functions of $z$ and  $p(z'),q(z'),r(z')$ are entire functions of $z'$. For a holomorphic bivector field on $F_m$ which has the form $(\ref{r2})$ on $U_1\times \mathbb{P}_\mathbb{C}^1$ and, the form $(\ref{r3})$ on $U_2\times \mathbb{P}_\mathbb{C}^1$, we must have $d(z)+e(z)\xi+f(z)\xi^2=-(p(\frac{1}{z})+q(\frac{1}{z})z^m \xi+r(\frac{1}{z})z^{2m}\xi^2)z^{-m+2}=-p(\frac{1}{z})z^{-m+2}-q(\frac{1}{z})z^{2}\xi-r(\frac{1}{z})z^{m+2}\xi^2$
so that
\begin{align*}
d(z)=-p\left(\frac{1}{z}\right)z^{-m+2},\,\,\,e(z)=-q\left(\frac{1}{z}\right)z^{2},\,\,\, f(z)=-r\left(\frac{1}{z}\right)z^{m+2}
\end{align*}
\begin{enumerate}
\item In the case of $m=0$, we have $d(z)=d_0+d_1z+d_2z^2 ,e(z)=e_0+e_1z+e_2z^2 ,f(z)=f_0+f_1z+f_2z^2$ so that $H^0(F_0,\wedge^2 \Theta_{F_0})\cong \mathbb{C}^9$. \label{nn1}
\item In the case of $m=1$, we have $d(z)=d_0+d_1z, e(z)=e_0+e_1z+e_2z^2, f(z)=f_0+f_1z+f_2z^2+f_3z^3$ so that $H^0(F_1,\wedge^2 \Theta_{F_1})\cong \mathbb{C}^9$.\label{nn2}
\item In the case of $m=2$, we have $d(z)=d_0,e(z)=e_0+e_1z+e_2z^2,f(z)=f_0+f_1z+f_2z^2+f_3z^3+f_4z^4$ so that $H^0(F_2,\wedge^2 \Theta_{F_2})\cong \mathbb{C}^9$.\label{nn3}
\item In the case of $m\geq 3$, we have $d(z)=0,e(z)=e_0+e_1z+e_2z^2,f(z)=f_0+f_1z+\cdots f_{m+2}z^{m+2}$ so that $H^0(F_m,\wedge^2 \Theta_{F_m})\cong \mathbb{C}^{m+6}$. \label{nn4}
\end{enumerate}

\subsubsection{Descriptions of $H^1(F_m ,\wedge^2 \Theta_{F_m})$}\

Let $\mathcal{U}=\{U_1\times \mathbb{P}_\mathbb{C}^1, U_2\times \mathbb{P}_\mathbb{C}^1\}$ be the open covering of $F_m$ as in Remark \ref{r1}. We can compute $H^i(F_m, \wedge^2 \Theta_{F_m})$ by \v{C}ech resolution of $\Theta_{F_m}$ by using the open covering $\mathcal{U}$ of $F_m$. We represent a $1$-cocycle $\{\Lambda_{12},\Lambda_{21}\}\in C^1(\mathcal{U},\wedge^2\Theta_{F_m})$ by the holomorphic bivector fields with $\Lambda_{12}=-\Lambda_{21}$ on $(U_1\times \mathbb{P}_\mathbb{C}^1)\cap (U_2\times \mathbb{P}_\mathbb{C}^1)$. Then this $1$-cocycle belongs to $\delta C^0(\mathcal{U}, \wedge^2 \Theta_{F_m})$ if and only if  there exist  holomorphic bivector field $\Lambda_1$ and $\Lambda_2$ respectively on $U_1\times \mathbb{P}_\mathbb{C}^1$ and $U_2\times \mathbb{P}_\mathbb{C}^1$ such that
\begin{align*}
\Lambda_2-\Lambda_1=\Lambda_{12}
\end{align*}
We write $\Lambda_1$ in the form of $(\ref{r2})$ and $\Lambda_2$ in the form of $(\ref{r3})$. We write $\Lambda_{12}$ in terms of the coordinates $(z,\xi):$
\begin{align*}
\Lambda_{12}=(\alpha(z)+\beta(z)\xi+\gamma(z)\xi^2)\frac{\partial}{\partial z}\wedge \frac{\partial}{\partial \xi}
\end{align*}
where $\alpha(z),\beta(z), \gamma(z)$ are holomorphic functions of $z$ on $U_1\cap U_2=\mathbb{C}^*$ so that they are expanded into Laurent series in $z$. In terms of the coordinates $(z,\xi)$ with $z=\frac{1}{z'},\xi=z'^m \xi'$ in place of $(z',\xi')$, $\Lambda_2$ is written in the form
\begin{align*}
-\left(p\left(\frac{1}{z}\right) z^{-m+2}+q\left(\frac{1}{z}\right)z^2\xi  +r\left(\frac{1}{z}\right)z^{m+2}\xi^2 \right)\frac{\partial}{\partial z}\wedge \frac{\partial}{\partial \xi}
\end{align*}
Hence the equation is reduced to the following system of equations:
\begin{align*}
\begin{cases}
-p\left(\frac{1}{z}\right)z^{-m+2}-d(z)&=\alpha(z)\\
-q\left(\frac{1}{z}\right)z^2-e(z)&=\beta(z)\\
-r\left(\frac{1}{z}\right)z^{m+2}-f(z)&=\gamma(z)
\end{cases}
\end{align*}
In case of $m=0,1,2,3$, these equations always have a solution. For $m\geq 4$, let $\alpha(z)=\sum_{n=-\infty}^{\infty} c_nz^n$ be the Laurent expansion of $\alpha(z)$. Then the above equations have a solution if and only if $c_{-1}=c_{-2}=\cdots =c_{-(m-3)}=0$. Hence we obtain
\begin{align*}
\dim_\mathbb{C} H^1(F_m, \wedge^2 \Theta_{F_m})=
\begin{cases}
0,  & m=0,1,2,3\\
m-3, & m\geq 4.
\end{cases}
\end{align*}
We note that for $m\geq 4$, the $1$-cocycles
\begin{align}\label{r11}
\frac{1}{z^k}\frac{\partial}{\partial z}\wedge \frac{\partial}{\partial \xi}=z'^k\frac{\partial}{\partial z}\wedge \frac{\partial}{\partial \xi}\in C^1(\mathcal{U}, \wedge^2 \Theta_{F_m}),\,\,\,\,\,k=1,...,m-3
\end{align}
forms a  basis of $H^1(F_m,\wedge^2 \Theta_{F_m})$.

\subsubsection{Descriptions of $H^2(F_m, \Theta_{F_m})$ and $H^2(F_m, \wedge^2 \Theta_{F_m})$}\

We have $H^2(F_m, \Theta_{F_m})=0$ for any $m\geq 0$ (see \cite{Kod05} p.312), and similarly we can show that $H^2(F_m, \wedge^2 \Theta_{F_m})=0$ for any $m \geq 0$.

\begin{lemma}\label{r4}
Let $(F_m,\Lambda_0), m \geq 0$ be a Poisson rational ruled surface. Then 
\begin{align*}
\mathbb{H}^0(F_m, \Theta_{F_m}^\bullet)&\cong ker(H^0(F_m, \Theta_{F_m})\xrightarrow{[\Lambda_0,-]} H^0(F_m, \wedge^2 \Theta_{F_m}))\\
\mathbb{H}^1(F_m, \Theta_{F_m}^\bullet)&\cong coker(H^0(F_m, \Theta_{F_m})\xrightarrow{[\Lambda_0,-]} H^0(F_m, \wedge^2 \Theta_{F_m}))\oplus ker(H^1(F_m, \Theta_{F_m})\xrightarrow{[\Lambda_0,-]} H^1(F_m,\wedge^2 \Theta_{F_m}))\\
\mathbb{H}^2(F_m,\Theta_{F_m}^\bullet)&\cong coker(H^1(F_m,\Theta_{F_m})\xrightarrow{[\Lambda_0,-]}H^1(F_m,\wedge^2 \Theta_{ F_m}))
\end{align*}
Let $\mathcal{U}=(U_1 \times \mathbb{P}_\mathbb{C}^1,U_2 \times \mathbb{P}_\mathbb{C}^1)$ be the open covering of $F_m$ as in Remark $\ref{r1}$. Then $(F_m,\Lambda_0)$ is obstructed in Poisson deformations if for some $a , b$ where $a\in H^0(F_m,\wedge^2 \Theta_{F_m})$, and $b\in C^1(\mathcal{U},\Theta_{F_m})$ which defines an element in $ker(H^1(F_m, \Theta_{F_m})\xrightarrow{[\Lambda_0,-]} H^1(F_m,\wedge^2 \Theta_{F_m}))$, under the following map
\begin{align*}
[-,-]:H^0(F_m, \wedge^2 \Theta_{F_m})\times H^1(F_m, \Theta_{F_m})\to H^1(F_m, \wedge^2 \Theta_{F_m})
\end{align*}
$[a,b]\in H^1(F_m,\wedge^2 \Theta_{F_m})$ is not in the image of $ H^1(F_m, \Theta_{F_m})\xrightarrow{[\Lambda_0,-]} H^1(F_m,\wedge^2 \Theta_{F_m}) $.
\end{lemma}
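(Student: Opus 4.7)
The plan is to first compute the hypercohomology groups $\mathbb{H}^i(F_m,\Theta_{F_m}^\bullet)$ via the spectral sequence of the \v{C}ech double complex $(\ref{o6})$, and then to apply Theorem $\ref{o50}$, exploiting that on a surface with a two-open cover most components of the resulting obstruction $2$-cocycle vanish automatically. Because $F_m$ is a surface, $\wedge^3\Theta_{F_m}=0$, so the complex $\Theta_{F_m}^\bullet$ has only two nonzero terms; and since $\mathcal{U}$ has only two open sets, $C^p(\mathcal{U},-)=0$ for $p\ge 2$. The spectral sequence with $E_1^{p,q}=H^q(F_m,\Theta^p)$ (where $\Theta^0=\Theta_{F_m}$ and $\Theta^1=\wedge^2\Theta_{F_m}$) and $d_1=[\Lambda_0,-]$ therefore has all higher differentials vanishing for degree reasons, so $E_2=E_\infty$. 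Invoking the already established vanishings $H^2(F_m,\Theta_{F_m})=H^2(F_m,\wedge^2\Theta_{F_m})=0$, one reads off $\mathbb{H}^0=\ker([\Lambda_0,-]|_{H^0})$, an extension $0\to\mathrm{coker}([\Lambda_0,-]|_{H^0})\to\mathbb{H}^1\to\ker([\Lambda_0,-]|_{H^1})\to 0$ that splits over $\mathbb{C}$, and $\mathbb{H}^2=\mathrm{coker}([\Lambda_0,-]|_{H^1})$ since $E_\infty^{0,2}=\ker([\Lambda_0,-]|_{H^2})=0$.

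For the obstruction criterion, start with $a\in H^0(\wedge^2\Theta_{F_m})$ and a \v{C}ech $1$-cocycle $b=\{b_{jk}\}\in C^1(\mathcal{U},\Theta_{F_m})$ whose class lies in $\ker([\Lambda_0,-]\colon H^1(\Theta_{F_m})\to H^1(\wedge^2\Theta_{F_m}))$. Choose $c=\{c_j\}\in C^0(\mathcal{U},\wedge^2\Theta_{F_m})$ with $\delta c=[\Lambda_0,b]$; then $(\lambda_j,\theta_{jk}):=(a|_{U_j}+c_j,b_{jk})$ is a total $1$-cocycle representing the class $(a,[b])\in\mathbb{H}^1$ under the splitting from the first paragraph. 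By Theorem $\ref{o50}$, unobstructedness requires the $2$-cocycle $\bigl(\{-[\lambda_j,\lambda_j]\},\{-[\lambda_j+\lambda_k,\theta_{jk}]\},\{[\theta_{ij},\theta_{jk}]\}\bigr)$ to represent the zero class in $\mathbb{H}^2$. The first component vanishes because $\wedge^3\Theta_{F_m}=0$, and the third because $C^2(\mathcal{U},\Theta_{F_m})=0$, so only $\eta_{jk}:=-[\lambda_j+\lambda_k,\theta_{jk}]=-2[a,b_{jk}]-[c_j+c_k,b_{jk}]$ in $C^1(\wedge^2\Theta_{F_m})$ survives.

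The crux is to show that, modulo $\delta C^0(\wedge^2\Theta_{F_m})+[\Lambda_0,-]C^1(\Theta_{F_m})$, the correction term $\{[c_j+c_k,b_{jk}]\}$ is trivial; then the class of $\eta$ in $\mathbb{H}^2\cong\mathrm{coker}([\Lambda_0,-]|_{H^1})$ equals $-2[a,b]$, and nonvanishing of $[a,b]$ in this cokernel yields the obstruction. This follows most cleanly from the standard fact that the Schouten bracket descends to a well-defined graded Lie bracket on $\mathbb{H}^\bullet$, so the obstruction is $\tfrac12[\alpha,\alpha]$ for $\alpha=(a,[b])$, whose only surviving contribution is the cross term $[a,b]$ (the self-brackets live in $H^0(\wedge^3\Theta_{F_m})=0$ and $H^2(\Theta_{F_m})=0$). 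Alternatively, the correction term can be shown to be trivial by a direct \v{C}ech calculation using $c_k-c_j=[\Lambda_0,b_{jk}]$ and the graded Jacobi identity for the Schouten bracket; the bookkeeping there is the main technical obstacle, which the differential graded Lie algebra viewpoint sidesteps.
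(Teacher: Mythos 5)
Your first paragraph (the spectral-sequence computation of $\mathbb{H}^i$, degeneration at $E_2$ for a two-term complex, and the use of $H^2(F_m,\Theta_{F_m})=H^2(F_m,\wedge^2\Theta_{F_m})=0$) matches the paper's argument and is fine, as are the simplifications $\wedge^3\Theta_{F_m}=0$ and $C^2(\mathcal{U},\Theta_{F_m})=0$ that reduce the obstruction $2$-cocycle to its $C^1(\mathcal{U},\wedge^2\Theta_{F_m})$-component.

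The gap is in your third paragraph. You need the class of $\{[c_j+c_k,b_{jk}]\}$ to vanish in $\mathbb{H}^2\cong \mathrm{coker}(H^1(\Theta_{F_m})\xrightarrow{[\Lambda_0,-]}H^1(\wedge^2\Theta_{F_m}))$, and the differential-graded justification you give does not deliver this. Writing $\alpha=\alpha_1+\alpha_2$ with $\alpha_1$ the class of $(\{a\},0)$ and $\alpha_2$ the class of $(\{c_j\},\{b_{jk}\})$, the term you must kill is exactly $[\alpha_2,\alpha_2]$. The vanishing of $H^0(\wedge^3\Theta_{F_m})$ and $H^2(\Theta_{F_m})$ only kills the \emph{leading term} of $[\alpha_2,\alpha_2]$ in the spectral-sequence filtration (the image of $[b]\cup[b]$ in $E^{0,2}_\infty$); since $\mathbb{H}^2\cong E^{1,1}_\infty$ here, the entire class sits in the part of the filtration that this argument says nothing about. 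Worse, $[\alpha_2,\alpha_2]$ genuinely depends on the choice of lift $c$ of $[b]$: replacing $\{c_j\}$ by $\{c_j+a'\}$ for a global bivector $a'$ shifts it by $-2\overline{[a',b]}$, which is nonzero precisely in the situation the lemma is designed to detect. So the correction term cannot be universally trivial, and no bookkeeping with the Jacobi identity will make it so for an arbitrary lift.

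The fix is the paper's device, which sidesteps the question of what $[\alpha_2,\alpha_2]$ actually is: argue by contraposition and compare the \emph{two} lifts $(\{c_j\},\{b_{jk}\})$ and $(\{c_j+a\},\{b_{jk}\})$. If $(F_m,\Lambda_0)$ is unobstructed, Theorem \ref{o50} makes both obstruction cocycles $\{-[c_j+c_k,b_{jk}]\}$ and $\{-[c_j+c_k+2a,b_{jk}]\}$ total coboundaries $\delta(\{d_i\})+[\Lambda_0,\{e_{jk}\}]$ and $\delta(\{d_i'\})+[\Lambda_0,\{e_{jk}'\}]$ with $\delta(\{e_{jk}\})=\delta(\{e_{jk}'\})=\{[b_{ij},b_{jk}]\}$; subtracting cancels the uncontrolled term $\{[c_j+c_k,b_{jk}]\}$ and exhibits $\{[a,b_{jk}]\}$ as $\delta(\{\tfrac12(d_i-d_i')\})+[\Lambda_0,\{\tfrac12(e_{jk}-e_{jk}')\}]$ with $\{e_{jk}-e_{jk}'\}$ a $\delta$-cocycle, i.e. $[a,b]\in\mathrm{im}(H^1(\Theta_{F_m})\xrightarrow{[\Lambda_0,-]}H^1(\wedge^2\Theta_{F_m}))$. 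Equivalently, in your language: the obstructions of the two lifts differ by $-2\overline{[a,b]}$, so if $\overline{[a,b]}\neq0$ at least one lift is an obstructed element, which is all the lemma needs.
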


\begin{proof}
We can compute $\mathbb{H}^i(F_m,\Theta_{F_m}^\bullet)$ by using the following \v{C}ech resolution of $\Theta_{F_m}^\bullet$. By considering the spectral sequence associated with the double complex $(\ref{o6})$ and $H^2(F_m, \Theta_{F_m})=0$, we get the first claim.

Let us prove the second claim. Assume that $(F_m,\Lambda_0)$ is unobstructed in Poisson deformations. Choose any element $a,$ and $b=\{b_{jk}\}$ where $a\in H^0(F_m,\wedge^2 \Theta_{F_m})$, and $b=\{b_{jk}\}\in C^1(\mathcal{U},\Theta_{F_m})$ which is in $ker(H^1(F_m, \Theta_{F_m})\xrightarrow{[\Lambda_0,-]} H^1(F_m,\wedge^2 \Theta_{F_m}))$. Then there exists $\{c_i\}\in C^0(\mathcal{U}, \wedge^2 \Theta_{F_m})$ such that $c_k-c_j+[\Lambda_0, b_{jk}]=0$. We note that $(\{c_i\},\{b_{jk}\})$ and $(\{c_i+a\},\{b_{jk}\})$ define elements in $\mathbb{H}^1(F_m,\Theta_{F_m}^\bullet)$. Since $(F_m,\Lambda_0)$ is unobstructed,
\begin{align*}
(0,\{-[c_j+c_k,b_{jk}]\},\{[b_{ij},b_{jk}]\}),\,\,\,\,\, (0,\{-[c_j+c_k+2a,b_{jk}]\},\{[b_{ij},b_{jk}]\})
\end{align*}
define the $0$ class in $\mathbb{H}^2(F_m,\wedge^2 \Theta_{F_m})$. Hence there exist $(\{d_i\},\{e_{jk}\})$ and $(\{d_i'\},\{e_{jk}'\})\in C^0(\mathcal{U},\wedge^2 \Theta_{F_m})\oplus C^1(\mathcal{U},\Theta_{F_m})$ such that $\delta(\{d_i\})+[\Lambda_0,\{e_{jk}\}]=\{-[c_j+c_k,b_{jk}]\}$ and $\delta(\{d_i'\})+[\Lambda_0,\{e_{jk}'\}]=\{-[c_j+c_k+2a,b_{jk}]\}$, and we have $\delta(\{e_{jk}\})=\{[b_{ij},b_{jk}]\}$ and $\delta(\{e_{jk}'\})=\{[b_{ij},b_{jk}]\}$ so that $\{e_{jk}-e_{jk}'\} \in C^1(\mathcal{U},\Theta_{F_m})$ defines an element in $H^1(F_m,\Theta_{F_m})$. Then we have
\begin{align*}
\delta(\{\frac{1}{2}(d_i-d_i')\})+[\Lambda_0,\{\frac{1}{2}(e_{jk}-e_{jk}')\}]=\{[a, b_{jk}]\}
\end{align*}
Hence $\{[a,b_{jk}]\}$ and $[\Lambda_0,\{\frac{1}{2}(e_{jk}-e_{jk}')\}]$ define the same class in $H^1(F_m,\wedge^2 \Theta_{F_m})$ so that $\{[a,b_{jk}]\} \in H^1(F_m,\wedge^2 \Theta_{F_m})$ is  in the image of $ H^1(F_m, \Theta_{F_m})\xrightarrow{[\Lambda_0,-]} H^1(F_m,\wedge^2 \Theta_{F_m}) $.

\end{proof}

\subsection{Obstructedness or unobstructedness of $F_m$ in Poisson deformations}\

In this subsection, we determine the obstructedness or unobstructedness of Poisson rational ruled surfaces $(F_m,\Lambda_0)$. In the case of unobstructed $(F_m,\Lambda_0)$ in Poisson deformations, we prove the unobstructednss by showing that $\mathbb{H}^2(F_m, \Theta_{F_m}^\bullet)=0$, and we show that if $\mathbb{H}^2(F_m, \Theta_{F_m}^\bullet)\ne 0$, $(F_m, \Lambda_0)$ is obstructed in Poisson deformations.

\subsubsection{Computation of $\mathbb{H}^2(F_m, \Theta_{F_m}^\bullet)$}\

We note that for a Poisson rational ruled surface $(F_m, \Lambda_0)$, we have $\mathbb{H}^2(F_m, \Theta_{F_m}^\bullet)\cong coker(H^1(F_m,\Theta_{F_m})\xrightarrow{[\Lambda_0,-]} H^1(F_m, \wedge^2 \Theta_{F_m}))$ by Lemma \ref{r4}. Hence if $H^1(F_m, \Theta_{F_m})\xrightarrow{[\Lambda_0,-]} H^1(F_m, \wedge^2 \Theta_{F_m})$ is surjective, we have $\mathbb{H}^2(F_m, \Theta_{F_m}^\bullet)=0$ so that $(F_m, \Lambda_0)$ is unobstructed in Poisson deformations. Now we identify the condition when $\mathbb{H}^2(F_m, \Theta_{F_m}^\bullet)=0$. 

First we note that $H^1(F_m, \wedge^2 \Theta_{F_m})=0$ for $m=0,1,2,3$ so that $\mathbb{H}^2(F_m, \Theta_{F_m}^\bullet)=0$ and so $(F_m, \Lambda_0)$ are unobstructed in Poisson deformations for any Poisson structure $\Lambda_0$ on $F_m$, where $m=0,1,2,3$.

Now assume that $m\geq 4$. Let us describe $H^1(F_m,\Theta_{F_m})\xrightarrow{[\Lambda_0,-]} H^1(F_m, \wedge^2 \Theta_{F_m})$ by using $(\ref{r2})$, $(\ref{r10})$ and $(\ref{r11})$  in general. Let $(F_m, \Lambda_0=(e(z)\xi+f(z)\xi^2)\frac{\partial}{\partial z}\wedge \frac{\partial}{\partial \xi})$ be a Poisson rational ruled surface. Then we have
\begin{align*}
&[\Lambda_0, \left(\frac{\alpha_1}{z}+\frac{\alpha_2}{z}+\cdots+\frac{\alpha_{m-1}}{z^{m-1}}\right)\frac{\partial}{\partial \xi}]=[(e(z)\xi+f(z)\xi^2)\frac{\partial}{\partial z}\wedge \frac{\partial}{\partial \xi},\left(\frac{\alpha_1}{z}+\frac{\alpha_2}{z}+\cdots+\frac{\alpha_{m-1}}{z^{m-1}}\right)\frac{\partial}{\partial \xi}]\\
&=[(e(z)\xi+f(z)\xi^2)\frac{\partial}{\partial z},\left(\frac{\alpha_1}{z}+\frac{\alpha_2}{z^2}+\cdots+\frac{\alpha_{m-1}}{z^{m-1}}\right)\frac{\partial}{\partial \xi}]\wedge \frac{\partial}{\partial \xi}\\
&=-\left(\frac{\alpha_1}{z}+\frac{\alpha_2}{z^2}+\cdots+\frac{\alpha_{m-1}}{z^{m-1}}\right)(e(z)+2f(z)\xi)\frac{\partial}{\partial z}\wedge \frac{\partial}{\partial \xi}\equiv -\left(\frac{\alpha_1}{z}+\frac{\alpha_2}{z^2}+\cdots+\frac{\alpha_{m-1}}{z^{m-1}}\right)e(z)\frac{\partial}{\partial z}\wedge \frac{\partial}{\partial \xi}\\
&=-\left(\frac{\alpha_1}{z}+\frac{\alpha_2}{z^2}+\cdots+\frac{\alpha_{m-1}}{z^{m-1}}\right)(e_0+e_1z+e_2z^2)\frac{\partial }{\partial z}\wedge \frac{\partial}{\partial \xi}\\
&\equiv -\left(\frac{\alpha_1e_0+\alpha_2e_1+\alpha_3e_2}{z}+\frac{\alpha_2e_0+\alpha_3e_1+\alpha_4e_2}{z^2}+\frac{\alpha_3e_0+\alpha_4e_1+\alpha_5e_2}{z^3}+\cdots+\frac{\alpha_{m-3}e_0+\alpha_{m-2}e_1+\alpha_{m-1}e_2}{z^{m-3} }      \right)\frac{\partial}{\partial z}\wedge \frac{\partial}{\partial \xi}
\end{align*}
Here $a \equiv b$ means that $a $ and $b$ represent the same cohomology class in $H^1(F_m, \wedge^2 \Theta_{F_m})$. 

We represent our computation in the following matrix form with respect to bases $(\ref{r10})$ and $(\ref{r11})$.
\begin{equation*}
\left(\begin{matrix}
e_0 &e_1 & e_2 \\
 & e_0 & e_1 & e_2\\
& & e_0 &e_1 & e_2 &\\
& & & &\cdots & \cdots &\cdots \\
&&&&&&& e_0 & e_1 &e_2\\
\end{matrix}\right)
\left(
\begin{matrix}
\alpha_1\\
\alpha_2\\
\alpha_3\\
\cdots\\
\alpha_{m-1}
\end{matrix}
\right)
=
-\left(
\begin{matrix}
\alpha_1e_0+\alpha_2e_1+\alpha_3e_2\\
\alpha_2e_0+\alpha_3 e_1+\alpha_4 e_2\\
\alpha_3e_0+\alpha_4 e_1+\alpha_5 e_2\\
\cdots\\
\alpha_{m-3} e_0+\alpha_{m-2}e_1+\alpha_{m-1}e_2
\end{matrix}
\right)
\end{equation*}
We note that if $(e_0,e_1,e_2)\ne 0$, $H^1(F_m, \Theta_{F_m})\xrightarrow{[\Lambda_0,-]}H^1(F_m,\wedge^2 \Theta_{F_m})$ is surjective so that $\mathbb{H}^2(F_m,  \Theta_{F_m}^\bullet)=0$. Hence $(F_m, \Lambda_0=(e(z)\xi+f(z)\xi^2)\frac{\partial}{\partial z}\wedge \frac{\partial}{\partial \xi})$ are unobstructed in Poisson deformations when $e(z)\ne 0$ for $m\geq 4$.

On the other hand, assume that $(e_0,e_1,e_2)=0$. Then $H^1(F_m, \Theta_{F_m})\xrightarrow{[\Lambda_0,-]} H^1(F_m, \wedge^2 \Theta_{F_m})$ is zero map so that $\mathbb{H}^2(F_m, \Theta_{F_m}^\bullet)\cong H^1 (F_m, \wedge^2 \Theta_{F_m})$. Hence $\dim_\mathbb{C} \mathbb{H}^2(F_m,\Theta_{F_m}^\bullet)=m-3$ for $m\geq 4$. In this case, we show that $(F_m, \Lambda_0=f(z)\xi^2\frac{\partial}{\partial z}\wedge \frac{\partial}{\partial \xi})$ in $(\ref{r2})$ are obstructed in Poisson deformations for $m\geq 4$. Indeed, choose $\xi \frac{\partial}{\partial z}\wedge \frac{\partial}{\partial \xi}\in H^0(F_m, \wedge^2 \Theta_{F_m})$, and choose $\frac{1}{z}\frac{\partial}{\partial \xi }\in H^1(F_m, \Theta_{F_m})=ker(H^1(F_m, \Theta_{F_m}) \xrightarrow{[\Lambda_0,-]} H^1(F_m, \wedge^2 \Theta_{F_m}))$. Then
\begin{align*}
[\xi\frac{\partial}{\partial z}\wedge \frac{\partial}{\partial \xi}, \frac{1}{z}\frac{\partial}{\partial \xi}]= -\frac{1}{z}\frac{\partial}{\partial z}\wedge \frac{\partial}{\partial \xi}\ne 0 \in H^1(F_m, \wedge^2 \Theta_{F_m}).
\end{align*}
which is not in the image of $H^1(F_m,\Theta_{F_m})\xrightarrow{[\Lambda_0,-]} H^1( F_m, \wedge^2 \Theta_{F_m})$ since $[\Lambda_0=f(z)\xi^2\frac{\partial}{\partial z}\wedge \frac{\partial}{\partial \xi},-]$ is the zero map. Hence by Lemma \ref{r4}, $(F_m, \Lambda_0=f(z)\xi^2\frac{\partial}{\partial z}\wedge \frac{\partial}{\partial \xi})$ are obstructed in Poisson deformations for $m\geq 4$.

We summarize our discussion in Table \ref{ruled}.

\begin{table}
\begin{center}
\begin{tabular}{| c | c | c | c | } \hline
rational ruled surface $F_m$ &  Poisson structure & $\dim_\mathbb{C} \mathbb{H}^2(F_m,\Theta_{F_m}^\bullet)$ & obstructedness in Poisson deformations  \\ \hline
$F_0$ & any Poisson structure& $0$ & unobstructed    \\ \hline
$F_1$ & any Poisson structure& $0$ & unobstructed    \\ \hline
$F_2$ &  any Poisson structure & $0$ & unobstructed       \\ \hline
$F_3$ & any Poisson structure & $0$ &  unobstructed     \\ \hline
$F_m, m\geq 4$ & $e(z)\ne 0$ in $(\ref{r2})$ & $0$ & unobstructed   \\ \hline
$F_m,m\geq 4$ & $e(z)=0$ in $(\ref{r2})$ & $m-3$ & obstructed  \\ \hline
\end{tabular}
\end{center}
\caption{obstructed and unobstructedness of Poisson rational ruled surfaces} \label{ruled}
\end{table}

\subsection{Descriptions of $\mathbb{H}^1(F_m, \Theta_{F_m}^\bullet)$ for unobstructed $(F_m, \Lambda_0)$ in Poisson deformations}\

In this subsection, we explicitly give examples of Poisson analytic families of deformations of $(F_m, \Lambda_0)$ such that the Poisson Kodaira-Spencer map is an isomorphism at the distinguished point. First we need to compute $\mathbb{H}^1(F_m, \Theta_{F_m}^\bullet)$. We note that by Lemma \ref{r4},
\begin{align*}
\mathbb{H}^1(F_m, \Theta_{F_m}^\bullet)&\cong coker(H^0(F_m, \Theta_{F_m})\xrightarrow{[\Lambda_0,-]} H^0(F_m, \wedge^2 \Theta_{F_m}))\oplus ker(H^1(F_m, \Theta_{F_m})\xrightarrow{[\Lambda_0,-]} H^1(F_m,\wedge^2 \Theta_{F_m}))
\end{align*}

We describe $H^0(F_m, \Theta_{F_m}  )  \xrightarrow{[\Lambda_0,-]} H^0(F_m, \wedge^2 \Theta_{F_m}), m\geq 1$ by using $(\ref{r12})$ and $(\ref{r2})$.

{\small{\begin{align*}
&[(d(z)+e(z)\xi+f(z)\xi^2)\frac{\partial}{\partial z}\wedge \frac{\partial}{\partial \xi},g(z)\frac{\partial}{\partial z}+(b(z)\xi+c(z)\xi^2)\frac{\partial}{\partial \xi}]
= (A(z)+B(z)\xi+C(z)\xi^2)\frac{\partial}{\partial z}\wedge \frac{\partial }{\partial \xi}\\
\end{align*}}}
where
\begin{align}\label{nn30}
A(z):&=d(z)g'(z)-g(z)d'(z)+d(z)b(z) \notag \\
B(z):&=e(z)g'(z)-g(z)e'(z)+2d(z)c(z)\\
C(z):&=f(z)g'(z)-g(z)f'(z)+c(z)e(z)-b(z)f(z) \notag
\end{align}

\begin{example}
Let us consider $(F_2,\Lambda_0=0)$. Since $\dim_\mathbb{C} \mathbb{H}^2(F_2, \Theta_{F_2}^\bullet)=0$, it is unobstructed in Poisson deformations. We will describe a Poisson analytic family of deformations of $(F_2,\Lambda_0=0)$ such that the Poisson Kodaira-Spencer map is an isomorphism at the distinguished point. We note that $\dim_\mathbb{C} \mathbb{H}^1(F_2, \Theta_{F_2}^\bullet)=\dim_\mathbb{C}H^0(F_2, \wedge^2 \Theta_{F_2})\oplus H^1(F_2, \Theta_{F_2})=9+1=10$, and from $(\ref{r10})$, $\frac{1}{z}\frac{\partial}{\partial \xi}=z\frac{\partial}{\partial \xi'}$ is a basis of $H^1(F_2, \Theta_{F_2})$. Let us consider a complex analytic family $(\mathcal{F},\mathbb{C}^{10}, \omega)$ of deformations of $F_2=\omega^{-1}(0)$ defined by patching $U_i\times \mathbb{P}_\mathbb{C}^1\times \mathbb{C}^{10}, i=1,2, ( t_1,...,t_{10})\in \mathbb{C}^{10}$
\begin{align*}
(z',\xi')=\left(\frac{1}{z},z^2\xi+t_1z \right)
\end{align*}
where $\omega:\mathcal{F}\to \mathbb{C}^{10}$ is the natural projection. Then the Kodaira-Spencer map is described by $\frac{\partial}{\partial t_1}\mapsto z\frac{\partial}{\partial \xi'}=\frac{1}{z}\frac{\partial}{\partial \xi}$, and $\frac{\partial}{\partial t_i}\mapsto 0$ otherwise. We will put a Poisson structure $\Lambda$ on $\mathcal{F}$ such that $(\mathcal{F},\Lambda,\mathbb{C}^{10},\omega)$ is the desired Poisson analytic family.

From $(\ref{nn3})$, let us consider a holomorphic bivector field on $U_1\times \mathbb{P}_\mathbb{C}^1\times \mathbb{C}^{10}$
\begin{align}\label{i1}
\Pi=(t_2+(t_3+t_4z+t_5z^2)\xi+(t_6+t_7z+t_8z^2+t_9z^3+t_{10}z^4)\xi^2)\frac{\partial}{\partial z}\wedge \frac{\partial}{\partial \xi}
\end{align}

Since $z'=\frac{1}{z}$ and $\xi=\frac{\xi'-t_1 z}{z^2}=z'^2\xi'-t_1 z'$, and $\frac{\partial}{\partial z}\wedge \frac{\partial}{\partial \xi}=-\frac{\partial}{\partial z'}\wedge \frac{\partial}{\partial \xi'}$, $\Pi$ is translated  on $U_2\times \mathbb{P}_\mathbb{C}^1\times \mathbb{C}^{10}$ into
\begin{align}\label{nn10}
\left(t_2+\left(t_3+\frac{t_4}{z'}+\frac{t_5}{z'^2}\right)(z'^2\xi'-t_1z')+\left(t_6+\frac{t_7}{z'}+\frac{t_8}{z'^2}+\frac{t_9}{z'^3}+\frac{t_{10}}{z'^4} \right)( z'^4\xi'^2-2t_1z'^3 \xi'+t_1^2z'^2)\right)\left(-\frac{\partial}{\partial z'}\wedge \frac{\partial}{\partial \xi'}\right)
\end{align}
which is not holomorphic. We will modify $(\ref{i1})$ to define a global bivector field on $\mathcal{F}$. Consider the rational part of $(\ref{nn10})$
\begin{align*}
\left(-\frac{t_1t_5}{z'}+\frac{t_1^2t_9}{z'}-2\frac{t_1t_{10}\xi'}{z'}+\frac{t_1^2t_{10}}{z'^2}\right)\left(-\frac{\partial}{\partial z'}\wedge \frac{\partial}{\partial \xi'}\right)
\end{align*}
Then $\Lambda:=F(z,\xi,t)\frac{\partial}{\partial z}\wedge \frac{\partial}{\partial \xi}$ defines the Poisson structure on $\mathcal{F}$, where $F(z,\xi,t)$ is defined by
\begin{align*}
t_2+(t_3+t_4z+t_5z^2)\xi+(t_6+t_7z+t_8z^2+t_9z^3+t_{10}z^4)\xi^2+t_1t_5z-t_1^2t_9z+2t_1t_{10}z(z^2\xi+t_1z)-t_1^2t_{10}z^2
\end{align*}
and $(\mathcal{F},\Lambda,\mathbb{C}^{10},\omega)$ is a Poisson analytic family of deformations of $(F_2,\Lambda_0=0)=\omega^{-1}(0)$ such that the Poisson Kodaria-Spencer map is an isomorphism at $t=0$.
\end{example}

\begin{example}
Let us consider $(F_3,\Lambda_0=0)$. Since $\dim_\mathbb{C} \mathbb{H}^2(F_3,\Theta_{F_3}^\bullet)=0$, it is unobstructed in Poisson deformations. We will explicitly construct a Poisson analytic family of deformations of $(F_3, \Lambda_0=0)$ such that the Poisson Kodaira-Spencer map is an isomorphism at the distinguished point. We note that $\dim_\mathbb{C} \mathbb{H}^1(F_3, \Theta_{F_3}^\bullet)=\dim_\mathbb{C}H^0(F_3, \wedge^2 \Theta_{F_3})\oplus H^1(F_3, \Theta_{F_3})=9+2=11$, and from $(\ref{r10})$, $\frac{1}{z}\frac{\partial}{\partial \xi}=z^2\frac{\partial}{\partial \xi'}, \frac{1}{z^2}\frac{\partial}{\partial \xi}=z\frac{\partial}{\partial \xi'}$ is a basis of $H^1(F_2, \Theta_{F_2})$.
Let us consider a complex analytic family $(\mathcal{F},\mathbb{C}^{11},\omega)$ of deformations of $F_3=\omega^{-1}(0)$ defined by patching $U_i\times \mathbb{P}_\mathbb{C}^1\times \mathbb{C}^{11}, i=1,2, (t_1,...,t_{11})\in \mathbb{C}^{11}$ by the relation 
\begin{align*}
(z',\xi')=\left(\frac{1}{z}, z^3\xi+t_1z+t_2z^2\right)
\end{align*}
where $\omega:\mathcal{F}\to \mathbb{C}^{11}$ is the natural projection. Then the Kodaira-Spencer map is described by $\frac{\partial}{\partial t_1}\mapsto z\frac{\partial}{\partial \xi'}=\frac{1}{z^2}\frac{\partial}{\partial \xi}$ and $\frac{\partial}{\partial t_2}\mapsto z^2\frac{\partial}{\partial \xi'}=\frac{1}{z}\frac{\partial}{\partial \xi}$.
We will put a Poisson structure $\Lambda$ on $\mathcal{F}$ such that $(\mathcal{F},\Lambda,\mathbb{C}^{11},\omega)$ is the desired Poisson analytic family. 

From $(\ref{nn4})$, let us consider a holomorphic bivector field on $U_1\times \mathbb{P}_\mathbb{C}^1\times \mathbb{C}^9$
\begin{align}\label{i2}
\Pi=((t_3+t_4z+t_5z^2)\xi+(t_6+t_7z+t_8z^2+t_9z^3+t_{10}z^4+t_{11}z^5)\xi^2)\frac{\partial }{\partial z}\wedge \frac{\partial}{\partial \xi}
\end{align}
Since $z'=\frac{1}{z}$ and $\xi=\frac{\xi'-t_1z-t_2z^2}{z^3}=\xi'z'^3-t_1z'^2-t_2z'$, $\frac{\partial}{\partial z}\wedge \frac{\partial}{\partial \xi}=-\frac{1}{z'}\frac{\partial}{\partial z'}\wedge \frac{\partial}{\partial \xi'}$, and
\begin{align*}
(z'^3\xi'-t_1z'^2-t_2z')^2=z'^6\xi'^2+t_1^2z'^4+t_2^2z'^2-2t_1z'^5\xi'-2t_2z'^4 \xi'+2t_1t_2z'^3,
\end{align*}
$\Pi$ is translated on $U_2\times \mathbb{P}_\mathbb{C}^1\times \mathbb{C}^{11}$ into
\begin{align}\label{nn20}
&\left(\left(t_3+\frac{t_4}{z'}+\frac{t_5}{z'^2}\right)(z'^2\xi'-t_1z'-t_2)\right)\left(-\frac{\partial}{\partial z'}\wedge \frac{\partial}{\partial \xi'}\right)
\\
&+\left(\left(t_6+\frac{t_7}{z'}+\frac{t_8}{z'^2}+\frac{t_9}{z'^3}+\frac{t_{10}}{z'^4}+\frac{t_{11}}{z'^5}\right)(z'^5\xi'^2+t_1^2z'^3+t_2^2z'-2t_1z'^4\xi'-2t_2z'^3 \xi'+2t_1t_2z'^2)\right)\left(-\frac{\partial}{\partial z'}\wedge \frac{\partial}{\partial \xi'}\right)\notag
\end{align}
which is not holomorphic. We will modify $(\ref{i2})$ to define a global bivector field on $\mathcal{F}$. Let us consider the rational part of $(\ref{nn20})$.
{\small{\begin{align*}
&\left(-\frac{t_2 t_4}{z'}-\frac{t_1t_5}{z'}-\frac{t_2t_5}{z'^2}\right)\left(-\frac{\partial}{\partial z'}\wedge \frac{\partial}{\partial \xi'}\right)
\\
&+\left(\frac{t_8t_2^2}{z'}+\frac{t_9t_2^2}{z'^2}+2\frac{t_9t_1t_2}{z'}+\frac{t_{10}t_1^2}{z'}+\frac{t_{10}t_2^2}{z'^3}-2\frac{t_{10}t_2\xi'}{z'}+2\frac{t_{10}t_1t_2}{z'^2}+\frac{t_{11}t_1^2}{z'^2}+\frac{t_{11}t_2^2}{z'^4}-2\frac{t_{11}t_1\xi'}{z'}-2\frac{t_{11}t_2\xi'}{z'^2}+2\frac{t_{11}t_1t_2}{z'^3}\right)\left(-\frac{\partial}{\partial z'}\wedge \frac{\partial}{\partial w'}\right)
\end{align*}}}
Then $\Lambda:=F(z,\xi,t)\frac{\partial }{\partial z}\wedge \frac{\partial}{\partial \xi}$ defines the Poisson structure on $\mathcal{F}$, where
\begin{align*}
&F(z,\xi,t)=(t_3+t_4z+t_5z^2)\xi+(t_6+t_7z+t_8z^2+t_9z^3+t_{10}z^4+t_{11}z^5)\xi^2\\
&-(-t_2t_4-t_1t_5+t_8t_2^2+2t_1t_2t_9+t_1^2t_{10})-(-t_2t_5+t_9t_2^2+2 t_1t_2t_{10}+t_1^2t_{11} )z -( t_{10}t_2^2+2t_1t_2t_{11})z^2+t_{11}t_2^2z^3\\
&+2(t_2t_{10}+t_1t_{11}+t_2t_{11}z)(z^2\xi+t_1z+t_2z^2)
\end{align*}
and $(\mathcal{F},\Lambda, \mathbb{C}^{10},\omega)$ is a Poisson analytic family of deformations of $(F_3, \Lambda_0=0)=\omega^{-1}(0)$ such that the Poisson Kodaira-Spencer map is an isomorphism at $t=0$.
\end{example}

\begin{example}
Let us consider $(F_4, \Lambda_0=(z\xi+z\xi^2)\frac{\partial}{\partial z}\wedge \frac{\partial}{\partial \xi})$. Since $e(z)=z\ne 0 $ in $(\ref{r2})$ so that $\dim_\mathbb{C} \mathbb{H}^2(F_4,\Theta_{F_4}^\bullet)=0$  by Table $\ref{ruled}$, it is unobstructed in Poisson deformations. We will explicitly  construct a Poisson analytic family of deformations of $(F_4,\Lambda_0)$ such that the Poisson Kodaira-Spencer map is an isomorphism at the distinguished point. We describe $\mathbb{H}^1(\mathbb{F}_4,\Theta_{F_4}^\bullet)$.  Let us find $ker(H^1(F_4,\Theta_{F_4} )\xrightarrow{[\Lambda_0,-]} H^1(F_4, \wedge^2 \Theta_{F_4}))$. From $(\ref{r10})$, since
\begin{align*}
[(z\xi+z\xi^2)\frac{\partial}{\partial z}\wedge \frac{\partial}{\partial \xi},\left(\frac{b_1}{z}+\frac{b_2}{z^2}+\frac{b_3}{z^3}\right)\frac{\partial}{\partial \xi}]\equiv -\left(\frac{b_1}{z}+\frac{b_2}{z^2}+\frac{b_3}{z^3}\right)(z+2z\xi)\frac{\partial}{\partial z}\wedge \frac{\partial}{\partial \xi}\equiv -\frac{ b_2}{z}\frac{\partial}{\partial z}\wedge \frac{\partial}{\partial \xi}
\end{align*}
so that $\frac{1}{z}\frac{\partial}{\partial \xi}= z^3\frac{\partial}{\partial \xi'}$ and $\frac{ 1}{z^3}\frac{\partial}{\partial \xi}=z\frac{\partial}{\partial \xi'}$ is a basis of $ker(H^1(F_4,\Theta_{F_4} ) \xrightarrow{[\Lambda_0,-]} H^1(F_4, \wedge^2 \Theta_{F_4}))$. Let us find $coker(H^0(F_4,\Theta_{F_4} ) \xrightarrow{[\Lambda_0,-]} H^0(F_4, \wedge^2 \Theta_{F_4}))$. Since $d(z)=0,e(z)=z,f(z)=z$ in $(\ref{nn30})$, and we have $(\ref{r12})$,
\begin{align*}
&(e(z)g'(z)-g(z)e'(z))\xi+(f(z)g'(z)-g(z)f'(z)+c(z)e(z)-b(z)f(z))\xi^2\\
&=(z(g_1+2g_2z)-(g_0+g_1z+g_2z^2))\xi+(z(g_1+2g_2z)-(g_0+g_1z+g_2z^2)+z(c_0+c_1z+\cdots+c_4z^4)-z(-4g_2z+d))\xi^2\\
&=(-g_0+g_2z^2)\xi+(-g_0+(c_0-d)z+(5g_2+c_1)z^2+c_2z^3+c_3z^4+c_4z^5)\xi^2
\end{align*}
so that $\xi\frac{\partial}{\partial z}\wedge \frac{\partial}{\partial \xi}, z\xi\frac{\partial}{\partial z}\wedge \frac{\partial}{\partial \xi}, z^6\xi^2\frac{\partial}{\partial z}\wedge \frac{\partial}{\partial \xi}$ is a basis of $coker(H^0(F_4,\Theta_{F_4} )\xrightarrow{[\Lambda_0,-]} H^2(F_4, \wedge^2 \Theta_{F_4}))$. Hence $\mathbb{H}^1(F_4,\Theta_{F_4}^\bullet)\cong coker(H^0(F_4,\Theta_{F_4} )\xrightarrow{[\Lambda_0,-]} H^2(F_4, \wedge^2 \Theta_{F_4}))\oplus ker(H^1(F_4,\Theta_{F_4} ) \xrightarrow{[\Lambda_0,-]} H^1(F_4, \wedge^2 \Theta_{F_4}))$ is generated by
\begin{align}\label{nn50}
\mathbb{H}^1(F_4,\Theta_{F_4}^\bullet)\cong \left\langle \xi\frac{\partial}{\partial z}\wedge \frac{\partial}{\partial \xi}, z\xi\frac{\partial}{\partial z}\wedge \frac{\partial}{\partial \xi}, z^6\xi^2\frac{\partial}{\partial z}\wedge \frac{\partial}{\partial \xi} \right\rangle \oplus \left \langle z\frac{\partial}{\partial \xi'},z^3\frac{\partial}{\partial \xi'}  \right \rangle
\end{align}
We will construct a Poisson analytic family such that the Poisson Kodaira-Spencer map is an isomorphism at the distinguished point by using  the basis $(\ref{nn50})$ which forms a linear part. From $(\ref{nn50})$, let us consider a complex analytic family defined by patching $U_i\times \mathbb{P}_\mathbb{C}^1\times \mathbb{C}^5,i=1,2,(t_1,...,t_5)\in \mathbb{C}^5$ by the relation
\begin{align}\label{nn53}
(z',\xi')=\left(\frac{1}{z}, z^4\xi+t_1z+t_2z^3    \right)
\end{align}
We note that since $\frac{\partial}{\partial z}\wedge \frac{\partial}{\partial w}=-\frac{1}{z'^2}\frac{\partial}{\partial z'}\wedge \frac{\partial}{\partial w'}$,
\begin{align*}
&[(z\xi+z\xi^2)\frac{\partial}{\partial z}\wedge \frac{\partial}{\partial \xi}, \frac{1}{z}\frac{\partial}{\partial \xi}]=  -\left(    1+2\xi  \right) \frac{\partial}{\partial z}\wedge \frac{\partial}{\partial \xi}\\
&[(z\xi+z\xi^2)\frac{\partial}{\partial z}\wedge \frac{\partial}{\partial \xi}, \frac{1}{z^3}\frac{\partial}{\partial \xi}]=-\frac{1}{z^2}\left(1+2\xi    \right) \frac{\partial}{\partial z}\wedge \frac{\partial}{\partial \xi}=(1+2z'^4\xi')\frac{\partial}{\partial z'}\wedge \frac{\partial}{\partial \xi'}
\end{align*}
Then $(((1+2\xi)\frac{\partial}{\partial z}\wedge \frac{\partial}{\partial \xi},0), \frac{1}{z}\frac{\partial}{\partial \xi})$ and $( (0,(1+2z'^4\xi'))\frac{\partial}{\partial z'}\wedge \frac{\partial}{\partial \xi'}), \frac{1}{z^3}\frac{\partial}{\partial \xi})\in C^0(\mathcal{U},\wedge^2 \Theta_{F_4})\oplus C^1(\mathcal{U}, \Theta_{F_4})$ defines element in $\mathbb{H}^1(F_4,\Theta_{F_4}^\bullet)$. Let us consider a holomorphic bivector field on $U_1\times \mathbb{P}_\mathbb{C}^1\times \mathbb{C}^5$
\begin{align}\label{nn52}
\Pi=(t_2+ (2t_2+t_3 +z +t_4z)\xi+( z+t_5z^6)\xi^2)\frac{\partial}{\partial z}\wedge \frac{\partial }{\partial \xi}
\end{align}
Since $\xi=z'^4\xi'-t_1z'^3-t_2z'$,  and $(z'^4\xi'-t_1z'^3-t_2z')^2=z'^8\xi'^2+t_1^2z'^6+t_2^2z'^2-2t_1z'^7\xi'-2t_2z'^5\xi'+2t_1t_2z'^4$, $\Pi$ is translated on $U_2\times \mathbb{P}_\mathbb{C}^1\times \mathbb{C}^5$ into
\begin{align}\label{nn51}
&\left(\frac{t_2}{z'^2}+\left(2t_2+t_3+\frac{1}{z'}+\frac{t_4}{z'}\right)(z'^2\xi'-t_1z'-\frac{t_2}{z'})\right)\left( - \frac{\partial}{\partial z'}\wedge \frac{\partial }{\partial \xi'}  \right) \\
&+\left(\frac{1}{z'}+\frac{t_5}{z'^6}\right)(z'^6\xi'^2+t_1^2z'^4+t_2^2-2t_1z'^5\xi'-2t_2z'^3\xi'+2t_1t_2z'^2)\left( - \frac{\partial}{\partial z'}\wedge \frac{\partial }{\partial \xi'}  \right)\notag
\end{align}
which is not holomorphic. We note that for $k\geq 2$, $\frac{1}{z'^k}\frac{\partial}{\partial z'}\wedge \frac{\partial}{\partial \xi'}$ is translated into $z^{k-2}\frac{\partial}{\partial z}\wedge \frac{\partial}{\partial \xi}$ which is holomorphic. So we consider the $\frac{1}{z'}$ part of $(\ref{nn51})$ which is given by
\begin{align*}
\left(-2\frac{t_2^2}{z'}-\frac{t_2t_3}{z'}\right)\left( - \frac{\partial}{\partial z'}\wedge \frac{\partial }{\partial \xi'}  \right)+\frac{t_2^2}{z'}\left( - \frac{\partial}{\partial z'}\wedge \frac{\partial }{\partial \xi'}  \right)
\end{align*}

We will modify $(\ref{nn53})$ and $(\ref{nn52})$ so that $\frac{1}{z'}$ part does not appear on $U_2\times \mathbb{P}_\mathbb{C}^1\times \mathbb{C}^5$. Let us consider a complex analytic family $(\mathcal{F},\mathbb{C}^5,\omega)$ defined by patching $U_i\times \mathbb{P}_\mathbb{C}^1\times \mathbb{C}^5, i=1,2$ by the relation
\begin{align*}
(z',\xi') =\left(\frac{1}{z}, z^4\xi+t_1z+t_2z^3-(t_2^2+t_2t_3)z^2   \right)
\end{align*}
where $\omega:\mathcal{F}\to \mathbb{C}^5$ is the natural projection, and consider a holomorphic bivector field on $U_1\times \mathbb{P}_\mathbb{C}^1\times \mathbb{C}^5$
\begin{align*}
\Pi'=(t_2+ (2t_2+t_3 +z +t_4z+t_3t_4+t_2t_4)\xi+( z+t_5z^6)\xi^2)\frac{\partial}{\partial z}\wedge \frac{\partial }{\partial \xi}
\end{align*}

Since $\xi=z'^4\xi'-t_1z'^3-t_2z'+(t_2^2+t_2t_3)z'^2$, and
\begin{align*}
&(z'^4\xi'-t_1z'^3-t_2z'+(t_2^2+t_2t_3)z'^2)^2\\
&=z'^8\xi'^2+t_1^2z'^6+t_2^2z'^2+(t_2^2+t_2t_3)^2z'^4-2t_1z'^7\xi'-2t_2z'^5\xi'+(t_2^2+t_2t_3)z'^6\xi'+2t_1t_2z'^4\\
&-2t_1(t_2^2+t_2t_3)z'^5-2t_2(t_2^2+t_2t_3)z'^3
\end{align*}
$\Pi'$ is translated on $U_2\times \mathbb{P}_\mathbb{C}^1\times \mathbb{C}^5$ into
{\small{\begin{align}\label{i3}
&\left(\frac{t_2}{z'^2}+\left(2t_2+t_3+\frac{1}{z'}+\frac{t_4}{z'}+t_3t_4 +t_2t_4\right)\left(z'^2\xi'-t_1z'-\frac{t_2}{z'}+t_2^2+t_2t_3\right)\right)\left( - \frac{\partial}{\partial z'}\wedge \frac{\partial }{\partial \xi'}  \right)\\
&+\left(\frac{1}{z'}+\frac{t_5}{z'^6}\right)(z'^6\xi'^2+t_1^2z'^4+t_2^2+(t_2^2+t_2t_3)^2z'^2-2t_1z'^5\xi'-2t_2z'^3\xi'+(t_2^2+t_2t_3)z'^4\xi'+2t_1t_2z'^2 \notag\\
&-2t_1(t_2^2+t_2t_3)z'^3-2t_2(t_2^2+t_2t_3)z'
)\left( - \frac{\partial}{\partial z'}\wedge \frac{\partial }{\partial \xi'}  \right) \notag
\end{align}}}
which is not holomorphic. Consider the rational part of $(\ref{i3})$
\begin{align*}
&\left(\frac{t_2}{z'^2}-2\frac{t_2^2}{z'}-\frac{t_2t_3}{z'}-\frac{t_2}{z'^2}+\frac{t_2^2}{z'} +\frac{t_2t_3}{z'}-\frac{t_2t_4}{z'^2}+\frac{t_2^2t_4}{z'}+\frac{t_2t_3t_4}{z'}-\frac{t_2t_3t_4}{z'} -\frac{t_2^2t_4}{z'}   \right)\left( - \frac{\partial}{\partial z'}\wedge \frac{\partial }{\partial \xi'}  \right)\\
&+(\frac{t_2^2}{z'}+\frac{t_1^2t_5}{z'^2}+\frac{t_2^2t_5}{z'^6}+\frac{t_5(t_2^2+t_2t_3)^2}{z'^4}-2\frac{t_1t_5\xi'}{z'}-2\frac{t_2t_5\xi'}{z'^3}+\frac{t_5(t_2^2+t_2t_3)\xi'}{z'^2} +2\frac{t_1t_2t_5}{z'^4}\\
&-2\frac{t_1t_5(t_2^2+t_2t_3)}{z'^3}-2\frac{t_2t_5(t_2^2+t_2t_3)}{z'^5})\left( - \frac{\partial}{\partial z'}\wedge \frac{\partial }{\partial \xi'}  \right)
\end{align*}
Then the Poisson structure on $U_1\times \mathbb{P}_\mathbb{C}^1\times \mathbb{C}^5$
\begin{align*}
\Lambda:=A(z,\xi,t)\frac{\partial}{\partial z}\wedge \frac{\partial}{\partial \xi}
\end{align*}
where
\begin{align*}
&A(z,\xi,t)\\
&=t_2+ (2t_2+t_3 +z +t_4z+t_3t_4+t_2t_4)\xi+( z+t_5z^6)\xi^2-[-t_2t_4+t_1^2t_5+t_2^2t_5z^4+t_5(t_2^2+t_2t_3)^2z^2\\
&-2t_1t_5(z^3\xi+t_1+t_2z^2-(t_2^2+t_2t_3)z)-(2t_2t_5z-t_5(t_2^2+t_2t_3))(z^4\xi+t_1z+t_2z^3-(t_2^2+t_2t_3)z^2)\\
&+2t_1t_2t_5z^2-2t_1t_5(t_2^2+t_2t_3)z-2t_2t_5(t_2^2+t_2t_3)z^3]
\end{align*}
define a global bivector field on $\mathcal{F}$ so that $(\mathcal{F},\Lambda, \mathbb{C}^5 ,\omega )$ is a Poisson analytic family of deformations of $(F_4,\Lambda_0)=\omega^{-1}(0)$ such that the Poisson Kodaira-Spencer map is an isomorphism at $t=0$.
\end{example}

\begin{example}
Let us consider $(F_5, \Lambda_0=z\xi\frac{\partial}{\partial z}\wedge \frac{\partial}{\partial \xi})$.  Since $e(z)=z\ne 0$ in $(\ref{r2})$ so that $\dim_\mathbb{C} \mathbb{H}^2(F_5,\Theta_{F_5}^\bullet)=0$ by Table $\ref{ruled}$, it is unobstructed in Poisson deformations. We find $\mathbb{H}^1(F_5, \Theta_{F_5}^\bullet)$. First we find $ker(H^1(F_5,\Theta_{F_5})\xrightarrow{[\Lambda_0,-]} H^1(F_5,\wedge^2 \Theta_{F_5}))$. We note that from $(\ref{r10})$,
\begin{align*}
[z\xi\frac{\partial}{\partial z}\wedge \frac{\partial}{\partial \xi}, \left(\frac{b_1}{z}+\frac{b_2}{z^2}+\frac{b_3}{z^3}+\frac{b_4}{z^4}\right)\frac{\partial}{\partial \xi}]&=-\left(b_1+\frac{b_2}{z}+\frac{b_3}{z^2}+\frac{b_4}{z^3}\right)\frac{\partial}{\partial z}\wedge \frac{\partial}{\partial \xi} \equiv -\left(\frac{b_2}{z}+\frac{b_3}{z^2}\right)\frac{\partial}{\partial z}\wedge \frac{\partial}{\partial \xi}
\end{align*}

Hence $ker(H^1(F_5,\Theta_{F_5})\xrightarrow{[\Lambda_0,-]} H^1(F_5,\wedge^2 \Theta_{F_5}))$ is generated by $\frac{1}{z}\frac{\partial}{\partial \xi}=z^4\frac{\partial}{\partial \xi ' },\frac{1}{z^4}\frac{\partial}{\partial \xi}=z\frac{\partial}{\partial \xi'}$.
Let us find $coker(H^0(F_5, \Theta_{F_5})\xrightarrow{[\Lambda_0,-]} H^0(F_5,\wedge^2 \Theta_{F_5}))$. Since $d(z)=0, e(z)=z,f(z)=0$ in $(\ref{nn30})$, and we have $(\ref{r12})$, 
\begin{align*}
(zg'(z)-g(z))\xi+zc(z)\xi^2&=(z(g_1+2g_2z)-g_0-g_1z-g_2z^2)\xi+z(c_0+c_1z+c_2z^2+c_3z^3+c_4z^4+c_5z^5)\xi^2\\
&=(-g_0+g_2z^2)\xi+(c_0z+c_1z^2+c_2z^3+c_3z^4+c_4z^5+c_5z^6)\xi^2
\end{align*}
so that $coker(H^0(F_5, \Theta_{F_5})\xrightarrow{[\Lambda_0,-]} H^0(F_5,\wedge^2 \Theta_{F_5}))$ is generated by $z\xi\frac{\partial}{\partial z}\wedge \frac{\partial}{\partial \xi}, \xi^2\frac{\partial}{\partial z}\wedge \frac{\partial}{\partial \xi}, z^7\xi^2\frac{\partial}{\partial z}\wedge \frac{\partial}{\partial \xi}$. Hence $\mathbb{H}^1(F_5, \Theta_{F_5}^\bullet)\cong coker(H^0(F_5, \Theta_{F_5}\xrightarrow{[\Lambda_0,-]} H^0(F_5,\wedge^2 \Theta_{F_5}))\oplus ker(H^1(F_5,\Theta_{F_5})\xrightarrow{[\Lambda_0,-]} H^1(F_5,\wedge^2 \Theta_{F_5}))  $ is generated by
\begin{align}\label{r53}
\mathbb{H}^1(F_5,\Theta_{F_5}^\bullet)\cong \left\langle z\xi\frac{\partial}{\partial z}\wedge \frac{\partial}{\partial \xi}, \xi^2\frac{\partial}{\partial z}\wedge \frac{\partial}{\partial \xi}, z^7\xi^2\frac{\partial}{\partial z}\wedge \frac{\partial}{\partial \xi}  \right\rangle \oplus \left \langle z^4\frac{\partial}{\partial \xi'}, z\frac{\partial}{\partial \xi'} \right \rangle
\end{align}
We will construct a Poisson analytic family  such that  Poisson Kodaira-Spencer map is an isomorphism at the distinguished point by using the basis $(\ref{r53})$ which forms a linear part. From $(\ref{r53})$, let us consider a complex analytic family defined by patching $U_i\times \mathbb{P}_\mathbb{C}^1\times \mathbb{C}^5,i=1,2, (t_1,t_2,t_3,t_4,t_5)\in \mathbb{C}^5$ by the relation
\begin{align}\label{r50}
(z',\xi')=\left(\frac{1}{z},z^5\xi+t_1z+t_2z^4\right)
\end{align}
We note that since $\frac{\partial}{\partial z}\wedge \frac{\partial}{\partial w}=-\frac{1}{z'^3}\frac{\partial}{\partial z'}\wedge \frac{\partial}{\partial w'}$,
\begin{align*}
[z\xi\frac{\partial}{\partial z}\wedge \frac{\partial}{\partial \xi},\frac{1}{z}\frac{\partial}{\partial \xi}]&=-\frac{\partial}{\partial z}\wedge \frac{\partial}{\partial \xi}\\
[z\xi\frac{\partial}{\partial z}\wedge \frac{\partial}{\partial \xi},\frac{1}{z^4}\frac{\partial}{\partial \xi}]&=-\frac{1}{z^3}\frac{\partial}{\partial z}\wedge \frac{\partial}{\partial \xi}=\frac{\partial}{\partial z'}\wedge \frac{\partial}{\partial \xi'},
\end{align*}
Then $((\frac{\partial}{\partial z}\wedge \frac{\partial}{\partial \xi},0), \frac{1}{z}\frac{\partial}{\partial \xi})$ and $((0,\frac{\partial}{\partial z'}\wedge \frac{\partial}{\partial \xi'}), \frac{1}{z^4}\frac{\partial}{\partial \xi})\in C^0(\mathcal{U}, \wedge^2 \Theta_{F_5})\oplus C^1(\mathcal{U}, \Theta_{F_5})$ define elements in $\mathbb{H}^1(F_5,\Theta_{F_5}^\bullet)$.

Let us consider a bivector field on $U_1\times \mathbb{P}_\mathbb{C}^1\times \mathbb{C}^5$
\begin{align}\label{r51}
\Pi=(t_2+(z+t_3z)\xi+(t_4+t_5z^7)\xi^2 ) \frac{\partial}{\partial z}\wedge \frac{\partial}{\partial \xi}
\end{align}

We note that $\xi=\frac{\xi'-t_1z-t_2z^4}{z^5}=z'^5\xi'-t_1z'^4-t_2z'$, and $(z'^5\xi'-t_1z'^4-t_2z')^2=z'^{10}\xi'^2+t_1^2z'^8+t_2^2z'^2-2t_1z'^9\xi'-2t_2z'^6\xi'+2t_1t_2z'^5$ so that $\Pi$ is translated on $U_2\times \mathbb{P}_\mathbb{C}^1\times \mathbb{C}^5$ into
\begin{align}
&\left(\frac{t_2}{z'^3}+\left(\frac{1}{z'}+\frac{t_3}{z'}\right)\left(z'^2\xi'-t_1 z' -\frac{t_2}{z'^2}\right)\right)\left(-\frac{\partial}{\partial z'}\wedge \frac{\partial}{\partial \xi'}\right) \label{r52}\\
&+\left(t_4+\frac{t_5}{z'^7}\right)(z'^{7}\xi'^2+t_1^2z'^5+\frac{t_2^2}{z'}-2t_1z'^6\xi'-2t_2z'^3\xi'+2t_1t_2z'^2)\left(-\frac{\partial}{\partial z'}\wedge \frac{\partial}{\partial \xi'}\right) \notag
\end{align}
which is not holomorphic. Our purpose is to modify $(\ref{r50})$ and $(\ref{r51})$ to delete rational parts to make a global holomorphic bivector field both on $U_i\times \mathbb{P}_\mathbb{C}^1\times \mathbb{C}^5,i=1,2$. We note that $\frac{1}{z'^k}\frac{\partial}{\partial z'}\wedge \frac{\partial}{\partial \xi'},k\geq 3$ on $U_2\times \mathbb{P}^1\times \mathbb{C}^2$ is translated into $-z^{k-3}\frac{\partial}{\partial z}\wedge \frac{\partial}{\partial \xi}$ which is holomorphic. So we consider $\frac{1}{z'}$ and $\frac{1}{z'^2}$ parts in $(\ref{r52})$ which is given by
\begin{align*}
\left(\frac{t_2^2t_4}{z'}+\frac{t_1^2t_5}{z'^2}-2\frac{t_1t_5\xi'}{z'}\right)\left(-\frac{\partial}{\partial z'}\wedge \frac{\partial}{\partial \xi'}\right)
\end{align*}
We note that $2\frac{t_1t_5\xi'}{z'}\frac{\partial}{\partial z'}\wedge \frac{\partial}{\partial \xi'}$ is translated into $-2t_1t_5(z^3\xi+\frac{t_1}{z}+t_2z^2)\frac{\partial}{\partial z}\wedge \frac{\partial}{\partial \xi}$ by $(\ref{r50})$ which has the rational part as $-2\frac{t_1^2t_5}{z}\frac{\partial}{\partial z}\wedge \frac{\partial}{\partial \xi}$ which is translated into $2\frac{t_1^2t_5}{z'^2}\frac{\partial}{\partial z'}\wedge \frac{\partial}{\partial \xi'} $.

Let us consider a complex analytic family $(\mathcal{F},\mathbb{C}^5, \omega)$ of deformations of $F_5=\omega^{-1}(0)$  defined by patching $U_i\times \mathbb{P}_\mathbb{C}^1\times \mathbb{C}^5,i=1,2$ by the relation
\begin{align*}
(z',\xi')=\left(\frac{1}{z},z^5\xi+t_1z+t_2z^4+t_2^2t_4z^2-t_1^2t_5z^3\right)
\end{align*}
where $\omega:\mathcal{F}\to \mathbb{C}^5$ is the natural projection, and consider
\begin{align*}
\Pi'=(t_2+(z+t_3z)\xi+(t_4+t_5z^7+t_3t_4+t_3t_5z^7)\xi^2 ) \frac{\partial}{\partial z}\wedge \frac{\partial}{\partial \xi}
\end{align*}

Since $\xi=\frac{\xi'-t_1z-t_2z^4-t_2^2t_4z^2+t_1^2t_5z^3 }{z^5}=z'^5\xi'-t_1z'^4-t_2z'-t_2^2t_4z'^3+t_1^2t_5z'^2$, and
\begin{align*}
&(z'^5\xi'-t_1z'^4-t_2z'-t_2^2t_4z'^3+t_1^2t_5z'^2)^2\\
&=z'^{10}\xi'^2+t_1^2z'^8+t_2^2z'^2-2t_1z'^9\xi'-2t_2z'^6\xi'+2t_1t_2z'^5+t_2^4t_4^2z'^6+t_1^4t_5^2z'^4-2t_2^2t_4z'^8\xi'\\
&+2t_1^2t_5z'^7\xi'+2t_1t_2^2t_4z'^7-2t_1^3t_5z'^6+2t_2^3t_4z'^4-2t_1^2t_2t_5z'^3-2t_1^2t_5t_2^2t_4z'^5
\end{align*}
$\Pi'$ is translated into $U_2\times \mathbb{P}_\mathbb{C}^1\times \mathbb{C}^5$ as
\begin{align}\label{r55}
&\left(\frac{t_2}{z'^3}+\left(\frac{1}{z'}+\frac{t_3}{z'}\right)\left(z'^2\xi'-t_1z'-\frac{t_2}{z'^2}-t_2^2t_4+\frac{t_1^2t_5}{z'}\right)\right) \left(-\frac{\partial}{\partial z'}\wedge \frac{\partial}{\partial \xi'}\right)\\
&+\left(t_4+\frac{t_5}{z'^7}+t_3t_4+\frac{t_3t_5}{z'^7}\right)(z'^{7}\xi'^2+t_1^2z'^5+\frac{t_2^2}{z'}-2t_1z'^6\xi'-2t_2z'^3\xi'+2t_1t_2z'^2+t_2^4t_4^2z'^3+t_1^4t_5^2z'-2t_2^2t_4z'^5\xi' \notag\\
&+2t_1^2t_5z'^4\xi'+2t_1t_2^2t_4z'^4-2t_1^3t_5z'^3+2t_2^3t_4z'-2t_1^2t_2t_5-2t_1^2t_5t_2^2t_4z'^2)\left(-\frac{\partial}{\partial z'}\wedge \frac{\partial}{\partial \xi'}\right)\notag
\end{align}
Then $\frac{1}{z'}$ and $\frac{1}{z'^2}$ parts are
{\small{\begin{align*}
&\left(-\frac{t_2^2t_4}{z'}+\frac{t_1^2t_5}{z'^2}-\frac{t_2^2t_3t_4}{z'}+\frac{t_1^2t_3t_5}{z'^2}\right)\left(-\frac{\partial}{\partial z'}\wedge \frac{\partial}{\partial \xi'}\right)+\left(\frac{t_4t_2^2}{z'}+\frac{t_1^2t_5}{z'^2}-2\frac{t_1t_5\xi'}{z'}+\frac{t_3t_4t_2^2}{z'}+\frac{t_3t_5t_1^2}{z'^2}-2\frac{t_1t_3t_5\xi'}{z'}\right)\left(-\frac{\partial}{\partial z'}\wedge \frac{\partial}{\partial \xi'}\right)\\
&=\left(2\frac{t_1^2t_5}{z'^2}-2\frac{t_1t_5\xi'}{z'}+2\frac{t_1^2t_3t_5}{z'^2}-2\frac{t_1t_3t_5\xi'}{z'} \right) \left(-\frac{\partial}{\partial z'}\wedge \frac{\partial}{\partial w'}\right)
\end{align*}}}
We note that $\left(2\frac{t_1^2 t_5}{z'^2}-2\frac{t_1t_5\xi'}{z'}\right)\left(-\frac{\partial}{\partial z'}\wedge \frac{\partial}{\partial \xi'}\right)$ is translated on $U_1\times \mathbb{P}_\mathbb{C}^1\times \mathbb{C}^5$ into
\begin{align*}
\left(2\frac{t_1^2t_5}{z}-2\frac{t_1t_5(z^5\xi+t_1z+t_2z^4+t_2^2t_4z^2-t_1^2t_5 z^3)}{z^2}\right)\frac{\partial}{\partial z}\wedge \frac{\partial}{\partial \xi}=-2t_1t_5(z^3\xi+t_2z^2+t_2^2t_4-t_1^2t_5z)\frac{\partial}{\partial z}\wedge \frac{\partial}{\partial \xi}
\end{align*}
and $\left(2\frac{t_1^2t_3t_5}{z'^2}-2\frac{t_1t_3t_5\xi'}{z'}\right)\left(-\frac{\partial}{\partial z'}\wedge \frac{\partial}{\partial \xi'}\right)$ is translated on $U_1\times \mathbb{P}_\mathbb{C}^1\times \mathbb{C}^5$ into
\begin{align*}
\left(2\frac{t_1^2t_3t_5}{z}-2\frac{t_1t_3t_5(z^5\xi+t_1z+t_2z^4+t_2^2t_4z^2-t_1^2t_5 z^3)}{z^2}\right)\frac{\partial}{\partial z}\wedge \frac{\partial}{\partial \xi}=-2t_1t_3t_5(z^3 \xi+t_2z^2+t_2^2t_4-t_1^2t_5z)\frac{\partial}{\partial z}\wedge \frac{\partial}{\partial \xi}
\end{align*}

Let us consider the other rational parts in $(\ref{r55})$,
\begin{align*}
&(\frac{t_2}{z'^3}-\frac{t_2}{z'^3}-\frac{t_2t_3}{z'^3}+\frac{(t_5+t_3t_5)t_2^2}{z'^8}-2\frac{(t_5+t_3t_5)t_2\xi'}{z'^4}+2\frac{(t_5+t_3t_5)t_1t_2}{z'^5}+\frac{(t_5+t_3t_5)t_2^4t_4^2}{z'^4}\\
&+\frac{(t_5+t_3t_5)t_1^4t_5^2}{z'^6}-2\frac{(t_5+t_3t_5)t_2^2t_4\xi'}{z'^2}
+2\frac{(t_5+t_3t_5)t_1^2t_5\xi'}{z'^3}+2\frac{(t_5+t_3t_5)t_1t_2^2t_4}{z'^3}\\&-2\frac{(t_5+t_3t_5)t_1^3t_5}{z'^4}
+2\frac{(t_5+t_3t_5)t_2^3t_4}{z'^6}-2\frac{(t_5+t_3t_5)t_1^2t_2t_5}{z'^7}-2\frac{(t_5+t_3t_5)t_1^2t_2^2t_4t_5}{z'^5})\left(-\frac{\partial}{\partial z'}\wedge \frac{\partial}{\partial \xi'}\right)
\end{align*}

Then the Poisson structure on $U_1\times \mathbb{P}_\mathbb{C}^1\times \mathbb{C}^5$ given by
\begin{align*}
\Lambda=A(z,\xi,t)\frac{\partial}{\partial z}\wedge \frac{\partial}{\partial \xi}
\end{align*}
where
\begin{align*}
&A(z,\xi,t)\\
&=t_2+(z+t_3z)\xi +(t_4+t_5z^7 +t_3t_4+t_5z^7)\xi^2+2t_1t_5(z^3\xi+t_2z^2+t_2^2t_4-t_1^2t_5z)+2t_1t_3t_5(z^3\xi+t_2z^2+t_2^2t_4-t_1^2t_5z)    \\
&+t_2t_3-(t_5+t_3t_5)t_2^2z^5+2(t_5+t_3t_5)t_2(z^5\xi+t_1z+t_2z^4+t_2^2t_4z^2-t_1^2t_5 z^3)z-2(t_5+t_3t_5)t_1t_2z^2\\
&-(t_5+t_3t_5)t_2^4t_4^2z-(t_5+t_3t_5)t_1^4t_5^2z^3+2(t_5+t_3t_5)t_2^2t_4(z^4\xi+t_1+t_2z^3+t_2^2t_4z-t_1^2t_5 z^2)\\
&-2(t_5+t_3t_5)t_1^2t_5(z^5\xi+t_1z+t_2z^4+t_2^2t_4z^2-t_1^2t_5 z^3)-2(t_5+t_3t_5)t_1t_2^2t_4\\
&-2(t_5+t_3t_5)t_1^3t_5z-2(t_5+t_3t_5)t_2^3t_4z^3+2(t_5+t_3t_5)t_1^2t_2t_5z^4+2(t_5+t_3t_5)t_1^2t_2^2t_4 t_5z^2
\end{align*}
defines a global bivector field on $\mathcal{F}$ and $(\mathcal{F},\Lambda, \mathbb{C}^5,\omega)$ is a Poisson analytic family of deformations of $(F_5,\Lambda_0)=\omega^{-1}(0)$ such that the Poisson Kodaira-Spencer map is an isomorphism at $t=0$.

\end{example}

\section{Hopf surfaces}\label{section4}

In this section, we study Poisson deformations of (primary) Hopf surfaces. We determine obstructedness or unobstructedness of Poisson Hopf surfaces except for two classes of Poisson Hopf surfaces (see Table $\ref{h51}$ and Remark $\ref{h95}$). For unobstructed Poisson Hopf surfaces, we show the unobstructedness  by explicitly constructing Poisson analytic families of deformations of Poisson Hopf surfaces such that the Poisson Kodaira-Spencer map is an isomorphism at the distinguished point (see Theorem $\ref{h53}$). We extend the method in \cite{Weh81} in the context of holomorphic Poisson deformations of Hopf surfaces.

\subsection{Preliminaries}\

In this subsection, we review properties of Hopf surfaces. For the detail, we refer to \cite{Weh81}.

\begin{definition}
A compact complex surface $X$ is called $($primary$)$ Hopf surfaces if the universal covering is biholomorphically equivalent to the domain $W:=\mathbb{C}^2-\{0,0\}$ and the fundamental group equals the infinite cyclic group $\mathbb{Z}$.
\end{definition}

\begin{remark}\

\begin{enumerate}
\item The group of covering transformations of a Hopf surface is generated by a contraction
\begin{align*}
f:\mathbb{C}^2\to \mathbb{C}^2,\,\,\,\,\,f(0)=0
\end{align*}
After a suitable choice of coordinates in $\mathbb{C}^2$, the contraction has the normal form
\begin{align*}
f(z,w)=(\alpha z+\lambda w^p, \delta w),
\end{align*}
where $p\in \mathbb{N}-\{0\}$ and $\alpha,\delta, \lambda\in \mathbb{C}$ are constants subject to the restrictions $0<|\alpha |\leq |\delta |<1$ and $(\alpha-\delta^p)\lambda=0$.
\item Every Hopf surface $X$ is homeomorphic to $S^1\times S^3$ so that  the first Chern class of any Hopf surface $X$ is $c_1=0$, and the second Chern class of any Hopf surface $X$ is $c_2=0$.
\item We have $h^{0,0}=h^{0,1}=h^{2,1}=h^{2,2}=1$ and $h^{p,q}=0$ for all other $(p,q)$, where $h^{p,q}=\dim_\mathbb{C} H^q(X, \wedge^p\Omega_X^1)$.
\item $\dim_\mathbb{C} H^0(X, \Theta_X)=\dim_\mathbb{C} H^1(X,\Theta_X)$ and $H^2(X,\Theta_X)=0$.
\end{enumerate}
\end{remark}

\begin{theorem}[see \cite{Weh81} p.23]\label{h85}
Table $\ref{h15}$ shows the classification of Hopf surfaces $X=W/\langle f \rangle$ in the complex analytic case. Hopf surfaces of a fixed type are classified by the different values, which the parameters of the contraction
\begin{align*}
f:\mathbb{C}^2 \to \mathbb{C}^2,\,\,\,\,\,f(0)=0
\end{align*}
can take. Table $\ref{h16}$ shows a basis of the vector space $H^0(X, \Theta_X)$ and the group $Aut(X)$ of holomorphic automorphisms for the Hopf surfaces $X=W/\langle f \rangle $ of each type respectively.
\end{theorem}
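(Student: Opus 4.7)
The plan is to follow the Poincaré–Dulac-style normalization strategy of \cite{Weh81}. First, any contraction $f\colon\mathbb{C}^2\to\mathbb{C}^2$ with $f(0)=0$ and spectrum strictly inside the open unit disk can be conjugated by a polynomial change of coordinates to the stated form $f(z,w)=(\alpha z+\lambda w^p,\delta w)$; the resonance condition $(\alpha-\delta^p)\lambda=0$ records whether the monomial $w^p\partial_z$ survives (when $\alpha=\delta^p$, so $\lambda$ is retained) or is eliminated by a further triangular substitution (when $\alpha\ne\delta^p$, forcing $\lambda=0$). Tracking the residual coordinate changes that preserve this normal form yields the type-classification of Table~\ref{h15}, with the discrete invariants being $\alpha,\delta,p$ and the presence or absence of the $\lambda$-term.

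For each type I would then compute $H^0(X,\Theta_X)$ by pulling back to the universal cover $W$ and solving for holomorphic vector fields $v=A\partial_z+B\partial_w$ on $\mathbb{C}^2$ (Hartogs allows extension across the origin) satisfying the invariance equation $f_*v=v\circ f$. Expanding in Taylor series turns this into a system of linear conditions on monomial coefficients, and only those monomials $z^iw^j\partial_z$, $z^iw^j\partial_w$ whose weights resonate with $(\alpha,\delta)$ survive; the resulting finite basis is precisely the one displayed in Table~\ref{h16}. The equality $\dim H^0(X,\Theta_X)=\dim H^1(X,\Theta_X)$ then follows from Riemann–Roch together with $c_1=c_2=0$ and the vanishing $H^2(X,\Theta_X)=0$.

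Finally, $\mathrm{Aut}(X)$ is the normalizer of $\langle f\rangle$ in $\mathrm{Aut}(W)$ modulo $\langle f\rangle$ itself: a biholomorphism $g$ of $W$ descends to $X$ iff $g\circ f\circ g^{-1}=f^n$ for some $n\in\mathbb{Z}$. Writing an arbitrary $g\in\mathrm{Aut}(\mathbb{C}^2,0)$ as a power series and imposing this relation again reduces to linear conditions on Taylor coefficients, cutting out the groups listed; the identity component integrates the Lie algebra $H^0(X,\Theta_X)$, while any extra components arise from finite-order symmetries of the eigenvalue data. The main obstacle is the combinatorial bookkeeping: the resonance conditions $\alpha=\delta^k$ for various $k\ge 1$ cause the argument to branch into the several types of Table~\ref{h15}, and one must verify case-by-case that precisely the listed invariant fields and automorphisms appear. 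I would organize the argument by disposing of the generic non-resonant case first and then handling each resonant family in turn, matching the row-by-row structure of the two tables.
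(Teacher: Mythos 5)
This theorem is quoted from \cite{Weh81} without proof, and your outline reproduces the standard argument there: Poincar\'e--Dulac normalization of the contraction, computation of $H^0(X,\Theta_X)$ as $\langle f\rangle$-invariant vector fields on $W$ via Hartogs extension and a monomial-by-monomial resonance analysis, and identification of $\mathrm{Aut}(X)$ with the normalizer of $\langle f\rangle$ in $\mathrm{Aut}(W)$ modulo $\langle f\rangle$; this is also precisely the method the paper itself employs in its parallel computations of $H^0(X,\wedge^2\Theta_X)$ and of $M_1, M_2$ in Lemma \ref{h55}. One small point of precision in your normalizer step: the condition $g\circ f\circ g^{-1}=f^n$ must hold with $n=\pm 1$ (conjugation has to induce an automorphism of $\langle f\rangle\cong\mathbb{Z}$), and since $g$ extends by Hartogs to an automorphism of $\mathbb{C}^2$ fixing the origin, where conjugation preserves the contracting behaviour, only $n=1$ survives --- so $\mathrm{Aut}(X)$ is the centralizer of $f$ modulo $\langle f\rangle$, which is what the groups in Table \ref{h16} actually are.
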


\begin{table}
\begin{center}
\begin{tabular}{| c | c | c | c |} \hline
type & $\dim H^0(X,\Theta_X)$ & $f(z,w)$ & parameters \\ \hline
$\textnormal{IV}$ & 4 & $(\alpha z,\alpha w)$ & $0<|\alpha|<1$ \\ \hline
$\textnormal{III}$ & 3 & $(\delta^p z,\delta w)$ & $p\in \mathbb{N}-\{0,1\},0<|\delta|<1$\\ \hline
$\textnormal{II}_a$ & 2 & $(\delta^pz+w^p,\delta w)$ & $p \in \mathbb{N}-\{0,1\},0<|\delta|<1$ \\ \hline
$\textnormal{II}_b$ & 2 & $(\alpha z +w, \alpha w)$ & $0<|\alpha|<1$ \\ \hline
$\textnormal{II}_c$ & 2 & $(\alpha z,\delta w)$ & $0<|\alpha|<|\delta| <1, \alpha\ne \delta^p$ for all $p\in \mathbb{N}$ \\ \hline
\end{tabular}
\caption{Classification of Hopf surfaces and infinitesimal automorphisms} \label{h15}
\end{center}
\end{table}

\begin{table}
\begin{center}
\begin{tabular}{| c | c | c | c |} \hline
type & basis of $ H^0(X,\Theta_X)$ & $Aut(X)$ \\ \hline
$\textnormal{IV}$ & $z\frac{\partial}{\partial z}, w\frac{\partial}{\partial z},z\frac{\partial}{\partial w},w\frac{\partial}{\partial w}$  & $GL(2,\mathbb{C})/\langle f \rangle$ \\ \hline
$\textnormal{III}$ & $ z\frac{\partial}{\partial z},w\frac{\partial}{\partial w},w^p \frac{\partial}{\partial z}$  & $\{(z,w)\mapsto (az+bw^p,dw):ad\ne0,b\in \mathbb{C}\}/\langle f\rangle  $ \\ \hline
$\textnormal{II}_a$ & $pz\frac{\partial}{\partial z} + w\frac{\partial}{\partial w}, w^p \frac{\partial}{\partial z}$ & $\{(z,w)\mapsto (a^p z+ b w^p,aw):a\in \mathbb{C}^*,b\in \mathbb{C}\}/\langle f\rangle$ \\ \hline
$\textnormal{II}_b$ & $z \frac{\partial}{\partial z} + w\frac{\partial}{\partial w}, w\frac{\partial}{\partial z}$ & $\{(z,w)\mapsto (az+bw,aw):a\in \mathbb{C}^*,b\in \mathbb{C}\}/\langle f\rangle $\\ \hline
$\textnormal{II}_c$ &  $z\frac{\partial}{\partial z},w\frac{\partial}{\partial w}$ & $\{(z,w)\mapsto (az,dw):ad\ne 0\}/\langle f \rangle$ \\ \hline
\end{tabular}
\caption{Group of automorphisms on Hopf surfaces} \label{h16}
\end{center}
\end{table}

\subsection{Holomorphic Poisson structures on Hopf surfaces and Poisson automorphisms}\

In this subsection, we describe holomorphic Poisson structures on Hopf surfaces of each type in Table \ref{h15}, and for given a Poisson Hopf surface $(X,\Lambda_0)$, we compute the infinitesimal automorphisms $\mathbb{H}^0(X,\Theta_X^\bullet)$, and describe group $Aut(X,\Lambda_0)$ of (holomorphic) Poisson automorphisms for any Poisson Hopf surface $(X,\Lambda_0)$.

\subsubsection{Hopf surfaces of type $\textnormal{IV}$}\

We describe holomorphic Poisson structures on a Hopf surface of type $\textnormal{IV}$ defined by $f:(z,w)\to (\alpha z,\alpha w)$ for $0<|\alpha|<1$. A holomorphic bivector field on $X$ is induced from a $\langle f\rangle$-invariant holomorphic bivector field on $W$, equivalently an invariant bivector field on $W$ under $f^n:(z,w)\mapsto (z',w')=(\alpha^n z,\alpha^n w)$ for all $n\in\mathbb{Z}$. Let $g(z,w)\frac{\partial}{\partial z}\wedge \frac{\partial}{\partial w}$ be an $\langle f\rangle$-invariant holomorphic bivector field on $W$. Since $g(z,w)\frac{\partial}{\partial z}\wedge \frac{\partial}{\partial w}=g(z',w')\frac{\partial}{\partial z'}\wedge \frac{\partial}{\partial w'}$, and $\frac{\partial}{\partial z}\wedge \frac{\partial}{\partial w}=\alpha^{2n}\frac{\partial }{\partial z'}\wedge \frac{\partial}{\partial w'}$, we get $g(z,w)=\frac{1}{\alpha^{2n}}g(\alpha^n z,\alpha^n w)$. By Hartog's theorem, $g(z,w)$ is extended to a holomorphic function on the whole $\mathbb{C}^2$. Let $g(z,w)=\sum_{h,k\geq 0}^\infty c_{hk}z^hw^k$ be the power series expansion of $g(z,w)$. Then since $0<|\alpha|<1$,  we have 
\begin{align*}
g(z,w)=\lim_{n\to \infty}\frac{1}{\alpha^{2n}} \sum_{h,k \geq 0}c_{hk}\alpha^{hn}\alpha^{kn}z^h w^k=c_{20}z^2+c_{11}zw+c_{02}w^2.
\end{align*}
 Hence $H^0(X,\wedge^2 \Theta_X)$ is generated by $\left(z^2\frac{\partial}{\partial z}\wedge \frac{\partial}{\partial w}, zw\frac{\partial}{\partial z}\wedge \frac{\partial}{\partial w},w^2\frac{\partial}{\partial z}\wedge \frac{\partial}{\partial w}\right)$ which give holomorphic Poisson structures on $X$.

Now let $\Lambda_0=(Az^2+Bzw+Cw^2)\frac{\partial}{\partial z}\wedge \frac{\partial}{\partial w}$ be a Poisson structure on a Hopf surface of type $\textnormal{IV}$ defined by $f:(z,w)\to (\alpha z,\alpha w)$ for some constants $(A,B,C)\in \mathbb{C}^3$. We find infinitesimal Poisson automorphism $\mathbb{H}^0(X, \Theta_X^\bullet)=ker(H^0(X, \Theta_X)\xrightarrow{[\Lambda_0,-]} H^0(X, \wedge^2 \Theta_X))$. We note a basis of $H^0(X,\Theta_X)$ in Table \ref{h16}. Then

{\small{\begin{align}\label{h78}
&[(Az^2+Bzw+Cw^2)\frac{\partial}{\partial z}\wedge \frac{\partial}{\partial w}, (dz+ew)\frac{\partial}{\partial z}+ (fz+gw)\frac{\partial}{\partial w} ]\\
&=\left((-Ad-Bf+gA)z^2+(-2Ae-2Cf)zw+(Cd-Be-Cg)w^2\right)\frac{\partial}{\partial z}\wedge \frac{\partial}{\partial w}=0 \notag\\
&\iff  -Ad-Bf+gA=0,\,\,\,\,\,-2Ae-2Cf=0,\,\,\,\,\,Cd-Be-Cg=0.\notag
\end{align}}}
Let us represent our computation in the matrix form.
\begin{equation}\label{h35}
M\cdot v:=\left(
\begin{matrix}
-A & 0 & -B & A\\
0 & -2A & -2C & 0\\
C & -B & 0 & -C
\end{matrix}
\right)
\left(
\begin{matrix}
d\\
e\\
f\\
g
\end{matrix}
\right)
=
\left(
\begin{matrix}
-Ad-Bf+gA\\
-2Ae-2Cf\\
Cd-Be-Cg
\end{matrix}
\right)
\end{equation}
Since the first column and the last column of $M$ are dependent, and $\det \left(
\begin{matrix}
0 & -B & A\\
 -2A & -2C & 0\\
 -B & 0 & -C
\end{matrix}
\right)=0$, we see that if $(A,B,C)=0$, then $\dim_\mathbb{C} \mathbb{H}^0(X,\Theta_X^\bullet)=4$, and if $(A,B,C)\ne 0$, then $\dim_\mathbb{C} \mathbb{H}^0(X,\Theta_X^\bullet)=2$. Let us find the basis of $\mathbb{H}^0(X,\Theta_X^\bullet)$. 
\begin{enumerate}
\item If $\Lambda_0=0$, then we have
\begin{align}\label{h22}
\mathbb{H}^0(X,\Theta_X^\bullet)=Span_\mathbb{C}\left\langle z\frac{\partial}{\partial z}, w\frac{\partial}{\partial z}, z\frac{\partial}{\partial w}, w\frac{\partial}{\partial w} \right\rangle
\end{align}

\item If $\Lambda_0\ne 0$ (i.e. $(A,B,C)\ne0$), then $\mathbb{H}^0(X,\Theta_X^\bullet)$ is generated by $\left(\begin{matrix}
d\\
e\\
f\\
g
\end{matrix}
\right)=\left(
\begin{matrix}
1\\
0\\
0\\
1
\end{matrix}
\right)$, and
$\left(
\begin{matrix}
B\\
C\\
-A\\
0
\end{matrix}
\right)
$. In other words,
\begin{align}\label{h23}
\mathbb{H}^0(X,\Theta_X^\bullet)\cong Span_\mathbb{C} \left\langle  z\frac{\partial}{\partial z}+w\frac{\partial}{\partial w}, (Bz+Cw)\frac{\partial}{\partial z}-Az\frac{\partial}{\partial w} \right\rangle 
\end{align}
\end{enumerate}

Lastly we discuss groups of Poisson automorphisms of Hopf surfaces of type $\textnormal{IV}$.

\begin{lemma}
Let $X$ be a Hopf surface of type $\textnormal{IV}$. The group of Possion atuomorphisms of $(X,\Lambda_0=(Az^2+Bzw+Cw^2)\frac{\partial}{\partial z}\wedge \frac{\partial}{\partial w} ) $ is as follows.
\begin{enumerate}
\item if $(A,B,C)=0$,
\begin{align}\label{h20}
Aut(X,\Lambda_0=0)=Aut(X)=GL(2,\mathbb{C})/\langle f\rangle
\end{align} 
\item if $(A,B,C)\ne 0$,
\begin{align}\label{h21}
Aut(X,\Lambda_0)=\{(z,w)\mapsto ((a+bB)z+bCw,-bAz+aw) :a(a+bB)+b^2AC\ne 0  \} /\langle f\rangle
\end{align}
\end{enumerate}
\end{lemma}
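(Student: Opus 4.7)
Part (1) is immediate: every holomorphic automorphism preserves the zero Poisson structure, so $Aut(X,\Lambda_0=0)$ coincides with $Aut(X)=GL(2,\mathbb{C})/\langle f\rangle$ as recorded in Table \ref{h16}. For part (2), I will lift any Poisson automorphism to an element $T\in GL(2,\mathbb{C})$ acting linearly on $\mathbb{C}^2$ with entries $\alpha,\beta,\gamma,\delta$. Using $T_{*}\bigl(\tfrac{\partial}{\partial z}\wedge\tfrac{\partial}{\partial w}\bigr)=\det(T)\,\tfrac{\partial}{\partial z}\wedge\tfrac{\partial}{\partial w}$, the condition $T_{*}\Lambda_0=\Lambda_0$ becomes the polynomial identity $Q(T(z,w))=\det(T)\,Q(z,w)$, where $Q(z,w)=Az^2+Bzw+Cw^2$. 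Equating coefficients of $z^2,zw,w^2$ yields three quadratic relations in $(\alpha,\beta,\gamma,\delta)$, which are the group-level analogues of the infinitesimal relations appearing in $(\ref{h78})$.

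Next I will verify the stated parametrization by direct substitution: plugging in $\alpha=a+bB,\ \beta=bC,\ \gamma=-bA,\ \delta=a$ gives $\det T=a(a+bB)+b^2AC$, and each of the three relations reduces to a scalar multiple (by $A$, $B$, $C$ respectively) of this determinant, confirming that these matrices lie in $Aut(X,\Lambda_0)$ under the non-degeneracy $\det T\neq 0$. Writing $N=\bigl(\begin{smallmatrix}B&C\\-A&0\end{smallmatrix}\bigr)$, the parametrized family is precisely $\{aI+bN:\det(aI+bN)\neq 0\}$. The Cayley--Hamilton identity $N^2=BN-ACI$ shows that $\mathbb{C} I\oplus\mathbb{C} N$ is a commutative $2$-dimensional unital subalgebra of $M_2(\mathbb{C})$; consequently the parametrized set is a connected $2$-dimensional Lie subgroup of $GL(2,\mathbb{C})$ whose Lie algebra is spanned by the vector fields $z\tfrac{\partial}{\partial z}+w\tfrac{\partial}{\partial w}$ and $(Bz+Cw)\tfrac{\partial}{\partial z}-Az\tfrac{\partial}{\partial w}$. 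This agrees with $\mathbb{H}^0(X,\Theta_X^\bullet)$ as computed in $(\ref{h23})$, so the parametrized family coincides with the identity component of $Aut(X,\Lambda_0)$.

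The main remaining obstacle, which I expect to be the principal technical step, is ruling out additional components, and I plan to carry this out by a discriminant case analysis on $Q$. When $B^2-4AC\neq 0$, a linear change of basis brings $Q$ to the form $\tilde Bz'w'$; the three equations then force $\alpha\gamma=\beta\gamma=\beta\delta=0$ in the new basis, so that $T$ is diagonal, producing a connected $(\mathbb{C}^{*})^2$-torus which transports back to the stated family. When $B^2-4AC=0$ with $(A,B,C)\neq 0$, a linear change of coordinates reduces $Q$ to $Az^2$; the third relation immediately gives $\beta=0$, the first forces $\alpha=\delta$, and $\gamma$ remains free, again matching the parametrization. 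In both cases the full solution set is connected and coincides with the stated subgroup, completing the proof.
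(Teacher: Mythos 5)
Your proof is correct, and it takes the route the paper only mentions in passing. The paper's stated argument is that, since $Aut(X)$ consists of linear transformations, the subgroup preserving $\Lambda_0$ is ``completely determined'' by the infinitesimal Poisson automorphisms computed in $(\ref{h23})$; this implicitly assumes that $Aut(X,\Lambda_0)$ is connected (or at least recovered from its Lie algebra), which is exactly the point a skeptical reader would question. You instead carry out the direct computation: the invariance condition $Q(T(z,w))=\det(T)\,Q(z,w)$ for $Q=Az^2+Bzw+Cw^2$, the verification that the family $\{aI+bN\}$ with $N=\bigl(\begin{smallmatrix}B&C\\-A&0\end{smallmatrix}\bigr)$ satisfies it (each coefficient equation collapsing to $A$, $B$, $C$ times $\det T$), the Cayley--Hamilton observation that $\mathbb{C}I\oplus\mathbb{C}N$ is a commutative unital subalgebra whose unit group has Lie algebra matching $(\ref{h23})$, and finally the discriminant case analysis normalizing $Q$ to $\tilde Bz'w'$ or $Az'^2$ to show the solution set is exactly two-dimensional and irreducible, hence equal to the stated family. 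The two normal forms are handled correctly (diagonal torus in the split case; $\beta=0$, $\alpha=\delta$, $\gamma$ free in the degenerate case), and the conjugation-covariance of the condition under the change of basis is the right justification for working in the normal form. The net effect is that your argument closes the connectedness gap that the paper's one-line proof leaves implicit, at the cost of a somewhat longer case analysis; the paper's appeal to $(\ref{h23})$ buys brevity but, as written, does not by itself exclude extra components of $Aut(X,\Lambda_0)$.
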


\begin{proof}
Since $Aut(X)$ consists of linear transformations by Table $\ref{h16}$, $Aut(X,\Lambda_0)\subset Aut(X)$ preserving $\Lambda_0$ is completely determined by infinitesimal Poisson automorphisms $(\ref{h22})$  and $(\ref{h23})$ so that we get $(\ref{h20})$ and $(\ref{h21})$. We can also directly compute $Aut(X,\Lambda_0)$ by finding the subgroup of $Aut(X)$ which preserves $\Lambda_0$.
\end{proof}

We summarize our discussion in this subsection in Table $\ref{h28}$ and Table $\ref{h29}$.

\subsubsection{Hopf surfaces of type $\textnormal{III}$}\

We describe holomorphic Poisson structures on a Hopf surface of type $\textnormal{III}$ defined by $f:(z,w)\to (\delta^p  z,\delta w)$ for $p\in \mathbb{N}-\{0,1\}$, and $0<|\delta |<1$.  A holomorphic bivector field on $X$ is induced from a $\langle f\rangle$-invariant holomorphic bivector field on $W$, equivalently an invariant bivector field on $W$ under $f^n:(z,w)\mapsto (\delta^{np}z,\delta^nw)$ for all $n\in \mathbb{Z}$. Let $g(z,w)\frac{\partial}{\partial z}\wedge \frac{\partial}{\partial w}$ be an $\langle f\rangle$-invariant holomorphic bivector field on $W$. Since $g(z,w)\frac{\partial}{\partial z}\wedge \frac{\partial}{\partial w}=g(z',w')\frac{\partial}{\partial z'}\wedge \frac{\partial}{\partial w'}$, and $\frac{\partial}{\partial z}\wedge \frac{\partial}{\partial w}=\delta^{n(p+1)}\frac{\partial }{\partial z'}\wedge \frac{\partial}{\partial w'}$, we get $g(z,w)=\frac{1}{\delta^{n(p+1)}}g(\delta^{np} z,\delta^n w)$. By Hartog's theorem, $g(z,w)$ is extended to a holomorphic function on the whole $\mathbb{C}^2$. Let $g(z,w)=\sum_{h,k\geq 0}^\infty c_{hk}z^hw^k$ be the power series expansion of $g(z,w)$. Then since $0<|\delta|<1$,  we have 
\begin{align*}
g(z,w)=\lim_{n\to \infty}\frac{1}{\delta^{n(p+1)}} \sum_{h,k\geq 0} c_{hk}\delta^{nph} \delta^{nk}z^h w^k=\lim_{n\to \infty}\frac{1}{\delta^{n(p+1)}} \sum_{h,k\geq 0} c_{hk}\delta^{n(ph+k)}z^h w^k=c_{11}zw+c_{0(p+1)}w^{p+1}.
\end{align*}
Hence $H^0(X,\wedge^2 \Theta_X)$ is generated by $\left(zw\frac{\partial}{\partial z}\wedge \frac{\partial}{\partial w}, w^{p+1}\frac{\partial}{\partial z}\wedge \frac{\partial}{\partial w}\right)$ which give holomorphic Poisson structures on $X$.

Now let $\Lambda_0=(Azw+Bw^{p+1})\frac{\partial}{\partial z}\wedge \frac{\partial}{\partial w}$ be a Poisson structure on a Hopf surface of type $\textnormal{III}$ defined by $f:(z,w)\to (\delta^p z,\delta w)$ for $p\in \mathbb{N}-\{0,1\}$, and $0<|\delta| <1$ for some constants $(A,B)\in \mathbb{C}^2$. We find infinitesimal Poisson automorphisms $\mathbb{H}^0(X, \Theta_X^\bullet)=ker(H^0(X,\Theta_X)\xrightarrow{[\Lambda_0,-]} H^0(X, \wedge^2 \Theta_X))$. We note a basis of $H^0(X,\Theta_X)$ in Table $\ref{h16}$. Then
{\small{\begin{align}\label{h62}
&[(Azw+ Bw^{p+1})\frac{\partial}{\partial z}\wedge \frac{\partial}{\partial w},(dz+ew^p)\frac{\partial}{\partial z}+fw\frac{\partial}{\partial w}]=(Bd-Ae-pB f)w^{p+1}\frac{\partial}{\partial z}\wedge \frac{\partial}{\partial w}=0 \notag\\
&\iff Bd-Ae-pBf=0.\notag
\end{align}}}
If $A=B=0$ so that $\Lambda_0=0$, then $\mathbb{H}^0(X,\Theta_X^\bullet)=3$. If $\Lambda_0\ne 0$, then $\mathbb{H}^0(X,\Theta_X^\bullet)=2$. Let us find a basis of $\mathbb{H}^0(X, \Theta_X^\bullet)=ker(H^0(X, \Theta_X)\xrightarrow{[\Lambda_0,-]} H^0(X, \wedge^2 \Theta_X))$.
\begin{enumerate}
\item If $\Lambda_0\ne 0$, then $\mathbb{H}^0(X, \Theta_X^\bullet)=H^0(X, \Theta_X)$ so that
\begin{align}
\mathbb{H}^0(X, \Theta_X^\bullet)\cong Span_\mathbb{C} \left\langle z\frac{\partial}{\partial z}, w^p\frac{\partial}{\partial z}, w\frac{\partial}{\partial w}  \right\rangle
\end{align}
\item If $\Lambda_0=B w^{p+1}\frac{\partial }{\partial z}\wedge \frac{\partial}{\partial w}, B\ne 0$, then
\begin{align}
\mathbb{H}^0(X, \Theta_X^\bullet)\cong Span_\mathbb{C} \left\langle pz\frac{\partial}{\partial z}+w\frac{\partial}{\partial w}, w^p\frac{\partial}{\partial z} \right\rangle
\end{align}
\item If $\Lambda_0=(Azw+Bw^{p+1})\frac{\partial}{\partial z}\wedge \frac{\partial}{\partial w}, A\ne 0$, then
\begin{align}
\mathbb{H}^0(X, \Theta_X^\bullet)\cong Span_\mathbb{C} \left\langle \left( z+\frac{B}{A} w^p \right)\frac{\partial}{\partial z}  , -\frac{pB}{A}w^p\frac{\partial}{\partial z}+w\frac{\partial}{\partial w} \right\rangle
\end{align}
\end{enumerate}

Lastly we discuss groups of Poisson auotmorphisms of Hopf surfaces of type $\textnormal{III}$.
\begin{lemma}
Let $X$ be a Hopf surface of type $\textnormal{III}$. The group of Poisson automorphisms of $(X,\Lambda_0=(Azw+Bw^{p+1})\frac{\partial}{\partial z}\wedge \frac{\partial}{\partial w} ) $ is as follows.
\begin{enumerate}
\item if $\Lambda_0=0$,
\begin{align}
Aut(X,\Lambda_0)=Aut(X)=\{(z,w)\mapsto (az+bw^p,dw):ad\ne 0, b\in \mathbb{C}\}/\langle f\rangle
\end{align}
\item if $\Lambda_0=Bw^{p+1}\frac{\partial}{\partial z}\wedge \frac{\partial}{\partial w},B\ne 0$,
\begin{align}\label{o25}
Aut(X,\Lambda_0)=\{(z,w) \mapsto (d^p z+b w^p,dw):d\ne 0, b\in \mathbb{C}\}/\langle f \rangle
\end{align}
\item if $\Lambda_0=(Azw+Bw^{p+1})\frac{\partial}{\partial z}\wedge \frac{\partial}{\partial w}, A\ne 0$,
\begin{align}\label{o26}
Aut(X,\Lambda_0)=\{(z,w)\mapsto \left( az+\frac{B}{A}(a-d^p)w^p  , dw  \right): ad\ne 0\}/\langle f \rangle
\end{align}
\end{enumerate}
\end{lemma}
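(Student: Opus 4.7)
The plan is to mirror the strategy used in the preceding lemma for type IV Hopf surfaces: since the full automorphism group $Aut(X)=\{(z,w)\mapsto (az+bw^p,dw):ad\ne 0,\,b\in\mathbb{C}\}/\langle f\rangle$ is already known from Table \ref{h16}, I would identify $Aut(X,\Lambda_0)$ as the subgroup of those $\phi\in Aut(X)$ with $\phi_*\Lambda_0=\Lambda_0$, parametrize $\phi$ by $(a,b,d)$, and translate the invariance condition into a polynomial equation in these parameters.

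The key computation is the pushforward of the bivector. Writing $\phi(z,w)=(z',w')=(az+bw^p,dw)$, one has $\phi_*\partial_z=a\partial_{z'}$ and $\phi_*\partial_w=pbw^{p-1}\partial_{z'}+d\partial_{w'}$, hence $\phi_*\partial_z\wedge\phi_*\partial_w=ad\,\partial_{z'}\wedge\partial_{w'}$. Setting $g(z,w)=Azw+Bw^{p+1}$, the invariance $\phi_*\Lambda_0=\Lambda_0$ reduces to the single scalar identity $ad\cdot g(z,w)=g(\phi(z,w))$. Expanding,
\begin{equation*}
Aad\,zw+Bad\,w^{p+1}=A(az+bw^p)(dw)+B(dw)^{p+1}=Aad\,zw+(Abd+Bd^{p+1})w^{p+1},
\end{equation*}
which collapses to the single relation $Ab=B(a-d^p)$ (after dividing by $d\ne 0$).

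From this relation the three cases fall out immediately: when $A=B=0$ the condition is vacuous and one recovers the whole $Aut(X)$; when $A=0$ and $B\ne 0$, one must have $a=d^p$ with $b$ free, yielding the group in (ii); when $A\ne 0$, the parameter $b$ is determined by $a,d$ via $b=\tfrac{B(a-d^p)}{A}$, yielding the group in (iii). Finally I would verify compatibility with the covering group: the generator $f(z,w)=(\delta^p z,\delta w)$ corresponds to $(a,b,d)=(\delta^p,0,\delta)$, which trivially satisfies $Ab=B(a-d^p)$, so passing to the quotient by $\langle f\rangle$ is legitimate in all three cases. There is no real analytic obstacle here — the exercise is purely algebraic — but the one place to be careful is to compute $\phi_*\partial_w$ correctly (the $pbw^{p-1}$ term is essential) and to remember that although this term contributes to the Jacobian action on $\partial_{z'}$, it cancels in the wedge product, leaving only the Jacobian determinant $ad$ on the right-hand side.
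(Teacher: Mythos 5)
Your proposal is correct and follows essentially the same route as the paper: restrict the known group $Aut(X)$ from Table \ref{h16} to the maps preserving $\Lambda_0$, use that the wedge of the pushforwards contributes only the Jacobian determinant $ad$, and read off the constraint on $(a,b,d)$. The only difference is cosmetic — you derive the single unified relation $Ab=B(a-d^p)$ and then specialize, whereas the paper treats cases (2) and (3) by separate (equivalent) computations.
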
 

\begin{proof}
We note that by Table $\ref{h16}$, we have $Aut(X)=\{(z,w)\mapsto (z',w')=(az+bw^p, dw): ad\ne 0, b\in\mathbb{C}\} / \langle f \rangle$. Note that $Aut(X,\Lambda_0)\subset Aut(X)$ preserves $\Lambda_0$. Assume that $(z,w)\mapsto (z',w')=(az+bw^p,dw)$ with $ad\ne 0, b\in \mathbb{C}$ preserves $\Lambda_0$. Then
\begin{enumerate}
\item if $\Lambda_0=Bw^{p+1}\frac{\partial}{\partial z}\wedge \frac{\partial}{\partial w},B\ne 0$, since $\frac{\partial}{\partial z}\wedge \frac{\partial}{\partial w}=ad\frac{\partial}{\partial z'}\wedge \frac{\partial}{\partial w'}$, we have
\begin{align*}
Bw^{p+1}ad=B(dw)^{p+1}=Bd^{p+1}w^{p+1}\iff a=d^p
\end{align*}
so that we get $(\ref{o25})$.
\item if $\Lambda_0=(Azw+Bw^{p+1})\frac{\partial}{\partial z}\wedge \frac{\partial}{\partial w}, A\ne 0$, 
\begin{align*}
(Azw+Bw^{p+1})ad=A(az+bw^p)dw+B(dw)^{p+1} \iff b=\frac{B}{A}(a-d^p)
\end{align*}
so that we get $(\ref{o26})$.
\end{enumerate}
\end{proof}

We summarize our discussion in this section in Table $\ref{h28}$ and Table $\ref{h29}$.

\subsubsection{Hopf surfaces of type $\textnormal{II}_a$}\

We describe holomorphic Poisson structures on a Hopf surface of type $\textnormal{II}_a$ defined by $f:(z,w)\to (\delta^p z +w^p ,\delta w)$ for $p\in \mathbb{N}-\{0,1\}$ and $0<|\delta|<1$. A holomorphic bivector field on $X$ is induced from a $\langle f\rangle$-invariant holomorphic bivector field on $W$, equivalently an invariant bivector field on $W$ under $f^n:(z,w)\mapsto (z',w')=(\delta^{np}z+n\delta^{(n-1)p}w^p,\delta^n w)$ for all $n\in \mathbb{Z}$. Let $g(z,w)\frac{\partial}{\partial z}\wedge \frac{\partial}{\partial w}$ be an $\langle f\rangle$-invariant holomorphic bivector field on $W$. Since $g(z,w)\frac{\partial}{\partial z}\wedge \frac{\partial}{\partial w}=g(z',w')\frac{\partial}{\partial z'}\wedge \frac{\partial}{\partial w'}$, and $\frac{\partial}{\partial z}=\delta^{np} \frac{\partial}{\partial z'}$ and $\frac{\partial}{\partial w}=np\delta^{(n-1)p}w^{p-1}\frac{\partial}{\partial z'}+\delta^n \frac{\partial}{\partial w'}$ so that $\frac{\partial}{\partial z}\wedge \frac{\partial}{\partial w}=\delta^{n(p+1)}\frac{\partial }{\partial z'}\wedge \frac{\partial}{\partial w'}$, we get $g(z,w)=\frac{1}{\delta^{n(p+1)}}g(\delta^{(n-1)p}(\delta^p z+nw^p),\delta^n w)$. By Hartog's theorem, $g(z,w)$ is extended to a holomorphic function on the whole $\mathbb{C}^2$. Let $g(z,w)=\sum_{h,k\geq 0}^\infty c_{hk}z^hw^k$ be the power series expansion of $g(z,w)$. Then since $0<|\delta|<1$,  we have 
\begin{align*}
g(z,w)=\lim_{n\to \infty} \frac{1}{\delta^{n(p+1)}} \sum_{h,k\geq 0} c_{hk}\delta^{(n-1)ph}\delta^{nk} (\delta^pz+nw^p)^h w^k=c_{0(p+1)} w^{p+1}.
\end{align*}
 Hence $H^0(X,\wedge^2 \Theta_X)$ is generated by $\left(w^{p+1}\frac{\partial}{\partial z}\wedge \frac{\partial}{\partial w}\right)$ which give holomorphic Poisson structures on $X$.
 
 Now let $\Lambda_0=Aw^{p+1}\frac{\partial}{\partial z}\wedge \frac{\partial}{\partial w}$ be a Poisson structure on a Hopf surface of type $\textnormal{II}_a$ defined by $f:(z,w)\to (\delta^p z +w^p ,\delta w)$ for $p\in \mathbb{N}-\{0,1\}$ and $0<|\delta|<1$ for some constant $A\in \mathbb{C}$. We find infinitesimal Poisson auotmorphisms $\mathbb{H}^0(X, \Theta_X^\bullet)=ker(H^0(X, \Theta_X)\xrightarrow{[\Lambda_0, -]} H^0(X, \wedge^2 \Theta_X))$.  We note a basis of $H^0(X,\Theta_X)$ in Table $\ref{h16}$. Then
 \begin{align}\label{h63}
&[A w^{p+1}\frac{\partial}{\partial z}\wedge \frac{\partial}{\partial w},(cpz+dw^p)\frac{\partial}{\partial z}+cw\frac{\partial}{\partial w}]=0
\end{align}
Hence we have $\mathbb{H}^0(X,\Theta_X^\bullet)=H^0(X, \Theta_X)$ so that
\begin{align}
\mathbb{H}^0(X, \Theta_X^\bullet)\cong Span_\mathbb{C} \left\langle  pz\frac{\partial}{\partial z}+w\frac{\partial}{\partial w}, w^p\frac{\partial}{\partial z}   \right\rangle
\end{align}

Lastly we discuss groups of Poisson automorphisms of Hopf surfaces of type $\textnormal{II}_a$.
\begin{lemma}
Let $X$ be a Hopf surface of type $\textnormal{II}_a$. The group of Poisson automorphisms of $(X,\Lambda_0=Aw^{p+1}\frac{\partial}{\partial z}\wedge \frac{\partial}{\partial w})$ is
\begin{align*}
Aut(X,\Lambda_0)=Aut(X)=\{(z,w)\mapsto (a^pz+bw^p,aw):a\in \mathbb{C}^*,b\in \mathbb{C}\}/\langle f \rangle
\end{align*}
\end{lemma}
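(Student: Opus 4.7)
The plan is to establish $Aut(X,\Lambda_0)=Aut(X)$ by proving both inclusions. The inclusion $Aut(X,\Lambda_0)\subseteq Aut(X)$ is immediate from the definitions, so all the content lies in the reverse inclusion: every automorphism of $X$ preserves $\Lambda_0=Aw^{p+1}\frac{\partial}{\partial z}\wedge\frac{\partial}{\partial w}$. By Table \ref{h16}, a representative of any class in $Aut(X)$ has the form $\phi\colon (z,w)\mapsto (z',w')=(a^pz+bw^p,\,aw)$ with $a\in\mathbb{C}^*$ and $b\in\mathbb{C}$, so it suffices to check invariance under such a $\phi$.

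Next I would perform the direct computation. The Jacobian of $\phi$ is upper triangular with diagonal entries $a^p$ and $a$, which gives
\begin{equation*}
\frac{\partial}{\partial z}\wedge\frac{\partial}{\partial w}=a^{p+1}\,\frac{\partial}{\partial z'}\wedge\frac{\partial}{\partial w'},
\end{equation*}
while $w^{p+1}=a^{-(p+1)}w'^{p+1}$. The two factors of $a^{p+1}$ cancel, so
\begin{equation*}
Aw^{p+1}\frac{\partial}{\partial z}\wedge\frac{\partial}{\partial w}=Aw'^{p+1}\frac{\partial}{\partial z'}\wedge\frac{\partial}{\partial w'},
\end{equation*}
which is exactly the condition that $\phi$ is a Poisson automorphism of $(X,\Lambda_0)$. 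Combined with the observation that the contraction $f(z,w)=(\delta^pz+w^p,\delta w)$ itself lies in this set (it is the case $a=\delta$, $b=1$), passing to the quotient by $\langle f\rangle$ is compatible with preserving $\Lambda_0$, so the identification descends to the quotient.

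The computation offers no real obstacle; the only point requiring attention is to confirm that the exponent $p+1$ appearing in the Jacobian determinant matches the exponent of $w$ in $\Lambda_0$. This is not an accident: it is the same weight condition that forced $w^{p+1}\frac{\partial}{\partial z}\wedge\frac{\partial}{\partial w}$ to be the unique (up to scalar) $f$-invariant bivector in the earlier derivation of $H^0(X,\wedge^2\Theta_X)$. As a consistency check, one can instead invoke the infinitesimal calculation $(\ref{h63})$, which shows $[\Lambda_0,\theta]=0$ for every $\theta\in H^0(X,\Theta_X)$; since $Aut(X)$ in Table \ref{h16} is connected (parametrized by $(a,b)\in\mathbb{C}^*\times\mathbb{C}$), it is generated by flows of global holomorphic vector fields, each of which preserves $\Lambda_0$, yielding the same conclusion.
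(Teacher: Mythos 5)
Your proof is correct and follows the same route as the paper: the paper states this lemma without a written proof, but the direct check that $(z,w)\mapsto(a^pz+bw^p,aw)$ scales $\frac{\partial}{\partial z}\wedge\frac{\partial}{\partial w}$ by $a^{p+1}$ while $w^{p+1}$ picks up the compensating factor $a^{-(p+1)}$ is exactly the computation the paper performs for the analogous type $\textnormal{III}$ lemma. Your alternative argument via the infinitesimal computation $(\ref{h63})$ together with connectedness of $Aut(X)$ is likewise the mechanism the paper cites in its type $\textnormal{IV}$ proof, so both of your routes are consistent with the paper's method.
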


We summarize our discussion in this subsection in Table $\ref{h28}$ and Table $\ref{h29}$.

\subsubsection{Hopf surfaces of type $\textnormal{II}_b$}\

We describe holomorphic Poisson structures on a Hopf surface of type $\textnormal{IV}$ defined by $f:(z,w)\to (\alpha z+w,\alpha w)$ for $0<|\alpha|<1$. A holomorphic bivector field on $X$ is induced from a $\langle f\rangle$-invariant holomorphic bivector field on $W$, equivalently an invariant bivector field on $W$ under $f^n:(z,w)\mapsto (z',w')=(\alpha^n z+n\alpha^{n-1}w,\alpha^n w)$ for all $n\in \mathbb{Z}$. Let $g(z,w)\frac{\partial}{\partial z}\wedge \frac{\partial}{\partial w}$ be an $\langle f\rangle$-invariant holomorphic bivector field on $W$. Since $g(z,w)\frac{\partial}{\partial z}\wedge \frac{\partial}{\partial w}=g(z',w')\frac{\partial}{\partial z'}\wedge \frac{\partial}{\partial w'}$, and $\frac{\partial}{\partial z}\wedge \frac{\partial}{\partial w}=\alpha^{2n}\frac{\partial }{\partial z'}\wedge \frac{\partial}{\partial w'}$, we get $g(z,w)=\frac{1}{\alpha^{2n}}g(\alpha^n z+n\alpha^{n-1}w,\alpha^n w)$. By Hartog's theorem, $g(z,w)$ is extended to a holomorphic function on the whole $\mathbb{C}^2$. Let $g(z,w)=\sum_{h,k\geq 0}^\infty c_{hk}z^hw^k$ be the power series expansion of $g(z,w)$. Then since $0<|\alpha|<1$,  we have 
\begin{align*}
g(z,w)=\lim_{n\to \infty}\frac{1}{\alpha^{2n}}\sum_{h,k\geq 0}c_{hk}(\alpha^n z+n\alpha^{n-1} w)^h\alpha^{kn} w^k=\lim_{n\to \infty}\frac{1}{\alpha^{2n}}\sum_{h,k\geq 0}c_{hk}\alpha^{(n-1)h}\alpha^{nk}(\alpha z+n w)^h  w^k=c_{02}w^2
\end{align*}
 Hence $H^0(X,\wedge^2 \Theta_X)$ is generated by $\left(w^2\frac{\partial}{\partial z}\wedge \frac{\partial}{\partial w}\right)$ which give holomorphic Poisson structures on $X$.
 
 Now let $\Lambda_0=Aw^2\frac{\partial}{\partial z}\wedge \frac{\partial}{\partial w}$ be a Poisson structure on a Hopf surface of type $\textnormal{II}_b$ defined by $f:(z,w)\to (\alpha z+w,\alpha w)$ for $0<|\alpha|<1$ for some constant $A\in \mathbb{C}$. We find infinitesimal Poisson auotmorphisms $\mathbb{H}^0(X, \Theta_X^\bullet)=ker(H^0(X, \Theta_X)\xrightarrow{[\Lambda_0, -]} H^0(X, \wedge^2 \Theta_X))$. We note a basis of $H^0(X, \Theta_X)$ in Table $\ref{h16}$. Then
\begin{align}\label{h64}
&[Aw^2\frac{\partial}{\partial z}\wedge \frac{\partial}{\partial w}, (bz+cw)\frac{\partial}{\partial z}+bw\frac{\partial}{\partial w}]=0
\end{align}
Hence we have $\mathbb{H}^0(X, \Theta_X^\bullet)=H^0(X, \Theta_X)$ so that
\begin{align*}
\mathbb{H}^0(X, \Theta_X^\bullet)= Span_\mathbb{C} \left\langle z\frac{\partial}{\partial z}+w\frac{\partial}{\partial w}, w\frac{\partial}{\partial z}   \right\rangle
\end{align*}

Lastly we discuss groups of Poisson automorphisms of Hopf surfaces of type $\textnormal{II}_b$.
\begin{lemma}
Let $X$ be a Hopf surface of type $\textnormal{II}_b$. The group of Poisson automorphisms of $(X,\Lambda_0=Aw^2\frac{\partial}{\partial z}\wedge \frac{\partial}{\partial w})$ is
\begin{align*}
Aut(X,\Lambda_0)=Aut(X)=\{(z,w)\mapsto (az+bw,aw):a\in \mathbb{C}^*,b\in \mathbb{C}\}/\langle f \rangle
\end{align*}
\end{lemma}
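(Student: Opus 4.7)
The plan is to prove the equality $Aut(X,\Lambda_0)=Aut(X)$ by direct verification that every holomorphic automorphism of $X$ preserves the Poisson tensor $\Lambda_0=Aw^2\tfrac{\partial}{\partial z}\wedge\tfrac{\partial}{\partial w}$. The inclusion $Aut(X,\Lambda_0)\subset Aut(X)$ is tautological, so the substantive content is the reverse inclusion.

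First I would invoke Table \ref{h16} to write an arbitrary element of $Aut(X)$ in the form $(z,w)\mapsto(z',w')=(az+bw,aw)$ with $a\in\mathbb{C}^*$, $b\in\mathbb{C}$, modulo $\langle f\rangle$. Then I would compute the pullback of $\Lambda_0$. The Jacobian of this map is upper triangular with diagonal entries $(a,a)$, so its determinant is $a^2$, which gives
\[
\frac{\partial}{\partial z}\wedge\frac{\partial}{\partial w}=a^2\,\frac{\partial}{\partial z'}\wedge\frac{\partial}{\partial w'}.
\]
Combined with $w'=aw$, this yields
\[
Aw^2\,\frac{\partial}{\partial z}\wedge\frac{\partial}{\partial w}=A(aw)^2\,\frac{\partial}{\partial z'}\wedge\frac{\partial}{\partial w'}=A(w')^2\,\frac{\partial}{\partial z'}\wedge\frac{\partial}{\partial w'},
\]
so $\Lambda_0$ is preserved and every element of $Aut(X)$ lies in $Aut(X,\Lambda_0)$.

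As a conceptual cross-check, this is consistent with the computation $(\ref{h64})$ above, which shows $\mathbb{H}^0(X,\Theta_X^\bullet)=H^0(X,\Theta_X)$: infinitesimally every automorphism is Poisson, mirroring the global statement. There is no real obstacle here — the only mild care required is to ensure the identification of $Aut(X)$ from Table \ref{h16} is used exactly, and to confirm that the generator $f(z,w)=(\alpha z+w,\alpha w)$ itself lies in the group on the right (which it does, taking $a=\alpha$, $b=1$), so quotienting by $\langle f\rangle$ is well-defined in both $Aut(X)$ and $Aut(X,\Lambda_0)$.
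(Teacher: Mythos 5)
Your verification is correct and matches the approach the paper uses for the analogous lemmas (the paper states this particular lemma without proof, but its proofs for types $\textnormal{IV}$ and $\textnormal{III}$ proceed by exactly this direct computation of how $\frac{\partial}{\partial z}\wedge\frac{\partial}{\partial w}$ transforms under the linear automorphisms from Table \ref{h16}). The Jacobian determinant $a^2$ and the relation $w'=aw$ do give $Aw^2\frac{\partial}{\partial z}\wedge\frac{\partial}{\partial w}=A(w')^2\frac{\partial}{\partial z'}\wedge\frac{\partial}{\partial w'}$, so every element of $Aut(X)$ preserves $\Lambda_0$ and the stated equality holds.
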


We summarize our discussion in this section in Table $\ref{h28}$ and Table $\ref{h29}$.

\subsubsection{Hopf surfaces of type $\textnormal{II}_c$}\

We describe holomorphic Poisson structures on a Hopf surface of type $\textnormal{IV}$ defined by $f:(z,w)\to (\alpha z,\delta w)$ for $0<|\alpha|<|\delta | <1$ and $\alpha \ne \delta^p$ for all $p \in \mathbb{N}$. A holomorphic bivector field on $X$ is induced from a $\langle f\rangle$-invariant holomorphic bivector field on $W$, equivalently an invariant bivector field on $W$ under $f^n:(z,w)\mapsto (z',w')=(\alpha^n z,\delta^n w)$ for all $n\in \mathbb{Z}$. Let $g(z,w)\frac{\partial}{\partial z}\wedge \frac{\partial}{\partial w}$ be an $\langle f\rangle$-invariant holomorphic bivector field on $W$. Since $g(z,w)\frac{\partial}{\partial z}\wedge \frac{\partial}{\partial w}=g(z',w')\frac{\partial}{\partial z'}\wedge \frac{\partial}{\partial w'}$, and $\frac{\partial}{\partial z}\wedge \frac{\partial}{\partial w}=\alpha^{n}\delta^n\frac{\partial }{\partial z'}\wedge \frac{\partial}{\partial w'}$, we get $g(z,w)=\frac{1}{\alpha^{n}\delta^n}g(\alpha^n z,\delta^n w)$. By Hartog's theorem, $g(z,w)$ is extended to a holomorphic function on the whole $\mathbb{C}^2$. Let $g(z,w)=\sum_{h,k\geq 0}^\infty c_{hk}z^hw^k$ be the power series expansion of $g(z,w)$. Then since $0<|\alpha|<|\delta| <1$ and $\alpha\ne \delta^p$ for all $p\in \mathbb{N}$,  we have 
\begin{align*}
g(z,w)=\lim_{n\to \infty}\frac{1}{\alpha^{n}\delta^n} \sum_{h,k \geq 0}c_{hk}\alpha^{hn}\delta^{kn}z^h w^k=c_{11}zw.
\end{align*}
 Hence $H^0(X,\wedge^2 \Theta_X)$ is generated by $\left(zw\frac{\partial}{\partial z}\wedge \frac{\partial}{\partial w}\right)$ which give holomorphic Poisson structures on $X$.
 
 Now let $\Lambda_0=Azw\frac{\partial}{\partial z}\wedge \frac{\partial}{\partial w}$ on a Hopf surface of type $\textnormal{IV}$ defined by $f:(z,w)\to (\alpha z,\delta w)$ for $0<|\alpha|<|\delta | <1$ and $\alpha \ne \delta^p$ for all $p \in \mathbb{N}$ for some constant $A\in \mathbb{C}$. We find infinitesimal Poisson auotmorphisms $\mathbb{H}^0(X, \Theta_X^\bullet)=ker(H^0(X, \Theta_X)\xrightarrow{[\Lambda_0, -]} H^0(X, \wedge^2 \Theta_X))$. We note a basis of $H^0(X, \Theta_X)$ in Table $\ref{h16}$. Then
\begin{align}\label{h65}
&[Azw\frac{\partial}{\partial z}\wedge \frac{\partial}{\partial w}, bz\frac{\partial}{\partial z}+cw\frac{\partial}{\partial w}]=0
\end{align}
Hence we have $\mathbb{H}^0(X,\Theta_X^\bullet)=H^0(X,\Theta_X)$ so that
\begin{align}
\mathbb{H}^0(X, \Theta_X^\bullet)=Span_\mathbb{C} \left\langle z\frac{\partial}{\partial z}, w\frac{\partial}{\partial w} \right\rangle
\end{align}

Lastly we discuss groups of Poisson automorphisms of Hopf surfaces of type $\textnormal{II}_c$.
\begin{lemma}
Let $X$ be a Hopf surface of type $\textnormal{II}_c$. The group of Poisson automorphisms of $(X,\Lambda_0=Azw\frac{\partial}{\partial z}\wedge \frac{\partial}{\partial w})$ is
\begin{align*}
Aut(X,\Lambda_0)=Aut(X)=\{(z,w)\mapsto (az,dw):ad\ne 0\}/\langle f \rangle
\end{align*}
\end{lemma}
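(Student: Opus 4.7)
The plan is to follow exactly the template established in the previous three lemmas for Hopf surfaces of type $\textnormal{IV}, \textnormal{III}, \textnormal{II}_a, \textnormal{II}_b$. Since we have just computed that $\mathbb{H}^0(X,\Theta_X^\bullet)=H^0(X,\Theta_X)$ via $(\ref{h65})$, every infinitesimal automorphism is an infinitesimal Poisson automorphism, so we expect the containment $Aut(X,\Lambda_0)\supset Aut(X)$ to become equality; the reverse containment is tautological.

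Concretely, I would start from the description $Aut(X)=\{(z,w)\mapsto(az,dw):ad\neq 0\}/\langle f\rangle$ taken from Table $\ref{h16}$, pick an arbitrary representative $\varphi:(z,w)\mapsto(z',w')=(az,dw)$, and show directly that $\varphi_*\Lambda_0=\Lambda_0$. The only calculation needed is: under $z=z'/a$, $w=w'/d$, one has $zw=z'w'/(ad)$ and $\tfrac{\partial}{\partial z}\wedge\tfrac{\partial}{\partial w}=ad\,\tfrac{\partial}{\partial z'}\wedge\tfrac{\partial}{\partial w'}$, so that
\begin{align*}
A\,zw\,\frac{\partial}{\partial z}\wedge\frac{\partial}{\partial w}
\;=\;A\,\frac{z'w'}{ad}\cdot ad\,\frac{\partial}{\partial z'}\wedge\frac{\partial}{\partial w'}
\;=\;A\,z'w'\,\frac{\partial}{\partial z'}\wedge\frac{\partial}{\partial w'},
\end{align*}
which is $\Lambda_0$ expressed in the new coordinates. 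Hence every element of $Aut(X)$ preserves $\Lambda_0$, and combined with $Aut(X,\Lambda_0)\subset Aut(X)$ we obtain equality.

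There is essentially no obstacle here: the key simplification is that $H^0(X,\wedge^2\Theta_X)$ is one-dimensional and spanned by $zw\tfrac{\partial}{\partial z}\wedge\tfrac{\partial}{\partial w}$, which is the unique (up to scalar) semi-invariant of the diagonal $\mathbb{C}^*\times\mathbb{C}^*$-action of weight $(1,1)\otimes(-1,-1)=0$; the bicharacter is trivial, so the action is in fact invariant. This is precisely the structural reason the computation is so short, and it parallels the $\textnormal{II}_a$ and $\textnormal{II}_b$ cases (and the $\Lambda_0=0$ subcase of $\textnormal{III}$), where again $[\Lambda_0,-]$ vanished on all of $H^0(X,\Theta_X)$ in $(\ref{h63})$, $(\ref{h64})$, and we concluded $Aut(X,\Lambda_0)=Aut(X)$. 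No new ideas are required beyond matching the pattern.
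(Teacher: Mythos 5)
Your proof is correct and follows the same approach the paper uses for the analogous lemmas (the paper states this particular lemma without an explicit proof, but the direct verification that $(z,w)\mapsto(az,dw)$ sends $Azw\,\frac{\partial}{\partial z}\wedge\frac{\partial}{\partial w}$ to $Az'w'\,\frac{\partial}{\partial z'}\wedge\frac{\partial}{\partial w'}$ is exactly the computation carried out explicitly in the type $\textnormal{III}$ case). The weight-counting remark correctly identifies why the check is trivial here, and no step is missing.
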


We summarize our discussion in this subsection in Table $\ref{h28}$ and Table $\ref{h29}$.

\begin{table}
\begin{center}
\begin{tabular}{| c | c | c | c |c|} \hline
type & $\dim H^0(X, \wedge^2 \Theta_X)$ & basis of $H^0(X, \wedge^2 \Theta_X)$ & $f(z,w)$ & parameters \\ \hline
$\textnormal{IV}$ & 3 & $z^2\frac{\partial}{\partial z}\wedge \frac{\partial}{\partial w},zw\frac{\partial}{\partial z}\wedge \frac{\partial}{\partial w},w^2\frac{\partial}{\partial z}\wedge \frac{\partial}{\partial w}$ & $(\alpha z,\alpha w)$ & $0<|\alpha|<1$ \\ \hline
$\textnormal{III}$ & 2 & $zw\frac{\partial}{\partial z}\wedge \frac{\partial}{\partial w}, w^{p+1}\frac{\partial}{\partial z}\wedge \frac{\partial}{\partial w}$ & $(\delta^p z,\delta w)$ & $p\in \mathbb{N}-\{0,1\},0<|\delta|<1$\\ \hline
$\textnormal{II}_a$ & 1 &  $w^{p+1}\frac{\partial}{\partial z}\wedge \frac{\partial}{\partial w}$   & $(\delta^pz+w^p,\delta w)$ & $p \in \mathbb{N}-\{0,1\},0<|\delta|<1$ \\ \hline
$\textnormal{II}_b$ & 1  &  $w^2\frac{\partial}{\partial z}\wedge \frac{\partial}{\partial w}$  & $(\alpha z +w, \alpha w)$ & $0<|\alpha|<1$ \\ \hline
$\textnormal{II}_c$ & 1 &  $zw\frac{\partial}{\partial z}\wedge \frac{\partial}{\partial w}$   & $(\alpha z,\delta w)$ & $0<|\alpha|<|\delta| <1, \alpha\ne \delta^p$ for all $p\in \mathbb{N}$ \\ \hline
\end{tabular}
\end{center}
\caption{Holomorphic Poisson structures on Hopf surfaces}\label{h28}
\end{table}

\begin{table}\label{aa1}
{\small{\begin{center}
\begin{tabular}{| c | c | c | c |c|} \hline
type & Poisson structure $\Lambda_0$ & basis of $ \mathbb{H}^0(X,\Theta_X^\bullet)$ & $Aut(X,\Lambda_0)$  \\ \hline
$\textnormal{IV}$ & 0 &  $z\frac{\partial}{\partial z}, w\frac{\partial}{\partial z},z\frac{\partial}{\partial w},w\frac{\partial}{\partial w}$ & $GL(2,\mathbb{C})/\langle f \rangle$ \\ \hline
$\textnormal{IV}$ & $(Az^2+Bzw+Cw^2)\frac{\partial}{\partial z}\wedge \frac{\partial}{\partial w}$ &  & $\{(z,w)\mapsto ((a+bB)z+bCw,-bAz+aw)\}/\langle f\rangle$\\ \cline{5-5}
 & $(A,B,C)\ne 0$ & $z\frac{\partial}{\partial z}+w\frac{\partial}{\partial w},(Bz+Cw)\frac{\partial}{\partial z}-Az\frac{\partial}{\partial w}$ &  where $a(a+bB)+b^2AC\ne0$ \\ \hline
$\textnormal{III}$ & 0 &  $z\frac{\partial}{\partial z},w^p\frac{\partial}{\partial z}, w\frac{\partial}{\partial w}$ & $\{(z,w)\mapsto (az+bw^p,dw):ad\ne 0, b\in\mathbb{C}\}/\langle f\rangle$ \\ \hline
$\textnormal{III}$ &  $Bw^{p+1}\frac{\partial}{\partial z}\wedge \frac{\partial}{\partial w},B\ne 0$ & $pz\frac{\partial}{\partial z}+w\frac{\partial}{\partial w}, w^p \frac{\partial}{\partial z}$  & $\{(z,w)\mapsto (d^pz+bw^p , dw): d \in \mathbb{C}^*,b \in \mathbb{C}\}/\langle f\rangle$ \\ \hline
$\textnormal{III}$ & $(Azw+Bw^{p+1})\frac{\partial}{\partial z}\wedge \frac{\partial}{\partial w},A\ne 0$ & $(z+\frac{B}{A}w^p)\frac{\partial}{\partial z}, -\frac{pB}{A}w^p\frac{\partial}{\partial z}+w\frac{\partial}{\partial w}$  & $\{(z,w)\mapsto az+\frac{B}{A}(a-d^p)w^p,dw):ad\ne 0\}/\langle f\rangle$\\ \hline
$\textnormal{II}_a$ & $Aw^{p+1}\frac{\partial}{\partial z}\wedge \frac{\partial}{\partial w}$  & $pz\frac{\partial}{\partial z}+w\frac{\partial}{\partial w},w^p\frac{\partial}{\partial z}$   & $\{(z,w)\mapsto (a^pz+bw^p , aw): a \in \mathbb{C}^*,b \in \mathbb{C}\}/\langle f\rangle$ \\ \hline
$\textnormal{II}_b$ &  $Aw^2\frac{\partial}{\partial z}\wedge \frac{\partial}{\partial w}$ & $z\frac{\partial}{\partial z}+w\frac{\partial}{\partial w}, w\frac{\partial}{\partial z}$ & $\{(z,w)\mapsto (az+bw , aw): a \in \mathbb{C}^*,b \in \mathbb{C}\}/\langle f\rangle$   \\ \hline
$\textnormal{II}_c$ & $Azw\frac{\partial}{\partial z}\wedge \frac{\partial}{\partial w}$ & $z\frac{\partial}{\partial z}, w\frac{\partial}{\partial w}$ & $\{(z,w)\mapsto (az,dw):ad\ne 0\}/\langle f\rangle
$ \\ \hline
\end{tabular}
\end{center}}}
\caption{Infinitesimal Poisson automorphisms and Groups of Poisson automorphisms on Poisson Hopf surfaces} \label{h29}
\end{table}

\begin{remark}
Let $X$ be any Hopf surface. Since the first Chern class $c_1(X)=0$, the first Chern class of $\wedge^2 \Theta_X$ is $c_1(\wedge^2 \Theta_X)=0$, and the second Chern class $c_2(\wedge^2 \Theta_X)=0$. Then from Hirzebruch-Riemann-Roch theorem, we have $\chi(\wedge^2 \Theta_X)=\sum_{j=0}^2 (-1)^j \dim_\mathbb{C} H^j(X,\wedge^2 \Theta_X)=(\bold{ch} \wedge^2 \Theta_X\cdot \bold{td} X)[X]=0$. On the other hand, from Serre duality, we have $H^2(X,\wedge^2 \Theta_X)\cong H^0(X, (\wedge^2 \Theta_X)^*\otimes \wedge^2 \Omega_X^1)$. Since $H^0(X,\wedge^2 \Omega_X^1)=0$ and $H^0(X, \wedge^2 \Theta_X)\ne 0$ from Table $\ref{h28}$, we have $H^2(X,\wedge^2 \Theta_X)=0$. Hence since $\chi(\wedge^2 \Theta_X)=0$, we obtain
\begin{align}\label{h58}
\dim_\mathbb{C} H^0(X,\wedge^2 \Theta_X)=\dim_\mathbb{C} H^1(X,\wedge^2 \Theta_X)
\end{align}
\end{remark}

As in Lemma $\ref{r4}$,  since $H^2(X,\Theta_X)=0$ for any Hopf surface $X$, we have

\begin{lemma}\label{h90}
Let $(X, \Lambda_0)$ be any Poisson Hopf surface.  Then 
\begin{align}
\mathbb{H}^1(X, \Theta_{X}^\bullet)&\cong coker(H^0(X, \Theta_{X})\xrightarrow{[\Lambda_0,-]} H^0(X, \wedge^2 \Theta_{X}))\oplus ker(H^1(X, \Theta_{X})\xrightarrow{[\Lambda_0,-]} H^1(X,\wedge^2 \Theta_{X})) \label{h1}\\
\mathbb{H}^2(X,\Theta_{X}^\bullet)&\cong coker(H^1(X,\Theta_{X})\xrightarrow{[\Lambda_0,-]}H^1(X,\wedge^2 \Theta_X))
\end{align}
 $(X,\Lambda_0)$ is obstructed in Poisson deformations if for some $a , b$ where $a\in H^0(X,\wedge^2 \Theta_X)$ and $b\in ker(H^1(X, \Theta_{X})\xrightarrow{[\Lambda_0,-]} H^1(X,\wedge^2 \Theta_{X}))$, under the following map
\begin{align} \label{h10}
[-,-]:H^0(X, \wedge^2 \Theta_{X})\times H^1(X, \Theta_{X})\to H^1(X, \wedge^2 \Theta_{X})
\end{align}
$[a,b]\in H^1(X,\wedge^2 \Theta_{X})$ is not in the image of $ H^1(X, \Theta_{X})\xrightarrow{[\Lambda_0,-]} H^1(X,\wedge^2 \Theta_{X}) $.
\end{lemma}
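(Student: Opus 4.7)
The plan is to follow the template set by Lemma \ref{r4}, with the only new input being the vanishing $H^2(X,\wedge^2\Theta_X)=0$ established just before the statement (together with the standard $H^2(X,\Theta_X)=0$ for Hopf surfaces). The first two isomorphisms are purely spectral-sequence bookkeeping, and the obstruction criterion is a subtraction argument applied to Theorem \ref{o50}.

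For the first two cohomological identifications, I would run the spectral sequence associated with the \v{C}ech double complex $(\ref{o6})$ for $\Theta_X^\bullet$. At the $E_1$-page the only possibly nonzero entries sit in the columns $H^0,H^1$ because $H^2(X,\Theta_X)=0$ and $H^2(X,\wedge^2\Theta_X)=0$. Taking the differential $[\Lambda_0,-]$ to pass to $E_2$ kills the $H^0(X,\Theta_X)$ contribution by its image in $H^0(X,\wedge^2\Theta_X)$ and cuts $H^1(X,\Theta_X)$ down to the kernel of $[\Lambda_0,-]$. Reading off the total cohomology in degrees $1$ and $2$ yields $(\ref{h1})$ and the corresponding description of $\mathbb{H}^2$. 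This step is mechanical; the only thing to check carefully is that there are no further differentials affecting degrees $1,2$, which follows from the shape of the truncated $E_1$-page.

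For the obstruction criterion, I will argue by contradiction as in Lemma \ref{r4}. Assume $(X,\Lambda_0)$ is unobstructed, and fix $a\in H^0(X,\wedge^2\Theta_X)$ and a \v{C}ech representative $b=\{b_{jk}\}\in C^1(\mathcal{U},\Theta_X)$ of a class in $\ker(H^1(X,\Theta_X)\xrightarrow{[\Lambda_0,-]}H^1(X,\wedge^2\Theta_X))$. By the kernel condition there is $\{c_i\}\in C^0(\mathcal{U},\wedge^2\Theta_X)$ with $c_k-c_j+[\Lambda_0,b_{jk}]=0$, and since $a$ is a global bivector the pairs $(\{c_i\},\{b_{jk}\})$ and $(\{c_i+a\},\{b_{jk}\})$ both represent classes in $\mathbb{H}^1(X,\Theta_X^\bullet)$. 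Applying Theorem \ref{o50} to each (permissible by the unobstructedness hypothesis) makes the two $2$-cocycles
\[
(0,\{-[c_j+c_k,b_{jk}]\},\{[b_{ij},b_{jk}]\}),\qquad (0,\{-[c_j+c_k+2a,b_{jk}]\},\{[b_{ij},b_{jk}]\})
\]
represent the $0$-class in $\mathbb{H}^2(X,\Theta_X^\bullet)$. Subtracting the two witnesses and halving produces $(\{\tfrac{1}{2}(d_i-d_i')\},\{\tfrac{1}{2}(e_{jk}-e_{jk}')\})\in C^0(\mathcal{U},\wedge^2\Theta_X)\oplus C^1(\mathcal{U},\Theta_X)$ such that $\delta(\tfrac{1}{2}(d_i-d_i'))+[\Lambda_0,\tfrac{1}{2}(e_{jk}-e_{jk}')]=\{[a,b_{jk}]\}$; because the $\xi$-part cancels, $\{e_{jk}-e_{jk}'\}$ is a genuine $1$-cocycle in $\Theta_X$, and it exhibits $\{[a,b_{jk}]\}$ in the image of $[\Lambda_0,-]:H^1(X,\Theta_X)\to H^1(X,\wedge^2\Theta_X)$, contradicting the hypothesis.

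The only delicate point, and the one where I would slow down, is the subtraction step: I need to make sure that $\{e_{jk}-e_{jk}'\}$ really is $\bar\partial$-closed (i.e.\ $\delta$-closed) on the nose, which is why the $\delta(\{e_{jk}\})=\{[b_{ij},b_{jk}]\}=\delta(\{e_{jk}'\})$ identity must be used in conjunction with the Theorem \ref{o50} witnesses. Everything else is transcribed verbatim from the proof of Lemma \ref{r4}, with the inputs $H^2(X,\Theta_X)=0$ and $H^2(X,\wedge^2\Theta_X)=0$ taking the place of the analogous vanishings for $F_m$.
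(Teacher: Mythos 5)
Your proposal is correct and is essentially the paper's own argument: the paper proves Lemma \ref{h90} simply by invoking the proof of Lemma \ref{r4} verbatim (spectral sequence of the double complex using $H^2(X,\Theta_X)=0$, then the subtract-and-halve application of Theorem \ref{o50} to the cocycles $(\{c_i\},\{b_{jk}\})$ and $(\{c_i+a\},\{b_{jk}\})$), which is exactly what you carry out. The only cosmetic slips are that the phrase ``the $\xi$-part cancels'' should read ``the $\{[b_{ij},b_{jk}]\}$-components cancel,'' and that $H^2(X,\wedge^2\Theta_X)=0$ is not actually needed for the degree $1$ and $2$ identifications ($H^2(X,\Theta_X)=0$ suffices); neither affects the validity of the proof.
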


\subsection{Calculations of $H^1(X,\Theta_X)$ and $H^1(X, \wedge^2 \Theta_X)$ in terms of the universal covering} \label{h3}\

We will describe $\mathbb{H}^1(X,\Theta_X^\bullet)$ for any Poisson Hopf surface $(X,\Lambda_0)$. Form $(\ref{h1})$, first we will describe
\begin{align}\label{h2}
 ker(H^1(X, \Theta_{X})\xrightarrow{[\Lambda_0,-]} H^1(X,\wedge^2 \Theta_{X}))
\end{align} 

In order to describe $(\ref{h2})$, we describe the basis $H^1(X, \Theta_X)$ and $H^1(X, \wedge^2 \Theta_X)$ in terms of universal covering $W$ of $X$ as elements in $H^0(W, \Theta_W)$ and $H^0(W, \wedge^2 \Theta_W)$, respectively. We use the method presented in \cite{Weh81}. We describe bases explicitly  for Hopf surfaces of any type since we actually need them to compute $(\ref{h2})$.

Let $X$ be Hopf surface. Take a finite covering $\{U_i\}_{i\in I}$ of $X$ where every $U_i$ is an open Stein subset of $X$ such that the inverse image $\tilde{U}_i:=v^{-1}(U_i)$ relative to the canonical projection
\begin{align*}
v:W \to X
\end{align*}
splits up into a disjoint union
\begin{align*}
\tilde{U}_i=\bigcup_{m\in \mathbb{Z}}^\bullet f^m (U_i')
\end{align*}
 and the canonical projection induces a homoemorphism $v|_{U_i'}:U_i'\to U_i$. Then the family $\tilde{\mathcal{U}}:=(\tilde{U}_i)_{i\in I}$ constitutes an open covering of $W$ and the mapping $f:W\to W$ induces morphisms
\begin{align*}
f_*:\Gamma(\tilde{U}_i,\Theta_W) &\to \Gamma(\tilde{U}_i, \Theta_W)\\
f_*:\Gamma(\tilde{U}_i,\wedge^2 \Theta_W) &\to \Gamma(\tilde{U}_i, \wedge^2 \Theta_W)
\end{align*}
for every $i\in I$ so that we have short exact sequences
\begin{align}
0\to C^\bullet(\mathcal{U},\Theta_X) & \to  C^\bullet(\tilde{\mathcal{U}},\Theta_W) \xrightarrow{id-f_*} C^\bullet (\tilde{\mathcal{U}},\Theta_W)\to 0 \label{hh12}\\
0\to C^\bullet(\mathcal{U},\wedge^2 \Theta_X) & \to  C^\bullet(\tilde{\mathcal{U}}, \wedge^2 \Theta_W) \xrightarrow{id-f_*} C^\bullet (\tilde{\mathcal{U}}, \wedge^2 \Theta_W)\to 0
\end{align}
which determines the long exact sequence
\begin{align}
0 \to H^0(X, \Theta_X) \to H^0(W, \Theta_W) & \xrightarrow{id-f_*} H^0(W, \Theta_W) \xrightarrow{\sigma} H^1(X,\Theta_X) \to \cdots \label{h4}\\
0 \to H^0(X, \wedge^2 \Theta_X) \to H^0(W, \wedge^2 \Theta_W) &  \xrightarrow{id-f_*} H^0(W,  \wedge^2 \Theta_W) \xrightarrow{\sigma} H^1(X,   \wedge^2 \Theta_X) \to \cdots \label{h5}
\end{align}

\begin{lemma}\label{h55}
From $(\ref{h4})$ and $(\ref{h5})$, we have isomorphisms
\begin{align}
M_1:=coker(H^0(W, \Theta_W) \xrightarrow{id-f_*} H^0(W, \Theta_W)) & \cong H^1(X,\Theta_X) \label{h8}\\
M_2:=coker(H^0(W, \wedge^2 \Theta_W)\xrightarrow{id-f_*} H^0(W, \wedge^2 \Theta_W))  & \cong H^1(X, \wedge^2 \Theta_X) \label{h9}
\end{align}
and explicitly we have
\begin{enumerate}
\item Type $\textnormal{IV}$
\begin{align}\label{h30}
M_1\cong Span_\mathbb{C}\left\langle z\frac{\partial}{\partial z},w\frac{\partial}{\partial z}, z\frac{\partial}{\partial w}, w\frac{\partial}{\partial w}\right\rangle,\,\,\,\,\,\,\,\,\,M_2\cong Span_\mathbb{C} \left\langle z^2\frac{\partial}{\partial z}\wedge \frac{\partial}{\partial w}, zw\frac{\partial}{\partial z}\wedge \frac{\partial}{\partial w}, w^2\frac{\partial}{\partial z}\wedge \frac{\partial}{\partial w} \right\rangle
\end{align}
\item Type $\textnormal{III}$
\begin{align}\label{h31}
M_1\cong Span_\mathbb{C} \left\langle z\frac{\partial}{\partial z}, w^p \frac{\partial}{\partial z}, w\frac{\partial}{\partial w} \right\rangle,\,\,\,\,\,\,\,\,\,\, M_2\cong Span_\mathbb{C} \left\langle zw\frac{\partial}{\partial z}\wedge \frac{\partial}{\partial w}, w^{p+1}\frac{\partial}{\partial z}\wedge \frac{\partial}{\partial w} \right\rangle
\end{align}
\item Type $\textnormal{II}_a$
\begin{align}\label{h32}
M_1\cong Span_\mathbb{C} \left\langle (\delta^p z-w^p)\frac{\partial}{\partial z}, w\frac{\partial}{\partial w} \right\rangle, \,\,\,\,\,\,\,\,\,\, M_2\cong Span_\mathbb{C} \left\langle  zw\frac{\partial}{\partial z}\wedge \frac{\partial}{\partial w} \right\rangle
\end{align}
\item Type $\textnormal{II}_b$
\begin{align}\label{h33}
M_1\cong Span_\mathbb{C}\left\langle  (\alpha z -w)\frac{\partial}{\partial z}+\alpha w \frac{\partial}{\partial w}, (\alpha z-w)\frac{\partial}{\partial w} \right\rangle,\,\,\,\,\,\,\,\,\,\, M_2\cong Span_\mathbb{C}  \left\langle z^2\frac{\partial}{\partial z}\wedge \frac{\partial}{\partial w}  \right\rangle
\end{align}
\item Type $\textnormal{II}_c$
\begin{align}\label{h34}
M_1\cong Span_\mathbb{C} \left\langle z\frac{\partial}{\partial z}, w\frac{\partial}{\partial w} \right\rangle,\,\,\,\,\,\,\,\,\,\,M_2\cong Span_\mathbb{C}   \left\langle zw\frac{\partial}{\partial z}\wedge \frac{\partial}{\partial w}  \right\rangle
\end{align}
\end{enumerate}
\end{lemma}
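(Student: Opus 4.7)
The plan is to promote the long exact sequences \eqref{h4} and \eqref{h5} to isomorphisms. These arise from the short exact sequence of \v{C}ech complexes \eqref{hh12}, which is valid since each $\tilde U_i$ is a disjoint union of Stein opens (hence itself Stein, so $\tilde{\mathcal{U}}$ is Leray for the coherent sheaves involved). Extending the sequence one step further reads $\cdots \to H^1(X, \Theta_X) \to H^1(W, \Theta_W) \xrightarrow{id - f_*} H^1(W, \Theta_W) \to \cdots$, and similarly for $\wedge^2 \Theta$, so exactness packages $M_i$ into short exact sequences
\[
0 \to M_1 \to H^1(X, \Theta_X) \to H^1(W, \Theta_W)^{f_*} \to 0, \qquad 0 \to M_2 \to H^1(X, \wedge^2 \Theta_X) \to H^1(W, \wedge^2 \Theta_W)^{f_*} \to 0.
\]
The lemma thus reduces to two independent tasks: (i) show that both invariant subspaces $H^1(W, \Theta_W)^{f_*}$ and $H^1(W, \wedge^2 \Theta_W)^{f_*}$ vanish, and (ii) identify the explicit bases of $M_1, M_2$ listed in \eqref{h30}--\eqref{h34}.

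For (i), I would apply the Stein cover $\{z \neq 0\}, \{w \neq 0\}$ to $W = \mathbb{C}^2 \setminus \{0\}$ to identify $H^1(W, \mathcal{O}_W) \cong \bigoplus_{a,b \geq 1} \mathbb{C}\cdot z^{-a} w^{-b}$, yielding natural Laurent-monomial bases for $H^1(W, \Theta_W)$ and $H^1(W, \wedge^2 \Theta_W)$. One computes that $f_*$ preserves the weighted-degree grading of Laurent exponents, and on each graded piece is given by a diagonal scalar plus a nilpotent correction, where the diagonal eigenvalue is a nontrivial power of $\alpha$ or $\delta$: for Type $\textnormal{IV}$, $f_*(z^{-a} w^{-b} \partial_z) = \alpha^{1+a+b} z^{-a} w^{-b} \partial_z$, and analogous identities hold for the other four types. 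Because $0 < |\alpha|, |\delta| < 1$ and the exponents are bounded below (since $a, b \geq 1$), the eigenvalues all have modulus strictly less than $1$, so $1$ is never an eigenvalue and both invariant subspaces vanish.

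For (ii), Hartogs' extension theorem identifies $H^0(W, \Theta_W) \cong \mathcal{O}(\mathbb{C}^2)^{\oplus 2}$ and $H^0(W, \wedge^2 \Theta_W) \cong \mathcal{O}(\mathbb{C}^2)$, so monomial power series expansions are available. The pullback $f^*$ preserves a weighted-degree grading on $\mathcal{O}(\mathbb{C}^2)$ (weight $(1,1)$ in Types $\textnormal{IV}, \textnormal{II}_b, \textnormal{II}_c$, weight $(p,1)$ in Types $\textnormal{III}, \textnormal{II}_a$), and on each finite-dimensional weighted piece $f_*$ decomposes as a diagonal scalar $D_w$ plus a nilpotent correction $N_w$. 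Whenever $D_w \neq 1$ the operator $id - f_*$ is inverted by the geometric series $(1 - D_w)^{-1} \sum_{k \geq 0}((1 - D_w)^{-1} N_w)^k$, and since $|D_w|$ grows (or decays) with the weight, the local inverses are uniformly norm-bounded, so entirety is preserved. The cokernel is therefore supported on the unique exceptional weight (weight $1$ for $M_1$, weight $2$ for $M_2$), where it equals $\ker N_w / \mathrm{im}\, N_w$ on a small finite-dimensional space. The main obstacle is Types $\textnormal{II}_a$ and $\textnormal{II}_b$, where $N_w \neq 0$ at the exceptional weight and one must carry out this quotient explicitly; a direct matrix computation identifies the cokernel classes with the representatives in \eqref{h32}--\eqref{h33}. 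A final dimension count against Table~\ref{h15}, Table~\ref{h28}, and \eqref{h58} then verifies that the listed elements in \eqref{h30}--\eqref{h34} span the cokernels, completing the proof.
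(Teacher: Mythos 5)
Your proposal is correct, and part (ii) — the explicit identification of the cokernel bases — is essentially the computation the paper performs: extend sections of $\Theta_W$ and $\wedge^2\Theta_W$ to $\mathbb{C}^2$ by Hartogs, expand $(id-f_*)$ coefficient-wise, locate the monomials whose diagonal coefficient $1-\lambda$ vanishes, and treat the nilpotent correction in types $\textnormal{II}_a$, $\textnormal{II}_b$ by hand. Where you genuinely diverge is in how the isomorphisms $(\ref{h8})$--$(\ref{h9})$ are obtained. You continue the long exact sequences into $H^1(W,\cdot)$ and kill the invariant subspaces there via the Laurent-monomial model of $H^1(W,\mathcal{O}_W)$ on the cover $\{z\ne 0\},\{w\ne 0\}$; the paper never touches $H^1(W,\cdot)$. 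For $M_2$ it sandwiches: the exhibited classes give $\dim M_2\ge \dim H^1(X,\wedge^2\Theta_X)$, where the right-hand side is known from $(\ref{h58})$ (i.e.\ $\chi(\wedge^2\Theta_X)=0$ by Riemann--Roch plus Serre duality), while exactness of $(\ref{h5})$ at the second $H^0(W,\wedge^2\Theta_W)$ gives $\dim M_2=\dim(\mathrm{im}\,\sigma)\le \dim H^1(X,\wedge^2\Theta_X)$ for free; for $M_1$ it cites Wehler. Your route buys independence from the Chern-class/Serre-duality input, but at the price of the more delicate $H^1(W,\cdot)$ computation: for types $\textnormal{II}_a$, $\textnormal{II}_b$ the contraction is neither diagonal nor cover-preserving, so the action of $f_*$ on Laurent classes must be extracted by expanding $(\delta^{-p}z-\delta^{-2p}w^p)^{-a}$ and discarding terms that die in the quotient (only finitely many survive per weighted-degree piece, since the weight is preserved), and since $H^1(W,\mathcal{O}_W)$ is a completed rather than algebraic direct sum, the injectivity of $id-f_*$ must be checked coefficient-wise within each finite-dimensional graded piece. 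Note also that since you close with the same dimension count against Table $\ref{h15}$ and $(\ref{h58})$, both the vanishing of the invariant subspaces and the geometric-series surjectivity argument become logically redundant — the sandwich already forces $\sigma$ to be onto and the listed classes to be a basis — so your proof is sound but carries more machinery than the conclusion requires.
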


\begin{proof}
We note that by Hartog's theorem every element of $H^0(W, \Theta_W)$ and  $H^0(W, \wedge^2 \Theta_W)$ extends to a holomorphic vector field and a bivector field on $\mathbb{C}^2$ respectively. Taking 
\begin{align*}
\theta&=g(z,w)\frac{\partial}{\partial z}+h(z,w) \frac{\partial}{\partial w}\\
\Pi&=G(z,w)\frac{\partial}{\partial z}\wedge \frac{\partial}{\partial w}
\end{align*}
where
\begin{align*}
g(z,w)=\sum_{\mu, v\geq 0} a_{\mu v} z^\mu w^v,\,\,\,\,\,h(z,w)=\sum_{\mu, v\geq 0} b_{\mu v}z^\mu w^v,\,\,\,\,\,G(z,w)=\sum_{\mu, v \geq 0}c_{\mu v}z^\mu w^v.
\end{align*}

We will describe $(id-f_*)\theta$ and $(id-f_*)\Pi$ in each type of Hopf surfaces. We set
\begin{align}
f_*\theta&=\tilde{g}(z,w)\frac{\partial}{\partial z}+\tilde{h}(z,w)\frac{\partial}{\partial w} \label{h56}\\
f_* \Pi&=\tilde{G}(z,w)\frac{\partial}{\partial z}\wedge \frac{\partial}{\partial w}\label{h57}
\end{align}
where
\begin{align*}
\tilde{g}(z,w)=\sum_{\mu, v\geq 0} \tilde{a}_{\mu v} z^\mu w^v,\,\,\,\,\, \tilde{h}(z,w)=\sum_{\mu, v\geq 0} \tilde{b}_{\mu v}z^\mu w^v,\,\,\,\,\,\tilde{G}(z,w)=\sum_{\mu, v \geq 0}\tilde{c}_{\mu v}z^\mu w^v.
\end{align*}

\subsubsection{Proof of $(\ref{h30})$}\

Let us consider a Hopf surface of type $\textnormal{IV}$ defined by $f(z,w)=(\alpha z,\alpha w)$ with $0<|\alpha |<1$. First let us consider $f_* \theta$ in $(\ref{h56})$. Then we have
\begin{align*}
g(z,w)\alpha=\tilde{g}(\alpha z,\alpha w),\,\,\,\,\,\,\,\, h(z,w)\alpha=\tilde{h}(\alpha z,\alpha w),
\end{align*}
equivalently, we have
\begin{align*}
\sum_{\mu, v \geq 0}\alpha a_{\mu v}z^\mu w^v=\sum_{\mu, v\geq 0} \alpha^{\mu+v}\tilde{a}_{\mu v}z^\mu w^v,\,\,\,\,\,\,\,\,\,\,\sum_{\mu, v \geq 0}\alpha b_{\mu v} z^\mu w^v=\sum_{\mu, v\geq 0} \alpha^{\mu+v} \tilde{b}_{\mu v} z^\mu w^v.
\end{align*}
so that we obtain
\begin{align*}
\tilde{a}_{\mu v}=\alpha^{1-\mu-v}a_{\mu v},\,\,\,\,\,\,\,\,\tilde{b}_{\mu v}=\alpha^{1-\mu-v}b_{\mu v}.
\end{align*}
Then we have
\begin{align*}
(id-f_*)\theta= \left(\sum_{\mu,v\geq 0} a_{\mu v}(1-\alpha^{1-\mu-v})z^\mu w^v    \right)\frac{\partial}{\partial z} +   \left( \sum_{\mu,v\geq 0} b_{\mu v}(1-\alpha^{1-\mu-v})z^\mu w^v           \right) \frac{\partial}{\partial w}
\end{align*}
We note that if $(\mu, v)=(0,1),(1,0)$, the coefficients of $z^\mu w^v$ is zero so that $\frac{\partial}{\partial z},w\frac{\partial}{\partial z},z\frac{\partial}{\partial w}, w\frac{\partial}{\partial w}$ are linearly independent modulo $im(id-f_*)$. Since we have $(\ref{h8})$ (see \cite{Weh81} Lemma 2 p.27), we obtain 
\begin{align*}
M_1\cong Span_\mathbb{C} \left\langle z\frac{\partial}{\partial z}, w \frac{\partial}{\partial z}, z\frac{\partial}{\partial w }, w\frac{\partial}{\partial w} \right\rangle
\end{align*}

Next let us consider $f_* \Pi $ in $(\ref{h57})$. Then we have
\begin{align*}
G(z,w)\alpha^2=\tilde{G}(\alpha z,\alpha w),
\end{align*}
equivalently, we have
\begin{align*}
\sum_{\mu, v \geq 0} \alpha^2 c_{\mu v} z^\mu w^v=\sum_{\mu, v\geq 0} \alpha^{\mu+v}\tilde{c}_{\mu v} z^\mu w^v
\end{align*}
so that we obtain
\begin{align*}
\tilde{c}_{\mu v}=\alpha^{2-\mu-v} c_{\mu v}
\end{align*}
Then we have
\begin{align*}
(id-f_*)\Pi= \left(  \sum_{\mu, v \geq 0} c_{\mu v}(1-\alpha^{2-\mu -v})z^\mu w^v        \right) \frac{\partial}{\partial z}\wedge \frac{\partial}{\partial w}
\end{align*}
We note that if $(\mu,v)=(2,0),(1,1),(0,2)$, the coefficients of $z^\mu w^v$ is zero so that $z^2\frac{\partial}{\partial z}\wedge \frac{\partial}{\partial w}, zw\frac{\partial}{\partial z}\wedge \frac{\partial}{\partial w}, w^2\frac{\partial}{\partial z}\wedge \frac{\partial}{\partial w}$ are linearly independent modulo $im(id-f_*)$. Hence since $\dim_\mathbb{C} H^1(X, \wedge^2 \Theta_X)=3$ by $(\ref{h58})$, we have $\dim_\mathbb{C} M_2\geq \dim_\mathbb{C} H^1(X, \wedge^2 \Theta_X)$. On the other hand, from $(\ref{h5})$, $\dim_\mathbb{C} M_2=\dim_\mathbb{C} (im \,\sigma)\leq \dim_\mathbb{C}  H^1(X, \wedge^2 \Theta_X)$ so that we obtain $(\ref{h9})$ and
\begin{align*}
M_2\cong Span_\mathbb{C} \left\langle z^2\frac{\partial}{\partial z}\wedge \frac{\partial}{\partial w}, zw\frac{\partial}{\partial z}\wedge \frac{\partial}{\partial w}, w^2\frac{\partial}{\partial z}\wedge \frac{\partial}{\partial w} \right\rangle
\end{align*}

\subsubsection{Proof of $(\ref{h31})$}\

Let us consider a Hopf surface of type $\textnormal{III}$ defined by $f(z,w)=(\delta^p z, \delta w)$ with $p\in \mathbb{N}-\{0,1\}$ and $0<|\delta |<1$. First we note that we have $(\ref{h8})$ and $M_1\cong Span_\mathbb{C} \left\langle z\frac{\partial}{\partial z}, w^p \frac{\partial}{\partial z}, w\frac{\partial}{\partial w} \right\rangle$ (see \cite{Weh81} Lemma 2 p.27).

Next let us consider $f_* \Pi$ in $(\ref{h57})$. Then we have
\begin{align*}
G(z,w)\delta^{p+1}=\tilde{G}(\delta^p z, \delta w),
\end{align*}
equivalently, we have
\begin{align*}
\sum_{\mu, v\geq 0} \delta^{p+1} c_{\mu v} z^\mu w^v=\sum_{\mu, v \geq 0} \delta^{p\mu+v}\tilde{c}_{\mu v}z^\mu w^v
\end{align*}
so that we obtain
\begin{align*}
\tilde{c}_{\mu v}=\delta^{p(1-\mu)-v+1} c_{\mu v}
\end{align*}
Then we have
\begin{align*}
(id-f_*)\Pi=\left(\sum_{\mu, v\geq 0} c_{\mu v}(1-\delta^{p(1-\mu)-v+1}) z^\mu w^v \right) \frac{\partial}{\partial z}\wedge \frac{\partial}{\partial w}
\end{align*}
We note that if  $(\mu,v)=(1,1),(0,p+1)$, then the coefficients of $z^\mu w^v$ are zero so that $zw\frac{\partial}{\partial z}\wedge \frac{\partial}{\partial w}, w^{p+1}\frac{\partial}{\partial z}\wedge \frac{\partial}{\partial w}$ are linearly independent modulo $im(id-f_*)$. Hence since $\dim_\mathbb{C} H^1(X, \wedge^2 \Theta_X)=2$ by $(\ref{h58})$, we have $\dim_\mathbb{C} M_2\geq \dim_\mathbb{C} H^1(X, \wedge^2 \Theta_X)$. On the other hand, from $(\ref{h5})$, $\dim_\mathbb{C} M_2=\dim_\mathbb{C} (im \,\sigma)\leq \dim_\mathbb{C}  H^1(X, \wedge^2 \Theta_X)$ so that we obtain $(\ref{h9})$ and 
\begin{align*}
M_2 \cong\langle zw\frac{\partial}{\partial z}\wedge \frac{\partial}{\partial w}, w^{p+1}\frac{\partial}{\partial z}\wedge \frac{\partial}{\partial w} \rangle
\end{align*}
\subsubsection{Proof of $(\ref{h32})$}\

Let us consider a Hopf surface of type $\textnormal{II}_a$ defined by $f(z,w)=(\delta^p z+w^p, \delta w )$ with $p\in \mathbb{N}-\{0,1\}$ and $ 0< |\delta |<1 $. First we note that we have $(\ref{h8})$ and $M_1\cong Span_\mathbb{C} \left\langle (\delta^p z-w^p)\frac{\partial}{\partial z},    w\frac{\partial}{\partial w} \right\rangle$ (see \cite{Weh81} Lemma 2 p.27).

Next let us consider $f_* \Pi$ in $(\ref{h57})$. We note that $f^{-1}:(z,w)\mapsto (z',w')= (\delta^{-p} z-\delta^{-2p} w^p, \delta^{-1}w)$. Since $\frac{\partial}{\partial z}=\delta^{-p} \frac{\partial}{\partial z'}$, and $\frac{\partial}{\partial w}=-p\delta^{-2p} w^{p-1}\frac{\partial}{\partial z'}+\delta^{-1}\frac{\partial}{\partial w'}$ so that $\frac{\partial}{\partial z}\wedge \frac{\partial}{\partial w}=\delta^{-p-1}\frac{\partial}{\partial z'}\wedge \frac{\partial}{\partial w'}$. Then we have
\begin{align*}
\tilde{G}(z,w)\delta^{-p-1}=G(\delta^{-p} z-\delta^{-2p} w^p, \delta^{-1}w),
\end{align*}
equivalently, we have
\begin{align*}
\sum_{\mu, v\geq 0} \tilde{a}_{\mu v}\delta^{-p-1} z^\mu w^v&=\sum_{m,n\geq 0} a_{mn}(\delta^{-p}z-\delta^{-2p}w^p)^m (\delta^{-1} w)^n\\
&=\sum_{m,n\geq 0} \sum_{i=0}^m a_{mn} \binom{m}{i} (\delta^{-p}z)^i(-\delta^{-2p}w^p)^{m-i}(\delta^{-1}w)^n\\
\iff \sum_{\mu, v\geq 0} \tilde{a}_{\mu v} z^\mu w^v&=\sum_{m,n\geq 0} \sum_{i=0}^m a_{mn} \binom{m}{i} (-1)^{m-i}\delta^{-2pm+pi-n+p+1} z^iw^{p(m-i)+n} 
\end{align*}
so that we obtain
\begin{align*}
\tilde{a}_{\mu v}
=\sum_{m\geq 0}a_{m, v+p(\mu-m)}\binom{m}{\mu}(-1)^{m-\mu}\delta^{-pm-v+p+1}
\end{align*}
Hence we get
\begin{align*}
(id-f_*)\Pi=\left(\sum_{\mu, v\geq 0}\left(a_{\mu v}-\sum_{m\geq 0}a_{m, v+p(\mu-m)}\binom{m}{\mu}(-1)^{m-\mu}\delta^{-pm-v+p+1}\right) z^\mu w^v\right)\frac{\partial}{\partial z}\wedge \frac{\partial}{\partial w}
\end{align*}

We note that if $(\mu,v)=(1,1)$, then since $m \geq \mu, v+p(\mu-m)=1+p(1-m)\geq 0$ so that $m=1$, the coefficient of $zw$ is zero. Hence $zw\frac{\partial}{\partial z}\wedge \frac{\partial}{\partial w}$ is not in $im(id-f_*)$. Since $\dim_\mathbb{C} H^1(X,\wedge^2\Theta_X)=1$ by $(\ref{h58})$, we have  $\dim_\mathbb{C} M_2\geq \dim_\mathbb{C} H^1(X, \wedge^2 \Theta_X)$. On the other hand, from $(\ref{h5})$, $\dim_\mathbb{C} M_2=\dim_\mathbb{C} (im \,\sigma)\leq \dim_\mathbb{C}  H^1(X, \wedge^2 \Theta_X)$ so that we obtain $(\ref{h9})$ and
\begin{align*}
M_2 \cong Span_\mathbb{C} \left\langle zw\frac{\partial}{\partial z}\wedge \frac{\partial}{\partial w}  \right\rangle
\end{align*}

\subsubsection{Proof of $(\ref{h33})$}\

Let us consider a Hopf surface of type $\textnormal{II}_b$ defined by $f(z,w)=(\alpha z +w, \alpha w)$ with $0<|\alpha |<1$. We note that $f^{-1}:(z,w)\mapsto (z',w')=(\alpha^{-1}z-\alpha^{-2}w, \alpha^{-1}w)=(\alpha^{-1}(z-\alpha^{-1}w), \alpha^{-1}w)$. Let us consider $f_*\theta$ in $(\ref{h56})$. Since $\frac{\partial}{\partial z}=\alpha^{-1}\frac{\partial}{\partial z'}$, and $\frac{\partial}{\partial w}=-\alpha^{-2} \frac{\partial}{\partial z'}+\alpha^{-1}\frac{\partial}{\partial w'}$, we have
\begin{align*}
\tilde{g}(z,w)\alpha^{-1}+\tilde{h}(z,w)(-\alpha^{-2})&=g(\alpha^{-1}z-\alpha^{-2}w,\alpha^{-1}w)\\
\tilde{h}(z,w)\alpha^{-1}&=h(\alpha^{-1}z-\alpha^{-2}w,\alpha^{-1}w),
\end{align*}
equivalently, we have
\begin{align}
\sum_{\mu,v \geq 0} \alpha^{-1} \tilde{a}_{\mu v}z^\mu w^v+\sum_{\mu, v\geq 0} (-\alpha^{-2})\tilde{b}_{\mu v} z^\mu w^v&=\sum_{m,n\geq 0} a_{mn}\alpha^{-m}(z-\alpha^{-1}w)^m \alpha^{-n}w^n \label{h59}\\
\sum_{\mu, v\geq 0}\alpha^{-1}\tilde{b}_{\mu v}z^\mu w^v&=\sum_{m,n\geq 0} b_{mn}\alpha^{-m}(z-\alpha^{-1}w)^m \alpha^{-n}w^n \label{h60}
\end{align}

Let us consider $(\ref{h60})$. Then
\begin{align*}
\sum_{\mu, v\geq 0}\alpha^{-1} \tilde{b}_{\mu v}z^\mu w^v&=\sum_{m,n\geq 0}\sum_{i=0}^m b_{mn}\binom{m}{i}z^i(-\alpha^{-1}w)^{m-i}\alpha^{-n-m}w^n\\
&=\sum_{m,n\geq 0}\sum_{i=0}^m b_{mn}\binom{m}{i}(-1)^{m-i}\alpha^{-2m+i-n}z^iw^{n+m-i}
\end{align*}
so that we obtain
\begin{align*}
\tilde{b}_{\mu v}=\sum_{m\geq 0} b_{m, v-m+\mu}\binom{m}{\mu}(-1)^{m-\mu} \alpha^{-m-v+1}
\end{align*}
On the other hand, let us consider $(\ref{h59})$. Then
\begin{align*}
\sum_{\mu,v \geq 0} \alpha^{-1} \tilde{a}_{\mu v}z^\mu w^v+\sum_{\mu, v\geq 0} (-\alpha^{-2})\tilde{b}_{\mu v} z^\mu w^v=\sum_{m,n\geq 0}\sum_{i=0}^m a_{mn}\binom{m}{i}(-1)^{m-i}\alpha^{-2m+i-n}z^iw^{n+m-i}
\end{align*}
so that we obtain
\begin{align*}
\tilde{a}_{\mu v}&=\alpha^{-1} \tilde{b}_{\mu v}+\sum_{m\geq 0} a_{m, v-m+\mu}\binom{m}{\mu}(-1)^{m-\mu} \alpha^{-m-v+1}\\
 &=\sum_{m\geq 0} b_{m, v-m+\mu}\binom{m}{\mu}(-1)^{m-\mu} \alpha^{-m-v}+\sum_{m\geq 0} a_{m, v-m+\mu}\binom{m}{\mu}(-1)^{m-\mu} \alpha^{-m-v+1}
\end{align*}
Hence we get
\begin{align*}
(id-f_*)\theta=\left(\sum_{\mu, v\geq 0} \left( a_{\mu v}-   \sum_{m\geq 0} b_{m, v-m+\mu}\binom{m}{\mu}(-1)^{m-\mu} \alpha^{-m-v}-\sum_{m\geq 0} a_{m, v-m+\mu}\binom{m}{\mu}(-1)^{m-\mu} \alpha^{-m-v+1}    \right)z^\mu w^v     \right)\frac{\partial}{\partial z}\\
 +    \left(\sum_{\mu, v \geq 0}\left(  b_{\mu v}- \sum_{m\geq 0} b_{m, v-m+\mu}\binom{m}{\mu}(-1)^{m-\mu} \alpha^{-m-v+1}     \right) z^\mu w^v   \right)\frac{\partial}{\partial w}
\end{align*}
We note that if $(\mu, v)=(1,0)$ in term $\frac{\partial}{\partial w}$, then since $m\geq \mu$ and $v-m+\mu=-m+1\geq 0$, we get $m=1$ so that the coefficient of $z$ is $0$.  Hence $z\frac{\partial}{\partial w}$ is not in $im(id-f_*)$. On the other hand, if $(\mu, v)=(1,0)$ in term $\frac{\partial}{\partial z}$, then $m=1$ so that the coefficient of $z$ in the term $\frac{\partial}{\partial z}$ is $-b_{10}\alpha^{-1}$. If $(\mu, v)=(0,1)$ in the term $\frac{\partial}{\partial w}$, then since $m\geq \mu$ and $v-m+\mu=1-m\geq 0$, we get $m=0,1$ so that the coefficient of $w$ is $b_{10}\alpha^{-1}$. Hence $w\frac{\partial}{\partial w}$ is not in $im(id-f_*)$. Then $z\frac{\partial}{\partial w}$ and $w\frac{\partial}{\partial w}$ are linearly independent modulo $im(id-f_*)$. We note that since $f_*(\alpha^2 z\frac{\partial}{\partial w})=(\alpha z-w)\frac{\partial}{\partial z}+(\alpha^2 z-\alpha w)\frac{\partial}{\partial w}$, we see that $(\alpha z-w)\frac{\partial}{\partial z}-\alpha w\frac{\partial}{\partial w}\equiv 0 \mod (id-f_*)$.   

Since we have $(\ref{h8})$ (see \cite{Weh81} Lemma 2 p.27), we obtain 
\begin{align*}
M_1\cong Span_\mathbb{C} \left\langle w\frac{\partial}{\partial w}, z \frac{\partial}{\partial w} \right\rangle=Span_\mathbb{C}\left\langle  (\alpha z -w)\frac{\partial}{\partial z}+\alpha w \frac{\partial}{\partial w}, (\alpha z-w)\frac{\partial}{\partial w} \right\rangle
\end{align*}

Next let us consider $f_* \Pi$ in  $(\ref{h57})$. Then we have
\begin{align*}
\tilde{G}(z,w)\alpha^{-2}=G(\alpha^{-1}(z-\alpha^{-1}w),\alpha^{-1}w),
\end{align*}
equivalently, we have
\begin{align*}
\sum_{\mu, v\geq 0} \alpha^{-2} \tilde{c}_{\mu v} z^\mu w^v=\sum_{m,n\geq 0} c_{mn}\alpha^{-m}(z-\alpha^{-1}w)^m \alpha^{-n}w^n
\end{align*}
so that we obtain
\begin{align*}
\tilde{c}_{\mu v}=\sum_{m\geq 0} c_{m, v-m+\mu}\binom{m}{\mu}(-1)^{m-\mu} \alpha^{-m-v+2}
\end{align*}
Hence we get
\begin{align*}
(id-f_*)\Pi=\left( \sum_{\mu,v\geq 0} \left( c_{\mu v}-\sum_{m\geq 0} c_{m,v-m+\mu}\binom{m}{\mu}(-1)^{m-\mu}\alpha^{-m-v+2}       \right)z^\mu w^v  \right)\frac{\partial}{\partial z}\wedge \frac{\partial}{\partial w}
\end{align*}
We note that if $(\mu, v)=(2,0)$, then since $m\geq \mu$ and $v-m+\mu=-m+2\geq 0$, we have $m=2$ so that the coefficient of $z^2$ is zero. Hence $z^2\frac{\partial}{\partial z}\wedge \frac{\partial}{\partial w}$ is not in $im(id-f_*)$. Since $\dim_\mathbb{C} H^1(X,\wedge^2\Theta_X)=1$ by $(\ref{h58})$, we have  $\dim_\mathbb{C} M_2\geq \dim_\mathbb{C} H^1(X, \wedge^2 \Theta_X)$. On the other hand, from $(\ref{h5})$, $\dim_\mathbb{C} M_2=\dim_\mathbb{C} (im \,\sigma)\leq \dim_\mathbb{C}  H^1(X, \wedge^2 \Theta_X)$ so that we obtain $(\ref{h9})$ and
\begin{align*}
M_2 \cong Span_\mathbb{C} \left\langle z^2\frac{\partial}{\partial z}\wedge \frac{\partial}{\partial w}  \right\rangle
\end{align*}

\subsubsection{Proof of $(\ref{h34})$}\

Let us consider a Hopf surface of type $\textnormal{II}_c$ defined by $f(z,w)=(\alpha z, \delta w)$ with $0<|\alpha |<|\delta |<1$ and $\alpha\ne \delta^p $ for all $p\in \mathbb{N}$. First let us consider $f_* \theta$ in $(\ref{h56})$. Then we have
\begin{align*}
g(z,w)\alpha=\tilde{g}(\alpha z,\delta w),\,\,\,\,\,\,\,\, h(z,w)\delta=\tilde{h}(\alpha z,\delta w),
\end{align*}
equivalently, we have
\begin{align*}
\sum_{\mu, v \geq 0}\alpha a_{\mu v}z^\mu w^v=\sum_{\mu, v\geq 0} \alpha^{\mu}\delta^{v}\tilde{a}_{\mu v}z^\mu w^v,\,\,\,\,\,\,\,\,\,\,\sum_{\mu, v \geq 0}\delta b_{\mu v} z^\mu w^v=\sum_{\mu, v\geq 0} \alpha^{\mu}\delta^{v} \tilde{b}_{\mu v} z^\mu w^v.
\end{align*}
so that we obtain
\begin{align*}
\tilde{a}_{\mu v}=\alpha^{1-\mu}\delta^{-v}a_{\mu v},\,\,\,\,\,\,\,\,\tilde{b}_{\mu v}=\alpha^{-\mu}\delta^{1-v}b_{\mu v}.
\end{align*}
Then we have
\begin{align*}
(id-f_*)\theta= \left(\sum_{\mu,v\geq 0} a_{\mu v}(1-\alpha^{1-\mu}\delta^{-v})z^\mu w^v    \right)\frac{\partial}{\partial z} +   \left( \sum_{\mu,v\geq 0} b_{\mu v}(1-\alpha^{-\mu}\delta^{-v})z^\mu w^v           \right) \frac{\partial}{\partial w}
\end{align*}
We see that $z\frac{\partial}{\partial z}, w\frac{\partial}{\partial w}$ are linearly independent modulo $im(id-f_*)$. Since we have $(\ref{h8})$ (see \cite{Weh81} Lemma 2 p.27), we obtain 
\begin{align*}
M_1\cong Span_\mathbb{C} \left\langle z\frac{\partial}{\partial z},  w\frac{\partial}{\partial w} \right\rangle
\end{align*}

Next let us consider $f_* \Pi $ in $(\ref{h57})$. Then we have
\begin{align*}
G(z,w)\alpha\delta=\tilde{G}(\alpha z,\delta w),
\end{align*}
equivalently, we have
\begin{align*}
\sum_{\mu, v \geq 0} \alpha\delta c_{\mu v} z^\mu w^v=\sum_{\mu, v\geq 0} \alpha^\mu \delta^v \tilde{c}_{\mu v} z^\mu w^v
\end{align*}
so that we obtain
\begin{align*}
\tilde{c}_{\mu v}=\alpha^{1-\mu}\delta^{1-v} c_{\mu v}
\end{align*}
Then we have
\begin{align*}
(id-f_*)\Pi= \left(  \sum_{\mu, v \geq 0} c_{\mu v}(1-\alpha^{1-\mu}\delta^{1-v})z^\mu w^v        \right) \frac{\partial}{\partial z}\wedge \frac{\partial}{\partial w}
\end{align*}
We note that the coefficient of $zw$ is zero so that $ zw\frac{\partial}{\partial z}\wedge \frac{\partial}{\partial w}$ is not in $im(id-f_*)$. Hence since $\dim_\mathbb{C} H^1(X, \wedge^2 \Theta_X)=1$ by $(\ref{h58})$, we have $\dim_\mathbb{C} M_2\geq \dim_\mathbb{C} H^1(X, \wedge^2 \Theta_X)$. On the other hand, from $(\ref{h5})$, $\dim_\mathbb{C} M_2=\dim_\mathbb{C} (im \,\sigma)\leq \dim_\mathbb{C}  H^1(X, \wedge^2 \Theta_X)$ so that we obtain $(\ref{h9})$ and
\begin{align*}
M_2\cong Span_\mathbb{C} \left\langle zw\frac{\partial}{\partial z}\wedge \frac{\partial}{\partial w}  \right\rangle
\end{align*}

This completes proof of Lemma $\ref{h55}$.
\end{proof}

\subsection{Descriptions of $ ker(H^1(X, \Theta_{X})\xrightarrow{[\Lambda_0,-]} H^1(X,\wedge^2 \Theta_{X})) $}\

Let $X$ be a Hopf surface. We keep the notations in the section $\ref{h3}$ for an open covering $\mathcal{U}=\{U_i\}$ of $X$, and an open covering $\tilde{\mathcal{U}}=\{\tilde{U}_i\}$, and so on.

\begin{lemma}\label{h6}
Let $(X,\Lambda_0)$ be any Poisson Hopf surface. From $(\ref{h4})$ and $(\ref{h5})$, we have a commutative diagram
\begin{center}
$\begin{CD}
0@>>> H^0(X,\wedge^2 \Theta_X) @>>> H^0(W,\wedge^2 \Theta_W) @>id-f_*>> H^0(W, \wedge^2 \Theta_W)@>\sigma>>H^1(X,\wedge^2 \Theta_X)@>>>\\
@. @A[\Lambda_0,-]AA @A[\Lambda_0,-]AA @A[\Lambda_0,-]AA @A[\Lambda_0,-]AA\\
0@>>> H^0(X,\Theta_X) @>>> H^0(W,\Theta_W)@>id-f_*>> H^0(W,\Theta_W)@>\sigma >> H^1(X,\Theta_X)@>>>
\end{CD}$
\end{center}

\end{lemma}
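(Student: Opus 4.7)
The plan is to reduce the assertion to three facts: (i) the Poisson bivector $\Lambda_0$ on $X$ lifts to an $f$-invariant bivector $\widetilde{\Lambda}_0$ on $W$; (ii) the Schouten bracket with $\widetilde{\Lambda}_0$ is natural under both the local biholomorphism $v:W\to X$ and the deck transformation $f$; and (iii) a morphism of short exact sequences of (\v{C}ech) complexes induces a morphism of the associated long exact sequences.

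First, because $v:W\to X$ is a local biholomorphism, the pullback $\widetilde{\Lambda}_0:=v^{*}\Lambda_0\in H^0(W,\wedge^2\Theta_W)$ is well defined, and the identity $v\circ f=v$ forces $f_{*}\widetilde{\Lambda}_0=\widetilde{\Lambda}_0$. The Schouten bracket is natural under local biholomorphisms and with respect to push-forward by biholomorphisms, hence for any (poly-)vector field $\theta$ on $X$ one has $v^{*}[\Lambda_0,\theta]=[\widetilde{\Lambda}_0,v^{*}\theta]$ on $W$, and for any $\eta\in H^0(W,\Theta_W)$ or $H^0(W,\wedge^2\Theta_W)$ one has $f_{*}[\widetilde{\Lambda}_0,\eta]=[f_{*}\widetilde{\Lambda}_0,f_{*}\eta]=[\widetilde{\Lambda}_0,f_{*}\eta]$. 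These two identities immediately give commutativity of the first two squares of the diagram (the one containing the inclusion $H^0(X,\Theta_X)\hookrightarrow H^0(W,\Theta_W)$ and the one containing $id-f_{*}$), and they do so already at the level of \v{C}ech cochains, since the bracket acts term-by-term on $C^\bullet(\mathcal{U},\Theta_X)$ and $C^\bullet(\widetilde{\mathcal{U}},\Theta_W)$.

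Second, I would extend the preceding identities to chain-level statements: the map $[\Lambda_0,-]$ on $C^\bullet(\mathcal{U},\Theta_X)$ and $[\widetilde{\Lambda}_0,-]$ on $C^\bullet(\widetilde{\mathcal{U}},\Theta_W)$ together define a morphism of the short exact sequence
\begin{equation*}
0\to C^\bullet(\mathcal{U},\Theta_X)\to C^\bullet(\widetilde{\mathcal{U}},\Theta_W)\xrightarrow{id-f_{*}}C^\bullet(\widetilde{\mathcal{U}},\Theta_W)\to 0
\end{equation*}
into the analogous short exact sequence with $\Theta$ replaced by $\wedge^2\Theta$. Commutativity of each square of this morphism of short exact sequences is exactly what was verified in the previous paragraph. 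By the naturality of the long exact sequence of cohomology associated with a short exact sequence of complexes, the induced maps on cohomology fit into the commutative diagram stated in the lemma; in particular, the square containing the connecting homomorphism $\sigma$ commutes.

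The only step that requires any care is the naturality of the Schouten bracket under $v^{*}$ and $f_{*}$. Since $v$ is a local biholomorphism and $f$ is a biholomorphism of $W$, this is a standard functorial property of the Schouten--Nijenhuis bracket for holomorphic polyvector fields; no genuine obstacle arises.
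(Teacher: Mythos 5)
Your proof is correct and follows essentially the same route as the paper: both rest on the $f$-invariance of (the lift of) $\Lambda_0$ and the resulting cochain-level identity $(id-f_*)\circ[\Lambda_0,-]=[\Lambda_0,-]\circ(id-f_*)$. The only presentational difference is that you package the commutativity of the square involving $\sigma$ as an instance of the naturality of the long exact sequence attached to a morphism of short exact sequences of \v{C}ech complexes, whereas the paper traces through the construction of the connecting homomorphism by hand; the underlying computation is identical.
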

\begin{proof}
We show that second diagram commutes. Indeed, let $a\in H^0(W,\Theta_W)$. Then 
\begin{align*}
&(id-f_*)([\Lambda_0,a])=[\Lambda_0,a]-f_*([\Lambda_0,a])=[\Lambda_0,a]-[f_*\Lambda_0, f_*a]=[\Lambda_0, a]-[\Lambda_0, f_*a]=[\Lambda_0,(id-f_*)a]
\end{align*}

We show that third diagram commutes.
First we recall how the connecting homomorphism $\sigma $ is constructed. Let $a\in H^0(W,\Theta_W)$. Then there exists $\{a_i\}\in C^0(\tilde{\mathcal{U}},\Theta_W)$ such that $(id-f_* ) (\{a_i\})=a$. Then $\delta(\{a_i\})=\{a_j-a_i\}\in C^1(\tilde{\mathcal{U}},\Theta_W)$ defines $\sigma(a)$ in $H^1(X,\Theta_X)$. Then $[\Lambda_0,\sigma(a)]\in H^1(X,\wedge^2 \Theta_X)$ is defined by $[\Lambda_0, \{a_j-a_i\}]=\{[\Lambda_0, a_j-a_i]\}\in C^1(\tilde{\mathcal{U}}, \wedge^2 \Theta_W)$. On the other hand, since $(id-f_*)([\Lambda_0, \{a_i\}])=[\Lambda_0,(id-f_*)(\{a_i\})]=[\Lambda_0,a]$, we see that $\sigma([\Lambda_0,a])\in H^1(X,\wedge^2 \Theta_X)$ is defined by $\delta([\Lambda_0,\{a_i\}])= \{[\Lambda_0,a_j-a_i]\}\in C^1(\tilde{\mathcal{U}},\wedge^2 \Theta_W)$ so that the third diagram commutes.
\end{proof}

By Lemma \ref{h6}, $(\ref{h8})$ and $(\ref{h9})$, we have the following commutative diagram 
\begin{center}
$\begin{CD}
coker(H^0(W, \wedge^2 \Theta_W)\xrightarrow{id-f_*}H^0(W, \wedge^2 \Theta_W))@> \cong>> H^1(X,\wedge^2 \Theta_X)\\
@A[\Lambda_0,-]AA @A[\Lambda_0,-]AA \\
coker(H^0(W,  \Theta_W)\xrightarrow{id-f_*}H^0(W,  \Theta_W))@>\cong >> H^1(X, \Theta_X)\\
\end{CD}$
\end{center}

Then by Lemma $\ref{h90}$, we obtain

\begin{lemma}
Let $(X,\Lambda_0)$ be any Poisson Hopf surface. Then we have
\begin{align}\label{h37}
&\mathbb{H}^1(X, \Theta_X^\bullet)\\
&\cong coker( H^0(X,\Theta_X)\xrightarrow{[\Lambda_0,-]} H^0(X,\wedge^2 \Theta_X))\oplus ker\left(H^0(W,\Theta_W)/im(id-f_*)\xrightarrow{[\Lambda_0,-]} H^0(W,\wedge^2 \Theta_W)/im(id-f_*)\right) \notag
\end{align}
On the other hand, 
we have
\begin{align}\label{h38}
\mathbb{H}^2(X, \Theta_X^\bullet) \cong coker\left(H^0(W,\Theta_W)/im(id-f_*)\xrightarrow{[\Lambda_0,-]} H^0(W,\wedge^2 \Theta_W)/im(id-f_*)\right)
\end{align}
\end{lemma}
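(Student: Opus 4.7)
The plan is to combine the three preparatory results already established in this section. By Lemma \ref{h90}, which is the Hopf-surface analogue of Lemma \ref{r4} and relies only on $H^2(X,\Theta_X)=0$, we already have
\begin{align*}
\mathbb{H}^1(X,\Theta_X^\bullet) &\cong \operatorname{coker}\bigl(H^0(X,\Theta_X)\xrightarrow{[\Lambda_0,-]} H^0(X,\wedge^2 \Theta_X)\bigr)\oplus \ker\bigl(H^1(X,\Theta_X)\xrightarrow{[\Lambda_0,-]} H^1(X,\wedge^2 \Theta_X)\bigr),\\
\mathbb{H}^2(X,\Theta_X^\bullet) &\cong \operatorname{coker}\bigl(H^1(X,\Theta_X)\xrightarrow{[\Lambda_0,-]} H^1(X,\wedge^2 \Theta_X)\bigr).
\end{align*}
Thus the only remaining task is to rewrite the two terms $H^1(X,\Theta_X)$ and $H^1(X,\wedge^2\Theta_X)$ in a form compatible with the map induced by $[\Lambda_0,-]$.

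Next I would invoke Lemma \ref{h55}, which gives the identifications
\[
H^1(X,\Theta_X) \cong H^0(W,\Theta_W)/\operatorname{im}(id-f_*), \qquad H^1(X,\wedge^2\Theta_X) \cong H^0(W,\wedge^2\Theta_W)/\operatorname{im}(id-f_*),
\]
arising as the connecting homomorphisms from the short exact sequences \eqref{hh12} of \v Cech complexes and the vanishing of $H^1(W,\Theta_W)$ and $H^1(W,\wedge^2\Theta_W)$ (by Hartogs and Stein-ness of the covering). The crucial compatibility step is Lemma \ref{h6}: it shows that the Schouten bracket $[\Lambda_0,-]$ commutes with both $id-f_*$ and with the connecting homomorphism $\sigma$, hence descends to a map between the two cokernels that fits into a commutative square with the map $[\Lambda_0,-]\colon H^1(X,\Theta_X)\to H^1(X,\wedge^2\Theta_X)$, both vertical arrows being isomorphisms.

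Consequently, taking kernels and cokernels through the commutative square of Lemma \ref{h6} yields isomorphisms
\begin{align*}
\ker\bigl(H^1(X,\Theta_X)\xrightarrow{[\Lambda_0,-]} H^1(X,\wedge^2\Theta_X)\bigr) &\cong \ker\!\left(\tfrac{H^0(W,\Theta_W)}{\operatorname{im}(id-f_*)}\xrightarrow{[\Lambda_0,-]} \tfrac{H^0(W,\wedge^2\Theta_W)}{\operatorname{im}(id-f_*)}\right),\\
\operatorname{coker}\bigl(H^1(X,\Theta_X)\xrightarrow{[\Lambda_0,-]} H^1(X,\wedge^2\Theta_X)\bigr) &\cong \operatorname{coker}\!\left(\tfrac{H^0(W,\Theta_W)}{\operatorname{im}(id-f_*)}\xrightarrow{[\Lambda_0,-]} \tfrac{H^0(W,\wedge^2\Theta_W)}{\operatorname{im}(id-f_*)}\right).
\end{align*}
Substituting into the two displays from Lemma \ref{h90} yields the required formulas \eqref{h37} and \eqref{h38}. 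There is no serious obstacle: every input has already been proved; the only thing to be careful about is the commutativity of the second square in Lemma \ref{h6}, which uses that $f_*$ is a Poisson automorphism of $W$ and that the connecting homomorphism is constructed by lifting a global section to a \v Cech $0$-cochain and applying $\delta$, operations which manifestly commute with $[\Lambda_0,-]$.
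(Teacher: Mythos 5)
Your proposal is essentially the paper's own argument: the paper obtains \eqref{h37} and \eqref{h38} by combining Lemma \ref{h90} with the identifications \eqref{h8}, \eqref{h9} of Lemma \ref{h55} and the commutative square that follows from Lemma \ref{h6}, exactly as you do. One small caveat: your parenthetical justification of \eqref{h8} and \eqref{h9} via ``the vanishing of $H^1(W,\Theta_W)$ and $H^1(W,\wedge^2\Theta_W)$'' is not right (for $W=\mathbb{C}^2\setminus\{0\}$ one has $H^1(W,\mathcal{O}_W)\neq 0$, which is why the paper instead proves \eqref{h9} by a dimension count against $\dim_\mathbb{C}H^1(X,\wedge^2\Theta_X)$ from \eqref{h58}), but since those identifications are precisely the content of Lemma \ref{h55}, which you cite, this does not affect the validity of your argument.
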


Next we describe $(\ref{h10})$ in terms of the universal covering.
\begin{lemma}

The map $H^0(W,\wedge^2 \Theta_W)^f\times  H^0(W,\Theta_W) \xrightarrow{[-,-]} H^0(W,\wedge^2 \Theta_W)$ induces 
\begin{align}\label{h11}
H^0(W,\wedge^2 \Theta_W)^f \times H^0(W,\Theta_W)/im(id-f_*) \xrightarrow{[-,-]} H^0(W,\wedge^2 \Theta_W)/im(id-f_*)
\end{align}
where $H^0(W,\wedge^2 \Theta_W)^f\cong H^0(X, \wedge^2 \Theta_X)$ is the invariant bivector fields on $W$ by the action generated by $f$.

\end{lemma}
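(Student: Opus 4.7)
The plan is to verify that the Schouten bracket $[\Lambda,-]$, for a fixed $f$-invariant bivector $\Lambda \in H^0(W,\wedge^2 \Theta_W)^f$, descends to a well-defined map on the quotient by $im(id-f_*)$. Concretely, I need to check that if $\theta$ and $\theta'$ represent the same class in $H^0(W,\Theta_W)/im(id-f_*)$, i.e.\ $\theta' - \theta = (id - f_*)\eta$ for some $\eta \in H^0(W, \Theta_W)$, then $[\Lambda, \theta'] - [\Lambda, \theta] \in im(id - f_*)$ as an element of $H^0(W, \wedge^2 \Theta_W)$.

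The key input is that $f:W \to W$ is a biholomorphism, so $f_*$ commutes with the Schouten bracket on polyvector fields, and that $\Lambda$ being $f$-invariant translates to $f_*\Lambda = \Lambda$. Combining these two facts gives
\begin{align*}
f_*[\Lambda, \eta] = [f_*\Lambda, f_* \eta] = [\Lambda, f_* \eta],
\end{align*}
which yields the crucial identity
\begin{align*}
[\Lambda, (id - f_*)\eta] = [\Lambda, \eta] - [\Lambda, f_*\eta] = [\Lambda, \eta] - f_*[\Lambda, \eta] = (id - f_*)[\Lambda, \eta].
\end{align*}
Hence $[\Lambda, (id-f_*)\eta] \in im(id-f_*)$, which is exactly the required compatibility for the bracket to pass to the quotient in the second factor. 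Bilinearity and the fact that $[\Lambda, -]$ is $\mathbb{C}$-linear are immediate from the properties of the Schouten bracket, so the induced map $(\ref{h11})$ is a well-defined $\mathbb{C}$-bilinear pairing.

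There is no serious obstacle here; the proof is essentially a one-line verification exploiting naturality of the Schouten bracket under the biholomorphism $f$ together with $f$-invariance of $\Lambda$. The only thing to remark is why the analogous statement for the first factor is not needed: we have not quotiented $H^0(W, \wedge^2 \Theta_W)^f$ by anything (it is already identified with $H^0(X, \wedge^2 \Theta_X)$ via the canonical projection $v:W \to X$), so only the $\theta$-variable requires a well-definedness check.
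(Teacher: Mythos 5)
Your proof is correct and follows essentially the same route as the paper: the paper also verifies well-definedness by writing $[a,b]-[a,c]=[a,(id-f_*)d]=(id-f_*)([a,d])$, using exactly the naturality of the Schouten bracket under $f_*$ and the invariance $f_*\Lambda=\Lambda$ that you make explicit. Your version merely spells out the intermediate identity $f_*[\Lambda,\eta]=[f_*\Lambda,f_*\eta]=[\Lambda,f_*\eta]$, which the paper leaves implicit.
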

\begin{proof}
Let us show that $(\ref{h11})$ is well-defined. Let $a\in H^0(W,\wedge^2 \Theta_W)^f$ and $b,c\in H^0(W,\Theta_W)$ with $b-c=(id-f_*)(d)$ for some $d\in H^0(W, \Theta_W)$. Then $[a,b]-[a,c]=[a,(id-f_*)d]=(id-f_*)([a,d])$.
\end{proof}

\begin{lemma}\label{h91}
We have a commutative diagram
\begin{center}
$\begin{CD}
H^0(X,\wedge^2 \Theta_X)\,\,\,\,\,\,\,\,\,\,\,\,\,\times @.\,\,\,\,\, H^1(X,\Theta_X) @>[-,-]>> H^1(X,\wedge^2 \Theta_X)\\
@V\cong VV @V\cong VV\ @V\cong VV\\
H^0(W,\wedge^2 \Theta_W)^f\,\,\,\,\,@.\times \,\,\,\,\,\, H^0(W,\Theta_W)/im(id-f_*)@>[-,-]>> H^0(W,\wedge^2 \Theta_W)/im(id-f_*)
\end{CD}$
\end{center}
\end{lemma}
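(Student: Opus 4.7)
The plan is a direct diagram chase, following the construction of the connecting homomorphism $\sigma$ from the short exact sequence $(\ref{hh12})$ and checking that the Schouten bracket is compatible with it, in the same spirit as the proof of Lemma $\ref{h6}$.

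First, I would recall how the vertical isomorphism
\begin{align*}
H^0(W,\Theta_W)/im(id-f_*) \xrightarrow{\cong} H^1(X,\Theta_X)
\end{align*}
from Lemma $\ref{h55}$ is realized at the cochain level. Given $\tilde{b}\in H^0(W,\Theta_W)$, choose a lift $\{b_i\}\in C^0(\tilde{\mathcal{U}},\Theta_W)$ with $(id-f_*)(\{b_i\})=\tilde{b}$; then $\delta(\{b_i\})=\{b_j-b_i\}$ is $f_*$-invariant, hence descends to a $1$-cocycle in $C^1(\mathcal{U},\Theta_X)$ which represents $\sigma(\tilde{b})\in H^1(X,\Theta_X)$. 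The analogous statement holds for $\wedge^2 \Theta_W$ in place of $\Theta_W$.

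Next, take $a\in H^0(W,\wedge^2 \Theta_W)^f\cong H^0(X,\wedge^2 \Theta_X)$ and $\tilde{b}\in H^0(W,\Theta_W)$. Going down then right, the representative of $[a,\sigma(\tilde{b})]\in H^1(X,\wedge^2 \Theta_X)$ is the $1$-cocycle
\begin{align*}
\{[a,b_j-b_i]\}=\{[a,b_j]-[a,b_i]\}=\delta(\{[a,b_i]\})\in C^1(\mathcal{U},\wedge^2 \Theta_X).
\end{align*}
Going right then down, one first forms $[a,\tilde{b}]\in H^0(W,\wedge^2 \Theta_W)$, and it suffices to show that $\{[a,b_i]\}\in C^0(\tilde{\mathcal{U}},\wedge^2 \Theta_W)$ is a preimage of $[a,\tilde{b}]$ under $id-f_*$. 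Since $f_* a=a$ and $f_*$ respects the Schouten bracket,
\begin{align*}
(id-f_*)([a,\{b_i\}])=[a,\{b_i\}]-[f_* a, f_* \{b_i\}]=[a,(id-f_*)(\{b_i\})]=[a,\tilde{b}].
\end{align*}
By the very description of $\sigma$ recalled above, this identifies $\sigma([a,\tilde{b}])$ with $[a,\sigma(\tilde{b})]$, giving commutativity of the diagram.

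There is no substantive obstacle here: the content is bookkeeping about the connecting homomorphism and the derivation property of the Schouten bracket. The only point to double-check is that $f_*$ (which is the pushforward by the biholomorphism $f:W\to W$) is a Lie algebra homomorphism with respect to the Schouten bracket and that $a$ being $f$-invariant means $f_* a=a$; both are standard. The same argument goes through verbatim for the universal covering of any of the five types of Hopf surfaces, so the statement holds uniformly.
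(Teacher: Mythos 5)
Your proof is correct and is essentially identical to the paper's own argument: both choose a cochain lift $\{b_i\}$ of $b$ under $id-f_*$, observe that $[a,\{b_j-b_i\}]=\{[a,b_j]-[a,b_i]\}$, and use $f_*a=a$ together with compatibility of $f_*$ with the Schouten bracket to get $(id-f_*)([a,\{b_i\}])=[a,b]$. Your write-up merely spells out the last computation a little more explicitly than the paper does.
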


\begin{proof}
Let $a\in H^0(W,\wedge^2\Theta_W)^f$ and $b\in H^0(W,\Theta_W)$. Then there exists $\{b_i\}\in C^0(\tilde{\mathcal{U}}, \Theta_W)$ such that $(id-f_*)(\{b_i\})=b$ 
and the class $\bar{b}\in H^0(W,\Theta_W)/im(id-f_*)$ corresponds to $\{b_j-b_i\}\in C^1(\mathcal{U}, \Theta_X)$. Since $[a, \{b_j-b_i\}]=\{[a,b_j]-[a,b_i]\}$ and $(id-f_*)([a,\{b_i\}])=[a,b]$, the diagram commutes.
\end{proof}

\subsection{$\mathbb{H}^1(X,\Theta_X^\bullet)$ and $\mathbb{H}^2(X, \Theta_X^\bullet)$ of Poisson Hopf surfaces $(X,\Lambda_0)$} 

\subsubsection{Hopf surfaces of type $\textnormal{IV}$}\

Let $X$ be a Hopf surface of type $\textnormal{IV}$, and $\Lambda_0=(Az^2+Bzw+Cw^2)\frac{\partial}{\partial z}\wedge \frac{\partial}{\partial w}$ be a Poisson structure on $X$.

\begin{enumerate}
\item if $\Lambda_0=0$, then $\mathbb{H}^1(X, \Theta_X^\bullet)=H^0(X, \Theta_X)\oplus H^1(X, \Theta_X)$, and $\mathbb{H}^2(X, \Theta_X^\bullet)=H^1(X, \wedge^2 \Theta_X)$.
\item if $\Lambda_0=(Az^2+Bzw+Cw^2)\frac{\partial}{\partial z}\wedge \frac{\partial}{\partial w} \ne 0$, then from $(\ref{h30})$, $(\ref{h78})$ and $(\ref{h35})$, we obtain
\begin{align}\label{h70}
ker\left(H^0(W,\Theta_W)/im(id-f_*)\xrightarrow{[\Lambda_0,-]} H^0(W,\wedge^2 \Theta_W)/im(id-f_*)\right)\cong Span_\mathbb{C}\left\langle z\frac{\partial}{\partial z}+w\frac{\partial}{\partial w}, (Bz+Cw)\frac{\partial}{\partial z}-Az\frac{\partial}{\partial w} \right\rangle
\end{align}
From $(\ref{h35})$ and $(\ref{h23})$, we see that $\dim_\mathbb{C} coker(H^0(X, \Theta_X)\xrightarrow{[\Lambda_0, -]}H^0(X, \wedge^2 \Theta_X))=1$. Hence from $(\ref{h37})$, we have $\dim_\mathbb{C} \mathbb{H}^1(X, \Theta_X^\bullet)=3$. On the other hand, from $(\ref{h38}),(\ref{h30})$, $(\ref{h78})$ and $(\ref{h35})$, we have $\dim_\mathbb{C} \mathbb{H}^2(X, \Theta_X^\bullet)=1$.
\end{enumerate}

\begin{lemma}
Let $X$ be a Hopf surface of type $\textnormal{IV}$, and $\Lambda_0=(Az^2+Bzw+Cw^2)\frac{\partial}{\partial z}\wedge \frac{\partial}{\partial w}\ne 0$ be a Poisson structure on $X$. Assume that $B^2-4AC\ne 0$, then we have
\begin{align}\label{h61}
coker(H^0(X, \Theta_X)\xrightarrow{[\Lambda_0, -]}H^0(X, \wedge^2 \Theta_X))\cong Span_\mathbb{C} \left\langle  (Az^2+Bzw+Cw^2)\frac{\partial}{\partial z}\wedge \frac{\partial}{\partial w} \right\rangle
\end{align}
\end{lemma}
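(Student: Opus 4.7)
The plan is to use the matrix computation in $(\ref{h35})$ and the preceding analysis of the kernel. From that analysis, when $\Lambda_0\ne 0$ we have $\dim_\mathbb{C} \mathbb{H}^0(X,\Theta_X^\bullet)=2$, i.e., $\ker([\Lambda_0,-])$ on $H^0(X,\Theta_X)\cong\mathbb{C}^4$ has dimension $2$, so the rank of $[\Lambda_0,-]\colon H^0(X,\Theta_X)\to H^0(X,\wedge^2\Theta_X)$ is $2$. Since $\dim_\mathbb{C} H^0(X,\wedge^2\Theta_X)=3$, the cokernel is one-dimensional. It therefore suffices to exhibit one nonzero element of the cokernel, and the claim is that $\Lambda_0$ itself works.

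Thus the heart of the proof reduces to showing that $\Lambda_0$ is not in the image when $B^2-4AC\ne 0$. I will argue by contradiction. Assume there exist $(d,e,f,g)\in\mathbb{C}^4$ for which the right-hand side of $(\ref{h78})$ equals $(Az^2+Bzw+Cw^2)\frac{\partial}{\partial z}\wedge\frac{\partial}{\partial w}$, i.e.,
\begin{align*}
A(g-d)-Bf=A,\qquad -2(Ae+Cf)=B,\qquad -C(g-d)-Be=C.
\end{align*}
Set $h:=g-d$. The first and third equations read $Ah-Bf=A$ and $-Ch-Be=C$.

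Multiplying the first by $C$ and the third by $A$ and adding yields $-B(Ae+Cf)=2AC$, and substituting the middle equation $Ae+Cf=-B/2$ gives $B^2/2=2AC$, i.e., $B^2=4AC$. This contradicts the hypothesis $B^2-4AC\ne 0$, so no such $(d,e,f,g)$ exists, and $\Lambda_0$ represents a nonzero class in the cokernel. Combined with the one-dimensionality established in the first step, this proves $(\ref{h61})$.

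The computation is entirely linear-algebraic and I anticipate no real obstacle; the only subtlety is to check that the elimination argument to obtain $B^2=4AC$ does not secretly assume $A,B,C$ are individually nonzero. The manipulation I propose (multiply and add, then substitute) works uniformly without any such case split, so the discriminant condition $B^2-4AC\ne 0$ is precisely the obstruction to solvability.
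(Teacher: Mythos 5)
Your proof is correct, and it reaches the conclusion by a route that is recognizably parallel to the paper's but cleaner in execution. The paper does not first isolate the statement ``$\dim\operatorname{coker}=1$'' and then test whether $\Lambda_0$ lies in the image; instead it splits into the cases $A\ne 0$ and $A=0$, uses a $4\times 4$ determinant to identify two explicit vector fields (e.g.\ $z\frac{\partial}{\partial z}$ and $w\frac{\partial}{\partial z}$ when $A\ne 0$) whose images generate $\operatorname{im}[\Lambda_0,-]$, and then computes a $3\times 3$ determinant whose value is $A(4AC-B^2)$ (resp.\ a nonzero multiple of $B$ in the second case) to conclude that those two images together with $\Lambda_0$ span all of $H^0(X,\wedge^2\Theta_X)$. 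Your argument replaces both the case split and the determinant computations: you read off $\dim\operatorname{coker}=1$ from the rank computation already carried out for $\mathbb{H}^0(X,\Theta_X^\bullet)$ in $(\ref{h35})$, and then show $\Lambda_0\notin\operatorname{im}[\Lambda_0,-]$ by a single elimination that derives $-B(Ae+Cf)=2AC$ from the outer two equations and substitutes $Ae+Cf=-B/2$ to force $B^2=4AC$. I checked that this elimination is valid with no hidden nonvanishing assumptions on $A$, $B$, $C$ individually (in the degenerate subcase $A=C=0$ it still correctly forces $B=0$), so the discriminant appears exactly as the solvability obstruction, which is arguably more illuminating than the paper's determinant $A(4AC-B^2)$, where the factor $A$ is an artifact of the choice of generators. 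The only ingredient you import is the fact that $\operatorname{rank} M=2$ whenever $(A,B,C)\ne 0$; that is asserted and justified in the paper immediately before the lemma, so relying on it is legitimate.
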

\begin{proof}
Let $B^2-4AC\ne 0$. Assume that $A\ne 0$. Then from $(\ref{h78})$, $(\ref{h35})$ and $(\ref{h23})$, and
\begin{align*}
\det\left(
\begin{matrix}
1& B &0&1\\
0& C & 1& 0\\
0 &-A & 0& 0\\
1& 0&0&0
\end{matrix}
\right)\ne 0
\end{align*}
Hence the image of $H^0(X, \Theta_X)\xrightarrow{[\Lambda_0, -]}H^0(X, \wedge^2 \Theta_X)$ is generated by 
\begin{align*}
[\Lambda_0, z \frac{\partial}{\partial z}]=(-Az^2+Cw^2)\frac{\partial}{\partial z}\wedge \frac{\partial}{\partial w},\,\,\,\,\,\,\,\,\,\,\, [\Lambda_0, w\frac{\partial}{\partial z}] = (-2Azw-Bw^2)\frac{\partial}{\partial z}\wedge \frac{\partial}{\partial w}
\end{align*}
By considering
\begin{align*}
\det\left(
\begin{matrix}
-A& 0 & C\\
0& -2A & -B\\
A& B& C
\end{matrix}
\right)=-A(-2AC+B^2)+C(2A^2)=4A^2 C-AB^2=A(4AC-B^2)\ne 0 ,
\end{align*}
we obtain $(\ref{h61})$.

On the other hand, assume that $A=0$. From $B^2-4AC\ne 0$, we have $B\ne 0$. Considering
\begin{align*}
\det\left(
\begin{matrix}
1& B &0&0 \\
0& C & 0& 1\\
0 &A=0& 1& 0\\
1& 0&0&0
\end{matrix}
\right)\ne  0,
\end{align*}
we see that the image of $H^0(X, \Theta_X)\xrightarrow{[\Lambda_0, -]}H^0(X, \wedge^2 \Theta_X)$ is generated by 
\begin{align*}
[\Lambda_0, z \frac{\partial}{\partial w}]=(-Bz^2-2Czw)\frac{\partial}{\partial z}\wedge \frac{\partial}{\partial w},\,\,\,\,\,\,\,\,\,\,\, [\Lambda_0, w\frac{\partial}{\partial z}] = -Bw^2\frac{\partial}{\partial z}\wedge \frac{\partial}{\partial w}
\end{align*}
Since we have
\begin{align*}
\det\left(
\begin{matrix}
0& 0 & -B\\
-B& -2C & 0\\
A=0& B& C
\end{matrix}
\right)\ne 0 ,
\end{align*}
we obtain $(\ref{h61})$.

\end{proof}

We summarize our computation in Table $\ref{h52}$.

\subsubsection{Hopf surfaces of type $\textnormal{III}$}\

Let $X$ be a Hopf surface of type $\textnormal{III}$, and $\Lambda_0=(Azw+Bw^{p+1})\frac{\partial}{\partial z}\wedge \frac{\partial}{\partial w}$ be a Poisson structure on $X$.
\begin{enumerate}
\item if $\Lambda_0=0$, then $\mathbb{H}^1(X, \Theta_X^\bullet)=H^0(X, \wedge^2 \Theta_X)\oplus H^1(X, \Theta_X)$, and $\mathbb{H}^2(X, \Theta_X^\bullet)=H^1(X, \wedge^2 \Theta_X)$ so that $\dim_\mathbb{C} \mathbb{H}^1(X, \Theta_X^\bullet)=5$, and $\dim_\mathbb{C} \mathbb{H}^2(X, \Theta_X^\bullet)=2$.
\item if $\Lambda_0=Bw^{p+1}\frac{\partial}{\partial z}\wedge \frac{\partial}{\partial w}, B\ne 0$, then from $(\ref{h31})$, and $(\ref{h62})$, we obtain
\begin{align}
ker\left(H^0(W,\Theta_W)/im(id-f_*)\xrightarrow{[\Lambda_0,-]} H^0(W,\wedge^2 \Theta_W)/im(id-f_*)\right)&\cong Span_\mathbb{C} \left\langle pz\frac{\partial}{\partial z}+w\frac{\partial}{\partial w}, w^p\frac{\partial}{\partial z} \right\rangle\\
coker(H^0(X, \Theta_X)\xrightarrow{[\Lambda_0, -]}H^0(X, \wedge^2 \Theta_X))&\cong Span_\mathbb{C} \left\langle  zw\frac{\partial}{\partial z}\wedge \frac{\partial}{\partial w} \right\rangle
\end{align}
\item if $\Lambda_0=(Azw+Bw^{p+1})\frac{\partial}{\partial z}\wedge \frac{\partial}{\partial w}, A\ne 0$, then from $(\ref{h31})$ and $(\ref{h62})$, we obtain
\begin{align}
ker\left(H^0(W,\Theta_W)/im(id-f_*)\xrightarrow{[\Lambda_0,-]} H^0(W,\wedge^2 \Theta_W)/im(id-f_*)\right)&\cong Span_\mathbb{C} \left\langle \left( z+\frac{B}{A} w^p \right)\frac{\partial}{\partial z}  , -\frac{pB}{A}w^p\frac{\partial}{\partial z}+w\frac{\partial}{\partial w}  \right\rangle \label{h72}\\
coker(H^0(X, \Theta_X)\xrightarrow{[\Lambda_0, -]}H^0(X, \wedge^2 \Theta_X))&\cong Span_\mathbb{C} \left\langle  (Azw+Bw^{p+1})\frac{\partial}{\partial z}\wedge \frac{\partial}{\partial w} \right\rangle \label{h73}
\end{align}
\end{enumerate}

We summarize our computation in Table $\ref{h52}$.

\subsubsection{Hopf surfaces of type $\textnormal{II}_a$}\

Let $X$ be a Hopf surface of type $\textnormal{II}_a$, and $\Lambda_0=Aw^{p+1}\frac{\partial}{\partial z}\wedge \frac{\partial}{\partial w}$ be a Poisson structure on $X$. 
 From $(\ref{h32})$, and $(id-f_*)(zw\frac{\partial}{\partial z}\wedge \frac{\partial}{\partial w})=\delta^{-p}w^{p+1}\frac{\partial}{\partial z}\wedge \frac{\partial}{\partial w}$,
{\small{\begin{align*}
[Aw^{p+1}\frac{\partial}{\partial z}\wedge \frac{\partial}{\partial w},(\delta^pz-w^p)\frac{\partial}{\partial z}]&=A\delta^p w^{p+1}\frac{\partial}{\partial z}\wedge \frac{\partial}{\partial w}\equiv 0 \,\,\,\,\,\mod( id-f_*)\\
[w^{p+1}\frac{\partial}{\partial z}\wedge \frac{\partial}{\partial w}, w\frac{\partial}{\partial w}]&=-pw^{p+1} \frac{\partial}{\partial z}\wedge \frac{\partial}{\partial w}\equiv 0 \,\,\,\,\,\mod im(id-f_*)
\end{align*}}}
so that we have
\begin{align}
ker\left(H^0(W,\Theta_W)/im(id-f_*)\xrightarrow{[\Lambda_0,-]} H^0(W,\wedge^2 \Theta_W)/im(id-f_*)\right)\cong Span_\mathbb{C} \left\langle (\delta^pz-w^p)\frac{\partial}{\partial z}, w\frac{\partial}{\partial w} \right\rangle \label{h74}
\end{align}
From $(\ref{h63})$, we see that 
\begin{align}
coker(H^0(X, \Theta_X)\xrightarrow{[\Lambda_0,-]} H^0(X, \wedge^2 \Theta_X))\cong Span_\mathbb{C} \left\langle w^{p+1}\frac{\partial}{\partial z}\wedge \frac{\partial}{\partial w} \right\rangle \label{h75}
\end{align}
 Hence from $(\ref{h37})$, we have $\dim_\mathbb{C} \mathbb{H}^1(X, \Theta_X^\bullet)=3$. On the other hand, from $(\ref{h32}),(\ref{h38})$, we have $\dim_\mathbb{C} \mathbb{H}^2(X, \Theta_X^\bullet)=1$.

We summarize our computation in Table $\ref{h52}$.

\subsubsection{Hopf surfaces of type $\textnormal{II}_b$}\

Let $X$ be a Hopf surface of type $\textnormal{II}_b$, and $\Lambda_0=Aw^2\frac{\partial}{\partial z}\wedge \frac{\partial}{\partial w}$ be a Poisson structure on $X$. Then from $(\ref{h33})$, $(\ref{h64})$ and $(id-f_*)(z^2\frac{\partial}{\partial z}\wedge \frac{\partial}{\partial w})=(2\alpha^{-1}zw-\alpha^{-2}w^2)\frac{\partial}{\partial z}\wedge \frac{\partial}{\partial w}$, we have
{\small{\begin{align*}
&[Aw^2\frac{\partial}{\partial z}\wedge \frac{\partial}{\partial w},(\alpha z-w)\frac{\partial}{\partial z}+\alpha w\frac{\partial}{\partial w}]=0\\
&[Aw^2\frac{\partial}{\partial z}\wedge \frac{\partial}{\partial w},(\alpha z-w)\frac{\partial}{\partial w}]=A(-2\alpha zw+w^2)\frac{\partial}{\partial z}\wedge \frac{\partial}{\partial w}\equiv 0 \,\,\,\,\,\,\mod (id-f_*)
\end{align*}}}
so that we have
\begin{align}
ker\left(H^0(W,\Theta_W)/im(id-f_*)\xrightarrow{[\Lambda_0,-]} H^0(W,\wedge^2 \Theta_W)/im(id-f_*)\right)&\cong Span_\mathbb{C}\left\langle  (\alpha z -w)\frac{\partial}{\partial z}+\alpha w \frac{\partial}{\partial w}, (\alpha z-w)\frac{\partial}{\partial w} \right\rangle \label{h76}\\
coker(H^0(X, \Theta_X)\xrightarrow{[\Lambda_0,-]} H^0(X, \wedge^2 \Theta_X))&\cong Span_\mathbb{C} \left\langle w^2\frac{\partial}{\partial z}\wedge \frac{\partial}{\partial w} \right\rangle \label{h77}
\end{align}
 Hence from $(\ref{h37})$, we have $\dim_\mathbb{C} \mathbb{H}^1(X, \Theta_X^\bullet)=3$. On the other hand, from $(\ref{h33}),(\ref{h38})$, we have $\dim_\mathbb{C} \mathbb{H}^2(X, \Theta_X^\bullet)=1$.

We summarize our computation in Table $\ref{h52}$.

\subsubsection{Hopf surfaces of type $\textnormal{II}_c$}\

Let $X$ be a Hopf surface of type $\textnormal{II}_c$, and $\Lambda_0=Azw\frac{\partial}{\partial z}\wedge \frac{\partial}{\partial w}$ be a Poisson structure on $X$. Then from $(\ref{h34})$ and $(\ref{h65})$, we have
\begin{align}
ker\left(H^0(W,\Theta_W)/im(id-f_*)\xrightarrow{[\Lambda_0,-]} H^0(W,\wedge^2 \Theta_W)/im(id-f_*)\right)&\cong Span_\mathbb{C} \left\langle z\frac{\partial}{\partial z}, w\frac{\partial}{\partial w} \right\rangle \label{h80}\\
coker(H^0(X, \Theta_X)\xrightarrow{[\Lambda_0,-]} H^0(X, \wedge^2 \Theta_X))&\cong Span_\mathbb{C} \left\langle zw\frac{\partial}{\partial z}\wedge \frac{\partial}{\partial w} \right\rangle  \label{h81}
\end{align}
 Hence from $(\ref{h37})$, we have $\dim_\mathbb{C} \mathbb{H}^1(X, \Theta_X^\bullet)=3$. On the other hand, from $(\ref{h34}),(\ref{h38})$, we have $\dim_\mathbb{C} \mathbb{H}^2(X, \Theta_X^\bullet)=1$.

We summarize our computation in Table $\ref{h52}$.

\begin{table}
{\small{\begin{center}
\begin{tabular}{| c | c | c | c |c|c|} \hline
Type of Hopf surface $X$ &  Poisson Structure $\Lambda_0$ &    $\dim_\mathbb{C}\mathbb{H}^0(X,\Theta_X^\bullet)$ & $\dim_\mathbb{C}\mathbb{H}^1(X,\Theta_X^\bullet)$ & $\dim_\mathbb{C}\mathbb{H}^2(X,\Theta_X^\bullet)$    \\ \hline
$\textnormal{IV}$   &   0 & 4  &   7        &  3  \\ \hline
$\textnormal{IV}$   &   $(Az^2+Bzw+Cw^2)\frac{\partial}{\partial z}\wedge \frac{\partial}{\partial w}$ &2 &    3  &  1 \\ \cline{6-6}
   & $(A,B,C)\ne 0 $ &  &       &   \\ \hline
$\textnormal{III}$  & 0     & 3  &   5       & 2 \\ \hline
$\textnormal{III}$  & $Bw^{p+1}\frac{\partial}{\partial z}\wedge \frac{\partial}{\partial w}, B\ne 0$     &  2 &   3      & 1 \\ \hline
$\textnormal{III}$  & $(Azw+Bw^{p+1})\frac{\partial}{\partial z} \wedge \frac{\partial}{\partial w}, A\ne 0 $  &  2 &   3       & 1 \\ \hline
$\textnormal{II}_a$  &  $Aw^{p+1}\frac{\partial}{\partial z}\wedge \frac{\partial}{\partial w}$ & 2          &      3    & 1 \\ \hline
$\textnormal{II}_b$  &  $Aw^2\frac{\partial}{\partial z}\wedge \frac{\partial}{\partial w}$   &    2     &     3    & 1 \\ \hline
$\textnormal{II}_c$  &  $Azw\frac{\partial}{\partial z}\wedge \frac{\partial}{\partial w}$  &  2            &   3     & 1 \\ \hline
\end{tabular}
\end{center}}}
\caption{} \label{h52}
\end{table}

\subsection{Obstructed and unobstructed Poisson deformations of Poisson Hopf surfaces}\

In this subsection, we determine obstructedness or unobstructedness of Poisson Hopf surfaces except for two classes of Poisson Hopf surfaces, namely $(X, \Lambda_0=(Az^2+Bzw+Cw^2))\frac{\partial}{\partial z}\wedge \frac{\partial}{\partial w}, 4AC-B^2=0$ where $X$ is a Hopf surface of type $\textnormal{IV}$, and $(X,\Lambda_0=Bw^{p+1}\frac{\partial}{\partial z}\wedge \frac{\partial}{\partial w}), B\ne 0$ where $X$ is a Hopf surface of type $\textnormal{III}$. The author could not determine obstructedness or unobstructedness of those two classes of Poisson Hopf surfaces in Poisson deformations (see Remark $\ref{h95}$).

First we discuss obstructed Poisson Hopf surfaces.
\begin{theorem}
$(X,\Lambda_0=0)$ is obstructed in Poisson deformations if $X$ is a Hopf surface of type $\textnormal{IV}$ or type $\textnormal{III}$.
\end{theorem}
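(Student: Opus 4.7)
The plan is to apply Lemma \ref{h90} directly. When $\Lambda_0 = 0$, the map $[\Lambda_0,-]\colon H^1(X,\Theta_X)\to H^1(X,\wedge^2\Theta_X)$ is the zero map, so its kernel is all of $H^1(X,\Theta_X)$ and its image is $0$. Consequently, to prove obstructedness it suffices to exhibit $a\in H^0(X,\wedge^2\Theta_X)$ and $b\in H^1(X,\Theta_X)$ such that $[a,b]\ne 0$ in $H^1(X,\wedge^2\Theta_X)$; by definition such a class then cannot lie in the image (which is $0$).

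The second step is to pass to the universal cover $W=\mathbb{C}^2-\{(0,0)\}$ via Lemma \ref{h91}, which identifies the bracket pairing with
\[
H^0(W,\wedge^2\Theta_W)^f\times H^0(W,\Theta_W)/\mathrm{im}(id-f_*)\xrightarrow{[-,-]} H^0(W,\wedge^2\Theta_W)/\mathrm{im}(id-f_*).
\]
Representatives $a$ and $b$ can then be taken from the explicit bases for $M_1$ and $M_2$ computed in Lemma \ref{h55}, and one only needs to check that the Schouten bracket of the chosen representatives is again a nontrivial basis element modulo $\mathrm{im}(id-f_*)$.

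For type $\textnormal{IV}$ I would take $a=z^2\tfrac{\partial}{\partial z}\wedge\tfrac{\partial}{\partial w}\in H^0(X,\wedge^2\Theta_X)$ and lift $b\in H^1(X,\Theta_X)$ to $w\tfrac{\partial}{\partial z}\in M_1$ (see (\ref{h30})). A direct Schouten-bracket computation yields $[a,b]=-2zw\tfrac{\partial}{\partial z}\wedge\tfrac{\partial}{\partial w}$, and by (\ref{h30}) the bivector $zw\tfrac{\partial}{\partial z}\wedge\tfrac{\partial}{\partial w}$ is a basis element of $M_2\cong H^1(X,\wedge^2\Theta_X)$, hence nonzero there. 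For type $\textnormal{III}$ I would take $a=zw\tfrac{\partial}{\partial z}\wedge\tfrac{\partial}{\partial w}\in H^0(X,\wedge^2\Theta_X)$ and lift $b$ to $w^p\tfrac{\partial}{\partial z}\in M_1$ (see (\ref{h31})). The analogous computation gives $[a,b]$ equal to a nonzero scalar multiple of $w^{p+1}\tfrac{\partial}{\partial z}\wedge\tfrac{\partial}{\partial w}$, which by (\ref{h31}) is a basis element of $M_2\cong H^1(X,\wedge^2\Theta_X)$.

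The only real point requiring care, and the potential obstacle, is verifying that the chosen bracket is nonzero in the quotient $H^0(W,\wedge^2\Theta_W)/\mathrm{im}(id-f_*)$ rather than merely nonzero as a bivector on $W$. This is handled by the explicit description of $M_2$ provided in Lemma \ref{h55}: both $zw\tfrac{\partial}{\partial z}\wedge\tfrac{\partial}{\partial w}$ (for type $\textnormal{IV}$) and $w^{p+1}\tfrac{\partial}{\partial z}\wedge\tfrac{\partial}{\partial w}$ (for type $\textnormal{III}$) appear as distinguished basis vectors in that lemma, so their nonvanishing in the quotient is automatic. Once these two explicit non-vanishing brackets are in hand, Lemma \ref{h90} immediately yields the obstructedness of $(X,\Lambda_0=0)$ in both types.
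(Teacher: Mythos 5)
Your proposal is correct and follows essentially the same route as the paper: since $\Lambda_0=0$ kills the map $[\Lambda_0,-]$, Lemma \ref{h90} reduces obstructedness to exhibiting $a\in H^0(X,\wedge^2\Theta_X)$ and $b\in H^1(X,\Theta_X)$ with $[a,b]\ne 0$, which is then verified via Lemma \ref{h91} and the explicit bases of $M_1$, $M_2$ from Lemma \ref{h55}. The only difference is that you supply concrete representatives (and your bracket computations agree with the paper's formulas (\ref{h78}) and (\ref{h62})), whereas the paper merely asserts that constants making the bracket nonzero can be chosen.
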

\begin{proof}
Let $(X,\Lambda_0=0)$ be a Poisson Hopf surface of type $\textnormal{IV}$. Then $\mathbb{H}^1(X,\Theta_X^\bullet)=H^0(X,\wedge^2 \Theta_X)\oplus H^1(X,\Theta_X)$. Choose $A,B,C,d,e,f,g\in \mathbb{C}$ such that
\begin{align*}
&[(Az^2+Bzw+Cw^2)\frac{\partial}{\partial z}\wedge \frac{\partial}{\partial w}, (dz+ew)\frac{\partial}{\partial z}+ (fz+gw)\frac{\partial}{\partial w} ]\\
&=\left((-Ad-Bf+gA)z^2+(-2Ae-2Cf)zw+(Cd-Be-Cg)w^2\right)\frac{\partial}{\partial z}\wedge \frac{\partial}{\partial w}\ne 0\\
\end{align*}
Then from Table $\ref{h28}$, $(\ref{h30})$, Lemma $\ref{h90}$ and Lemma $\ref{h91}$, $(X,\Lambda_0)$ is obstructed in Poisson deformations.

On the other hand, let $(X,\Lambda_0=0)$ be a Poisson Hopf surface of type $\textnormal{III}$. Then $H^1(X,\Theta_X^\bullet)=H^0(X,\wedge^2 \Theta_X)\oplus H^1(X,\Theta_X)$. Choose $A,B,d,e,f\in \mathbb{C}$ such that
\begin{align*}
[(Azw+Bw^{p+1})\frac{\partial}{\partial z}\wedge\frac{\partial}{\partial w},(dz+ew^p)\frac{\partial}{\partial z}+fw\frac{\partial}{\partial w}]=(Bd-Ae-pBf)w^{p+1}\frac{\partial}{\partial z}\wedge \frac{\partial}{\partial w}\ne 0
\end{align*}
Then from Table $\ref{h28}$, $(\ref{h31})$, and Lemma $\ref{h90}$ and Lemma $\ref{h91}$, $(X,\Lambda_0)$ is obstructed in Poisson deformations.
\end{proof}

Next we discuss unobstructed Poisson Hopf surfaces. We prove the following theorem which is an extension of \cite{Weh81} Theorem 2 p.28 in the context of Poisson deformations.

\begin{theorem}[compare \cite{Weh81} Theorem 2 p.28] \label{h53}
Let $(X,\Lambda_0)$  be a Poisson Hopf surface except for $(X,\Lambda_0=0), (X, (Az^2+Bzw+Cw^2)\frac{\partial}{\partial z}\wedge \frac{\partial}{\partial w}), 4AC-B^2=0$, where $X$ is a Hopf surface of type $\textnormal{IV}$, and $(X,\Lambda_0=0), (X, \Lambda_0=Bw^{p+1}\frac{\partial}{\partial z}\wedge \frac{\partial}{\partial w} ), B\ne 0$, where $X$ is a Hopf surface of type $\textnormal{III}$. Then $(X,\Lambda_0)$ is unobstructed in Poisson deformations$:$

Explicitly a Poisson analytic family 
\begin{align*}
\pi:(Y,\Lambda_0)\to (S,s_0)
\end{align*}
of deformations of an unobstructed Poisson Hopf surface $\pi^{-1}(s_0)=(X=W/\langle f\rangle,\Lambda_0)$ such that the Poisson Kodaira-Spencer map $\varphi_{s_0}: T_{s_0} S\to \mathbb{H}^1(X, \Theta_X^\bullet)$ is an isomorphism at the distinguished point $s_0$ can be constructed as follows: The base $S$ is a smooth manifold. There exist a holomorphic Poisson structure $\Lambda\in H^0(W\times S, \wedge^2 \Theta_{W\times S/S})$ on $W\times S$ and a biholomorphic Poisson map
\begin{align*}
(W\times S,\Lambda)\to ( W\times S,\Lambda),\,\,\,\,\,(x,s)\mapsto (F(x,s),s),\,\,\,\,\,f(x)=F(x,s_0)
\end{align*}
generating an infinite cyclic group $G$. This group acts properly discontinuous and without fixed points on $W\times S$ and induces
\begin{align*}
\pi:(Y,\Lambda)\to (S,s_0)
\end{align*}
as canonical projection from the factor space $(Y,\Lambda)=((W\times S)/G,\Lambda)$. According to the type of $X$ as defined by Theorem $\ref{h85}$, and Poisson structures as given by Table $\ref{h28}$, the base $S$, the Poisson structure on $W\times S$, and the map
\begin{align*}
F:W\times S\to W
\end{align*}
are given by the explicit formulas in Table $\ref{h50}$ and Table $\ref{h51}$:
\begin{table}
{\tiny{\begin{center}
\begin{tabular}{| c | c | c | c |c|} \hline
Type  &  Poisson &    $S$ &  $F$   \\ \cline{5-5}
       &  structures&     &   \\ \hline
$\textnormal{IV}$   &   0  &  &               \\ \hline       
$\textnormal{IV}$   &   $(Az^2+Bzw+Cw^2)\frac{\partial}{\partial z}\wedge \frac{\partial}{\partial w}$  & $S=\{(\alpha,\beta, t)\in\mathbb{C}^3 :\left(\begin{matrix} \alpha+\beta B & \beta C \\ -\beta A & \alpha \end{matrix}\right)\in GL(2,\mathbb{C})$&  $F(z,w,\alpha,\beta,t)=$         \\ \cline{5-5}
       &  $4AC-B^2\ne 0$ &  $ \text{ has all eigenvalues of modulus}<1\}$   & $((\alpha+\beta B)z+\beta C w,-\beta Az+\alpha w,\alpha,\beta,t)$  \\ \hline
$\textnormal{IV}$   & $(Az^2+Bzw+Cw^2)\frac{\partial}{\partial z}\wedge \frac{\partial}{\partial w}$    &   & \\ \cline{5-5} 
       & $4AC-B^2=0$ &     &   \\ \hline    
$\textnormal{III}$   &  0   &  &                \\ \hline
$\textnormal{III}$   &  $Bw^{p+1}\frac{\partial}{\partial z}\wedge \frac{\partial}{\partial w},B\ne0 $   &  &                \\ \hline
$\textnormal{III}$  &   $(Azw+Bw^{p+1})\frac{\partial}{\partial z}\wedge \frac{\partial}{\partial w},A\ne 0$  &$S=\{(\alpha,\delta,t)\in \mathbb{C}^3:0<|\alpha|<|\delta|<1\}$           & $F(z,w,\alpha,\delta,t)$       \\ \cline{5-5}
 &      &   & $=(\alpha z+\frac{B}{A}(\alpha-\delta^p)w^p,\delta w, \alpha,\delta,t)$      \\ \hline
$\textnormal{II}_a$  & $  Aw^{p+1}\frac{\partial}{\partial z}\wedge \frac{\partial}{\partial w}$   & $S=\{(\alpha,\delta,t)\in \mathbb{C}^3:0<|\alpha|<|\delta|<1\}$          & $F(z,w,\alpha,\delta,t)$   \\ \cline{5-5}
  &      &   & $=(\alpha z+w^p, \delta w,\alpha , \delta,t)   $      \\ \hline
$\textnormal{II}_b$  &  $Aw^2\frac{\partial}{\partial z}\wedge \frac{\partial}{\partial w}$   &$S=\{(\alpha,\beta, t)\in \mathbb{C}^3:\left( \begin{matrix} \alpha & 1\\ \beta &\alpha \end{matrix}   \right)\in GL(2,\mathbb{C})$ has all eigenvalues of modulus $<1\}$        & $F(z,w,\alpha,\beta,t)$   \\ \cline{5-5}
  &      &   &   $=(\alpha z+w,\beta z+\alpha w, \alpha, \beta,t) $    \\ \hline
$\textnormal{II}_c$  & $Azw\frac{\partial}{\partial z}\wedge \frac{\partial}{\partial w}$    &$S=\{(\alpha,\delta,t)\in \mathbb{C}^3:\left(\begin{matrix} \alpha & 0\\ 0& \delta\end{matrix}\right)\in GL(2,\mathbb{C}),0<|\alpha|<|\delta|<1\}$              & $F(z,w,\alpha, \delta, t)=(\alpha z,\delta w,\alpha,\delta,t)$ \\ \hline
\end{tabular}
\end{center}}}
\caption{} \label{h50}
\end{table}

\begin{table}
{\small{\begin{center}
\begin{tabular}{| c | c | c | c |c|} \hline
type &  Poisson &    Poisson structure $\Lambda$ &   Poisson   \\ \cline{5-5}
       &  structures&    on $W\times S$  &   Deformations \\ \hline
$\textnormal{IV}$   &   0  &              &obstructed  \\ \hline       
$\textnormal{IV}$   &   $(Az^2+Bzw+Cw^2)\frac{\partial}{\partial z}\wedge \frac{\partial}{\partial w}$ & $(1+t)(Az^2+Bzw+Cw^2)\frac{\partial}{\partial z}\wedge \frac{\partial}{\partial w}$  & unobstructed \\ \cline{5-5}
 & $4AC-B^2\ne 0$  &      &     \\ \hline
$\textnormal{IV}$   &   $(Az^2+Bzw+Cw^2)\frac{\partial}{\partial z}\wedge \frac{\partial}{\partial w}$ & & The author could not determine \\ \cline{5-5}
 & $4AC-B^2= 0$  &      &     \\ \hline
$\textnormal{III}$  &   0   &     & obstructed   \\ \hline
$\textnormal{III}$  & $Bw^{p+1}\frac{\partial}{\partial z}\wedge \frac{\partial}{\partial w},B\ne0$     &       & The author could not determine   \\ \hline
$\textnormal{III}$  &   $(Azw+Bw^{p+1})\frac{\partial}{\partial z}\wedge \frac{\partial}{\partial w}, A\ne 0$   & $(1+t)(Azw+Bw^{p+1})\frac{\partial}{\partial z}\wedge \frac{\partial}{\partial w}$    & unobstructed   \\ \hline
$\textnormal{II}_a$  &  $Aw^{p+1}\frac{\partial}{\partial z}\wedge \frac{\partial}{\partial w}$ & $(A+t)((\alpha-\delta^p)zw+w^{p+1})\frac{\partial}{\partial z}\wedge \frac{\partial}{\partial w}$ & unobstructed  \\ \hline
$\textnormal{II}_b$  & $Aw^2\frac{\partial}{\partial z}\wedge \frac{\partial}{\partial w}$    & $(A+t)(-\beta z^2+w^2)\frac{\partial}{\partial z}\wedge \frac{\partial}{\partial w}$  & unobstructed   \\ \hline
$\textnormal{II}_c$  & $Azw\frac{\partial}{\partial z}\wedge \frac{\partial}{\partial w}$   & $(A+t)zw\frac{\partial}{\partial z}\wedge \frac{\partial}{\partial w}$   &unobstructed \\ \hline
\end{tabular}
\end{center}}}
\caption{continuation of Table $\ref{h50}$, obstructed and unobstructed Poisson Hopf surfaces}\label{h51}
\end{table}
\end{theorem}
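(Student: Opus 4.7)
The plan is to verify, case by case according to the type of Hopf surface and the form of $\Lambda_0$ listed in Table~\ref{h51}, that the data $(S,F,\Lambda)$ indeed produces a Poisson analytic family of deformations of $(X,\Lambda_0)$ whose Poisson Kodaira--Spencer map is an isomorphism at $s_0$. The complex-analytic skeleton of each family is exactly Wehler's construction from Theorem~2 of \cite{Weh81}, so the task reduces to adjoining compatible Poisson data (including the Poisson parameter $t$) on top of that skeleton.

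First, for each row of Tables~\ref{h50}--\ref{h51}, I would check the following structural points. (i) The map $F(\cdot,s): W\to W$ is a Hopf contraction of the same type as $f$ for all $s$ in a neighborhood of $s_0$; the eigenvalue/parameter conditions defining $S$ encode this, so $G=\langle F\rangle$ acts properly discontinuously and without fixed points on $W\times S$, the quotient $Y=(W\times S)/G$ is a complex manifold smooth over $S$, and $\pi^{-1}(s_0)=X$. (ii) The bivector field $\Lambda$ on $W\times S$ displayed in Table~\ref{h51} is $F$-invariant, i.e.\ $F_*\Lambda=\Lambda$; this is the central direct computation and is precisely why the correction terms $(\alpha-\delta^p)w^p$ in types $\textnormal{II}_a$ and $\textnormal{III}$ (with $A\ne 0$) and $-\beta z^2$ in type $\textnormal{II}_b$ appear. (iii) Each fiber bivector field lives on a surface and is automatically Schouten-closed, so $[\Lambda,\Lambda]=0$ on $W\times S$ (being a sum of expressions in only two fiber coordinates), and at $s_0$ the structure restricts to $\Lambda_0$.

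Second, I would compute the Poisson Kodaira--Spencer map $\varphi_{s_0}: T_{s_0}S\to\mathbb{H}^1(X,\Theta_X^\bullet)$ using the \v{C}ech/Dolbeault description of Definition~\ref{o8} and Remark~\ref{o9}: the derivative of the cocycle of transition functions obtained from $F$ in the $s$-directions produces a class in $H^1(X,\Theta_X)$, while $\partial\Lambda/\partial s_i|_{s_0}$ produces a class in $H^0(X,\wedge^2\Theta_X)$. I would match these against the explicit bases from $(\ref{h37})$, i.e.\ combine the kernel descriptions $(\ref{h70})$, $(\ref{h72})$, $(\ref{h74})$, $(\ref{h76})$, $(\ref{h80})$ with the cokernel descriptions $(\ref{h61})$, $(\ref{h73})$, $(\ref{h75})$, $(\ref{h77})$, $(\ref{h81})$. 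In each unobstructed case Table~\ref{h52} gives $\dim S=\dim\mathbb{H}^1(X,\Theta_X^\bullet)=3$, so it suffices to establish injectivity: the two complex-structure parameters (two of $\alpha,\beta,\delta$) produce the two generators of $\ker(H^1(X,\Theta_X)\xrightarrow{[\Lambda_0,-]}H^1(X,\wedge^2\Theta_X))$ — this part overlaps with Wehler's computation — and $\partial\Lambda/\partial t|_{s_0}=\Lambda_0$ gives the distinguished generator of $\mathrm{coker}(H^0(X,\Theta_X)\xrightarrow{[\Lambda_0,-]}H^0(X,\wedge^2\Theta_X))$ (nonzero by the explicit cokernel formulas cited above).

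For the remaining assertions, the obstructedness of $(X,\Lambda_0=0)$ for types $\textnormal{IV}$ and $\textnormal{III}$ follows immediately from Lemma~\ref{h90} and Lemma~\ref{h91}: when $\Lambda_0=0$ the bracket $[\Lambda_0,-]$ on $H^1(X,\Theta_X)$ is zero, yet we can choose $a\in H^0(X,\wedge^2\Theta_X)$ and $b\in H^1(X,\Theta_X)$ with $[a,b]\neq 0$ in $H^1(X,\wedge^2\Theta_X)$ (from the explicit bases in Lemma~\ref{h55}), which rules out any Poisson analytic family realizing the pair $(a,b)$ as a Kodaira--Spencer image. The main obstacle in the unobstructed direction will be the explicit verification of $F_*\Lambda=\Lambda$ in the three ``non-diagonal'' situations (types $\textnormal{II}_a$, $\textnormal{II}_b$, and $\textnormal{III}$ with $A\ne 0$), together with the bookkeeping needed to confirm that the complex-structure derivatives of $F$ actually represent the kernel generators listed in $(\ref{h37})$ and are not absorbed into $\mathrm{im}(\mathrm{id}-f_*)$ when we descend from $W$ to $X$.
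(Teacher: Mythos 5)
Your proposal follows essentially the same route as the paper: verify the $G$-invariance of $\Lambda$ on $W\times S$ case by case, observe that $[\Lambda,\Lambda]=0$ is automatic on a surface, and then compute the Poisson Kodaira--Spencer map through the universal covering (the paper packages this step as Lemma~\ref{h54}, an extension of Wehler's Lemma~3), matching the $t$-derivative of $\Lambda$ against the cokernel generators $(\ref{h61})$, $(\ref{h73})$, $(\ref{h75})$, $(\ref{h77})$, $(\ref{h81})$ and the complex-structure derivatives of $F$ against the kernel generators $(\ref{h70})$, $(\ref{h72})$, $(\ref{h74})$, $(\ref{h76})$, $(\ref{h80})$, with the dimension count from Table~\ref{h52} closing the argument. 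This is correct and agrees with the paper's proof in both structure and detail.
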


\begin{remark}
We show that the Poisson structure $\Lambda$ on $W\times S$ given by Table $\ref{h51}$ in Theorem $\ref{h53}$ is invariant under the action $G$ so that $\Lambda $ is well-defined on $Y=W\times S/ G$.
\subsubsection{Hopf surfaces of  $\textnormal{IV}$ with $\Lambda_0=(Az^2+Bzw+Cw^2)\frac{\partial}{\partial z}\wedge \frac{\partial}{\partial w}, 4AC-B^2\ne 0$}\

We show that $\Lambda=(1+t)(Az^2+Bzw+Cw^2)\frac{\partial}{\partial z}\wedge \frac{\partial}{\partial w}, 4AC-B^2\ne 0$ is invariant under the action generated by $F(z,w,\alpha,\beta, t)=((\alpha+\beta B)z+\beta C w,-\beta Az+\alpha w,\alpha,\beta,t)$. This follows from $(\ref{h21})$.

\subsubsection{Hopf surfaces of $\textnormal{III}$ with $\Lambda_0=(Azw+Bw^{p+1})\frac{\partial}{\partial z}\wedge \frac{\partial}{\partial w},A\ne 0$}\

We show that $\Lambda=(1+t)(Azw+Bw^{p+1})\frac{\partial}{\partial z}\wedge \frac{\partial}{\partial w}$ is invariant under the action generated by $F(z,w,\alpha,\delta,t)=(\alpha z+\frac{B}{A}(\alpha-\delta^p)w^p,\delta w, \alpha,\delta, t)$. We note that $F^n(z,w,\alpha,\delta, t)=(z',w',\alpha,\delta, t)=(\alpha^n z + \frac{B}{A}(\alpha^n -(\delta^p)^n)w^p,\delta^n w, \alpha, \delta, t)$. Since $\frac{\partial}{\partial z}\wedge \frac{\partial}{\partial w}=\alpha^n \delta^n \frac{\partial}{\partial z'}\wedge \frac{\partial}{\partial w'}$, and
\begin{align*}
(Azw+Bw^{p+1})\alpha^n \delta^n=A\alpha^n \delta^n zw+B(\alpha^n -(\delta^p)^n)\delta^n w^{p+1}+B\delta^{n(p+1)}w^{p+1}
\end{align*}
we see that $\Lambda$ is an invariant bivector field under the action generated by $F$.

\subsubsection{Hopf surfaces of $\textnormal{II}_a$ with $\Lambda_0=Aw^{p+1}\frac{\partial}{\partial z}\wedge \frac{\partial}{\partial w}$}\

We show that $\Lambda=(A+t)((\alpha-\delta^p)zw+w^{p+1})\frac{\partial}{\partial z}\wedge \frac{\partial}{\partial w}$ is invariant under the action generated by $F(z,w,\alpha,\delta,t)=(\alpha z+w^{p},\delta w, \alpha, \delta, t)$. We note that $F^n(z,w, \alpha, \delta, t)=(z',w',\alpha,\delta, t)=(\alpha^n z +\frac{\alpha^n -(\delta^p)^n}{\alpha-\delta^p}w^p, \delta^n w)$. Since $\frac{\partial}{\partial z}\wedge \frac{\partial}{\partial w}=\alpha^n \delta^n \frac{\partial}{\partial z'}\wedge \frac{\partial}{\partial w'}$, and
\begin{align*}
(\alpha-\delta^p)\alpha^n \delta^n zw+(\alpha^n-(\delta^p)^n)\delta^n w^{p+1}+\delta^{n(p+1)}w^{p+1}=((\alpha-\delta^p)zw+w^{p+1})\alpha^n \delta^n,
\end{align*}
we see that $\Lambda$ is an invariant bivector field under the action generated by $F$.

\subsubsection{Hopf surface of $\textnormal{II}_b$ with $\Lambda_0=Aw^2\frac{\partial}{\partial z}\wedge \frac{\partial}{\partial w}$}\

We show that $\Lambda=(A+t)(-\beta z^2+w^2)\frac{\partial}{\partial z}\wedge \frac{\partial}{\partial w}$ is invariant under the action  generated by $F(z,w,\alpha,\beta, t)=(\alpha z+w, \beta z+\alpha w,\alpha, \beta ,t)=(\left(\begin{matrix} \alpha & 1 \\ \beta & \alpha  \end{matrix}\right)\left(\begin{matrix} z \\ w\end{matrix}\right), \alpha, \beta, t)$. We note that

\begin{equation*}
\left(\begin{matrix}
\alpha &1\\
 \beta &\alpha
\end{matrix}\right)
=
-\frac{1}{2\beta}\left(\begin{matrix}
\sqrt{\beta}&1\\
 \beta & -\sqrt{\beta}
\end{matrix}\right)
\left(\begin{matrix}
\alpha+\sqrt{\beta} &0\\
 0 &\alpha-\sqrt{\beta}
\end{matrix}\right)
\left(\begin{matrix}
-\sqrt{\beta} &-1\\
 -\beta & \sqrt{\beta}
\end{matrix}\right)
\end{equation*}

Hence we have
\begin{align*}
\left(\begin{matrix}
\alpha &1\\
 \beta &\alpha
\end{matrix}\right)^n
&=
-\frac{1}{2\beta}\left(\begin{matrix}
\sqrt{\beta}&1\\
 \beta & -\sqrt{\beta}
\end{matrix}\right)
\left(\begin{matrix}
(\alpha+\sqrt{\beta})^n &0\\
 0 &(\alpha-\sqrt{\beta})^n
\end{matrix}\right)
\left(\begin{matrix}
-\sqrt{\beta} &-1\\
 -\beta & \sqrt{\beta}
\end{matrix}\right)\\
&=-\frac{1}{2\beta}
\left(\begin{matrix}
-\beta(\alpha+\sqrt{\beta})^n-\beta(\alpha-\sqrt{\beta})^n &-\sqrt{\beta}(\alpha+\sqrt{\beta})^n+\sqrt{\beta}(\alpha-\sqrt{\beta})^n\\
-\beta\sqrt{\beta}(\alpha+\sqrt{\beta})^n+\beta\sqrt{\beta}(\alpha-\sqrt{\beta})^n & -\beta(\alpha+\sqrt{\beta})^n-\beta(\alpha-\sqrt{\beta})^n
\end{matrix}\right)
\end{align*}

Let $C=-\frac{1}{2\beta}(-\beta(\alpha+\sqrt{\beta})^n-\beta(\alpha-\sqrt{\beta})^n)$ and $D=-\frac{1}{2\beta}(-\sqrt{\beta}(\alpha+\sqrt{\beta})^n+\sqrt{\beta}(\alpha-\sqrt{\beta})^n)$.

Then we have
\begin{equation*}
\left(\begin{matrix}
\alpha & 1\\
\beta & \alpha
\end{matrix}\right)^n
=
\left(\begin{matrix}
C & D\\
\beta D & C
\end{matrix}\right)
\end{equation*}

Then $F^n(z,w,\alpha,\beta, t)=(z',w',\alpha, \beta , t )=(Cz+Dw,\beta D z+Cw,\alpha, \beta, t)$. Since $\frac{\partial}{\partial z}=C\frac{\partial}{\partial z'}+\beta D\frac{\partial}{\partial w'}$ and $\frac{\partial}{\partial w}=D\frac{\partial}{\partial z'}+C\frac{\partial}{\partial w'}$ so that $\frac{\partial}{\partial z}\wedge \frac{\partial}{\partial w}=(C^2-\beta D^2)\frac{\partial}{\partial z'}\wedge \frac{\partial}{\partial w'}$, and
\begin{align*}
-\beta(Cz+Dw)^2+(\beta Dz+Cw)^2&=-\beta(C^2z^2+2CDzw+D^2w^2)+\beta^2D^2z^2+2\beta CDzw+C^2w^2\\
&=-\beta (C^2-\beta D^2)z^2+(C^2-\beta D^2)w^2=(-\beta z^2+w^2)(C^2-\beta D^2)  ,
\end{align*}
we see that $\Lambda$ is an invariant bivector field under the action generated by $F$.

\subsubsection{Hopf surfaces of type $\textnormal{II}_c$ with $\Lambda_0=Azw\frac{\partial}{\partial z}\wedge \frac{\partial}{\partial w}$}\

It is clear that $\Lambda_0=(A+t)zw\frac{\partial}{\partial z}\wedge \frac{\partial}{\partial w}$ is invariant under the action generated by $F(z,w,\alpha,\delta, t)=(\alpha z, \delta w, \alpha, \delta, t)$.
\end{remark}

\subsubsection{ Descriptions of Poisson Kodaira-Spencer maps}\

We show how we can describe the Poisson Kodaira-Spencer map in the Poisson analytic family constructed in Theorem $\ref{h53}$ (see Table $\ref{h50}$ and Table $\ref{h51}$). We prove the following lemma which is an extension of \cite{Weh81} Lemma 3 p.29 in the context of Poisson analytic families.

\begin{lemma}[compare \cite{Weh81} Lemma 3 p.29] \label{h54}
Under the hypotheses of Theorem $\ref{h53}$, assume $s\in S$, let
\begin{align*}
f:\mathbb{C}^2 \to \mathbb{C}^2
\end{align*}
be given by $f(x):=F(x,s)=(F_1(x,s),F_2(x,s))$, where $x=(z,w)\in \mathbb{C}^2$ and set $X:=W/\langle f\rangle$. Let $\Lambda=\Lambda(z,w,s)\frac{\partial }{\partial z}\wedge \frac{\partial}{\partial w}$ be the $G$-invariant bivector field on $W\times S$ with $\Lambda \in H^0(W\times S, \wedge^2 \Theta_{W\times S/S})$ in Theorem $\ref{h53}$, and set $\Lambda_s:=\Lambda (z,w,s)\frac{\partial}{\partial z}\wedge \frac{\partial}{\partial w}$ to be the Poisson structure on $X$ for $s\in S$.  Denote by 
\begin{align*}
\rho:T_s S\to \mathbb{H}^1(X,\Theta_X^\bullet)\cong coker(H^0(X,\Theta_X)\xrightarrow{[\Lambda_s,-]} H^0(W,\wedge^2 \Theta_X))\oplus ker (H^1(X,\Theta_X)\xrightarrow{[\Lambda_s,-]} H^1(X,\Theta_X))
\end{align*}
the Poisson Kodaira-Spencer map of the Poisson analytic family
\begin{align*}
\pi:(Y,\Lambda)\to S
\end{align*}
at the base point $s\in S$. Then a linear map
\begin{align*}
\tau:T_s S\to D\subset H^0(W,\wedge^2\Theta_W)\oplus H^0(W,\Theta_W)
\end{align*}
can be defined by 
\begin{align*}
\tau(v)(x):=\left(a\frac{\partial \Lambda}{\partial s}, \,\,\,\,\,a\cdot {}^t Q (f^{-1}(x))\cdot \frac{\partial}{\partial x}\right)
\end{align*}
where $v=a\frac{\partial}{\partial s}\in T_s S$ and $Q(y):=\frac{\partial F}{\partial s}(y,s)=\left(\begin{matrix}
 \frac{\partial F_1}{\partial s}(y,s) \\
 \frac{\partial F_2}{\partial s}(y,s) \\
 \end{matrix}
 \right)$, which renders the following diagram commutative
\[\xymatrix{
D\subset H^0(W,\wedge^2 \Theta_W) \oplus H^0(W,\Theta_W)   \ar[rrrr]^\sigma
 & & && \mathbb{H}^1(X,\Theta_X^\bullet) \\
& &  \\
T_s S \ar[uu]^{\tau} \ar[rrrruu]^\rho \\      
}\]
Here $D:=\{(B,A)\in H^0(W,\wedge^2 \Theta_W)\oplus H^0(W, \Theta_W): (id-f_*)(B)=[\Lambda_s, A]\}$ a vector space, and $\sigma$ is defined in the following way$:$ for $(B,A)\in D$, from $(\ref{hh12})$ we choose $\{\beta_i\}\in C^0(\tilde{\mathcal{U}}, \Theta_W)$ such that $(id-f_*)(\{\beta_i\})=A$. Then we define
{\small{\begin{align*}
D\subset H^0(W,\wedge^2 \Theta_W) \oplus H^0(W,\Theta_W)&\xrightarrow{\sigma} \mathbb{H}^1(X,\Theta_X^\bullet)\\
(B,A)&\mapsto (B-[\Lambda_s, \{\beta_i\}],\{\beta_j-\beta_i\})\in C^0(\mathcal{U}, \wedge^2 \Theta_X)\oplus C^1(\mathcal{U},\Theta_X)
\end{align*}} }
\end{lemma}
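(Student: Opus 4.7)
The plan is to extend \cite{Weh81} Lemma 3 p.29 by carrying the Poisson structure alongside the complex structure throughout the argument. There are three things to verify: that $\tau$ lands in $D$, that $\sigma$ is well-defined, and that the triangle commutes. For the first point, I would differentiate the $G$-invariance relation $\Lambda(F(x,s),s)=\Lambda(x,s)\det(D_xF(x,s))$ in the parameter $s$; after evaluating at $f^{-1}(x)$ and using the chain rule, the resulting identity rearranges to $(id-f_*)(a\,\partial\Lambda/\partial s)=[\Lambda_s,\, a\cdot {}^tQ(f^{-1}(x))\cdot\partial/\partial x]$, which is precisely the condition defining $D$. This is a straightforward but essential calculation that expresses the infinitesimal compatibility between the variation of the deck transformation and the variation of the Poisson tensor.

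For the second point, if $\{\beta_i\},\{\beta_i'\}\in C^0(\tilde{\mathcal{U}},\Theta_W)$ are two lifts of $A$, then $\{\beta_i-\beta_i'\}$ is $f_*$-invariant and therefore descends to a global section $\mu\in H^0(X,\Theta_X)$; the difference of the two candidate cocycles is the $L$-coboundary of $\mu$ in the double complex $(\ref{o6})$, establishing independence of the choice. The 2-cocycle property itself follows from $(id-f_*)(B)=[\Lambda_s,A]$: this forces $B-[\Lambda_s,\{\beta_i\}]$ to be $f_*$-invariant, hence to descend to a 0-cochain in $C^0(\mathcal{U},\wedge^2\Theta_X)$, and applying $\delta$ yields $[\Lambda_s,\{\beta_j-\beta_i\}]$ by construction, so the pair lies in the kernel of the total differential of $(\ref{o6})$.

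The main step, and the main obstacle, is checking $\rho=\sigma\circ\tau$. By Definition \ref{o8}, $\rho(v)$ is represented by $(\{\partial g^j_{\alpha\beta}/\partial s\},\{\partial f^\alpha_{jk}/\partial s\})$. Since $Y=(W\times S)/G$, the transition maps of $Y$ on an overlap $U_j\cap U_k$ arise from identifications $U'_k\leftrightarrow f^{m_{jk}}(U'_j)$ for a locally constant integer $m_{jk}$; differentiating the iterated map $f^{m_{jk}}(\cdot,s)$ in $s$ via the chain rule produces a telescoping sum whose summands, after pulling back by $f^{-1}$, recover the vector field $A=a\cdot{}^tQ(f^{-1}(x))\cdot\partial/\partial x$ and exhibit $\{\partial f^\alpha_{jk}/\partial s\}$ as the \v{C}ech coboundary $\{\beta_k-\beta_j\}$ of a lift $\{\beta_i\}$ of $A$. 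Simultaneously, the local Poisson coefficients $g^j_{\alpha\beta}$ on $Y$ arise from $\Lambda(x,s)$ by the same chart identifications, so their $s$-derivatives equal $a\,\partial\Lambda/\partial s$ corrected by $[\Lambda_s,\{\beta_j\}]$, accounting for the fact that the chart change itself depends on $s$. The hard part is the careful telescoping-and-bookkeeping argument required to align the complex and Poisson components simultaneously and identify the resulting \v{C}ech 1-cocycle with $\sigma(\tau(v))$; once carried out, commutativity of the triangle follows.
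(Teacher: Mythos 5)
Your proposal is correct and follows essentially the same route as the paper: differentiate the $G$-invariance of $\Lambda$ to see that $\tau$ lands in $D$, check well-definedness of $\sigma$ via the $f_*$-invariance of the difference of two lifts, and identify $\rho(v)$ with $\sigma(\tau(v))$ by lifting everything to the covering $\tilde{\mathcal{U}}$. The telescoping bookkeeping you defer at the end is exactly what the paper packages by setting $\tilde{\eta}_i:=G_*^m(a\,\partial/\partial s)$ on $f^m(U_i')$ with $G(x,s)=(F(x,s),s)$ --- the vertical components $\beta_i$ of these projectable fields are your lift of $A$, and the transformation law $(id-f_*)\beta=a\cdot{}^tQ(f^{-1})\cdot\frac{\partial}{\partial x}$ then drops out of the Jacobian of $G$ without any explicit iteration of $f$.
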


\begin{remark}
We show that $\sigma:D\to \mathbb{H}^1(X,\Theta_X^\bullet)$ in Lemma $\ref{h54}$ is well-defined. First we note that $B-[\Lambda_s,\{\beta_i\}]$ defines an element in $C^0(\mathcal{U},\wedge^2 \Theta_X)$ since $(id-f_*)(B-[\Lambda_s,\{\beta_i\}])=0$, and $(B-[\Lambda_s, \{\beta_i\}],\{\beta_j-\beta_i\})$ defines $1$-cocycle of the \v{C}ech resolusion $(\ref{o6})$ of $\Theta_X^\bullet$.  We show that $\sigma$ is independent of choice of $\{\beta_i\}$. Let $\{\beta_i'\}\in C^0(\tilde{\mathcal{U}},\Theta_W)$ such that $(id-f_*)(\{\beta_i'\})=A$. Since $(id-f_*)(\{\beta_i-\beta_i'\})=0$, we see that $\{\alpha_i:=\beta_i-\beta_i'\}$ defines an element in $C^0(\mathcal{U},\Theta_X)$. Then $[\Lambda_s,\{\alpha_i\}]=B-[\Lambda_s,\{\beta_i'\}]-(B-[\Lambda_s,\{\beta_i\}])$ and $-\delta(\{\alpha_i\})=\{\alpha_i-\alpha_j\}=\{\beta_j'-\beta_i'-(\beta_j-\beta_i)\}$.
\end{remark}

\begin{proof}[Proof of Lemma $\ref{h54}$]

We use the coverings in the subsection $\ref{h3}$ so that 
\begin{align*}
\mathcal{U}=(U_i)_{i\in I} \,\,\,\,\,\text{of $X$},\,\,\,\,\,\,\text{and}\,\,\,\,\,\,\text{$ \tilde{\mathcal{U}}=(\tilde{U}_i)_{i\in I}$ of $W$}
\end{align*}
where 
\begin{align*}
\tilde{U}_i=\bigcup_{m\in \mathbb{Z}}^\bullet f^m (U_i').
\end{align*}

 As in \cite{Weh81} Lemma 3 p.29, if $v=a\frac{\partial}{\partial s}\in T_s S$ is given, we construct a family of projectable vector fields $\tilde{\eta}_i\in \Gamma(\tilde{U}_i,\Theta_W),i\in I$ setting
 $\tilde{\eta}_i:=a\frac{\partial}{\partial s}$ on $U_i'$, and $\tilde{\eta}_i:=G_*^m \tilde{\eta}_i$ on $f^m ( U_i' ) $, $m\in \mathbb{Z}$, where $G(x,s):=(F(x,s),s)$, equivalently
\begin{align*}
G(z,w,s)=(F(z,w,s),s)=(F_1,F_2,s)=(F_1(z,w,s),F_2(z,w,s),s)
\end{align*}

The explicit notation
\begin{equation}
\tilde{\eta}_i
=(\beta_i,a)
\left(
\begin{matrix}
\frac{\partial}{\partial x}\\
\frac{\partial}{\partial s}
\end{matrix}
\right)
=(\beta_i^1,\beta_i^2,a)
\left(
\begin{matrix}
\frac{\partial}{\partial z}\\
\frac{\partial}{\partial w}\\
\frac{\partial}{\partial s}
\end{matrix}
\right)
\end{equation}
shows that we have defined a cochain
\begin{align*}
\beta&=\left(\beta_i\frac{\partial}{\partial x}\right)=\left( \beta_i^1\frac{\partial}{\partial z}+\beta_i^2\frac{\partial}{\partial w}          \right)\in C^0(\tilde{\mathcal{U}},\Theta_W )
\end{align*}
From $G_*\tilde{\eta}_i=\tilde{\eta}_i$ and
\begin{equation}
\partial G:=
\left(\begin{matrix}
\frac{\partial f}{\partial x} & Q\\
0 & 1
\end{matrix}
\right)
=\left(
\begin{matrix}
\frac{\partial F_1}{\partial z} &\frac{\partial F_1}{\partial w} & \frac{\partial F_1}{\partial s}      \\
\frac{\partial F_2}{\partial z} & \frac{\partial F_2}{\partial w} & \frac{\partial F_2}{\partial s}\\
0 & 0& 1
\end{matrix}
\right)
\end{equation}
where 
\begin{align*}
Q:=\frac{\partial F}{\partial s}
=
\left(
\begin{matrix}
\frac{\partial F_1}{\partial s}\\
 \frac{\partial F_2}{\partial s}
\end{matrix}
\right)
\end{align*}
follows the transformation law
\begin{align}
(id-f_*)\beta&=a\cdot {}^t Q(f^{-1})\cdot \frac{\partial}{\partial x} \label{h87}\\
\iff (id-f_*)\left(\beta_i^1\frac{\partial}{\partial z}+\beta_i^2\frac{\partial}{\partial w}\right)&=a\left(  \frac{\partial F_1}{\partial s}(f^{-1})\frac{\partial }{\partial z}+\frac{\partial F_2}{\partial s}(f^{-1})\frac{\partial}{\partial w}      \right) \notag
\end{align}
On the other hand, since $\Lambda=\Lambda(z,w,s)\frac{\partial}{\partial z}\wedge \frac{\partial}{\partial w}$ is invariant under $F(z,w,s)=(F_1,F_2, s)$, we have 
\begin{align}\label{h86}
\Lambda(z,w,s)\left(\frac{\partial F_1}{\partial z}\frac{\partial F_2}{\partial w}-\frac{\partial F_2}{\partial z}\frac{\partial F_1}{\partial w} \right)=\Lambda(F_1,F_2, s).
\end{align}
By applying $a\frac{\partial}{\partial s}$ to both sides of $(\ref{h86})$, we obtain 
\begin{align*}
(id-f_*)\left(a\frac{\partial \Lambda}{\partial s}\right)&=a\frac{\partial \Lambda}{\partial s}-f_*\left(a\frac{\partial \Lambda}{\partial s}\right)=[\Lambda_s, a\cdot {}^t Q(f^{-1})\cdot \frac{\partial}{\partial x}]
\end{align*}

 From $(\ref{h87})$ and $(\ref{h86})$, the $1$-cocycle in \v{C}ech resolution of $\Theta_W^\bullet$
\begin{align*}
\tilde{\theta}&=(\tilde{\theta}_{ij})\in Z^1(\tilde{\mathcal{U}},\Theta_W)\,\,\,\,\,\,\,\,\,\,\,\,\,\text{given by}\,\,\,\,\,\tilde{\theta}_{ij}:= (\tilde{\eta}_j-\tilde{\eta}_i)\frac{\partial}{\partial x}=(\beta_j-\beta_i)\frac{\partial}{\partial x}\\
\tilde{\lambda}&=(\tilde{\lambda}_i)\in C^0(\mathcal{\tilde{U}},\wedge^2 \Theta_W) \,\,\,\,\,\,\,\,\text{given by }\,\,\,\,\tilde{\lambda}_i:=a\frac{\partial \Lambda}{\partial s}-[\Lambda_s,\beta_i\frac{\partial}{\partial x}]
\end{align*}
is invariant with respect to $id-f_*$ so that $(\tilde{\lambda}, \tilde{\theta})$ defines the $1$-cocycle $(\lambda,\theta)\in C^0(\mathcal{U},\wedge^2 \Theta_X)\oplus C^1(\mathcal{U},\Theta_X)$ in \v{C}ech resolution of $\Theta_X^\bullet$ which represents the image of $v=a\frac{\partial}{\partial s}$ under $\rho$
\begin{align*}
(\lambda,\theta)=\rho(v)\in \mathbb{H}^1(X,\Theta_X^\bullet)
\end{align*}
of the Poisson Kodaira-Spencer map at $s$. By the definition of  $\sigma$, we see that
\begin{align*}
\rho(v)=\rho \left(a\frac{\partial}{\partial s}\right)=\sigma\left( a\frac{\partial \Lambda}{\partial s},\,\,\,\,\, a\cdot {}^t Q(f^{-1} ) \cdot \frac{\partial}{\partial x}     \right)\in \mathbb{H}^1(X,\Theta_X^\bullet)
\end{align*}

\end{proof}

\subsection{Proof of Theorem $\ref{h53}$ }\

Now we prove Theorem $\ref{h53}$ for each class of unobstructed Poisson Hopf surfaces given by Table $\ref{h51}$.

\subsection{Hopf surfaces of type $\textnormal{IV}$ with $\Lambda_0=(Az^2+Bzw+Cw^2)\frac{\partial}{\partial z}\wedge \frac{\partial}{\partial w}$ with $4AC-B^2\ne 0$}\

Denote by $(X,\Lambda_0)=(W/\langle f\rangle, (Az^2+Bzw+Cw^2)\frac{\partial}{\partial z}\wedge \frac{\partial}{\partial w})$ where $4AC-B^2\ne 0$ a Poisson Hopf surface of type $\textnormal{IV}$ defined by $f(z,w)=(\alpha z,\alpha w)$ with $0<|\alpha |<1$.

We will show that the Poisson Kodaira-Spencer map of  the Poisson analytic family in Table $\ref{h50}$ and Table $\ref{h51}$ defined by
\begin{align*}
&(W\times S/G, (1+t)(Az^2+Bzw+Cw^2)\frac{\partial}{\partial z}\wedge \frac{\partial}{\partial w}),\\
\text{where}\,\,\,\,\,S=\{(\alpha,\beta, t)\in\mathbb{C}^3 :&\left(\begin{matrix} \alpha+\beta B & \beta C \\ -\beta A & \alpha \end{matrix}\right)\in GL(2,\mathbb{C})\,\,\,\,\,\text{has all eigenvalues of modulus}<1\},\\
 &F(z,w,\alpha,\beta,t)=((\alpha+\beta B) z+\beta C w,-\beta Az+\alpha w,\alpha,\beta,t)
\end{align*}
 is an isomorphism at $s_0=(\alpha, 0,0)$. Indeed, we note that $f^{-1}(z,w)=(\alpha^{-1}z, \alpha^{-1}w)$  and 
 \begin{align*}
 {}^tQ=\left(\begin{matrix}
 \frac{\partial F_1}{\partial \alpha} & \frac{\partial F_2}{\partial \alpha}\\
 \frac{\partial F_1}{\partial \beta} & \frac{\partial F_2}{\partial \beta}\\
 \frac{\partial F_1}{\partial t} & \frac{\partial F_1}{\partial t}
 \end{matrix}
 \right)=
 \left(\begin{matrix}
 z & w\\
 Bz+Cw& -Az\\
 0 & 0
 \end{matrix}
 \right)
 \end{align*}
 so that by Lemma \ref{h54}, $\frac{\partial}{\partial \alpha}\in T_{s_0} S$ is mapped to $\left(0, \alpha^{-1}z\frac{\partial}{\partial z}+\alpha^{-1}w\frac{\partial}{\partial w} \right)\in D$, $\frac{\partial}{\partial \beta}$ is sent to $(0,  (B\alpha^{-1}z+C\alpha^{-1}w)\frac{\partial}{\partial z}-A\alpha^{-1}z\frac{\partial}{\partial z}  )\in D$, and $\frac{\partial}{\partial t}$ is sent to $((Az^2+Bzw+Cw^2,0)\frac{\partial}{\partial z}\wedge \frac{\partial}{\partial w},0)\in D$. Hence $\tau$ induces a biholomorphic map
{\small{ \begin{align*}
 \tau:T_{s_0}S\to M:=Span_\mathbb{C} \left\langle \left((Az^2+Bzw+Cw^2)\frac{\partial}{\partial z}\wedge \frac{\partial}{\partial w} ,0\right) \right\rangle \oplus \left\langle \left(0,z\frac{\partial }{\partial z}+w\frac{\partial}{\partial w}\right),\left(0, (Bz+Cw)\frac{\partial}{\partial z}-Az\frac{\partial}{\partial w}\right) \right\rangle\subset D
 \end{align*}}}
 
 By Lemma $\ref{h54}$, $(\ref{h70})$ and $(\ref{h61})$, the restriction
 \begin{align*}
 \sigma: M\to \mathbb{H}^1(X,\Theta_X^\bullet)\cong coker(H^0(X,\Theta_X)\xrightarrow{[\Lambda_0, -]}H^0(X, \wedge^2 \Theta_X))\oplus ker(H^1(X,\Theta_X)\xrightarrow{[\Lambda_0,-]} H^1(X,\wedge^2 \Theta_X)
 \end{align*}
is an isomorphism so that the Poisson Kodaira-Spencer map is an isomorphism at $s_0$. Hence $(X,\Lambda_0)$ is unobstructed in Poisson deformations .

\subsection{Hopf surfaces of type $\textnormal{III}$ with $\Lambda_0=(Azw+Bw^{p+1})\frac{\partial}{\partial z}\wedge \frac{\partial}{\partial w},A\ne 0$}\

Denote by $(X,\Lambda_0)=(W/\langle f\rangle,(Azw+Bw^{p+1})\frac{\partial}{\partial z}\wedge \frac{\partial}{\partial w}, A\ne 0$ a Hopf surface of type III defined by $f(z,w)=(\delta^p z,\delta w)$ with constants $0<|\delta|<1$ and $p\in \mathbb{N}-\{0,1\}$. We will show that the Poisson Kodaira-Spencer map of the Poisson analytic family in Table $\ref{h50}$ and Table $\ref{h51}$ defined by
\begin{align*}
&(W\times S/G,\Lambda=(1+t)(Azw+Bw^{p+1})\frac{\partial}{\partial z}\wedge \frac{\partial}{\partial w})\\
\text{where}\,\,\,\,\,\,\,S=\{(\alpha,\delta,t)&\in \mathbb{C}^3:|0<|\alpha|<|\delta|<1\},\,\,\,\,\,F((z,w),\alpha,\delta, t)=(\alpha z+\frac{B}{A}(\alpha-\delta^p)w^p,\delta w,\alpha, \delta ,t)
\end{align*}
is an isomorphism at $s_0=(\delta^p,\delta, 0)\in S$. Indeed,  $f^{-1}(z,w)=(\delta^{-p}z,\delta^{-1}w)$ and
\begin{align*}
^{t}Q=\left(\begin{matrix}
 \frac{\partial F_1}{\partial \alpha} & \frac{\partial F_2}{\partial \alpha}\\
 \frac{\partial F_1}{\partial \delta} & \frac{\partial F_2}{\partial \delta}\\
 \frac{\partial F_1}{\partial t} & \frac{\partial F_1}{\partial t}
 \end{matrix}
 \right)=\left(
\begin{matrix}
z+\frac{B}{A}w^p & 0\\
-\frac{pB}{A}\delta^{p-1}w^p & w\\
0 & 0
\end{matrix}
\right)
\end{align*}
so that by Lemma \ref{h54}, $\frac{\partial}{\partial \alpha}$ is sent to $(0,((\delta^{-p}z)+\frac{B}{A}(\delta^{-1}w)^p)\frac{\partial}{\partial z})=(0,\delta^{-p}(z+\frac{B}{A}w^p)\frac{\partial}{\partial z})\in D$, $\frac{\partial}{\partial \delta}$ is sent to $(0,-\frac{pB}{A}\delta^{p-1}(\delta^{-1}w)^p \frac{\partial}{\partial z}+\delta^{-1} w\frac{\partial}{\partial w})=(0,\delta^{-1}(-\frac{pB}{A}w^p\frac{\partial}{\partial z}+w\frac{\partial}{\partial w}))\in D$ and $\frac{\partial}{\partial t}$ is sent to $((Azw+Bw^{p+1})\frac{\partial}{\partial z}\wedge \frac{\partial}{\partial w},0)\in D$.
Hence  $\tau$ induces a biholomorphic map
{\small{\begin{align*}
\tau:T_{s_0}S\to M:=Span_\mathbb{C}\left\langle \left(  (Azw+Bw^{p+1})\frac{\partial}{\partial z} \wedge \frac{\partial}{\partial w} ,0\right)  \right\rangle\oplus \left\langle \left(0,\left(z+\frac{B}{A}w^p\right)\frac{\partial}{\partial z}\right),\left( 0,-\frac{pB}{A}w^p\frac{\partial}{\partial z}+w \frac{\partial}{\partial w} \right) \right\rangle \subset D
\end{align*}}}
 By Lemma $\ref{h54}$, $(\ref{h72})$ and $(\ref{h73})$, the restriction
 \begin{align*}
 \sigma:  M  \to \mathbb{H}^1(X,\Theta_X^\bullet)\cong coker(H^0(X,\Theta_X)\xrightarrow{[\Lambda_0, -]}H^0(X, \wedge^2 \Theta_X))\oplus ker(H^1(X,\Theta_X)\xrightarrow{[\Lambda_0,-]} H^1(X,\wedge^2 \Theta_X))
 \end{align*}
is an isomorphism so that the Poisson Kodaira-Spencer map is an isomorphism at $s_0$. Hence $(X,\Lambda_0)$ is unobstructed in Poisson deformations. 

\subsection{Hopf surfaces of type $\textnormal{II}_a$ with $\Lambda_0=Aw^{p+1}\frac{\partial}{\partial z}\wedge \frac{\partial}{\partial w}$}\

Denote by $(X,\Lambda_0)=(W/\langle f\rangle, Aw^{p+1}\frac{\partial}{\partial z}\wedge \frac{\partial}{\partial w})$ a Poisson Hopf surface of type $\textnormal{II}_a$ defined by $f(z,w)=(\delta^p z+w^p, \delta w)$ with constants $0<|\delta|<1$ and $p\in \mathbb{N}-\{0,1\}$.

We will show that the Poisson Kodaira-Spencer map of the Poisson analytic family in Table $\ref{h50}$ and Table $\ref{h51}$ defined by
\begin{align*}
&(W\times S/G, (A+t)((\alpha-\delta^p)zw+w^{p+1})\frac{\partial}{\partial z}\wedge \frac{\partial}{\partial w})\\
\text{where}\,\,\,\,\, S=\{(\alpha,\delta, t)&\in \mathbb{C}^2:  0<|\alpha|<|\delta |<1\},\,\,\,\,\,\,\,\,
F((z,w),(\alpha,\delta,t))=(\alpha z+w^p,\delta w, \alpha,\delta, t)
\end{align*}
is an isomorphism at $s_0=(\delta^p, \delta, 0)\in S$. Indeed, we have $f^{-1}(z,w)=(\delta^{-p}z-\delta^{-2p}w^p,\delta^{-1}w)$ and
\begin{align*}
{}^tQ=\left(\begin{matrix}
 \frac{\partial F_1}{\partial \alpha} & \frac{\partial F_2}{\partial \alpha}\\
 \frac{\partial F_1}{\partial \delta} & \frac{\partial F_2}{\partial \delta}\\
 \frac{\partial F_1}{\partial t} & \frac{\partial F_1}{\partial t}
 \end{matrix}
 \right)=\left(\begin{matrix}
z &0 \\
0 & w \\
0 & 0
\end{matrix}\right)
\end{align*}
By Lemma $\ref{h54}$, $\frac{\partial}{\partial \alpha}$ is sent to $\left(Azw\frac{\partial}{\partial z}\wedge \frac{\partial}{\partial w}, (\delta^{-p}z-\delta^{-2p}w^p)\frac{\partial}{\partial z}\right)\in D$, $\frac{\partial}{\partial \delta}$ is sent to $\left(-pA\delta^{p-1} zw\frac{\partial}{\partial z}\wedge \frac{\partial}{\partial w},\delta^{-1}w\frac{\partial}{\partial w}     \right)\in D$, and $\frac{\partial}{\partial t}$ is sent to $(w^{p+1}\frac{\partial}{\partial z}\wedge \frac{\partial}{\partial w} ,0)\in D $.
Hence $\tau$ induces a biholomorphic map
{\tiny{\begin{align*}
\tau: T_{s_0} S\to M:= Span_\mathbb{C}\left\langle \left(Azw\frac{\partial}{\partial z}\wedge \frac{\partial}{\partial w}, (\delta^{-p}z-\delta^{-2p}w^p)\frac{\partial}{\partial z}\right) \right\rangle \oplus \left\langle \left(-pA\delta^{p-1} zw\frac{\partial}{\partial z}\wedge \frac{\partial}{\partial w},\delta^{-1}w\frac{\partial}{\partial w}     \right)   \right\rangle \oplus  \left\langle \left(w^{p+1}\frac{\partial}{\partial z}\wedge \frac{\partial}{\partial w} ,0 \right) \right\rangle\subset D 
\end{align*}}}
By Lemma $\ref{h54}$, $(\ref{h74})$ and $(\ref{h75})$, the restriction
\begin{align*}
\sigma |_M \to \mathbb{H}^1(X,\Theta_X^\bullet)\cong coker(H^0(X,\Theta_X)\xrightarrow{[\Lambda_0, -]}H^0(X, \wedge^2 \Theta_X))\oplus ker(H^1(X,\Theta_X)\xrightarrow{[\Lambda_0,-]} H^1(X,\wedge^2 \Theta_X)).
\end{align*}
is an isomorphism so that the Poisson Kodaira-Spencer map is an isomorphism at $s_0$. Hence $(X,\Lambda_0)$ is unobstructed in Poisson deformations.

\subsection{Hopf surfaces of type $\textnormal{II}_b$ with $\Lambda_0=Aw^2\frac{\partial}{\partial z}\wedge \frac{\partial}{\partial w}$ }\

Denote by $(X,\Lambda_0)=(W/\langle f\rangle, Aw^2\frac{\partial}{\partial z}\wedge \frac{\partial}{\partial w})$ a Poisson Hopf surface of type $\textnormal{II}_b$ defined by $f(z,w)=(\alpha z+w,\alpha w)$.

We will show that the Poisson Kodaira-Spencer map of the Poisson analytic family in Table $\ref{h50}$ and Table $\ref{h51}$ defined by
\begin{align*}
&(W\times S/G,(A+t)(-\beta z^2+w^2)\frac{\partial}{\partial z}\wedge \frac{\partial}{\partial w}) \\
\text{where}\,\,\,\,\,\,\,S=\{(\alpha,\beta,t )&\in \mathbb{C}^3:\left(\begin{matrix} \alpha  & 1 \\ \beta & \alpha \end{matrix}\right)\in GL(2,\mathbb{C}) \text{ has all eigenvalues of modulus $<1$} \} \\  
&F(z,w,\alpha,\beta, t)=(\alpha z+w,\beta z+\alpha w, \alpha,\beta, t)
\end{align*}
is an isomorphism at $s_0=(\alpha, 0, 0)$. Indeed, we have $f^{-1}(z,w)=( \alpha^{-1} z-\alpha^{-2} w , \alpha^{-1}w)$ and
\begin{align*}
^{t}Q=\left(\begin{matrix}
 \frac{\partial F_1}{\partial \alpha} & \frac{\partial F_2}{\partial \alpha}\\
 \frac{\partial F_1}{\partial \beta} & \frac{\partial F_2}{\partial \beta}\\
 \frac{\partial F_1}{\partial t} & \frac{\partial F_1}{\partial t}
 \end{matrix}
 \right)=\left( 
\begin{matrix}
z & w \\
0 & z \\
0 & 0
\end{matrix}
\right)
\end{align*}
so that by Lemma \ref{h54}, $\frac{\partial}{\partial \alpha}$ is sent to $(0,(\alpha^{-1}z-\alpha^{-2}w)\frac{\partial}{\partial z}+\alpha^{-1} w\frac{\partial}{\partial w})\in D$, $\frac{\partial}{\partial \beta}$ is sent to $(-Az^2\frac{\partial}{\partial z}\wedge \frac{\partial}{\partial w}, (\alpha^{-1}z-\alpha^{-2}w)\frac{\partial}{\partial w})\in D$, and $\frac{\partial}{\partial t}$ is sent to $(w^2\frac{\partial}{\partial z}\wedge \frac{\partial}{\partial w},0)\in D$. Hence $\tau$ induces a biholomorphic map
{\tiny{\begin{align*}
\tau:T_{s_0} S\to M:=Span_\mathbb{C}\left\langle \left(0,\left(\alpha^{-1}z-\alpha^{-2}w\right)\frac{\partial}{\partial z}+\alpha^{-1} w\frac{\partial}{\partial w}\right) \right\rangle \oplus \left\langle \left(-Az^2\frac{\partial}{\partial z}\wedge \frac{\partial}{\partial w}, \left(\alpha^{-1}z-\alpha^{-2}w\right)\frac{\partial}{\partial w}\right) \right\rangle \oplus \left\langle \left(w^2\frac{\partial}{\partial z}\wedge \frac{\partial}{\partial w},0\right) \right\rangle \subset D
\end{align*}}}
By Lemma $\ref{h54}$, $(\ref{h76})$ and $(\ref{h77})$, the restriction
\begin{align*}
\sigma: M\to \mathbb{H}^1(X,\Theta_X^\bullet) \cong  coker(H^0(X,\Theta_X)\xrightarrow{[\Lambda_0, -]}H^0(X, \wedge^2 \Theta_X))\oplus ker(H^1(X,\Theta_X)\xrightarrow{[\Lambda_0,-]} H^1(X,\wedge^2 \Theta_X))
\end{align*}
is an isomorphism so that the Poisson Kodaira-Spencer map is an isomorphism at $s_0$. Hence $(X,\Lambda_0)$ is unobstructed in Poisson deformations.

\subsection{Hopf surfaces of type $\textnormal{II}_c$ with $\Lambda_0=Azw\frac{\partial}{\partial z}\wedge \frac{\partial}{\partial w}$ }\

Denote by $(X,\Lambda_0)=(W/\langle f \rangle, Azw\frac{\partial}{\partial z}\wedge \frac{\partial}{\partial w})$ a Poisson Hopf surface $\textnormal{II}_c$ defined by $f(z,w)=(\alpha z, \delta w)$ with $0<|\alpha| <|\delta |<1$. We will show that the Poisson Kodaira-Spencer map of the Poisson Kodaira-Spencer map of the Poisson analytic family in Table $\ref{h50}$ and Table $\ref{h51}$ defined by
\begin{align*}
&(W\times S/G, (A+t)zw\frac{\partial}{\partial z}\wedge \frac{\partial}{\partial w})\\
\text{where}\,\,\,\,\,S=\{(\alpha,\delta, t)&\in \mathbb{C}:0<|\alpha| <|\delta| <1\},\,\,\,\,\,F(z,w,\alpha,\delta, t)=(\alpha z,\delta w, \alpha,\delta, t)
\end{align*}
is an isomorphism at $s_0=(\alpha,\delta, 0)$. Indeed, we have $f^{-1}(z,w)=(\alpha^{-1}z , \delta^{-1}w)$ and
\begin{align*}
^{t}Q=\left(\begin{matrix}
 \frac{\partial F_1}{\partial \alpha} & \frac{\partial F_2}{\partial \alpha}\\
 \frac{\partial F_1}{\partial \delta} & \frac{\partial F_2}{\partial \delta}\\
 \frac{\partial F_1}{\partial t} & \frac{\partial F_1}{\partial t}
 \end{matrix}
 \right)=\left(\begin{matrix}
z & 0\\
0 & w \\
0 & 0
\end{matrix}\right)
\end{align*}
so that by Lemma \ref{h54},  $\frac{\partial}{\partial \alpha}$ is sent to $(0,\alpha^{-1}z\frac{\partial}{\partial z})\in D$, $\frac{\partial}{\partial \delta}$ is sent to $(0,\delta^{-1}w \frac{\partial}{\partial w})\in D$, and $\frac{\partial}{\partial t}$ is sent to $(zw\frac{\partial}{\partial z}\wedge \frac{\partial}{\partial w},0)\in D$. Hence $\tau$ induces a biholomorphic map
\begin{align*}
\tau:T_{s_0} S\to M:=span_\mathbb{C}\left\langle \left(0,\alpha^{-1}z\frac{\partial}{\partial z}\right) \right\rangle \oplus \left\langle \left(0,\delta^{-1}w \frac{\partial}{\partial w}\right) \right\rangle \oplus \left\langle \left(zw\frac{\partial}{\partial z}\wedge \frac{\partial}{\partial w},0\right) \right\rangle \subset D
\end{align*}
By Lemma $\ref{h54}$, $(\ref{h80})$, and $(\ref{h81})$,  the restriction
\begin{align*}
\sigma:M\to \mathbb{H}^1(X,\Theta_X^\bullet) \cong  coker(H^0(X,\Theta_X)\xrightarrow{[\Lambda_0, -]}H^0(X, \wedge^2 \Theta_X))\oplus ker(H^1(X,\Theta_X)\xrightarrow{[\Lambda_0,-]} H^1(X,\wedge^2 \Theta_X))
\end{align*}
is an isomorphism so that the Poisson Kodaira-Spencer map is an isomorphism at $s_0$. Hence $(X,\Lambda_0)$ is unobstructed in Poisson deformations.

\begin{remark}\label{h95}
The author could not determine unobstructedness or obstructedness in Poisson deformations for the cases $(\textnormal{IV}, \Lambda_0=(Az^2+Bzw+Cw^2)\frac{\partial}{\partial z}\wedge \frac{\partial}{\partial w}\ne 0)$ with $4AC-B^2=0$ , and $(\textnormal{III},\Lambda_0=Bw^{p+1}\frac{\partial}{\partial z}\wedge \frac{\partial}{\partial w})$ with $B\ne 0$. As one can check, we cannot use Lemma $\ref{h90}$ in order to show obstructedness.

In each case, when we ignore Poisson structures, we can construct a complex analytic family such that the Kodaira-Spencer map is an isomorphism to $ker(H^1(X,\Theta_X)\xrightarrow{[\Lambda_0,-]} H^1(X,\wedge^2 \Theta_X))$ in the following:
\begin{enumerate}
\item Let $(\textnormal{IV}, \Lambda_0=(Az^2+Bzw+Cw^2)\frac{\partial}{\partial z}\wedge \frac{\partial}{\partial w}\ne 0)$ with $4AC-B^2=0$. Consider $(W\times S', S')/\langle F\rangle$ where $F(z,w,\alpha,\beta)= ( (\alpha+\beta B)z+\beta Cw, -\beta Az+\alpha w,\alpha,\beta)$ and $S'=\{(\alpha,\beta)\in \mathbb{C}^2:\left(\begin{matrix} \alpha+\beta B & \beta C \\ -\beta A & \alpha \end{matrix}\right)\in GL(2,\mathbb{C}) \text{ has all eigenvalues of modulus}<1\}$. Then the Kodaira-Spencer map of the complex analytic family is isomorphic to $ker(H^1(X,\Theta_X)\xrightarrow{[\Lambda_0,-]} H^1(X,\wedge^2 \Theta_X))$. The question is whether we can construct a Poisson analytic family using $F(z,w,\alpha,\beta)$ in a way that the Poisson Kodaira-Spencer map is an isomorphism at the distinguished point. But this is impossible. For example let us consider $\Lambda_0=z^2\frac{\partial}{\partial z}\wedge \frac{\partial}{\partial w}$. We may assume that $F(z,w)=(\alpha z,\beta z+\alpha w)$. We note that $\left(\begin{matrix} \alpha & 0 \\ \beta & \alpha \end{matrix}\right)^n=\left(\begin{matrix} \alpha^n & 0\\ n \alpha^{n-1}\beta  &\alpha^n \end{matrix} \right)$. Hence the invariant bivector field is of the form $Dz^2\frac{\partial}{\partial z}\wedge \frac{\partial}{\partial w}$ under the action generated by $F$. Let us consider the Poisson analytic family defined by
\begin{align}
&(W\times S/\langle F\rangle,  g(t)z^2\frac{\partial}{\partial z}\wedge \frac{\partial}{\partial w},S),\,\,\,\,\, \notag\\
&F:(z,w,\alpha,\beta,t)\mapsto ( (\alpha z,\beta z+ \alpha w,\alpha,\beta,t)\label{a1}\\
&S=\{(\alpha,\beta,t)\in \mathbb{C}^3:\left(\begin{matrix} \alpha & 0 \\ \beta & \alpha \end{matrix}\right)\in GL(2,\mathbb{C}) \text{ has all eigenvalues of modulus}<1\}, g(0)=A \notag
\end{align}
But in this case, the Poisson Kodaira-Spencer map is not an isomorphism at $0$ since $\frac{\partial}{\partial t}$ is sent to $(g'(0)z^2\frac{\partial}{\partial z}\wedge \frac{\partial}{\partial w},0)\in D$ which defines the $0$ class in $\mathbb{H}^1(X,\Theta_X^\bullet)$. We cannot construct a Poisson analytic family of deformations of $(X,\Lambda_0)$ such that the Poisson Kodaira-Spencer map is an isomorphism by using the $F(z,w,\alpha,\beta,t)$ in $(\ref{a1})$.
\item Let $(\textnormal{III},\Lambda_0=Bw^{p+1}\frac{\partial}{\partial z}\wedge \frac{\partial}{\partial w})$ with $B\ne 0$. Consider $(W\times S',S')/\langle F \rangle $ where $F(z,w,\delta,\lambda)=(\delta^pz+\lambda w^p, \delta w,\delta, \lambda)$ and $S'=\{(\delta,\lambda)\in \mathbb{C}^2:0<|\delta|<1\}$. Then the Kodaira-Spencer map of the complex analytic family is isomorphic to $ker(H^1(X,\Theta_X)\xrightarrow{[\Lambda_0,-]} H^1(X,\wedge^2 \Theta_X))$. However we cannot construct a Poisson analytic family using $F(z,w,\delta, \lambda)$ such that the Poisson Kodaira-Spencer map is an isomorphism at $(\delta, 0)$. We note that  $F^n$ is given by
\begin{align*}
(z,w, \delta, \lambda)\mapsto (z',w',\delta, \lambda)=(\delta^{np}z+\lambda n \delta^{(n-1)p}w^p,\delta^n w,\delta, \lambda)
\end{align*}
Since $\frac{\partial}{\partial z}=\delta^{np}\frac{\partial}{\partial z'}$, and $\frac{\partial}{\partial w}=p\lambda n \delta^{(n-1)p}w^{p-1}\frac{\partial}{\partial z'}+\delta^n \frac{\partial}{\partial w'}$ and so $\frac{\partial}{\partial z}\wedge \frac{\partial}{\partial w}= \delta^{p(n+1)}\frac{\partial}{\partial z'}\wedge \frac{\partial}{\partial w'}$, the invariant bivector field under the action generated by $F$ is of the form $Dw^{p+1}\frac{\partial}{\partial z}\wedge \frac{\partial}{\partial w}$. Let us consider the Poisson analytic family
\begin{align}
&(W\times S/\langle F\rangle,  g(t)w^{p+1}\frac{\partial}{\partial z}\wedge \frac{\partial}{\partial w},S),\,\,\,\,\, \notag\\
&F:(z,w,\delta,\lambda,t)\mapsto ( \delta^p z+\lambda w^p, \delta w,\delta,\lambda, t) \label{a10} \\
&S=\{(\delta,\lambda,t)\in \mathbb{C}^3: 0<|\delta| <1\}, g(0)=A \notag
\end{align}
But in this case, the Poisson Kodaira-Spencer map is not isomorphism at $s=(\delta,1 ,0)$ since $\frac{\partial}{\partial t}$ is sent to $(g'(0)w^{p+1}\frac{\partial}{\partial z}\wedge \frac{\partial}{\partial w},0)\in D$ which defines the $0$ class in $\mathbb{H}^1(X,\Theta_X^\bullet)$. We cannot construct a Poisson analytic family of deformations of $(X,\Lambda_0)$ such that the Poisson Kodaira-Spencer map is an isomorphism by using the $F(z,w,\delta,\lambda, t)$ in $(\ref{a10})$.

\end{enumerate}
\end{remark}

\section{Product of two nonsingular projective curves}\label{section5}

In this section, we study Poisson deformations of $X=C_1\times C_2$ where $C_1$ and $C_2$ are nonsingular projective curves with genera $g(C_1)=g_1$ and $g(C_2)=g_2$ respectively. Since $X$ has only trivial Poisson structure for $g_1\geq 2$ or $g_2\geq 2$, we only consider $g_1\leq 1$ and $g_2\leq 1$. In this case, we show that $(X=C_1\times C_2,\Lambda_0)$ are unobstructed in Poisson deformations except for $(E\times \mathbb{P}_\mathbb{C}^1,\Lambda_0=0)$ where $E$ is an elliptic curve.

\subsection{Description of cohomology groups $H^i(X,\wedge^j \Theta_X)$ on $X$}\

Let $C_1$ and $C_2$ be two nonsingular projective curves with genera $g(C_1)=g_1$ and $g(C_2)=g_2$ respectively. Let $X:=C_1\times C_2$. We describe the cohomology groups $H^i(X,\wedge^j \Theta_X),i=0,1,2,j=1,2$. Let $\pi_1:X\to C_1$ and $\pi_2:X\to C_2$ be two natural projections. Then we have $\Theta_X=\pi_1^*\Theta_{C_1}\oplus \pi_2^* \Theta_{C_2}$ and $\wedge^2 \Theta_X=\pi_1^*\Theta_{C_1}\otimes_{\mathcal{O}_X} \pi_2^*\Theta_{C_2}$. By K\"{u}nneth formula, we have
\begin{align}\label{formula}
H^0(X,\Theta_X)&\cong H^0(C_1,\Theta_{C_1})\oplus H^0(C_2,\Theta_{C_2})\\
H^1(X,\Theta_X)&\cong H^1(C_1,\Theta_{C_1})\oplus \left( H^0(C_1,\Theta_{C_1})\otimes H^1(C_2,\mathcal{O}_{C_2})\right)\oplus\left(H^1(C_1,\mathcal{O}_{C_1})\otimes H^0(C_2,\Theta_{C_2}) \right)\oplus H^1(C_2,\Theta_{C_2}) \notag\\
H^2(X,\Theta_X)&\cong \left( H^1(C_1,\Theta_{C_1})\otimes H^1(C_2,\mathcal{O}_{C_2}) \right)\oplus \left(H^1(C_1,\mathcal{O}_{C_1})\otimes H^1(C_2,\Theta_{C_2})\right)\notag\\
H^0(X,\wedge^2 \Theta_X)&\cong  H^0(C_1,\Theta_{C_1})\otimes H^0(C_2,\Theta_{C_2})  \notag     \\
H^1(X,\wedge^2 \Theta_X)&\cong  \left( H^1(C_1,\Theta_{C_1})\otimes H^0(C_2,\Theta_{C_2})    \right) \oplus \left( H^0(C_1,\Theta_{C_1})\otimes H^1(C_2,\Theta_{C_2}) \right)  \notag    \\
H^2(X,\wedge^2 \Theta_X)&\cong  H^1(C_1,\Theta_{C_1})\otimes H^1(C_2,\Theta_{C_2})          \notag
\end{align}

\subsection{Poisson deformations of $X=C_1\times C_2$}\

If $g_1\geq 2$ or $g_2\geq 2$, we have $H^0(X,\wedge^2 \Theta_X)=0$ so that we only consider $(g_1,g_2)=(0,0),(0,1),(1,1)$.
\begin{enumerate}
\item When $g_1=0,g_2=0$, we have $X=\mathbb{P}_\mathbb{C}^1\times \mathbb{P}_\mathbb{C}^1\cong F_0$. In this case,  $(X,\Lambda_0)$ is unobstructed in Poisson deformations for any holomorphic Poisson structure as we showed in Table $\ref{ruled}$.

\item When $g_1=1,g_2=1$, we have $X=E_1\times E_2$, where $E_1,E_2$ are elliptic curves. In this case, $(X,\Lambda_0)$ is unobstructed in Poisson deformations by Example $\ref{torus}$.

\item When $g_1=1,g_2=0$, we have $X=E\times \mathbb{P}_\mathbb{C}^1$ where $E$ is an elliptic curve. In the next subsection, we show that $(X,\Lambda_0)$ is unobstructed in Poisson deformations for $\Lambda_0\ne 0$, and $(X,\Lambda_0=0)$ is obstructed in Poisson deformations.
\end{enumerate}

\subsection{Poisson deformations of $E\times \mathbb{P}_\mathbb{C}^1$ where $E$ is an elliptic curve}\

Let $E=\mathbb{C}/\mathcal{G}$ be an elliptic curve, where $\mathcal{G}$ is the free abelian group generated by non-degenerate period $\omega_j,j=1,2$ and where,  if $z$ is the coordinate of $\mathbb{C}$, $\omega_j\in \mathcal{G}$ operates on $\mathcal{G}$ by sending $z$ into $z+\omega_j$. Let $X:=E\times \mathbb{P}_\mathbb{C}^1$, and $\xi$ is the inhomogenous coordinate of $\mathbb{P}_\mathbb{C}^1$. Then from $(\ref{formula})$ we have
\begin{enumerate}
\item $\dim_\mathbb{C} H^0(X,\Theta_X)=4$ and any element of $H^0(X,\Theta_X)$ has an unique representation of the form 
\begin{align}\label{c51}
a_0\frac{\partial}{\partial z}+(a_1+a_2\xi+a_3\xi^2)\frac{\partial}{\partial \xi},\,\,\,\,\,\text{where}\,\,\,\,\,a_0,a_1,a_2,a_3\in \mathbb{C}.
\end{align} 
\item $\dim_\mathbb{C} H^1(X,\Theta_X)=4$ and any element of $H^1(X,\Theta_X)$ has an unique representation of the form
\begin{align}\label{c52}
x_0\frac{\partial}{\partial z}d\bar{z}+(x_1+x_2\xi+x_3\xi^2)\frac{\partial}{\partial \xi}d\bar{z},\,\,\,\,\,\text{where}\,\,\,\,\,x_0,x_1,x_2,x_3\in \mathbb{C}.
\end{align}
\item $\dim_\mathbb{C} H^2(X,\Theta_X)=0.$
\item $\dim_\mathbb{C} H^0(X,\wedge^2 \Theta_X)=3$ and any element of $H^0(X, \wedge^2 \Theta_X)$ has an unique representation of the form
\begin{align} \label{c1}
(b_0+b_1\xi+b_2\xi^2)\frac{\partial}{\partial z}\wedge \frac{\partial}{\partial \xi},\,\,\,\,\,\text{where}\,\,\,\,\,b_0,b_1,b_2\in \mathbb{C}.
\end{align}
\item $\dim_\mathbb{C} H^1(X,\wedge^2 \Theta_X)=3$ and any element of $H^1(X, \wedge^2 \Theta_X)$ has an unique representation of the form
\begin{align} \label{c2}
(x_0+x_1\xi+x_2\xi^2)\frac{\partial}{\partial z}\wedge \frac{\partial}{\partial \xi}d\bar{z},\,\,\,\,\,\text{where}\,\,\,\,\, x_0,x_1,x_2\in \mathbb{C}.
\end{align}
\item $\dim_\mathbb{C} H^2(X,\wedge^2 \Theta_X)=0$.
\end{enumerate}

By $(\ref{c1})$, let us consider a holomorphic Poisson structure on $X=E\times \mathbb{P}_\mathbb{C}^1$ given by
\begin{align*}
\Lambda_0=(A+B\xi+C\xi^2)\frac{\partial}{\partial z}\wedge \frac{\partial}{\partial \xi}
\end{align*}
for some constant $A,B,C\in \mathbb{C}$. As in Lemma $\ref{r4}$, we have

\begin{lemma}\label{c9}
Let $(X=E\times \mathbb{P}_\mathbb{C}^1,\Lambda_0)$ where $E$ is an elliptic curve.  Then 
\begin{align*}
\mathbb{H}^0(X,\Theta_X^\bullet)&\cong ker(H^0(X,\Theta_X)\xrightarrow{[\Lambda_0,-]} H^0(X,\wedge^2 \Theta_X))\\
\mathbb{H}^1(X, \Theta_{X}^\bullet)&\cong coker(H^0(X, \Theta_{X})\xrightarrow{[\Lambda_0,-]} H^0(X, \wedge^2 \Theta_{X}))\oplus ker(H^1(X, \Theta_{X})\xrightarrow{[\Lambda_0,-]} H^1(X,\wedge^2 \Theta_{X}))\\
\mathbb{H}^2(X,\Theta_{X}^\bullet)&\cong coker(H^1(X,\Theta_{X})\xrightarrow{[\Lambda_0,-]}H^1(X,\wedge^2 X))
\end{align*}
 $(X,\Lambda_0)$ is obstructed in Poisson deformations if for some $a , b$ where $a\in H^0(X,\wedge^2 \Theta_X)$ and $b\in ker(H^1(X, \Theta_{X})\xrightarrow{[\Lambda_0,-]} H^1(X,\wedge^2 \Theta_{X}))$, under the following map
\begin{align*}
[-,-]:H^0(X, \wedge^2 \Theta_{X})\times H^1(X, \Theta_{X})\to H^1(X, \wedge^2 \Theta_{X})
\end{align*}
$[a,b]\in H^1(X,\wedge^2 \Theta_{X})$ is not in the image of $ H^1(X, \Theta_{X})\xrightarrow{[\Lambda_0,-]} H^1(X,\wedge^2 \Theta_{X}) $.
\end{lemma}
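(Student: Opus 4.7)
The plan mirrors exactly that of Lemma \ref{r4}, with the role of the vanishing $H^2(F_m, \Theta_{F_m}) = 0$ now played by the two simultaneous vanishings $H^2(X, \Theta_X) = 0$ and $H^2(X, \wedge^2 \Theta_X) = 0$ recorded in items (3) and (6) above. First I would run the spectral sequence associated with the \v{C}ech double complex (\ref{o6}) resolving $\Theta_X^\bullet$. At the $E_1$ page the nontrivial columns are $H^i(X, \Theta_X)$ and $H^i(X, \wedge^2 \Theta_X)$ with $i = 0, 1$ only, and the horizontal differential is $[\Lambda_0, -]$. Because every entry with $i = 2$ vanishes, no differential can hit or leave these groups past the $E_2$ page, and the resulting filtration on $\mathbb{H}^n(X, \Theta_X^\bullet)$ splits in the three claimed ways: $\mathbb{H}^0$ is the kernel on degree zero, $\mathbb{H}^2$ is the cokernel on degree one, and $\mathbb{H}^1$ sits in a short exact sequence
\begin{equation*}
0 \to \coker\!\left(H^0(X, \Theta_X) \xrightarrow{[\Lambda_0,-]} H^0(X, \wedge^2 \Theta_X)\right) \to \mathbb{H}^1(X, \Theta_X^\bullet) \to \ker\!\left(H^1(X, \Theta_X) \xrightarrow{[\Lambda_0,-]} H^1(X, \wedge^2 \Theta_X)\right) \to 0
\end{equation*}
which splits since the kernel term is already a subgroup of $H^1(X, \Theta_X)$ and any lift back to $\mathbb{H}^1$ is canonical via the \v{C}ech representatives.

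For the obstruction criterion my plan is to transcribe almost verbatim the second half of the proof of Lemma \ref{r4}. Namely, assume $(X, \Lambda_0)$ is unobstructed in Poisson deformations and fix $a \in H^0(X, \wedge^2 \Theta_X)$ together with a \v{C}ech $1$-cocycle $b = \{b_{jk}\} \in C^1(\mathcal{U}, \Theta_X)$ representing a class in $\ker(H^1(X, \Theta_X) \xrightarrow{[\Lambda_0,-]} H^1(X, \wedge^2 \Theta_X))$. By the kernel condition there exists $\{c_i\} \in C^0(\mathcal{U}, \wedge^2 \Theta_X)$ such that $c_k - c_j + [\Lambda_0, b_{jk}] = 0$, so both $(\{c_i\}, \{b_{jk}\})$ and $(\{c_i + a\}, \{b_{jk}\})$ define classes in $\mathbb{H}^1(X, \Theta_X^\bullet)$. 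Unobstructedness together with Theorem \ref{o50} then forces both of the $2$-cocycles
\begin{equation*}
(0, \{-[c_j + c_k, b_{jk}]\}, \{[b_{ij}, b_{jk}]\}) \text{\quad and\quad} (0, \{-[c_j + c_k + 2a, b_{jk}]\}, \{[b_{ij}, b_{jk}]\})
\end{equation*}
to be coboundaries in $\mathbb{H}^2(X, \Theta_X^\bullet)$. Subtracting the two coboundings and rescaling produces elements $\{d_i - d_i'\}$ and $\{e_{jk} - e_{jk}'\}$ with $\delta\{\tfrac{1}{2}(d_i - d_i')\} + [\Lambda_0, \{\tfrac{1}{2}(e_{jk} - e_{jk}')\}] = \{[a, b_{jk}]\}$, which exhibits $\{[a, b_{jk}]\}$ as lying in the image of $[\Lambda_0, -] : H^1(X, \Theta_X) \to H^1(X, \wedge^2 \Theta_X)$. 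Contrapositively, if $[a, b]$ is not in this image, $(X, \Lambda_0)$ is obstructed.

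The main obstacle is nothing conceptually new relative to Lemma \ref{r4}; the only point requiring a small check is that the needed $2$-cocycle $\{e_{jk} - e_{jk}'\}$ represents a class in $H^1(X, \Theta_X)$ (because $\delta\{e_{jk}\} = \delta\{e_{jk}'\} = \{[b_{ij}, b_{jk}]\}$ cancels), which then enables the identification of $\{[a, b_{jk}]\}$ with $[\Lambda_0, \cdot]$ applied to this class modulo coboundaries. Once the three identifications in the first paragraph are in place, this computation is purely formal, and the criterion for obstructedness follows immediately.
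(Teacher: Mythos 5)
Your proposal is correct and follows essentially the same route as the paper, which proves this lemma simply by repeating the argument of Lemma \ref{r4}: the spectral sequence of the double complex (\ref{o6}) degenerates (here only $H^2(X,\Theta_X)=0$ is actually needed, to kill the extra graded piece of $\mathbb{H}^2$; degeneration at $E_2$ is automatic for the two-column complex $\Theta_X\to\wedge^2\Theta_X$), and the obstruction criterion is the verbatim transcription of the second half of that proof via Theorem \ref{o50}.
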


We will describe the first cohomology group 
\begin{align}\label{c7}
\mathbb{H}^1(X,\Theta_X^\bullet)\cong coker(H^0(X,\Theta_X)\xrightarrow{[\Lambda_0,-]} H^0(X,\wedge^2 \Theta_X))\oplus ker(H^1(X,\Theta_X)\xrightarrow{[\Lambda_0,-]}H^1(X,\wedge^2 \Theta_X)).
\end{align} 

Let us describe $H^1(X,\Theta_X)\xrightarrow{[\Lambda_0,-]} H^1(X,\wedge^2 \Theta_X)$ in $(\ref{c7})$. We compute, from $(\ref{c52})$ and $(\ref{c2})$,
\begin{center}
\small{\begin{align*}
&[\Lambda_0, x_0\frac{\partial}{\partial z}d\bar{z}+ (x_1+x_2\xi+x_3\xi^2) \frac{\partial}{\partial \xi}d\bar{z}]=[(A+B\xi+C\xi^2)\frac{\partial}{\partial z}\wedge \frac{\partial}{\partial \xi},x_0\frac{\partial}{\partial z}+ (x_1+x_2\xi+x_3\xi^2) \frac{\partial}{\partial \xi}]d\bar{z}\\
  &=( -Bx_1+Ax_2+2(-Cx_1+Ax_3 )\xi   +( -Cx_2+Bx_3  )\xi^2   )\frac{\partial}{\partial z}\wedge \frac{\partial}{\partial \xi} d\bar{z}
\end{align*}}
\end{center}
We represent the above computation in the matrix form with respect to bases $(\ref{c52})$ and $(\ref{c2})$
\begin{equation}\label{c5}
M\bold{x}:=\left(\begin{matrix}
0& -B & A& 0\\
0& -2C & 0& 2A\\
0& 0& -C & B
\end{matrix}\right)
\left(
\begin{matrix}
x_0\\
x_1\\
x_2\\
x_3
\end{matrix}
\right)\\
=\left(
\begin{matrix}
-Bx_1+Ax_2\\
-2Cx_1+2Ax_3\\
-Cx_2+Bx_3
\end{matrix}
\right)
\end{equation}

On the other hand, let us describe $H^0(X,\Theta_X)\xrightarrow{[\Lambda_0,-]} H^0(X,\wedge^2 \Theta_X)$ in $(\ref{c7})$, we compute, from $(\ref{c51})$ and $(\ref{c1})$
\begin{center}
\small{\begin{align*}
&[\Lambda_0, a_0\frac{\partial}{\partial z}+ (a_1+a_2\xi+a_3\xi^2) \frac{\partial}{\partial \xi}]=[(A+B\xi+C\xi^2)\frac{\partial}{\partial z}\wedge \frac{\partial}{\partial \xi},a_0\frac{\partial}{\partial z}+ (a_1+a_2\xi+a_3\xi^2) \frac{\partial}{\partial \xi}] \\
  &=( -Ba_1+Aa_2+2(-Ca_1+Aa_3 )\xi   +( -Ca_2+Ba_3  )\xi^2   )\frac{\partial}{\partial z}\wedge \frac{\partial}{\partial \xi}
\end{align*}}
\end{center}
which is represented by the matrix form with respect to bases $(\ref{c51})$ and $(\ref{c1})$
\begin{equation}\label{c6}
M\bold{a}:=\left(\begin{matrix}
0& -B & A& 0\\
0& -2C & 0& 2A\\
0& 0& -C & B
\end{matrix}\right)
\left(
\begin{matrix}
a_0\\
a_1\\
a_2\\
a_3
\end{matrix}
\right)\\
=\left(
\begin{matrix}
-Ba_1+Aa_2\\
-2Ca_1+2Aa_3\\
-Ca_2+Ba_3
\end{matrix}
\right)
\end{equation}
We note that since $det\left( \begin{matrix}  -B & A& 0\\
-2C & 0& 2A\\
 0& -C & B \end{matrix} \right)=0$, $rank(M)$ cannot be $3$, and we see that if $A=B=C=0$, $rank(M)=0$, and if not, $rank(M)=2$.  
When $\Lambda_0\ne 0$, the kernel of $M$ in $(\ref{c5})$ and $(\ref{c6})$ is given by
\begin{equation*}
\left(\begin{matrix}
x_0\\
x_1\\
x_2\\
x_3
\end{matrix}
\right)
\text{or}
\left(\begin{matrix}
a_0\\
a_1\\
a_2\\
a_3
\end{matrix}
\right)
=t_1\left(\begin{matrix}
1\\
0\\
0\\
0
\end{matrix}\right)
+t_2\left(\begin{matrix}
0\\
A\\
B\\
C
\end{matrix}\right)
\,\,\,\,\,\,\,\,\,t_1,t_2\in \mathbb{C}.
\end{equation*}

We are ready to describe $\mathbb{H}^1(X,\Theta_X^\bullet)$ in terms of $(\ref{c7})$ and determine obstructedeness or unobstructedness of Poisson deformations for the following two cases: (1) $(A,B,C)\ne 0$, and (2) $A=B=C=0$. We note that for the case $(1)$, since $rank(M)=2$, we have $\dim_\mathbb{C} coker(H^0(X,\Theta_X)\xrightarrow{[\Lambda_0,-]} H^0(X,\wedge^2 \Theta_X))=1$, $\dim_\mathbb{C} ker(H^1(X,\Theta_X)\xrightarrow{[\Lambda_0,-]} H^1(X,\wedge^2 \Theta_X))=2 $ so that $\mathbb{H}^1(X,\Theta_X^\bullet)=3$, and $\mathbb{H}^2(X,\Theta_X^\bullet)=1$ by Lemma \ref{c9}. We will show that for the case $(1)$, Poisson deformations of $(E\times \mathbb{P}^1, \Lambda_0= (A+B\xi+C\xi^2)\frac{\partial}{\partial z}\wedge \frac{\partial}{\partial \xi})$ is unobstructed even though we have $\mathbb{H}^2(X,\Theta_X^\bullet)=1\ne 0$. We will show the unobstructedness by finding $\beta(t)\in A^{0,0}(X,\wedge^2\Theta_X)\oplus A^{0,1}(X,\Theta_X)$ satisfying $L\beta(t)+\frac{1}{2}[\beta(t), \beta(t)]=0$, where $L=\bar{\partial}- +[\Lambda_0,-]$, which defines a Poisson analytic family of deformations of $(X,\Lambda_0)$ such that the Poisson Kodaira-Spencer map is an isomorphism at $t=0$ (see Remark $\ref{o9}$). On the other hand, we show that the case $(2)$ $(X,\Lambda_0=0)$ is obstructed in Poisson deformations.

\subsubsection{The case of $(A,B,C)\ne 0$}\

We note that $ker(H^1(X,\Theta_X) \xrightarrow{[\Lambda_0,-]} H^1(X,\wedge^2 \Theta_X))$ is given by
\begin{align}\label{c8}
t_1\frac{\partial}{\partial z}d\bar{z}+t_2(A+B\xi+C\xi^2)\frac{\partial}{\partial \xi}d\bar{z},\,\,\,\,\, t_1,t_2\in \mathbb{C}
\end{align}
Since $coker(H^0(X,\Theta_X)\xrightarrow{[\Lambda_0,-]} H^0(X,\wedge^2 \Theta_X))=1$, choose $(F_0,F_1,F_2)\ne 0\in \mathbb{C}^3$ such that
\begin{align}
(F_0+F_1\xi+F_2\xi^2)\frac{\partial}{\partial z}\wedge \frac{\partial}{\partial \xi}
\end{align}
is not in the image of $H^0(X,\Theta_X)\xrightarrow{[\Lambda_0,-]} H^0(X,\wedge^2 \Theta_X)$ so that $(F_0+F_1\xi+F_2\xi^2)\frac{\partial}{\partial z}\wedge \frac{\partial}{\partial \xi}$ is a representative of the $coker(H^0(X,\Theta_X)\xrightarrow{[\Lambda_0,-]} H^0(X,\wedge^2 \Theta_X))$. Then by Lemma $\ref{c9}$, 
\begin{align*}
\mathbb{H}^1(X,\Theta_X)=\left\langle (F_0+F_1\xi+F_2 \xi^2)\frac{\partial}{\partial z}\wedge \frac{\partial}{\partial \xi} \right\rangle \oplus  \left\langle \frac{\partial}{\partial z}d\bar{z}, (A+B\xi+C\xi^2)\frac{\partial}{\partial \xi}d\bar{z} \right\rangle
\end{align*}
We set $\alpha(t):=t_0(F_0+F_1\xi+F_2\xi^2)\frac{\partial}{\partial z}\wedge \frac{\partial}{\partial \xi}+t_1\frac{\partial}{\partial z}d\bar{z}+t_2(A+B\xi+C\xi^2)\frac{\partial}{\partial \xi}d\bar{z}\in A^{0,0}(X,\wedge^2 \Theta_X)\oplus A^{0,1}(X,\Theta_X)$. Then we have
\begin{align*}
&L \alpha(t)+\frac{1}{2}[\alpha(t),\alpha(t)]=-t_0t_2[\Lambda_0,(F_0+F_1\xi+F_2 \xi^2)\frac{\partial}{\partial \xi} d\bar{z}]
\end{align*}
where $L=\bar{\partial}-+[\Lambda_0,-]$.

Now we take $\beta(t):= \alpha(t) + t_0t_2(F_0+F_1\xi+F_2 \xi^2)\frac{\partial}{\partial \xi} d\bar{z}$. Then we have
\begin{align*}
&L \beta(t)+\frac{1}{2}[\beta(t),\beta(t)]=0
\end{align*}

Hence $\beta(t)$ defines a Poisson analytic family of deformations of $(X,\Lambda_0)$ such that the Poisson Kodaira Spencer map is an isomorphism at $t=0$ so that $(X,\Lambda_0)$ is unobstructed in Poisson deformations.

\subsubsection{The case of $A=B=C=0$}\

We show that $(E\times \mathbb{P}_\mathbb{C}^1,\Lambda_0=0)$ is obstructed in Poisson deformations. Since $\Lambda_0=0$, we have $\mathbb{H}^1(X,\Theta_X^\bullet)=H^0(X,\wedge^2 \Theta_X)\oplus H^1(X,\Theta_X)$. Consider $( \frac{\partial}{\partial z}\wedge \frac{\partial}{\partial \xi},\xi\frac{\partial}{\partial \xi}d\bar{z} )\in H^0(X,\wedge^2 \Theta_X)\oplus H^1(X,\Theta_X)$. Then
\begin{align*}
[ \frac{\partial}{\partial z}\wedge \frac{\partial}{\partial \xi},\xi\frac{\partial}{\partial \xi}d\bar{z}]=\frac{\partial}{\partial z}\wedge \frac{\partial}{\partial \xi}d\bar{z}\ne 0
\end{align*}
so that $(E\times \mathbb{P}_\mathbb{C}^1,\Lambda_0=0)$ is obstructed in Poisson deformations by Lemma $\ref{c9}$. 

We summarize Poisson deformations of product of two projective nonsingular curves in Table \ref{p32}.

\begin{table}
\begin{center}
\begin{tabular}{| c | c | c | c} \hline
Type & Poisson structure $\Lambda_0$ & Poisson deformations \\ \hline
$\mathbb{P}_\mathbb{C}^1\times \mathbb{P}_\mathbb{C}^1$ & any Poisson structure &  unobstructed  \\ \hline
$E_1\times E_2$ where $g(E_1)=g(E_2)=1$ & any Poisson structure  & unobstructed \\ \hline
$E\times \mathbb{P}_\mathbb{C}^1$ where $g(E)=1$ & $\Lambda_0\ne 0$ &  unobstructed  \\ \hline
$E\times \mathbb{P}_\mathbb{C}^1$ where $g(E)=1$ & $\Lambda_0=0$  & obstructed \\ \hline
\end{tabular}
\end{center}
\caption{Poisson deformations of product of two projective nonsingular curves} \label{p32}
\end{table}

\section{$T\times \mathbb{P}_\mathbb{C}^1$, where $T$ is a complex torus with dimension $2$}\label{section6}\
In this section, we study Poisson deformations of $T\times \mathbb{P}_\mathbb{C}^1$ where $T$ is a complex torus with dimension $2$. It is known that $X=T\times \mathbb{P}_\mathbb{C}^1$ is obstructed in complex deformations (see \cite{Kod58} p.436). In this section, we determine obstructedness and unobstructedness in Poisson deformations for any Poisson structure on $X$.  In particular, we show that there exist holomorphic Poisson structures $\Lambda_0$ on $X$ such that $(X,\Lambda_0)$ are unobstructed in Poisson deformations. $T\times \mathbb{P}_\mathbb{C}^1$ provides examples which are
\begin{enumerate}
\item obstructed in complex deformations, but unobstructed in Poisson deformations.
\item obstructed in complex deformations, and obstructed in Poisson deformations.
\end{enumerate}

\subsection{Descriptions of cohomology groups $H^i(X,\wedge^j \Theta_X)$ on $X$}\

Let $T=\mathbb{C}^2/\mathcal{G}$ is a complex torus with dimension $2$, where $\mathcal{G}$ is the free abelian group generated by non-degenerate periods $\omega_j=(\omega_{j1},\omega_{j2}), j=1,2,3,4$ and where, if $(z_1,z_2)$ are the coordinates of $\mathbb{C}^2$, $\omega_j\in \mathcal{G}$ operates on $\mathbb{C}^2$ by sending $z=(z_1,z_2)$ into $z+\omega_j=(z_1+\omega_{j1},z_2+\omega_{j2})$. Let $X:=T\times \mathbb{P}_\mathbb{C}^1$, and $\xi$ is the inhomogenous coordinate of $\mathbb{P}_\mathbb{C}^1$.

We describe cohomology groups $H^i(X,\wedge^j \Theta_X),(i,j)=(0,1),(1,1),(2,1),(0,2),(1,2),(2,2),(0,3),(1,3)$ on $X$. Let $\pi_1:X\to T$ and $\pi_2:X\to \mathbb{P}_\mathbb{C}^1$ be the natural projections. Then we have $\Theta_X=\pi_1^{*}\Theta_T\oplus \pi_2^{*}\Theta_{\mathbb{P}_\mathbb{C}^1}$, $\wedge^2 \Theta_X=\pi_1^*\wedge^2 \Theta_T\oplus \pi_1^*\Theta_T\otimes_{\mathcal{O}_X}\pi_2^*\Theta_{\mathbb{P}_\mathbb{C}^1}$ and $\wedge^3 \Theta_X=\pi_1^*\wedge^2\Theta_T\otimes_{\mathcal{O}_X} \pi_2^* \Theta_{\mathbb{P}_\mathbb{C}^1}$. By K\"{u}nneth formula, we have
\begin{align*}
H^0(X,\Theta_X)&\cong H^0(T,\Theta_T)\oplus H^0(\mathbb{P}_\mathbb{C}^1,\Theta_{\mathbb{P}_\mathbb{C}^1})\\
H^1(X,\Theta_X) &\cong H^1(T,\Theta_T)\oplus \left(H^0(\mathbb{P}_\mathbb{C}^1,\Theta_{\mathbb{P}_\mathbb{C}^1})         \otimes H^1(T,\mathcal{O}_T) \right)\\
H^2(X,\Theta_X) &\cong   H^2(T,\Theta_T)\oplus \left(H^0(\mathbb{P}_\mathbb{C}^1,\Theta_{\mathbb{P}_\mathbb{C}^1}) \otimes H^2(T,\mathcal{O}_T)  \right)    \\
H^0(X,\wedge^2\Theta_X)&\cong H^0(T,\wedge^2 \Theta_T)\oplus  \left(H^0(\mathbb{P}_\mathbb{C}^1,\Theta_{\mathbb{P}_\mathbb{C}^1})\otimes H^0(T,\Theta_T) \right)\\
H^1(X,\wedge^2 \Theta_X)&\cong     H^1(T,\wedge^2 \Theta_T)\oplus  \left(H^0(\mathbb{P}_\mathbb{C}^1,\Theta_{\mathbb{P}_\mathbb{C}^1}) \otimes H^1(T,\Theta_T) \right)        \\
H^2(X,\wedge^2 \Theta_X)&\cong   H^2(T,\wedge^2 \Theta_T)\oplus \left( H^0(\mathbb{P}_\mathbb{C}^1,\Theta_{\mathbb{P}_\mathbb{C}^1})\otimes H^2(T,\Theta_T) \right)   \\
H^0(X,\wedge^3 \Theta_X) & \cong H^0(\mathbb{P}_\mathbb{C}^1,\Theta_{\mathbb{P}_\mathbb{C}^1}) \otimes H^0 (  T,\wedge^2 \Theta_T)      \\
H^1(X,\wedge^3 \Theta_X)&\cong H^0(\mathbb{P}_\mathbb{C}^1,\Theta_{\mathbb{P}_\mathbb{C}^1})\otimes H^1(T,\wedge^2 \Theta_T)
\end{align*}
Hence we have 
\begin{enumerate}
\item $\dim_\mathbb{C} H^0(X,\Theta_X)=5$ and any element of $H^0(X,\Theta_X)$ has an unique representation of the form
\begin{align}\label{p19}
\alpha\frac{\partial}{\partial z_1}+\beta\frac{\partial}{\partial z_2}+(\gamma_1+\gamma_2\xi+\gamma_2\xi^2)\frac{\partial}{\partial \xi},\,\,\,\,\,\text{where}\,\,\,\,\,\alpha,\beta,\gamma_1,\gamma_2,\gamma_3\in \mathbb{C}
\end{align}
\item $\dim_\mathbb{C} H^1(X,\Theta_X)=10$ and any element of $H^1(X,\Theta_X)$ has an unique representation of the form
\begin{align}\label{p24}
\left(r_0\frac{\partial}{\partial z_1}+r_1\frac{\partial}{\partial z_2}\right)d\bar{z}_1+\left(s_0\frac{\partial}{\partial z_1}+s_1\frac{\partial}{\partial z_2}\right)d\bar{z}_2+&(w_0+w_1\xi+w_2\xi^2)\frac{\partial}{\partial \xi}d\bar{z}_1+(v_0+v_1\xi+v_2\xi^2)\frac{\partial}{\partial \xi}d\bar{z}_2,\\
&\text{where}\,\,\,\,\,r_0,r_1,s_0,s_1,w_0,w_1,w_2,v_0,v_1,v_2\in \mathbb{C}. \notag
\end{align}
\item $\dim_\mathbb{C} H^2(X,\Theta_X)=5$ and any element of $H^2(X,\Theta_X)$ has an unique representation of the form
\begin{align}
\left( f_0\frac{\partial}{\partial z_1}+f_1\frac{\partial}{\partial z_2}+(g_0+g_1\xi+g_2\xi^2)\frac{\partial}{\partial \xi}\right)d\bar{z}_1\wedge d\bar{z}_2,\,\,\,\,\,\text{where}\,\,\,\,\,f_0,f_1,g_0,g_1,g_2\in \mathbb{C}.
\end{align}
\item $\dim_\mathbb{C} H^0(X,\wedge^2 \Theta_X)=7$ and any element of $H^0(X, \wedge^2 \Theta_X)$ has an unique representation of the form
\begin{align}
a\frac{\partial}{\partial z_1}\wedge \frac{\partial}{\partial z_2}+(b_0+b_1\xi+b_2\xi^2)\frac{\partial}{\partial z_2}\wedge \frac{\partial}{\partial \xi}+(c_0+c_1\xi+c_2\xi^2)\frac{\partial}{\partial \xi}\wedge \frac{\partial}{\partial z_1},\,\,\,\,\,a,b_0,b_1,b_2,c_0,c_1,c_2\in \mathbb{C}. \label{p1}
\end{align}
\item $\dim_\mathbb{C} H^1(X,\wedge^2 \Theta_X)=14$ and any element of $H^1(X, \wedge^2 \Theta_X)$ has an unique representation of the form
\begin{align}
x_0\frac{\partial}{\partial z_1}\wedge \frac{\partial}{\partial z_2}d\bar{z}_1+x_1\frac{\partial}{\partial z_1}\wedge \frac{\partial}{\partial z_2}d\bar{z}_2+(x_2+x_3\xi+x_4\xi^2)\frac{\partial}{\partial z_1}\wedge \frac{\partial}{\partial \xi}d\bar{z}_1+(x_5+x_6\xi+x_7\xi^2)\frac{\partial}{\partial z_2}\wedge \frac{\partial}{\partial \xi}d\bar{z}_1\\
+ (x_8+x_9\xi+x_{10}\xi^2)\frac{\partial}{\partial z_2}\wedge \frac{\partial}{\partial \xi}d\bar{z}_2    + (x_{11}+x_{12}\xi+x_{12}\xi^2)\frac{\partial}{\partial z_1}\wedge \frac{\partial}{\partial \xi}d\bar{z}_2    \notag \\
\text{where}\,\,\,\,\, x_0,x_1,x_2,x_3,x_4,x_5,x_6,x_7,x_8,x_9,x_{10},x_{11},x_{12},x_{13}\in \mathbb{C}   \notag
\end{align}
\item $\dim_\mathbb{C} H^2(X,\wedge^2 \Theta_X)=7$ and any element of $H^1(X,\wedge^2 \Theta_X)$ has an unique representation of the form
\begin{align}
\left(h_0\frac{\partial}{\partial z_1}\wedge \frac{\partial}{\partial z_2}+(p_0+p_1\xi+p_2\xi^2)\frac{\partial}{\partial z_1}\wedge \frac{\partial}{\partial \xi}+(q_0+q_1\xi+q_2\xi^2)\frac{\partial}{\partial z_2}\wedge \frac{\partial}{\partial \xi}\right)d\bar{z}_1\wedge d\bar{z}_2\\
\text{where}\,\,\,\,\,h_0,p_0,p_1,p_2,q_0,q_1,q_2\in \mathbb{C}. \notag
\end{align}
\item $\dim_\mathbb{C} H^0(X, \wedge^3 \Theta_X)=3$ and any element of $H^0(X,  \wedge^3 \Theta_X)$ has an unique representation of the form
\begin{align}\label{p50}
(e_0+e_1\xi+e_2\xi^2)\frac{\partial}{\partial z_1}\wedge \frac{\partial}{\partial z_2}\wedge \frac{\partial}{\partial \xi},\,\,\,\,\,\text{where}\,\,\,\,\,e_0,e_1,e_2\in \mathbb{C}.
\end{align}
\item $\dim_\mathbb{C}H^1(X,\wedge^3 \Theta_X)=6$ and any element of $H^1(X,\wedge^3 \Theta_X)$ has an unique representation of the form
\begin{align}
(y_0+y_1\xi+y_2\xi^2)\frac{\partial}{\partial z_1}\wedge \frac{\partial}{\partial z_2}\wedge \frac{\partial}{\partial \xi}d\bar{z}_1+(y_3+y_4\xi+y_5\xi^2)\frac{\partial}{\partial z_1}\wedge \frac{\partial}{\partial z_2}\wedge \frac{\partial}{\partial \xi}d\bar{z}_2\\
\text{where}\,\,\,\,\,y_0,y_1,y_2,y_3,y_4,y_5\in \mathbb{C}. \notag
\end{align}
\end{enumerate}

\subsection{Holomorphic Poisson structures on $X=T\times \mathbb{P}_\mathbb{C}^1$}\

We describe holomorphic Poisson structures on $X$. As in $(\ref{p1})$, an element $\Lambda_0\in H^0(X,\wedge^2\Theta_X)$ is of the form
\begin{align*}
\Lambda_0=a\frac{\partial}{\partial z_1}\wedge \frac{\partial}{\partial z_2}+(b_0+b_1\xi+b_2\xi^2)\frac{\partial}{\partial z_2}\wedge \frac{\partial}{\partial \xi}+(c_0+c_1\xi+c_2\xi^2)\frac{\partial}{\partial \xi}\wedge \frac{\partial}{\partial z_1}
\end{align*}
where $a,b_0,b_1,b_2,c_0,c_1,c_2\in\mathbb{C}$. We set
\begin{align*}
 P_1&:=a\\
 P_2&:=b_0+b_1\xi+b_2\xi^2\\
 P_3&:=c_0+c_1\xi+c_2\xi^2.
\end{align*}
 Then $\Lambda_0$ defines a holomorphic Poisson structure on $X$ if and only if $[\Lambda_0,\Lambda_0]=0$ if and only if
\begin{align*}
&P_1\left( \frac{\partial P_3}{\partial z_1}-\frac{\partial P_2}{\partial z_2} \right)+P_2\left( \frac{\partial P_1}{\partial z_2}-\frac{\partial P_3}{\partial \xi}  \right)+P_3\left( \frac{\partial P_2}{\partial \xi}-\frac{\partial P_1}{\partial z_1}  \right)=0\\
&\iff  b_1c_0-b_0c_1=0,\,\,\,\,\, b_2c_0-b_0c_2=0,\,\,\,\,\,b_2c_1-b_1c_2=0
\end{align*}

Hence we can divide holomorphic Poisson structures on $X=T\times \mathbb{P}_\mathbb{C}^1$ into three classes
\begin{enumerate}
\item $\Lambda_0=D\frac{\partial}{\partial z_1}\wedge \frac{\partial}{\partial z_2},D\in \mathbb{C}$ 
\item $\Lambda_0=D\frac{\partial}{\partial z_1}\wedge \frac{\partial}{\partial z_2}+(A+B\xi+C\xi^2)\frac{\partial}{\partial z_2}\wedge \frac{\partial}{\partial \xi}+ k(A+B\xi+C\xi^2)\frac{\partial}{\partial \xi}\wedge \frac{\partial}{\partial z_1}$, $D\in \mathbb{C}, (A,B,C)\ne 0\in \mathbb{C}^3$, $k\in \mathbb{C}$
\item $\Lambda_0=D\frac{\partial}{\partial z_1}\wedge \frac{\partial}{\partial z_2}+(A+B\xi+C\xi^2)\frac{\partial}{\partial \xi}\wedge \frac{\partial}{\partial z_1}$, $D\in \mathbb{C}, (A,B,C)\ne 0\in \mathbb{C}^3$.
\end{enumerate}

In the next subsections, we will determine obstructedness or unobstructedness for each class, and show that $(X,\Lambda_0)$ is obstructed for the case $(1)$, and $(X,\Lambda_0)$ is unobstructed for the case $(2)$ and $(3)$.

Next we note that by considering spectral sequence associated with the double complex $(\ref{o7})$ induced from $\Theta_X^\bullet$, we get the following lemma.
\begin{lemma}
Let us consider $(X=T\times \mathbb{P}_\mathbb{C}^1,\Lambda_0)$, where $T$ is a complex torus with dimension $2$. Then we have
\begin{align}
\mathbb{H}^0(X,\Theta_X^\bullet)\cong & ker(H^0(X,\Theta_X)\xrightarrow{[\Lambda_0,-]}H^0(X,\wedge^2 \Theta_X))  \\
\mathbb{H}^1(X,\Theta_X^\bullet)\cong & ker(H^0(X,\wedge^2 \Theta_X)\xrightarrow{[\Lambda_0,-]} H^0(X,\wedge^3 \Theta_X))/im(H^0(X,\Theta_X)\xrightarrow{[\Lambda_0,-]}H^0(X,\wedge^2 \Theta_X)) \label{p2}\\
 &\oplus ker(H^1(X,\Theta_X)\xrightarrow{[\Lambda_0,-]}H^1(X,\wedge^2 \Theta_X))\notag\\
 \mathbb{H}^2(X,\Theta_X^\bullet)\cong & coker(H^0(X,\wedge^2 \Theta_X)\xrightarrow{[\Lambda_0,-]}H^0(X,\wedge^3 \Theta_X))\\
  &\oplus ker(H^1(X,\wedge^2 \Theta_X)\xrightarrow{[\Lambda_0,-]} H^1(X,\wedge^3 \Theta_X))/im(H^1(X,\Theta_X)\xrightarrow{[\Lambda_0,-]}H^1(X,\wedge^2 \Theta_X)) \notag\\
   &\oplus ker(H^2(X,\Theta_X)\xrightarrow{[\Lambda_0,-]}H^2(X,\wedge^2 \Theta_X)) \notag
\end{align}
\end{lemma}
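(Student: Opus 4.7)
The plan is to apply the spectral sequence of the double complex (\ref{o7}) induced from $\Theta_X^\bullet$. I would place $K^{p,q} = A^{0,q}(X, \wedge^{p+1} \Theta_X)$ in bidegree $(p,q)$; because $\dim_{\mathbb{C}} X = 3$, the vertical direction terminates at $p = 2$ (i.e., $\wedge^{p+1} \Theta_X = 0$ for $p \geq 3$). Taking horizontal ($\bar{\partial}$) cohomology first would yield
\[
E_1^{p,q} = H^q(X, \wedge^{p+1} \Theta_X),
\]
with $d_1$ induced by $[\Lambda_0, -]$, and passing to $E_2^{p,q}$ gives the cohomology of $[\Lambda_0, -]$ acting on $H^q(X, \wedge^{p+1} \Theta_X)$. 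This spectral sequence converges to $\mathbb{H}^{p+q}(X, \Theta_X^\bullet)$.

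Reading off the $E_2$-page in each total degree would recover exactly the formulas in the lemma: $\mathbb{H}^0$ corresponds to $E_2^{0,0}$; the two summands for $\mathbb{H}^1$ correspond to $E_2^{1,0}$ and $E_2^{0,1}$; and the three summands for $\mathbb{H}^2$ correspond to $E_2^{2,0}$, $E_2^{1,1}$, $E_2^{0,2}$. The kernel, cokernel, and kernel-modulo-image expressions displayed in the statement are precisely the resulting $E_2$-entries after unwinding the definitions of $d_1$ at the relevant spots.

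The main obstacle will be to verify that $E_2 = E_\infty$ at the relevant positions, so that the abutment delivers honest direct sums rather than merely a filtration. Because $\wedge^4 \Theta_X = 0$ and Dolbeault degrees are nonnegative, every higher-page differential entering or leaving these bidegrees has trivial source or trivial target, \emph{except} for $d_2 : E_2^{0,1} \to E_2^{2,0}$ (which affects both $\mathbb{H}^1$ and $\mathbb{H}^2$) and $d_2 : E_2^{0,2} \to E_2^{2,1}$ (which affects $\mathbb{H}^2$). Establishing the vanishing of these two $d_2$'s is the crux; once accomplished, the convergence filtration on each $\mathbb{H}^i(X,\Theta_X^\bullet)$ splits automatically as a filtration of a finite-dimensional $\mathbb{C}$-vector space, yielding the stated isomorphisms.
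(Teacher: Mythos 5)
Your setup is exactly the paper's (the first spectral sequence of the Dolbeault double complex $(\ref{o7})$), and your bookkeeping is correct: the $E_2$-entries at $(0,0)$, $(1,0)$, $(0,1)$, $(2,0)$, $(1,1)$, $(0,2)$ are precisely the kernels, cokernels and subquotients in the statement, and the only higher differentials that could spoil $E_2=E_\infty$ in total degree $\leq 2$ are $d_2\colon E_2^{0,1}\to E_2^{2,0}$ and $d_2\colon E_2^{0,2}\to E_2^{2,1}$. But you stop exactly there: you name the vanishing of these two $d_2$'s as the crux and never prove it, so as written the argument is incomplete. This is a genuine gap rather than a formality, since for a general compact holomorphic Poisson threefold there is no reason for these differentials to vanish.

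To close it for $X=T\times \mathbb{P}_\mathbb{C}^1$, use the explicit representatives listed in the paper: every class in $H^q(X,\wedge^j\Theta_X)$ has a unique representative lying in the finite-dimensional space $V$ spanned by forms $P(\xi)\,\partial_{I}\, d\bar{z}_J$, where $P$ is a polynomial of degree $\leq 2$ in $\xi$ and $\partial_I$ is a wedge of $\tfrac{\partial}{\partial z_1},\tfrac{\partial}{\partial z_2},\tfrac{\partial}{\partial \xi}$. These representatives are $\bar{\partial}$-closed, inject into Dolbeault cohomology, and $V$ is stable under $[\Lambda_0,-]$ (this is exactly what the subsequent computations in the section verify), so the bracket on cohomology is literally computed on $V$. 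Consequently, if $[\theta]\in E_2^{0,q}$ is represented by $\theta\in V$ with $[\Lambda_0,\theta]$ zero in cohomology, then $[\Lambda_0,\theta]=0$ identically; in the zig-zag defining $d_2$ one may therefore take the primitive $\mu=0$, giving $d_2[\theta]=[[\Lambda_0,\mu]]=0$. With that observation both problematic $d_2$'s vanish, the filtrations split because everything is a finite-dimensional $\mathbb{C}$-vector space, and your argument delivers the stated isomorphisms.
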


\subsection{Obstructed Poisson deformations}\

Let us consider a holomorphic Poisson structure on $X=T\times \mathbb{P}_\mathbb{C}^1$ given by
\begin{align*}
\Lambda_0=D\frac{\partial}{\partial z_1}\wedge \frac{\partial}{\partial z_2},\,\,\,\,\,D\in \mathbb{C}.
\end{align*}
In this case, we have $\mathbb{H}^1(X,\Theta_X^\bullet)=H^0(X,\wedge^2 \Theta_X)\oplus H^1(X,\Theta_X)$. Hence since $X$ is obstructed in complex deformations,  $(X,\Lambda_0)$ is obstructed in Poisson deformations.

\subsection{Unobstructed Poisson deformations}\

Let us consider a holomorphic Poisson structure on $X=T\times \mathbb{P}_\mathbb{C}^1$ given by
\begin{align*}
\Lambda_0=D\frac{\partial}{\partial z_1}\wedge \frac{\partial}{\partial z_2}+(A+B\xi+C\xi^2)\frac{\partial}{\partial z_2}\wedge \frac{\partial}{\partial \xi}+ k(A+B\xi+C\xi^2)\frac{\partial}{\partial \xi}\wedge \frac{\partial}{\partial z_1}
\end{align*}
where $D\in \mathbb{C}$ and $(A,B,C)\ne 0\in \mathbb{C}^3$, and $k \in \mathbb{C}$. We will show that $(X,\Lambda_0)$ is unobstructed in Poisson deformations.

We will describe $\mathbb{H}^1(X,\Theta_X^\bullet)$ explicitly in terms of $(\ref{p2})$. 

First let us find $im(H^0(X,\Theta_X)\xrightarrow{[\Lambda_0,-]} H^0(X,\wedge^2 \Theta_X) )$. We compute, from $(\ref{p19})$ and $(\ref{p1})$

{\small{\begin{align*}
&[D\frac{\partial}{\partial z_1}\wedge \frac{\partial}{\partial z_2}+(A+B\xi+C\xi^2)\frac{\partial}{\partial z_2}\wedge \frac{\partial}{\partial \xi}+k(A+B\xi+C\xi^2)\frac{\partial}{\partial \xi}\wedge \frac{\partial}{\partial z_1}, \alpha\frac{\partial}{\partial z_1}+\beta\frac{\partial}{\partial z_2}+(\gamma_0+\gamma_1\xi+\gamma_2\xi^2)\frac{\partial}{\partial \xi}]\\
&=(A\gamma_1-B\gamma_0+2(A\gamma_2-C\gamma_0)\xi+(B\gamma_2-C\gamma_1)\xi^2)\frac{\partial}{\partial z_2}\wedge \frac{\partial}{\partial \xi}+k(A\gamma_1-B\gamma_0+2(A\gamma_2-C\gamma_0)\xi+(B\gamma_2-C\gamma_1)\xi^2)\frac{\partial}{\partial \xi}\wedge \frac{\partial}{\partial z_1}
\end{align*}}}
Hence $\dim_\mathbb{C} \mathbb{H}^0(X,\Theta_X^\bullet)=\dim_\mathbb{C} ker(H^0(X,\Theta_X)\xrightarrow{[\Lambda_0,-]} H^0(X,\wedge^2 \Theta_X))=3$ so that $\dim_\mathbb{C} im(H^0(X,\Theta_X)\xrightarrow{[\Lambda_0,-]} H^0(X,\wedge^2 \Theta_X))=2$.

Second let us find $ker(H^0(X,\wedge^2 \Theta_X)\xrightarrow{[\Lambda_0,-]}H^0(X,\wedge^3 \Theta_X))$. We compute, from $(\ref{p1})$ and $(\ref{p50})$,
{\tiny{\begin{align*}
&[D\frac{\partial}{\partial z_1}\wedge \frac{\partial}{\partial z_2}+(A+B\xi+C\xi^2)\frac{\partial}{\partial z_2}\wedge \frac{\partial}{\partial \xi}+k(A+B\xi+C\xi^2)\frac{\partial}{\partial \xi}\wedge \frac{\partial}{\partial z_1},a\frac{\partial}{\partial z_1}\wedge \frac{\partial}{\partial z_2}+ (b_0+b_1\xi+b_2\xi^2)   \frac{\partial}{\partial z_2}\wedge \frac{\partial}{\partial \xi}+(c_0+c_1\xi+c_2\xi^2) \frac{\partial}{\partial \xi }\wedge \frac{\partial}{\partial z_1}]\\
&=-(Ac_1-Bc_0+2(Ac_2-Cc_0)\xi+(Bc_2-Cc_1)\xi^2)\frac{\partial}{\partial \xi}\wedge \frac{\partial}{\partial z_2}\wedge \frac{\partial}{\partial z_1}+k(Ab_1-Bb_0+2(Ab_2-Cb_0)\xi+(Bb_2-Cb_1)\xi^2)\frac{\partial}{\partial \xi}\wedge \frac{\partial}{\partial z_2}\wedge \frac{\partial}{\partial z_1}=0
\end{align*}}}
so that we have $A(- c_1+k b_1)-B(- c_0+k b_0)=0,\,\,\,\,\,A(-c_2+k b_2)-C(- c_0+k b_0)=0,\,\,\,\,\,B(-c_2+k b_2)-C(- c_1+k b_1)=0$. Hence
\begin{align*}
&(- c_0+k b_0, - c_1+k b_1,-c_2+kb_2 )=-t_1(A,B,C),\,\,\,\,\,\,\, t_1\in \mathbb{C}\\
&\iff (c_0,c_1,c_2)=k(b_0,b_1,b_2)+t_1(A,B,C),\,\,\,\,\,\,\,\,\,t_1\in \mathbb{C}
\end{align*}
Hence $\dim_\mathbb{C} ker(H^0(X,\wedge^2 \Theta_X)\xrightarrow{[\Lambda_0,-]}H^0(X,\wedge^3 \Theta_X))=5$ and any element of $ker(H^0(X,\wedge^2 \Theta_X)\xrightarrow{[\Lambda_0,-]}H^0(X,\wedge^3 \Theta_X))$ is of the form
\begin{align*}
t_0\frac{\partial}{\partial z_1}\wedge \frac{\partial}{\partial z_2}+(b_0+b_1\xi+b_2\xi^2)\frac{\partial}{\partial z_2}\wedge \frac{\partial}{\partial \xi} +(t_1A+kb_0+(t_1B+kb_1)\xi+(t_1C+kb_2)\xi^2)\frac{\partial}{\partial \xi}\wedge \frac{\partial}{\partial z_1}\\
\,\,\,\,\,\text{where}\,\,\,\,\,t_0,b_0,b_1,b_2,t_1\in \mathbb{C}.
\end{align*}

Since $rank\left(\begin{matrix} -B & A &0\\ -2C & 0 & 2A \\ 0 & -C & B   \end{matrix} \right)=2$, there exist $(F_1,F_2,F_3)\ne 0\in \mathbb{C}^3 $ such that
\begin{align*}
(F_0+F_1\xi+F_2\xi^2)\frac{\partial}{\partial z_2}\wedge \frac{\partial}{\partial \xi}+k(F_0+F_1\xi+F_2\xi^2)\frac{\partial}{\partial \xi}\wedge \frac{\partial}{\partial z_1} \in ker(H^0(X,\wedge^2 \Theta_X)\xrightarrow{[\Lambda_0,-]} H^0(X, \wedge^3 \Theta_X))
\end{align*}
is not in $im(H^0(X,\Theta_X)\xrightarrow{[\Lambda_0,-]} H^0(X,\wedge^2 \Theta_X))$. Then 
\begin{align*}
\dim_\mathbb{C} \mathbb{H}^1(X,\Theta_X^\bullet)=\dim_\mathbb{C} ker(H^0(X,\wedge^2 \Theta_X)\xrightarrow{[\Lambda_0,-]} H^0(X,\wedge^3 \Theta_X))/im(H^0(X,\Theta_X)\xrightarrow{[\Lambda_0,-]}H^0(X,\wedge^2 \Theta_X))=3
\end{align*}
and any element has an unique representation of the form
\begin{align}\label{p17}
t_0\frac{\partial}{\partial z_1}\wedge \frac{\partial}{\partial z_2}+t_2(F_0+F_1\xi+F_2\xi^2)\frac{\partial}{\partial z_2}\wedge \frac{\partial}{\partial \xi} +(t_1A+kt_2F_0+(t_1B+kt_2F_1)\xi+(t_1C+kt_2F_2)\xi^2)\frac{\partial}{\partial \xi}\wedge \frac{\partial}{\partial z_1}\\
\,\,\,\,\,\text{where}\,\,\,\,\,t_0,t_1,t_2 \in \mathbb{C}. \notag
\end{align}

Next let us find $ker(H^1(X,\Theta_X)\xrightarrow{[\Lambda_0,-]} H^1(X,\wedge^2 \Theta_X ) )$. We compute, from $(\ref{p24})$,
{\tiny{\begin{align*}
&[ D\frac{\partial}{\partial z_1}\wedge \frac{\partial}{\partial z_2}+(A+B\xi+C\xi^2)\frac{\partial}{\partial z_2}\wedge \frac{\partial}{\partial \xi}+k(A+B\xi+C\xi^2)\frac{\partial}{\partial \xi}\wedge \frac{\partial}{\partial z_1} ,\\
&\left(r_0\frac{\partial}{\partial z_1}+r_2\frac{\partial}{\partial z_2}\right)d\bar{z}_1+\left(s_0\frac{\partial}{\partial z_1}+s_1\frac{\partial}{\partial z_2}\right)d\bar{z}_2+(w_0+w_1\xi+w_2\xi^2)\frac{\partial}{\partial \xi}d\bar{z}_1+(v_0+v_1\xi+v_2\xi^2)\frac{\partial}{\partial \xi}d\bar{z}_2]\\
&=(Aw_1-Bw_0+2(Aw_2-Cw_0)\xi+(Bw_2-Cw_1)\xi^2)\frac{\partial}{\partial z_2}\wedge \frac{\partial}{\partial \xi}d\bar{z}_1+(Av_1-Bv_0+2(Av_2-Cv_0)\xi+(Bv_2-Cv_1)\xi^2)\frac{\partial}{\partial z_2}\wedge \frac{\partial}{\partial \xi}d\bar{z}_2\\
&-k(Aw_1-Bw_0+2(Aw_2-Cw_0)\xi+(Bw_2-Cw_1)\xi^2)\frac{\partial}{\partial z_1}\wedge \frac{\partial}{\partial \xi}d\bar{z}_1-k(Av_1-Bv_0+2(Av_2-Cv_0)\xi+(Bv_2-Cv_1)\xi^2)\frac{\partial}{\partial z_1}\wedge \frac{\partial}{\partial \xi}d\bar{z}_2=0
\end{align*}}}
Hence $\dim_\mathbb{C} ker(H^1(X,\Theta_X)\xrightarrow{[\Lambda_0,-]} H^1(X,\wedge^2 \Theta_X ) )=6$ and any element of $ker(H^1(X,\Theta_X)\xrightarrow{[\Lambda_0,-]} H^1(X,\wedge^2 \Theta_X ) )$ is of the form
\begin{align}\label{p18}
\left(t_3\frac{\partial}{\partial z_1}+t_4\frac{\partial}{\partial z_2}\right)d\bar{z}_1+\left(t_5\frac{\partial}{\partial z_1}+t_6\frac{\partial}{\partial z_2}\right)d\bar{z}_2+t_7(A+B\xi+C\xi^2)\frac{\partial}{\partial \xi}d\bar{z}_1+t_8(A+B\xi+C\xi^2)\frac{\partial}{\partial \xi}d\bar{z}_2
\end{align}

Hence $\dim_\mathbb{C} \mathbb{H}^1(X,\Theta_X^\bullet)=9$ and any element of $\mathbb{H}^1(X,\Theta_X^\bullet)$ has an unique representation of the form given by $(\ref{p17})$ and $(\ref{p18})$.

\begin{remark}
We note that $\dim_\mathbb{C} \mathbb{H}^2(X,\Theta_X^\bullet  )  \ne 0$ since $ker(H^2(X,\Theta_X)\xrightarrow{[\Lambda_0,-]}H^2(X, \wedge^2 \Theta_X))\ne0$.
Nevertheless we show that $(X,\Lambda_0)$ is unobstructed in Poisson deformations by constructing a Poisson analytic family of deformations of $(X,\Lambda_0)$  such that the Poisson Kodaira-Spencer  map is an isomorphism at $t=0$. 
\end{remark}

We set
\begin{align*}
\Lambda=\Lambda(t):&=t_0\frac{\partial}{\partial z_1}\wedge \frac{\partial}{\partial z_2}+t_2(F_0+F_1\xi+F_2\xi^2)\frac{\partial}{\partial z_2}\wedge \frac{\partial}{\partial \xi} +(t_1A+kt_2F_0+ (t_1B+kt_2F_1)\xi+(t_1C+kt_2F_2)\xi^2)\frac{\partial}{\partial \xi}\wedge \frac{\partial}{\partial z_1}\\
\phi=\phi(t):&=\left(t_3\frac{\partial}{\partial z_1}+t_4\frac{\partial}{\partial z_2}\right)d\bar{z}_1+\left(t_5\frac{\partial}{\partial z_1}+t_6\frac{\partial}{\partial z_2}\right)d\bar{z}_2+t_7(A+B\xi+C\xi^2)\frac{\partial}{\partial \xi}d\bar{z}_1+t_8(A+B\xi+C\xi^2)\frac{\partial}{\partial \xi}d\bar{z}_2
\end{align*}

We will show that there exist $\Lambda'(t)$ with coefficients in $H^0(X,\wedge^2 \Theta_X)$ and degree $\geq 2$ in $t$, and $\phi'(t)$ with holomorphic coefficients in $A^{0,1}(X,\Theta_X)$ and degree $\geq 2$ in $t$ such that by setting
\begin{align*}
\beta(t):&=\Lambda(t)+\Lambda'(t)\\
\alpha(t):&=\phi(t)+\phi'(t),
\end{align*}
$\beta(t)+\alpha(t)$ satisfies the integrability condition
\begin{align*}
L(\beta(t)+\alpha(t))+\frac{1}{2}[\beta(t)+\alpha(t),\beta(t)+\alpha(t)]=0,\,\,\,\,\,L=\bar{\partial}-+[\Lambda_0,-],
\end{align*}
equivalently
\begin{align}
[\Lambda_0,\beta(t)]+\frac{1}{2}[\beta(t),\beta(t)]&=0 \label{p11}\\
\bar{\partial} \beta(t)+[\Lambda_0,\alpha(t)]+[\beta(t),\alpha(t)]&=0 \label{p12}\\
\bar{\partial} \alpha(t)+\frac{1}{2}[\alpha(t),\alpha(t)]&=0 \label{p13}
\end{align}
so that $\beta(t)+\alpha(t)$ defines a Poisson analytic family of deformations of $(X,\Lambda_0)$ such that the Poisson Kodaira-Spencer map is an isomorphism at $t=0$ by Remark $\ref{o9}$. Hence $(X,\Lambda_0)$ is unobstructed in Poisson deformations.

First we note that
{\small{\begin{align*}
&[\Lambda(t),\Lambda(t)]\\
&=2[t_2(F_0+F_1\xi+F_2\xi^2)\frac{\partial}{\partial z_2}\wedge \frac{\partial}{\partial \xi}, (t_1A+kt_2F_0+ (t_1B+kt_2F_1)\xi+(t_1C+kt_2F_2)\xi^2)\frac{\partial}{\partial \xi}\wedge \frac{\partial}{\partial z_1}]\\
&=2t_1t_2(F_0B-F_1A+2(F_0C-F_2A)\xi+(F_1C-F_2B)\xi^2)\frac{\partial } {\partial \xi}\wedge \frac{\partial}{\partial z_1}\wedge \frac{\partial}{\partial z_2}\\
&=[\Lambda_0, -2t_1t_2(F_0+F_1\xi+F_2\xi^2)\frac{\partial}{\partial \xi} \wedge \frac{\partial}{\partial z_1}]
\end{align*}}}
We set 
\begin{align*}
\Lambda'(t):=t_1t_2(F_0+F_1\xi+F_2\xi^2)\frac{\partial}{\partial \xi}\wedge \frac{\partial}{\partial z_1}
\end{align*}
so that $[\Lambda_0, \Lambda'(t)]+\frac{1}{2}[\Lambda(t),\Lambda(t)]=0$. We note that $[\Lambda'(t),\Lambda'(t)]=0$, and $[\Lambda(t),\Lambda'(t)]=0$.
Then we have
\begin{align}\label{p14}
[\Lambda_0, \Lambda(t)+\Lambda'(t)]+\frac{1}{2}[\Lambda(t)+\Lambda'(t),\Lambda(t)+\Lambda'(t)]=0
\end{align}
Hence $(\ref{p11})$ is satisfied.
Second we note that 
{\tiny{\begin{align*}
&[\Lambda(t),\phi(t)]\\
&=t_2t_7[(F_0+F_1\xi+F_2\xi^2)\frac{\partial}{\partial z_2}\wedge \frac{\partial}{\partial \xi}, (A+B\xi+C\xi^2)\frac{\partial}{\partial \xi}]d\bar{z}_1
+t_2t_8[(F_0+F_1\xi+F_2\xi^2)\frac{\partial}{\partial z_2}\wedge \frac{\partial}{\partial \xi}, (A+B\xi+C\xi^2)\frac{\partial}{\partial \xi}]d\bar{z}_2\\
&+ kt_2t_7[(F_0+F_1\xi+F_2\xi^2)\frac{\partial}{\partial \xi}\wedge \frac{\partial}{\partial z_1}, (A+B\xi+C\xi^2)\frac{\partial}{\partial \xi}]d\bar{z}_1 + kt_2t_8[(F_0+F_1\xi+F_2\xi^2)\frac{\partial}{\partial \xi}\wedge \frac{\partial}{\partial z_1}, (A+B\xi+C\xi^2)\frac{\partial}{\partial \xi}]d\bar{z}_2 \\
&=t_2t_7(F_0B-F_1A+2(F_0C-F_2 A)\xi+(F_1C-F_2B)\xi^2)\frac{\partial}{\partial z_2}\wedge \frac{\partial}{\partial \xi}d\bar{z}_1+t_2t_8(F_0B-F_1A+2(F_0C-F_2 A)\xi+(F_1C-F_2B)\xi^2)\frac{\partial}{\partial z_2}\wedge \frac{\partial}{\partial \xi}d\bar{z}_2\\
&-kt_2t_7(F_0B-F_1A+2(F_0C-F_2 A)\xi+(F_1C-F_2B)\xi^2)\frac{\partial}{\partial z_1}\wedge \frac{\partial}{\partial \xi}d\bar{z}_1-kt_2t_8(F_0B-F_1A+2(F_0C-F_2 A)\xi+(F_1C-F_2B)\xi^2)\frac{\partial}{\partial z_1}\wedge \frac{\partial}{\partial \xi}d\bar{z}_2\\
&=[\Lambda_0,-t_2t_7(F_0+F_1\xi+F_2\xi^2)\frac{\partial}{\partial \xi} d\bar{z}_1]+[\Lambda_0, -t_2t_8(F_0+F_1\xi+F_2\xi^2)\frac{\partial}{\partial \xi} d\bar{z}_2]
\end{align*}}}
We set
\begin{align*}
\phi'(t)=t_2t_7 (F_0+F_1\xi+F_2\xi^2)\frac{\partial}{\partial \xi} d\bar{z}_1+t_2t_8(F_0+F_1\xi+F_2\xi^2)\frac{\partial}{\partial \xi}d\bar{z}_2
\end{align*}
so that $[\Lambda_0,\phi'(t)]+[\Lambda(t),\phi(t)]=0$. We note that $[\Lambda'(t),\phi'(t)]=0$ and
{\small{\begin{align*}
&[\Lambda(t),\phi'(t)]+[\Lambda'(t),\phi(t)]\\
&= [ t_1(A+B\xi+C\xi^2)\frac{\partial}{\partial \xi} \wedge \frac{\partial}{\partial z_1}, t_2t_7 (F_0+F_1\xi+F_2\xi^2)\frac{\partial}{\partial \xi} d\bar{z}_1+t_2t_8(F_0+F_1\xi+F_2\xi^2)\frac{\partial}{\partial \xi}d\bar{z}_2]\\
&+[t_1t_2(F_0+F_1\xi+F_2\xi^2)\frac{\partial}{\partial \xi}\wedge \frac{\partial}{\partial z_1}, t_7(A+B\xi+C\xi^2)\frac{\partial}{\partial \xi}d\bar{z}_1+t_8(A+B\xi+C\xi^2)\frac{\partial}{\partial \xi}d\bar{z}_2]=0
\end{align*}}}
Then since $\Lambda(t)+\Lambda'(t)$ is holomorphic, we have
\begin{align}\label{p15}
\bar{\partial}(\Lambda(t)+\Lambda'(t))+[\Lambda_0, \phi(t)+\phi'(t)]+[\Lambda(t)+\Lambda'(t),\phi(t)+\phi'(t)]=0
\end{align}
Hence $(\ref{p12})$ is satisfied. Lastly we note that $\phi(t)+\phi'(t)$ is holomorphic and
\begin{align*}
\phi(t)+\phi'(t)=&\left(t_3\frac{\partial}{\partial z_1}+t_4\frac{\partial}{\partial z_2}\right)d\bar{z}_1+\left(t_5\frac{\partial}{\partial z_1}+t_6\frac{\partial}{\partial z_2}\right)d\bar{z}_2\\
&+t_7(A+t_2F_0+(B+t_2F_1)\xi+(C+t_2 F_2)\xi^2)\frac{\partial}{\partial \xi}d\bar{z}_1+t_8(A+t_2F_0+(B+t_2F_1)\xi+(C+t_2 F_2)\xi^2)\frac{\partial}{\partial \xi} d\bar{z}_2
\end{align*}
so that we have
\begin{align}\label{p16}
\bar{\partial}(\phi(t)+\phi'(t))+\frac{1}{2}[\phi(t)+\phi'(t),\phi(t)+\phi'(t)]=0
\end{align}
Hence $(\ref{p13})$ is satisfied. From $(\ref{p14}),(\ref{p15}),(\ref{p16})$, $\beta(t):=\Lambda(t)+\Lambda'(t)$ and $\alpha(t):=\phi(t)+\phi'(t)$ defines a Poisson analytic family of deformations of $(X,\Lambda_0)$ such that the Poisson Kodaira-Spencer map is an isomorphism at $t=0$ so that $(X,\Lambda_0)$ is unobstructed in Poisson deformations.

It remains to determine obstructedness or unobstructedness of $(X,\Lambda_0)$ where  $\Lambda_0$ is given by
\begin{align*}
\Lambda_0=D\frac{\partial}{\partial z_1}\wedge \frac{\partial}{\partial z_2}+(A+B\xi+C\xi^2)\frac{\partial}{\partial \xi}\wedge \frac{\partial}{\partial z_1}
\end{align*}
where $D\in \mathbb{C}$ and $(A,B,C)\ne 0\in \mathbb{C}^3$. But by changing  $z_1$ and $z_2$, we get to the previous case. Hence $(X,\Lambda_0)$ is unobstructed in Poisson deformations.

We summarize Poisson deformations of $T\times \mathbb{P}_\mathbb{C}^1$ in Table \ref{p30}.

\begin{table}
\begin{center}
\begin{tabular}{| c | c | c | c} \hline
Poisson structure on $X=T\times \mathbb{P}_\mathbb{C}^1$& $\dim_\mathbb{C} \mathbb{H}^1(X,\Theta_X^\bullet)$ & Poisson deformations \\ \hline
$D\frac{\partial}{\partial z_1}\wedge \frac{\partial}{\partial z_2},D\in \mathbb{C}$ & 17 & obstructed\\ \hline
$D\frac{\partial}{\partial z_1}\wedge \frac{\partial}{\partial z_2}+(A+B\xi+C\xi^2)\frac{\partial}{\partial z_2}\wedge \frac{\partial}{\partial \xi}+ k(A+B\xi+C\xi^2)\frac{\partial}{\partial \xi}\wedge \frac{\partial}{\partial z_1}$ & 9 &  unobstructed  \\ \cline{4-4}
$D\in \mathbb{C}, (A,B,C)\ne 0\in \mathbb{C}^3$, $k\in \mathbb{C}$ &  &  \\ \hline
$D\frac{\partial}{\partial z_1}\wedge \frac{\partial}{\partial z_2}+(A+B\xi+C\xi^2)\frac{\partial}{\partial \xi}\wedge \frac{\partial}{\partial z_1}$ & 9 &  unobstructed  \\ \cline{4-4}
$D\in \mathbb{C}, (A,B,C)\ne 0\in \mathbb{C}^3$ &  &  \\ \hline
\end{tabular}
\end{center}
\caption{Poisson deformations of $T\times \mathbb{P}_\mathbb{C}^1$} \label{p30}
\end{table}

\bibliographystyle{amsalpha}
\bibliography{References-Rev9}

\end{document}